\theoremstyle{plain}
\newtheorem{mainthm}{Theorem}
\newtheorem{mainlemm}{Lemma}
\newtheorem*{maincor}{Corollary}
\newtheorem*{concludingfact}{Fact}
\newtheorem{mainseclemm}{Lemma}
\newtheorem{mainsecprop}[mainseclemm]{Proposition}
\numberwithin{mainseclemm}{subsection}
\newtheorem{thm}[subsubsection]{Theorem}
\newtheorem{lemm}[subsubsection]{Lemma}
\newtheorem{prop}[subsubsection]{Proposition}
\newtheorem{fact}[subsubsection]{Fact}
\newtheorem{obsv}[subsubsection]{Observation}
\DeclareMathOperator{\End}{\mathit{End}}
\DeclareMathOperator{\Hom}{\mathit{Hom}}
\DeclareMathOperator{\Simp}{\mathcal{S}}
\DeclareMathOperator{\Op}{\mathcal{O}}
\DeclareMathOperator{\NN}{\mathbb{N}}
\DeclareMathOperator{\ZZ}{\mathbb{Z}}
\DeclareMathOperator{\kk}{\Bbbk}
\DeclareMathOperator{\FF}{\mathbb{F}}
\DeclareMathOperator{\QQ}{\mathbb{Q}}
\DeclareMathOperator{\AOp}{\mathtt{A}}
\DeclareMathOperator{\BOp}{\mathtt{B}}
\DeclareMathOperator{\COp}{\mathtt{C}}
\DeclareMathOperator{\DOp}{\mathtt{D}}
\DeclareMathOperator{\EOp}{\mathtt{E}}
\DeclareMathOperator{\FOp}{\mathtt{F}}
\DeclareMathOperator{\GOp}{\mathtt{G}}
\DeclareMathOperator{\HOp}{\mathtt{H}}
\DeclareMathOperator{\IOp}{\mathtt{I}}
\DeclareMathOperator{\KOp}{\mathtt{K}}
\DeclareMathOperator{\LOp}{\mathtt{L}}
\DeclareMathOperator{\MOp}{\mathtt{M}}
\DeclareMathOperator{\POp}{\mathtt{P}}
\DeclareMathOperator{\QOp}{\mathtt{Q}}
\DeclareMathOperator{\WOp}{\mathtt{W}}
\DeclareMathOperator{\C}{\mathcal{C}}
\DeclareMathOperator{\E}{\mathcal{E}}
\DeclareMathOperator{\I}{\mathcal{I}}
\DeclareMathOperator{\K}{\mathcal{K}}
\DeclareMathOperator{\X}{\mathcal{X}}
\DeclareMathOperator{\Id}{Id}
\DeclareMathOperator{\id}{id}
\DeclareMathOperator{\sgn}{\mathit{sgn}}
\DeclareMathOperator*{\colim}{colim}
\DeclareMathOperator{\cst}{\mathit{cst}}
\DeclareMathOperator{\ev}{\mathit{ev}}
\DeclareMathOperator{\weight}{weight}
\DeclareMathOperator{\ad}{ad}
\DeclareMathOperator{\sk}{sk}
\DeclareMathOperator{\Cst}{\mathit{C}^{st}}
\DeclareMathOperator{\NDeg}{\mathtt{N}}
\DeclareMathOperator{\Span}{Span}
\DeclareMathOperator{\edge}{\eta}
\title[Koszul duality of $\mathrm{E_n}$-operads]{Koszul duality of $\mathbf{E_n}$-operads}
\author{Benoit Fresse}
\date{20 April 2009 (minor writing improvements on 26 April, 30 April, 15 November 2009)}
\address{UMR 8524 de l'Universit\'e Lille 1 - Sciences et Technologies - et du CNRS\\
Cit\'e Scientifique -- B\^atiment M2\\
F-59655 Villeneuve d'Ascq C\'edex (France)}
\email{Benoit.Fresse@math.univ-lille1.fr}
\urladdr{http://math.univ-lille1.fr/\~{ }fresse}
\subjclass[2000]{Primary: 55P48; Secondary: 18G55, 55S12}
\thanks{Research supported in part by grant ANR-06-JCJC-0042 ``OBTH''}
\begin{document}

\begin{abstract}
The goal of this paper is to prove a Koszul duality result for $E_n$-operads in differential graded modules over a ring.
The case of an $E_1$-operad, which is equivalent to the associative operad, is classical.
For $n>1$, the homology of an $E_n$-operad is identified with the $n$-Gerstenhaber operad
and forms another well known Koszul operad.
Our main theorem asserts that an operadic cobar construction on the dual cooperad of an $E_n$-operad $\EOp_n$
defines a cofibrant model of~$\EOp_n$.
This cofibrant model gives a realization at the chain level
of the minimal model of the $n$-Gerstenhaber operad
arising from Koszul duality.

Most models of $E_n$-operads in differential graded modules come in nested sequences $\EOp_1\subset\EOp_2\subset\cdots\subset\EOp_{\infty}$
homotopically equivalent to the sequence of the chain operads of little cubes.
In our main theorem, we also define a model of the operad embeddings $\EOp_{n-1}\hookrightarrow\EOp_n$
at the level of cobar constructions.
\end{abstract}

\maketitle

\tableofcontents

\section*{Introduction}

The bar duality of operads~\cite{GetzlerJones,GinzburgKapranov} shows that any augmented differential graded operad $\POp$
has a quasi-free model given by a cobar construction $\BOp^c(\DOp)$
on a cooperad $\DOp$ associated to $\POp$.
The notion of a Koszul operad, introduced in~\cite{GinzburgKapranov},
refers to certain good operads $\POp$
together with a minimal model $\BOp^c(\KOp(\POp))\xrightarrow{\sim}\POp$
such that $\KOp(\POp)$ is a cooperad defined by an explicit presentation by generators and relations
which is in a sense dual to a presentation of $\POp$.
The cooperad $\KOp(\POp)$ is called the Koszul dual of $\POp$
and determines the homology of all cooperads $\DOp$
such that $\BOp^c(\DOp)\xrightarrow{\sim}\POp$.
The operad of commutative algebras $\COp$,
which has a desuspension of the cooperad of Lie coalgebras
as a Koszul dual cooperad $\KOp(\COp) = \Lambda^{-1}\LOp^{\vee}$,
gives a classical example of a Koszul operad.
The theory of Koszul operads is established in a characteristic zero setting
in the original reference~\cite{GinzburgKapranov},
but we prove in~\cite{FressePartitions} that the notion of a Koszul operad has a suitable generalization
so that the above assertions make sense in any category of modules over a ring.

\medskip
In this work,
we determine quasi-free models of the form $\BOp^c(\DOp)$
for $E_n$-operads in differential graded modules,
where an $E_n$-operad refers to an operad weakly-equivalent to the chain operad of Boardman-Vogt little $n$-cubes.
The obtained operads $\BOp^c(\DOp_n)$ are cofibrant objects of the model category of operads,
and hence define cofibrant models of $E_n$-operads,
for any choice of ground ring~$\kk$
(for our purpose, we can take $\kk = \ZZ$).

The topological little cubes operads
have been introduced at the origin of the theory of operads in homotopy theory~\cite{BoardmanVogt,May}.
Since then,
many actions of $E_n$-operads have been discovered in algebra
and the idea is now well established that the notion of an $E_n$-operad gives the right device
to understand the degree of commutativity
of a multiplicative structure.
To give only one reference,
we cite~\cite{Kontsevich}
for a comprehensive account of a program which connects the homotopy of $E_n$-operads
to topological field theory and motivic Galois groups.
These developments give strong motivation for understanding the homotopy of cofibrant models of $E_n$-operads.
In fact,
the work of E. Getzler and J. Jones~\cite{GetzlerJones},
at the beginning of the theory of Koszul operads,
included a first attempt to settle this question.
A specific motivating problem, which we plan to study in subsequent work,
comes from the relationship, conjectured in~\cite{Kontsevich},
between the Grothendieck-Teichm\"uller group and the groups of homotopy automorphisms of $E_n$-operads,
whose proper definition in homotopy theory involves the choice of a cofibrant model.

\medskip
According to~\cite{Cohen},
the homology of the operad of little $n$-cubes $\COp_n$, and hence of any $E_n$-operad $\EOp_n$,
forms an operad in graded modules isomorphic to the operad of $n$-Gerstenhaber algebras $\GOp_n$ for $n>1$,
to the associative operad $\AOp$ for $n=1$.
The operads $\GOp_n$ are Koszul (and so does the associative operad $\AOp$),
and their dual $\KOp(\GOp_n) = \Lambda^{-n}\GOp_n^{\vee}$
is an operadic $n$-fold desuspension of the cooperad $\GOp_n^{\vee}$
dual to $\GOp_n$ in $\ZZ$-modules (we apply our work~\cite{FressePartitions} to give a sense to this statement
in the context of $\ZZ$-modules).
This duality result amounts to the existence of a weak-equivalence
$\epsilon: \BOp^c(\Lambda^{-n}\GOp_n^{\vee})\xrightarrow{\sim}\GOp_n$
in the category of differential graded operads,
where $\GOp_n$ is viewed as a differential graded operad equipped with a trivial differential.

Our first purpose is to prove that an $E_n$-operad $\EOp_n$
has a cofibrant model of the form $\BOp^c(\Lambda^{-n}\EOp_n^{\vee})$,
where $\DOp = \Lambda^{-n}\EOp_n^{\vee}$ is an operadic $n$-fold desuspension
of the cooperad $\EOp_n^{\vee}$
dual to $\EOp_n$ in $\ZZ$-modules,
so that we have a weak-equivalence $\epsilon: \BOp^c(\Lambda^{-n}\EOp_n^{\vee})\xrightarrow{\sim}\EOp_n$
which in a sense realizes the morphism $\epsilon: \BOp^c(\Lambda^{-n}\GOp_n^{\vee})\xrightarrow{\sim}\GOp_n$
when we take the homology of $\EOp_n$.

In the characteristic zero context,
our result can be deduced from the formality theorem of~\cite{Kontsevich}
which asserts that an $E_n$-operad $\EOp_n$ is weakly-equivalent to its homology $\GOp_n = H_*(\EOp_n)$.
In the context of $\ZZ$-modules, addressed in this paper,
we have to introduce new methods,
because the formality theorem does not hold anymore (for instance, just because $\EOp_n$ is no more formal as a $\Sigma_*$-object).

\medskip
Usual models of $E_n$-operads come in nested sequences
\begin{equation}\label{eqn:NestedEnOperads}
\EOp_1\subset\EOp_2\subset\cdots\subset\colim_n\EOp_n = \EOp,
\end{equation}
where $\EOp$ is an $E_\infty$-operad,
an operad weakly-equivalent to the operad of commutative algebras $\COp$.
In a previous work (with C. Berger),
we made explicit an operad morphism $\sigma: \EOp\rightarrow\Lambda^{-1}\EOp$ for a particular $E_\infty$-operad $\EOp$,
the Barratt-Eccles operad.
This morphism determines a natural action of $\EOp$
on the suspension of $\EOp$-algebras (where we consider the standard suspension of differential graded modules).
One observes that $\sigma$ maps $\EOp_n$ into $\Lambda^{-1}\EOp_{n-1}$
and yields an operad morphism $\sigma: \EOp_n\rightarrow\Lambda^{-1}\EOp_{n-1}$
for every $n>1$.
In the present article,
we prove that the morphisms $\sigma^*: \BOp^c(\Lambda^{1-n}\EOp_{n-1}^{\vee})\rightarrow \BOp^c(\Lambda^{-n}\EOp_{n}^{\vee})$
induced by $\sigma: \EOp_n\rightarrow\Lambda^{-1}\EOp_{n-1}$
on cobar constructions
fit a commutative diagram
\begin{equation}\label{NestedOperadEquivalences}
\vcenter{\xymatrix{ \BOp^c(\Lambda^{-1}\EOp_{1}^{\vee})\ar@{.>}[r]\ar[d]_{\sim} &
\cdots\ar@{.>}[r] &
\BOp^c(\Lambda^{1-n}\EOp_{n-1}^{\vee})\ar@{.>}[r]\ar@{.>}[d]_{\sim} &
\BOp^c(\Lambda^{-n}\EOp_n^{\vee})\ar@{.>}[r]\ar@{.>}[d]_{\sim} & \cdots \\
\EOp_1\ar@{^{(}->}[]!R+<4pt,0pt>;[r] &
\cdots\ar@{^{(}->}[]!R+<4pt,0pt>;[r] &
\EOp_{n-1}\ar@{^{(}->}[]!R+<4pt,0pt>;[r] &
\EOp_n\ar@{^{(}->}[]!R+<4pt,0pt>;[r] & \cdots }}
\end{equation}
and, hence, represent the operad embeddings $\iota: \EOp_{n-1}\hookrightarrow\EOp_n$
at the level of cofibrant models.

\medskip
This paper is not the first attempt to prove a Koszul duality result
for $E_n$-operads.
Besides~\cite{GetzlerJones},
results close to ours are stated in~\cite{PoHu}.
But several relations used in that reference (along~\cite[\S\S 2-3]{PoHu})
seem valid in homotopy categories only
and this is not enough for constructions of the genuine Koszul duality.

\subsubsection*{Acknowledgements}
I am very grateful to Mike Mandell for a decisive observation on a result of~\cite{BergerFresse}
which gives the starting point of this work.

The results and ideas of this article have been announced in a talk given at the~``Third Arolla Conference on Algebraic Topology'',
held in Arolla (Switzerland), August 2008.
I would like to thank the organizers of this conference for the opportunity
of giving this first announcement.

\subsection*{Contents}
In the prologue,
we review the definition of the Barratt-Eccles operad, of the operadic cobar construction,
and we explain the statement of our main result.

A first step toward the definition of the weak-equivalences of~(\ref{NestedOperadEquivalences})
is the construction of morphisms toward the commutative operad:
\begin{equation}\label{CobarAugmentation}
\colim_n \BOp^c(\Lambda^{-n}\EOp_n^{\vee})\xrightarrow{\epsilon}\COp
\end{equation}
This step is carried out in~\S\ref{FirstStep}.

In the interlude~\S\ref{Interlude},
we revisit the definition of particular cell structures, called $\K$-structures in the paper,
which refine filtration~(\ref{eqn:NestedEnOperads}).
In~\S\ref{SecondStep},
we prove that the operads $\BOp^c(\Lambda^{-n}\EOp_n^{\vee})$
inherit such a $\K$-structure.
Any $E_\infty$-operad $\EOp$
is, by definition,
equipped with a weak-equivalence of operads $\EOp\xrightarrow{\sim}\COp$
and morphism~(\ref{CobarAugmentation})
admits a lifting to $\EOp$.
In~\S\ref{FinalStep},
we use the cell structures of~\S\S\ref{Interlude}-\ref{SecondStep}
for proving that the obtained morphism $\epsilon: \colim_n \BOp^c(\Lambda^{-n}\EOp_n^{\vee})\rightarrow\EOp$
restricts to a sequence of morphisms as in diagram~(\ref{NestedOperadEquivalences}).
Then we use the Koszul duality of the Gerstenhaber operads to conclude that these morphisms are weak-equivalences.

In the epilogue,
we explain applications of our results to the definition of cochain models of spheres $\bar{N}^*(S^m)$
in algebras over operads.

\subsection*{Conventions and background}
Throughout this paper, we deal with operads in the category of differential graded modules.

The ring of integers $\ZZ$
forms our ground ring.
For us a differential graded module (a dg-module for short) refers to a lower $\ZZ$-graded $\ZZ$-module $C$
equipped with a differential, usually denoted by $\delta: C\rightarrow C$,
that decreases gradings by $1$.
The category of dg-modules is denoted by $\C$.
We equip this category with its standard model structure for which a morphism is a weak-equivalence
if it induces an isomorphism in homology, a fibration if it is degreewise surjective.

We adopt conventions of~\cite{FresseCylinder}
and we refer to this article for a survey of the homotopy theory of operads in dg-modules
and further bibliographical references.
To help the reader,
we have included a short glossary of notation
at the end of the article.

We only consider operads $\POp$ such that $\POp(0) = 0$ and $\POp(1) = \ZZ$
-- we then say that the operad $\POp$ is connected.
We use the notation $\Op_1$ to refer to the category of connected operads.
Any connected operad $\POp$ is equipped with an augmentation $\epsilon: \POp\rightarrow I$
just given by the identity of $\ZZ$ in arity $r=1$.
We use the notation $\bar{\POp}$ to refer to the augmentation ideal of $\POp$.
We have obviously $\bar{\POp}(0) = \bar{\POp}(1) = 0$ and $\bar{\POp}(r) = \POp(r)$ for $r\geq 2$.

We apply an extension of the operadic Koszul duality of~\cite{GinzburgKapranov}
to connected operads over rings.
Our reference for this generalization is~\cite{FressePartitions}.

We also use the survey of~\cite{FresseCylinder},
where we study the generalization of the bar duality of operads in the context of unbounded dg-modules
since we deal with operads defined in that category of dg-modules.
The arguments of~\cite{FressePartitions} only work for non-negatively graded objects,
but this will be the case of the operads and cooperads
to which we apply the results of that reference.

\section*{Prologue: statement of the main result}\label{Prelude}
The purpose of this section is to review the definition of objects involved in the statement of our main theorem.
First,
we review the definition of the Barratt-Eccles operad,
the filtration of the Barratt-Eccles by $E_n$-operads,
and
we define the morphisms $\sigma^*: \Lambda^{1-n}\EOp_{n-1}^{\vee}\rightarrow\Lambda^{-n}\EOp_n^{\vee}$
which fit the upper row of diagram~(\ref{NestedOperadEquivalences})
in the introduction.
Then we recall briefly the definition of the operadic cobar construction
and we state our Koszul duality theorem,
the main objective of this work.

\subsection{The Barratt-Eccles operad and the suspension morphism}\label{Prelude:BarrattEccles}
The original Barratt-Eccles operad, defined in~\cite{BarrattEccles},
is an $E_\infty$-operad in simplicial sets
formed by the universal $\Sigma_r$-bundles $E\Sigma_r$,
where $\Sigma_r$ denotes the group of permutations of $\{1,\dots,r\}$.
Throughout this paper,
we use the dg-operad $\EOp$
defined by the normalized complexes
of this simplicial operad:
\begin{equation*}
\EOp(r) = N_*(E\Sigma_r).
\end{equation*}
But,
we take the convention, contrary to the standard definition,
that $\EOp$ forms a connected operad
so that $\EOp(0) = 0$.

The purpose of the next paragraphs is only to recall the main features of the operad $\EOp$
used in the statement of our main theorem.
For more details on this operad,
we refer to the paper~\cite{BergerFresse}
of which we take our conventions.

\subsubsection{The Barratt-Eccles operad}\label{Prelude:BarrattEccles:Definition}
Recall that an $E_\infty$-operad in dg-modules
is a dg-operad $\EOp$
together with a weak-equivalence $\epsilon: \EOp\xrightarrow{\sim}\COp$,
where $\COp$ is the operad of commutative algebras.

For the Barratt-Eccles operad:
\begin{itemize}
\item
The dg-module $\EOp(r)$ is spanned in degree $d$ by non-degenerate $d$-simplices of permutations $(w_0,\dots,w_d)$
-- a simplex $(w_0,\dots,w_d)$ is non-degenerate if we have $w_j \not= w_{j+1}$ for every $j$.
The differential of $\EOp(r)$ is given by the usual formula
\begin{equation*}
\delta(w_0,\dots,w_d) = \sum_{i=0}^{d} \pm(w_0,\dots,\widehat{w_i},\dots,w_d).
\end{equation*}
\item
The composition structure is yielded by an explicit substitution process on permutations
(the definition of this part of the structure not needed untill~\S\ref{SecondStep:OperadKStructure}
and we put off the corresponding recollections untill that section).
\item
The operad weak-equivalence $\epsilon: \EOp\xrightarrow{\sim}\COp$ toward the commutative operad $\COp$
is given by the standard chain augmentations $\epsilon: N_*(E\Sigma_r)\rightarrow\ZZ$,
which are defined by:
\begin{equation*}
\epsilon(w_0,\dots,w_d) = \begin{cases} 1, & \text{if $d=0$}, \\ 0, & \text{otherwise}. \end{cases}
\end{equation*}
\end{itemize}

\subsubsection{The degree $0$ part of the Barratt-Eccles operad}\label{Prelude:BarrattEccles:Degree0}
The Barratt-Eccles operad vanishes in degree $*<0$ and consists of finitely generated free $\ZZ$-modules
in degree $*\geq 0$.

The degree $0$ parts of the chain complexes $\EOp(r)$ form a suboperad of the Barratt-Eccles operad
such that $\EOp(r)_0 = \ZZ[\Sigma_r]$,
the free $\ZZ$-module generated by the permutations of $\{1,\dots,r\}$.
This suboperad can be identified with the operad of associative algebras $\AOp$
for which we also have $\AOp(r) = \ZZ[\Sigma_r]$.
Hence,
the Barratt-Eccles operad sits in a factorization
\begin{equation*}
\xymatrix{ \AOp\ar[dr]\ar[rr] && \COp \\
& \EOp\ar[ur]_{\sim} & }
\end{equation*}
of the morphism $\alpha: \AOp\rightarrow\COp$
which represents the usual embedding $\alpha^*: {}_{\COp}\C\hookrightarrow{}_{\AOp}\C$
from the category of commutative algebras~${}_{\COp}\C$
to the category of associative algebras~${}_{\AOp}\C$.

\subsubsection{The little cubes filtration of the Barratt-Eccles operad}\label{Prelude:BarrattEccles:LittleCubesFiltration}
The Barratt-Eccles chain operad has a filtration by suboperads $\EOp_n\subset\EOp$
which form a nested sequence of operads~(\ref{eqn:NestedEnOperads})
weakly-equivalent
to the sequence of the chain operads of little cubes.
The definition of this filtration, in the simplicial setting,
goes back to~\cite{Smith}.
The existence of weak-equivalences with little cubes operads is established in~\cite{BergerCell}.
For our purpose,
we recall briefly an explicit definition of the filtration
at the chain level.

Any permutation $w\in\Sigma_r$ is represented by the sequence of its values $(w(1),\dots,w(r))$.
For any pair $\{i,j\}\subset\{1,\dots,r\}$,
the notation $w|_{i j}$
refers to the permutation of $\{i,j\}$ formed by the occurrences of $\{i,j\}$
in the sequence $(w(1),\dots,w(r))$.
For instance,
we have $(3,1,2)|_{1 2} = (1,2)$, $(3,1,2)|_{1 3} = (3,1)$, $(3,1,2)|_{2 3} = (3,2)$.
For a simplex $\underline{w} = (w_0,\dots,w_d)$,
we denote by $\mu_{i j}(\underline{w})$
the number of variations in the sequence $\underline{w}|_{i j} = (w_0|_{i j},\dots,w_d|_{i j})$.
For instance, for the simplex $\underline{w} = (1 2 3,3 1 2,3 2 1)$,
we have
\begin{align*}
\underline{w}|_{1 2} = (1 2,1 2,2 1) & \Rightarrow\mu_{1 2}(\underline{w}) = 1, \\
\underline{w}|_{1 3} = (1 3,3 1,3 1) & \Rightarrow\mu_{1 3}(\underline{w}) = 1, \\
\underline{w}|_{2 3} = (2 3,3 2,3 2) & \Rightarrow\mu_{2 3}(\underline{w}) = 1.
\end{align*}
The dg-module $\EOp_n(r)$
is spanned by the simplices of permutations $\underline{w} = (w_0,\dots,w_d)$
such that $\mu_{i j}(\underline{w})<n$
for all pairs $\{i,j\}\subset\{1,\dots,r\}$.
For instance,
we have $(1 2 3,3 1 2,3 2 1)\in\EOp_2(3)$ since we observe that $\mu_{1 2}(\underline{w}) = \mu_{1 3}(\underline{w}) = \mu_{2 3}(\underline{w}) = 1$
for this simplex.

For a simplex $\underline{w} = (w_0,\dots,w_d)$,
we have clearly:
\begin{equation*}
(\mu_{i j}(\underline{w}) = 0\ (\forall i j))\Rightarrow(w_0 = w_1 = \dots = w_d).
\end{equation*}
Hence we obtain:

\begin{obsv}\label{Prelude:BarrattEccles:FirstFiltrationLayer}
The subcomplex $\EOp_1(r)\subset\EOp(r)$ reduces to the degree $0$ part of~$\EOp(r)$.
\end{obsv}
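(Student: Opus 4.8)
The plan is to unwind the definitions. By construction, $\EOp_1(r)$ is spanned by those non-degenerate simplices of permutations $\underline{w} = (w_0,\dots,w_d)$ with $\mu_{ij}(\underline{w}) < 1$ for every pair $\{i,j\}$, which is to say $\mu_{ij}(\underline{w}) = 0$ for all $i,j$ since $\mu_{ij}$ takes values in $\NN$. So first I would invoke the elementary implication displayed just before the statement, namely that $(\mu_{ij}(\underline{w}) = 0\ \forall ij)\Rightarrow(w_0 = \dots = w_d)$, to conclude that any generator of $\EOp_1(r)$ is a constant simplex $(w_0,\dots,w_0)$.

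Next I would use non-degeneracy: a simplex $(w_0,\dots,w_d)$ is non-degenerate precisely when $w_j \neq w_{j+1}$ for all $j$, so a constant simplex is non-degenerate only if it has length $d = 0$, i.e.\ it is a $0$-simplex $(w_0)$. Hence the spanning generators of $\EOp_1(r)$ all lie in degree $0$, which gives $\EOp_1(r) \subset \EOp(r)_0$. For the reverse inclusion, every $0$-simplex $(w_0)$ trivially satisfies $\mu_{ij} = 0 < 1$, so $\EOp(r)_0 \subset \EOp_1(r)$. Combining the two inclusions yields $\EOp_1(r) = \EOp(r)_0$ as claimed, and it remains only to note (as already recorded in~\S\ref{Prelude:BarrattEccles:Degree0}) that this is a subcomplex since the differential vanishes on degree $0$.

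There is essentially no obstacle here: the only subtlety is keeping straight that $\mu_{ij} < n$ with $n = 1$ forces $\mu_{ij} = 0$ rather than merely being small, and that it is the non-degeneracy convention (not the filtration condition by itself) that rules out the positive-degree constant simplices. Both points are immediate from the definitions recalled above, so the argument is a one-line verification once the right implications are chained together.
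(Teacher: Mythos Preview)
Your proposal is correct and follows exactly the reasoning the paper intends: the paper states the implication $(\mu_{ij}(\underline{w}) = 0\ \forall ij)\Rightarrow(w_0 = \dots = w_d)$ immediately before the observation and derives it with a bare ``Hence we obtain,'' leaving implicit precisely the non-degeneracy step and reverse inclusion that you spell out.
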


From which we deduce:

\begin{prop}\label{Prelude:BarrattEccles:FirstLayerReduction}
The suboperad $\EOp_1\subset\EOp$ is identified with the operad of associative algebras $\AOp$.
\end{prop}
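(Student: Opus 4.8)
The plan is to read off the statement directly from Observation~\ref{Prelude:BarrattEccles:FirstFiltrationLayer} and from the description of the degree~$0$ part of the Barratt-Eccles operad recalled in~\S\ref{Prelude:BarrattEccles:Degree0}.

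By Observation~\ref{Prelude:BarrattEccles:FirstFiltrationLayer}, for each arity $r$ we have an identity of sub-$\ZZ$-modules $\EOp_1(r) = \EOp(r)_0$ inside $\EOp(r)$, so that $\EOp_1$ has the same underlying sequence of $\Sigma_*$-modules as the degree~$0$ part of $\EOp$. Since operadic composition products are degree-preserving, this degree~$0$ part is automatically stable under the composition products of $\EOp$ and forms the suboperad which, according to~\S\ref{Prelude:BarrattEccles:Degree0}, is identified with the associative operad $\AOp$ via the equalities $\AOp(r) = \ZZ[\Sigma_r] = \EOp(r)_0$. The operad structure of $\EOp_1$ being, by definition, induced by restriction from $\EOp$, it follows that $\EOp_1$ coincides with this degree~$0$ suboperad, hence with $\AOp$, as a suboperad of $\EOp$. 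One finally remarks that $\EOp_1$ carries the trivial differential, since it is concentrated in degree~$0$ while the differential of $\EOp$ lowers degrees by~$1$; this matches the trivial differential of $\AOp$, so the identification is one of dg-operads.

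I do not anticipate any real difficulty: once the filtration layers $\EOp_n$ are granted to be suboperads of $\EOp$ (as recalled in~\S\ref{Prelude:BarrattEccles:LittleCubesFiltration}), the proposition is a formal consequence of the two recollections above. The only point that deserves an explicit word is that the arity-wise identifications $\EOp_1(r) = \AOp(r)$ are compatible with the composition products, and this is immediate from the fact that operadic composition preserves the internal grading, so that restricting the composition of $\EOp$ to degree~$0$ gives back exactly the composition of $\AOp$ described in~\S\ref{Prelude:BarrattEccles:Degree0}.
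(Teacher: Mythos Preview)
Your proof is correct and follows exactly the route the paper takes: the proposition is stated as an immediate deduction from Observation~\ref{Prelude:BarrattEccles:FirstFiltrationLayer} together with the identification of the degree~$0$ part of~$\EOp$ with~$\AOp$ in~\S\ref{Prelude:BarrattEccles:Degree0}, and the paper gives no further argument. Your added remarks on degree-preservation of composition and triviality of the differential simply spell out what the paper leaves implicit.
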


\subsubsection{The endomorphism operad of normalized chain complexes}\label{Prelude:BarrattEccles:EndomorphismOperads}
Recall that the endomorphism operad of a dg-module $C$
is the operad $\End_C$
such that
\begin{equation*}
\End_C(n) = \Hom_{\C}(C^{\otimes n},C),
\end{equation*}
the dg-module of dg-module homomorphisms $f: C^{\otimes n}\rightarrow C$.
The endomorphism operad $\End_C$ is the universal operad acting on $C$.

The endomorphism operad of a functor $C: \X\rightarrow\C$
on a category $\X$
is defined by the ends
\begin{equation*}
\End_{C(-)}(r) = \int_{X\in\X}\Hom_{\C}(C(X)^{\otimes r},C(X))
\end{equation*}
and forms the universal operad acting on the dg-modules $C(X)\in\C$
functorially in~$X\in\X$.
This definition of the endomorphism~$\End_{C(-)}$
makes sense when the functor $C: \X\rightarrow\C$
is defined on a small category $\X$.

We consider the functor of reduced normalized cochains $\bar{N}^*: \Simp^{op}\rightarrow\C$
on the category of simplicial sets $\Simp$.
We can extend the definition of the endomorphism operad to this functor $\bar{N}^*(X)$
though the category of simplicial sets is not small,
because any simplex $x\in X_n$ in a simplicial set $X$
is represented by a morphism $\sigma_x: \Delta^n\rightarrow X$,
where $\Delta^n$ denotes the standard model of the $n$-simplex in $\Simp$.

The standard multiplicative structure of the cochain complex $\bar{N}^*(X)$, defined by the Alexander-Whitney diagonal,
can be represented by a morphism
\begin{equation*}
\nabla_{a}: \AOp\rightarrow\End_{\bar{N}^*(-)}
\end{equation*}
on the associative operad $\AOp$.
The next result gives a finer structure on $\bar{N}^*(X)$:

\begin{thm}[C. Berger, B. Fresse, see~\cite{BergerFresse}]\label{Prelude:BarrattEccles:CochainMultiplications}
The operad morphism $\nabla_{a}: \AOp\rightarrow\End_{\bar{N}^*(-)}$
admits an extension to the Barratt-Eccles operad
\begin{equation*}
\xymatrix{ \AOp\ar[d]\ar[r]^(0.3){\nabla_{a}} & \End_{\bar{N}^*(-)} \\ \EOp\ar@{.>}[ur]_{\exists\nabla_{\epsilon}} & }
\end{equation*}
so that the reduced normalized cochain complex $\bar{N}^*(X)$
of any simplicial set $X$
inherits the structure of an algebra over $\EOp$.
\end{thm}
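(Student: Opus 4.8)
The plan is to build the extension $\nabla_\epsilon\colon\EOp\to\End_{\bar N^*(-)}$ by exploiting the freeness properties of the Barratt--Eccles operad together with the acyclicity of the spaces $E\Sigma_r$. First I would recall that, for each arity $r$, the dg-module $\EOp(r)=N_*(E\Sigma_r)$ is a complex of free $\ZZ[\Sigma_r]$-modules which is acyclic in positive degrees, and in degree $0$ is precisely $\ZZ[\Sigma_r]=\AOp(r)$ — this is Observation~\ref{Prelude:BarrattEccles:FirstFiltrationLayer} together with \S\ref{Prelude:BarrattEccles:Degree0}. Dually, the target $\End_{\bar N^*(-)}(r)=\int_{X}\Hom_\C(\bar N^*(X)^{\otimes r},\bar N^*(X))$ is a dg-module whose homology in each arity can be computed, and the key input is that $\bar N^*(\Delta^n)$ is contractible, so that $\Hom_\C(\bar N^*(-)^{\otimes r},\bar N^*(-))$ is acyclic and concentrated (up to homotopy) on its degree-$0$ homology, which receives the Alexander--Whitney class $\nabla_a$.

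The construction then proceeds by induction on the skeletal (degree) filtration of $\EOp$. One sets $\nabla_\epsilon|_{\EOp_0}=\nabla_\epsilon|_{\AOp}=\nabla_a$, which is legitimate since the degree-$0$ part of $\EOp$ is the suboperad $\AOp$ by \S\ref{Prelude:BarrattEccles:Degree0}, and the hypothesis says we are extending exactly this morphism. Having defined $\nabla_\epsilon$ on the suboperad generated by simplices of degree $<d$, one must extend it over the new generators in degree $d$. Writing $e$ for such a generating simplex (a non-degenerate $d$-simplex of permutations) in $\EOp(r)$, compatibility with the differential and with the operad composition forces the value $\nabla_\epsilon(e)$ to be a chain in $\End_{\bar N^*(-)}(r)_d$ whose boundary is the already-constructed element $\nabla_\epsilon(\delta e)$; the latter is a cycle by induction, and it is a \emph{boundary} because the relevant complex $\Hom_\C(\bar N^*(-)^{\otimes r},\bar N^*(-))$ is acyclic in positive degrees (using contractibility of $\bar N^*(\Delta^n)$ and that ends preserve this). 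One must also arrange $\Sigma_r$-equivariance, which is possible because $\EOp(r)$ is $\Sigma_r$-free, so one only chooses $\nabla_\epsilon$ on orbit representatives and extends equivariantly; and one must respect the operad composition relations, which is where a lifting-against-an-acyclic-fibration argument in the category of operads, or equivalently a small-object/obstruction-theory argument using the cofibrancy of $\EOp$ relative to $\AOp$, does the job.

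The cleanest formulation is to observe that $\EOp$ is a cofibrant operad under $\AOp$ (the inclusion $\AOp=\EOp_0\hookrightarrow\EOp$ is a cofibration of operads, since $\EOp(r)$ is built from $\AOp(r)$ by attaching free $\Sigma_r$-cells), and that the map $\End_{\bar N^*(-)}\to\ast$, or more precisely a suitable map exhibiting $\nabla_a$ as landing in a contractible piece, is an acyclic fibration in the arity-wise model structure — then the lift $\nabla_\epsilon$ exists by the lifting axiom. Concretely one checks that $\Hom_\C(\bar N^*(X)^{\otimes r},\bar N^*(X))$ has the homotopy type dictated by $\bar N^*$ being an $E_\infty$-algebra functor, but for the bare \emph{existence} of the extension only acyclicity in positive degrees of the endomorphism dg-modules is needed, which follows from $\bar N^*(\Delta^n)\simeq 0$.

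The main obstacle I expect is the compatibility with the operad composition structure while performing the inductive extension: choosing $\nabla_\epsilon(e)$ to kill the boundary $\nabla_\epsilon(\delta e)$ is easy by acyclicity, but these choices must be made coherently across all arities so that $\nabla_\epsilon$ respects the partial composition products $\circ_i$ of the Barratt--Eccles operad. This is precisely the point handled by the model-category lifting argument (or, in down-to-earth terms, by an obstruction theory over the cellular filtration of $\EOp$ by free operadic cells), and it is the reason one wants the explicit cell structure of $\EOp$ from~\cite{BergerFresse}; the actual combinatorial formula for $\nabla_\epsilon$ on the Barratt--Eccles generators — a surjection/interval-cut operad recipe — is what \cite{BergerFresse} makes explicit, but for the statement as phrased here the existence via cofibrancy plus acyclicity suffices.
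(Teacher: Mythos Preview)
The paper does not give its own proof of this theorem; it is stated as a recollection from~\cite{BergerFresse}, with the surrounding text noting that the existence of an $E_\infty$-structure on cochains was first obtained by abstract arguments in~\cite{HinichSchechtman} and made explicit for the Barratt--Eccles operad in~\cite{BergerFresse}. The actual argument in~\cite{BergerFresse} proceeds by an explicit combinatorial construction via the surjection operad and interval-cut formulas, not by an abstract lifting.

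Your proposed argument has a genuine gap. The crucial claim --- that the end $\int_X\Hom_\C(\bar N^*(X)^{\otimes r},\bar N^*(X))$ is acyclic in positive degrees because each $\bar N^*(\Delta^n)$ is contractible, ``and ends preserve this'' --- is false. While each individual complex $\Hom_\C(\bar N^*(\Delta^n)^{\otimes r},\bar N^*(\Delta^n))$ is indeed acyclic, the end over the simplex category does \emph{not} preserve acyclicity: limits are not exact. Concretely, this end computes the complex of \emph{natural} multivariable cochain operations, and that complex has rich homology in positive degrees --- the cup-$i$ products are degree-$i$ natural operations that are not nullhomotopic as natural transformations (their nontriviality is exactly what produces Steenrod operations). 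Hence the obstruction cycles in your inductive step have no reason to be boundaries, and the lifting-against-an-acyclic-fibration formulation fails outright: $\End_{\bar N^*(-)}\to *$ is not an acyclic fibration, and $\AOp\hookrightarrow\EOp$ is a cofibration but not a weak equivalence (since $H_*(\AOp(r))=\ZZ[\Sigma_r]$ while $H_*(\EOp(r))=\ZZ$), so neither half of the lifting axiom applies. The extension $\nabla_\epsilon$ genuinely requires the explicit interval-cut construction of~\cite{BergerFresse} or a different abstract mechanism (e.g.\ the Eilenberg--Zilber/lax symmetric monoidal approach underlying~\cite{HinichSchechtman}); contrary to your final sentence, existence does not follow from cofibrancy plus acyclicity.
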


The existence of an $E_\infty$-structure on normalized cochain complexes of simplicial sets
is proved by general arguments in~\cite{HinichSchechtman}.
Explicit constructions of such $E_\infty$-structures are given in~\cite{BergerFresse,McClureSmith}.
The action of the Barratt-Eccles operad, used in this paper, is only defined in~\cite{BergerFresse}.

By theorems of M. Mandell~\cite{Mandell,MandellIntegral} (see also~\cite{SmirnovChain}),
the $E_\infty$-algebra $\bar{N}^*(X)$
is sufficient to determine the homotopy type of $X$
provided that $X$
satisfies standard finiteness and completeness assumptions.
This result was a first motivation
for the constructions of~\cite{BergerFresse,McClureSmith}.

\subsubsection{The action of the Barratt-Eccles operad on $S^1$}\label{Prelude:BarrattEccles:CircleCochainMultiplications}
For any fixed simplicial set $X\in\Simp$,
we have a natural operad morphism $\ev_X: \End_{\bar{N}^*(-)}\rightarrow\End_{\bar{N}^*(X)}$
which arises from the definition of $\End_{\bar{N}^*(-)}$
as an end.
The action of the Barratt-Eccles operad on the cochain complex of a fixed space $X$
is determined by the morphism $\nabla_X: \EOp\rightarrow\End_{\bar{N}^*(X)}$
defined by the composite
\begin{equation*}
\EOp\xrightarrow{\nabla_{\epsilon}}\End_{\bar{N}^*(-)}\xrightarrow{\ev_X}\End_{\bar{N}^*(X)},
\end{equation*}
with the morphism $\nabla_{\epsilon}$
of Theorem~\ref{Prelude:BarrattEccles:CochainMultiplications}

For the standard simplicial model of the circle $S^1 = \Delta^1/\partial\Delta^1$,
we have $\bar{N}^*(S^1) = \ZZ[-1]$,
where $\ZZ[d]$ denotes the free $\ZZ$-module of rank $1$
viewed as a dg-module concentrated in degree $d$ (with respect to lower gradings).
Hence, we have
\begin{equation*}
\End_{\bar{N}^*(S^1)}(r) = \Hom_{\C}(\ZZ[-1]^{\otimes r},\ZZ[-1]) = \ZZ[r-1],
\end{equation*}
and the morphism $\nabla_{S^1}: \EOp(r)\rightarrow\End_{\bar{N}^*(S^1)}(r)$
is determined by a cochain $\sgn: \EOp(r)\rightarrow\ZZ$
of degree $r-1$.

According to~\cite{BergerFresse},
the cochains $\sgn: \EOp(r)\rightarrow\ZZ$, $r\in\NN$,
are given by the formula:
\begin{equation*}
\sgn(w_0,\dots,w_d) = \begin{cases}
\pm 1, & \text{if $(w_0(1),\dots,w_d(1))$}\\ & \qquad\text{is a permutation}\\ & \qquad\text{of $(1,\dots,r)$},\\
0, & \text{otherwise}. \end{cases}
\end{equation*}
The sign $\pm$ is determined by the signature of the permutation $(w_0(1),\dots,w_d(1))$.

Note: this definition gives a cochain of degree $r-1$ (as required)
since $(w_0(1),\dots,w_d(1))$ has no chance to form a permutation of $(1,\dots,r)$
when $d\not=r-1$.

\subsubsection{Suspension morphisms}\label{Prelude:BarrattEccles:SuspensionMorphisms}
Recall that the operadic suspension of an operad $\POp$
is an operad $\Lambda\POp$
such that:
\begin{equation*}
\Lambda\POp(r) = \Sigma^{1-r}\POp(r)^{\pm},
\end{equation*}
where $\Sigma$ refers to the standard suspension of the category of dg-modules
and the exponent $\pm$ refers to a twist of the $\Sigma_r$-action
by the signature of permutations
(see~\cite[\S 1.3]{GetzlerJones}).

According to~\cite{BergerFresse},
the cap products with the cochains of~\S\ref{Prelude:BarrattEccles:CircleCochainMultiplications}
\begin{equation*}
\sgn\cap(w_0,\dots,w_d) = \sgn(w_0,\dots,w_{r-1})\cdot(w_{r-1},\dots,w_d)
\end{equation*}
define an operad morphism
\begin{equation*}
\sigma: \EOp\rightarrow\Lambda^{-1}\EOp
\end{equation*}
that we call the suspension morphism (of the Barratt-Eccles chain operad).

\medskip
The next observation has been made to the author by M. Mandell (in Spring~2002):

\begin{obsv}\label{Prelude:BarrattEccles:SuspensionLittleCubesLayers}
The suspension morphism $\sigma: \EOp\rightarrow\Lambda^{-1}\EOp$
admits factorizations
\begin{equation*}
\xymatrix{ \EOp_1\ar@{^{(}->}[]!R+<4pt,0pt>;[r]\ar@{.>}[d]^{\sigma} & \EOp_2\ar@{^{(}->}[]!R+<4pt,0pt>;[r]\ar@{.>}[d]^{\sigma} &
\cdots\ar@{^{(}->}[]!R+<4pt,0pt>;[r] &
\EOp_{n}\ar@{^{(}->}[]!R+<4pt,0pt>;[r]\ar@{.>}[d]^{\sigma} &
\cdots\ar@{^{(}->}[]!R+<4pt,0pt>;[r] &
\EOp\ar[d]^{\sigma} \\
\Lambda^{-1}\IOp\ar@{^{(}->}[]!R+<4pt,0pt>;[r] & \Lambda^{-1}\EOp_1\ar@{^{(}->}[]!R+<4pt,0pt>;[r] &
\cdots\ar@{^{(}->}[]!R+<4pt,0pt>;[r] &
\Lambda^{-1}\EOp_{n-1}\ar@{^{(}->}[]!R+<4pt,0pt>;[r] &
\cdots\ar@{^{(}->}[]!R+<4pt,0pt>;[r] &
\Lambda^{-1}\EOp },
\end{equation*}
where $\IOp$ denotes the composition unit of the category of $\Sigma_*$-objects.
\end{obsv}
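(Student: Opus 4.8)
The plan is to check the factorizations directly on the simplicial bases, using the combinatorial description of the little cubes filtration via the variation numbers $\mu_{ij}$ together with the explicit cap product formula for $\sigma$ from~\S\ref{Prelude:BarrattEccles:SuspensionMorphisms}. Two preliminary reductions will simplify the task. First, since $\sigma:\EOp\rightarrow\Lambda^{-1}\EOp$ is already an operad morphism, and since $\EOp_n\subset\EOp$ and $\Lambda^{-1}\EOp_{n-1}\subset\Lambda^{-1}\EOp$ are suboperads (operadic suspension sends a suboperad inclusion to a suboperad inclusion), it suffices to prove that $\sigma$ carries the dg-module $\EOp_n(r)$ into $\Lambda^{-1}\EOp_{n-1}(r)$ for every arity $r$; the operad morphisms $\sigma:\EOp_n\rightarrow\Lambda^{-1}\EOp_{n-1}$ and their compatibility with the horizontal inclusions of the diagram then follow automatically, everything being a (co)restriction of the single morphism $\sigma$. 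Second, I would record that the condition defining $\EOp_n(r)$ is vacuous when $r\leq 1$, so that $\EOp_0(r)$ reduces to $\EOp(1)=\ZZ$ for $r=1$ and vanishes for $r\geq 2$; in other words $\EOp_0=\IOp$, which identifies the target of the leftmost column, and all columns can then be treated uniformly.

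The key step is the following computation. Let $\underline w=(w_0,\dots,w_d)$ be a non-degenerate simplex spanning $\EOp_n(r)$, so that $\mu_{ij}(\underline w)<n$ for every pair $\{i,j\}\subset\{1,\dots,r\}$. By the formula from~\S\ref{Prelude:BarrattEccles:SuspensionMorphisms}, $\sigma(\underline w)=\sgn(w_0,\dots,w_{r-1})\cdot(w_{r-1},\dots,w_d)$, and this vanishes unless $d\geq r-1$ and $(w_0(1),\dots,w_{r-1}(1))$ is a permutation of $(1,\dots,r)$; when it vanishes there is nothing to prove, so I may assume $\sgn(w_0,\dots,w_{r-1})=\pm 1$ and set $\underline w'=(w_{r-1},\dots,w_d)$. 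It then remains to show $\underline w'\in\EOp_{n-1}(r)$. Splitting the sequence $\underline w|_{ij}$ at the index $r-1$ yields, for each pair $\{i,j\}$,
\begin{equation*}
\mu_{ij}(\underline w)=\bigl(\text{number of variations of }(w_0|_{ij},\dots,w_{r-1}|_{ij})\bigr)+\mu_{ij}(\underline w'),
\end{equation*}
so it is enough to see that the prefix $(w_0|_{ij},\dots,w_{r-1}|_{ij})$ is non-constant. Since $(w_0(1),\dots,w_{r-1}(1))$ runs through all of $\{1,\dots,r\}$, there are indices $a\neq b$ in $\{0,\dots,r-1\}$ with $w_a(1)=i$ and $w_b(1)=j$; then $i$ precedes $j$ in the value sequence of $w_a$ while $j$ precedes $i$ in that of $w_b$, so $w_a|_{ij}=(i,j)\neq(j,i)=w_b|_{ij}$. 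Hence the prefix carries at least one variation, whence $\mu_{ij}(\underline w')\leq\mu_{ij}(\underline w)-1\leq n-2<n-1$ for every pair, i.e. $\underline w'\in\EOp_{n-1}(r)$ and $\sigma(\underline w)\in\Lambda^{-1}\EOp_{n-1}(r)$, as needed.

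What is left is routine: the low arities $r\leq 1$, where $\sigma$ is the identity of $\ZZ$ in arity $1$ and there is nothing in arity $0$, and the verification (already noted above) that all the arrows of the diagram are restrictions of $\sigma$ or of the inclusion $\Lambda^{-1}\EOp_{n-1}\subset\Lambda^{-1}\EOp_n$. I do not expect a real obstacle here: the entire content of the observation is the elementary inequality $\mu_{ij}(\underline w')\leq\mu_{ij}(\underline w)-1$, which reflects that capping with $\sgn$ discards precisely an initial segment of the simplex along which, for every pair $\{i,j\}$, the restriction to $\{i,j\}$ has already flipped between $(i,j)$ and $(j,i)$.
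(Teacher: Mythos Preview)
Your argument is correct and is precisely the ``straightforward inspection'' the paper alludes to; indeed, the key inequality $\mu_{ij}(w_{r-1},\dots,w_d)\leq\mu_{ij}(w_0,\dots,w_d)-1$ whenever $\sgn(w_0,\dots,w_{r-1})\neq 0$ is stated and used verbatim later in the paper (in the proof of Lemma~\ref{SecondStep:DualModuleKStructure:DualSuspensionFiltration}). Your reductions and the handling of the $n=1$ column via $\EOp_0=\IOp$ are also in order.
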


This observation follows from a straightforward inspection
of the definition of the filtration in~\S\ref{Prelude:BarrattEccles:LittleCubesFiltration}.

In a sense,
the actual goal of this paper is to give an interpretation of the morphisms $\sigma: \EOp_n\rightarrow\Lambda^{-1}\EOp_{n-1}$
in terms of the homotopy of $E_n$-operads.

\subsection{Operadic cobar constructions and homological Koszul duality}\label{Prelude:KoszulDuality}
Throughout the paper,
we only consider connected cooperads $\DOp$ which,
just like connected operads,
have $\DOp(0) = 0$ and $\DOp(1) = \ZZ$.
According to~\cite[\S\S 3.1-3.3]{FresseCylinder},
this assumptions prevent all difficulties arising in the duality between operads and cooperads.

A connected cooperad $\DOp$ is naturally coaugmented over the composition unit of $\Sigma_*$-objects
and the coaugmentation coideal of $\DOp$
is identified with the $\Sigma_*$-object such that
\begin{equation*}
\bar{\DOp}(r) = \begin{cases} 0, & \text{if $r = 0,1$}, \\ \DOp(r), & \text{otherwise}. \end{cases}
\end{equation*}

The operadic cobar construction is a functor which maps a connected cooperad~$\DOp$ to a quasi-free operad
\begin{equation*}
\BOp^c(\DOp) = (\FOp(\Sigma^{-1}\bar{\DOp}),\partial),
\end{equation*}
where
\begin{equation*}
\partial: \FOp(\Sigma^{-1}\bar{\DOp})\rightarrow\FOp(\Sigma^{-1}\bar{\DOp})
\end{equation*}
is an operad derivation determined by the cooperad structure of~$\DOp$.
The internal differential of the cobar construction $\BOp^c(\DOp)$
is given by the addition of the derivation $\partial$
to the natural internal differential of the free operad $\FOp(\Sigma^{-1}\bar{\DOp})$.
The operadic cobar construction of cooperads is defined in the article~\cite{GetzlerJones}
in a characteristic zero setting.
The application of the cobar construction
to cooperads defined over a ring
is studied in the papers~\cite{FressePartitions,FresseCylinder}
to which we refer in our next recollections.

The goal of this subsection is to study the cobar construction of cooperads $\DOp$
whose homology $H_*(\DOp)$
forms a binary Koszul cooperad in the usual sense (we say that $\DOp$ is homologically Koszul).
The natural idea is to apply a spectral sequence $E^r\Rightarrow H_*(\BOp^c(\DOp))$
such that $E^1 = \BOp^c(H_*(\DOp))$.

For the moment,
we assume that $\DOp$ is any (connected) cooperad.
In the next subsection,
we apply our analysis to $E_n$-operads $\EOp_n$
and their dual cooperads $\DOp = \Lambda^{-n}\EOp_n^{\vee}$.

To begin with,
we review briefly the definition of the free operad $\FOp(M)$ associated to a $\Sigma_*$-object $M$ (in~\S\ref{Prelude:KoszulDuality:FreeOperad})
and of the cobar construction $\BOp^c(\DOp) = (\FOp(\Sigma^{-1}\bar{\DOp}),\partial)$ (in~\S\ref{Prelude:KoszulDuality:CobarConstruction}).
For a more detailed account of these definitions,
we refer to~\cite[\S 1.2]{FresseCylinder} (for the free operad)
and~\cite[\S 3.6]{FresseCylinder} (for the cobar construction).
In~\S\ref{Interlude:GraphOperadModel:FreeKOperad},
we extend the construction
of the free operad to operads equipped with a particular cell structure.

\subsubsection{The free operad}\label{Prelude:KoszulDuality:FreeOperad}
The goal of this paragraph is to review the definition of the free operad $\FOp(M)$.
In the sequel,
we only deal with free operads $\FOp(M)$ associated to $\Sigma_*$-objects $M$ such that $M(0) = M(1) = 0$.
This condition implies that $\FOp(M)$ is connected.
Therefore,
in the course of these recollections,
we also study the implications of the assumption $M(0) = M(1) = 0$
on the structure of the free operad $\FOp(M)$.

First,
we form the category $\Theta(r)$ of abstract oriented trees $\tau$
with $1$ outgoing edge and $r$ ingoing edges
indexed by $1,\dots,r$ (see the representations of Figure~\ref{fig:TreeComposite}).
The morphisms of $\Theta(r)$
are the isomorphisms of oriented trees $f: \sigma\rightarrow\tau$
preserving the indexing of ingoing edges.
The sequence of categories $\Theta(r)$, $r\in\NN$,
inherits an operad structure:
\begin{itemize}
\item
the permutations $w\in\Sigma_r$
act on $\Theta(r)$
by reindexing the ingoing edges of trees;
\item
the operadic composite $\sigma\circ_e\tau\in\Theta(s+t-1)$
of a tree $\tau\in\Theta(t)$ at the $e$th input of $\sigma\in\Theta(s)$
is obtained by plugging the outgoing edge of $\tau$
in the $e$th ingoing edge of $\sigma$ (see Figure~\ref{fig:TreeComposite}).
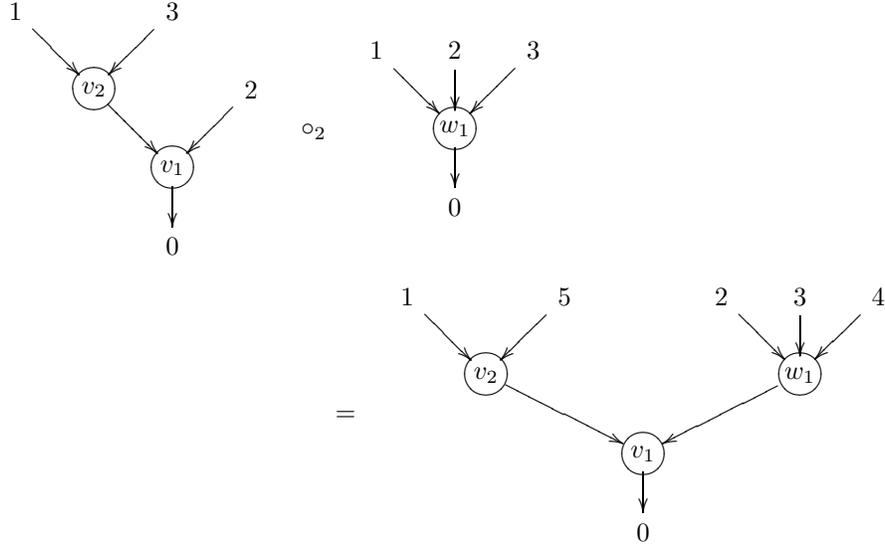
\begin{figure}[t]
\begin{multline*}
\vcenter{\xymatrix@M=0pt@!R=2mm@!C=2mm{ *+<3mm>{1}\ar[dr] && *+<3mm>{3}\ar[dl] & \\
& *+<3mm>[o][F]{v_2}\ar[dr] && *+<3mm>{2}\ar[dl] \\
&& *+<3mm>[o][F]{v_1}\ar[d] & \\
&& *+<3mm>{0} & }}
\quad\circ_2
\quad\vcenter{\xymatrix@M=0pt@!R=2mm@!C=2mm{ *+<3mm>{1}\ar[dr] & *+<3mm>{2}\ar[d] & *+<3mm>{3}\ar[dl] \\
& *+<3mm>[o][F]{w_1}\ar[d] & \\
& *+<3mm>{0} & }} \\
= \quad\vcenter{\xymatrix@M=0pt@!R=2mm@!C=2mm{ *+<3mm>{1}\ar[dr] && *+<3mm>{5}\ar[dl] && *+<3mm>{2}\ar[dr] & *+<3mm>{3}\ar[d] & *+<3mm>{4}\ar[dl] \\
& *+<3mm>[o][F]{v_2}\ar[drr] &&&& *+<3mm>[o][F]{w_1}\ar[dll] & \\
&&& *+<3mm>[o][F]{v_1}\ar[d] &&& \\
&&& *+<3mm>{0} &&& }}
\end{multline*}
\caption{The composition of trees. To produce a tree with $s+t-1$ inputs, we shift
the inputs of $\tau$ by $j\mapsto e+j-1$
and the inputs $i>e$ of $\sigma$ by $i\mapsto i+t-1$.}\label{fig:TreeComposite}\end{figure}
\end{itemize}

Denote the vertex set of a tree $\tau$
by $V(\tau)$.
To each tree $\tau$,
we associate the dg-module $\tau(M)$ of tensors $\otimes_{v\in V(\tau)} x_v$,
where each factor $x_v$, associated to a vertex $v\in V(\tau)$,
is an element of $M(r_v)$
together with a bijection between $\{1,\dots,r_v\}$
and the edges ending at $v$.
The usual convention is to represent the tensor $\otimes_{v\in V(\tau)} x_v$
by a labeling of the vertices of $\tau$,
with the elements $x_v$ as vertex labels.
For instance, in the example of Figure~\ref{fig:TreeComposite},
we label $v_1$ by an element $x_1\in M(2)$,
the vertex $v_2$ by an element $x_2\in M(2)$
and the vertex $w_1$ by an element $x_3\in M(3)$.
We have $x_1,x_2\in M(2)$, $x_3\in M(3)$ and each of these elements $x_k\in M(r_k)$
is given together with a bijection between $\{1,\dots,r_k\}$
and the ingoing edges of the associated vertex.

If $M(0) = M(1) = 0$, then we only have to deal with trees $\tau$, called reduced,
such that $r_v\geq 2$ for every vertex $v\in V(\tau)$,
because the dg-module $\tau(M)$
vanishes when this condition is not satisfied.
Reduced trees have no automorphism
outside the identity (see~\cite[\S 3.6.1]{FressePartitions}).

The dg-modules $\tau(M)$
form a functor in $\tau$
and the free operad $\FOp(M)$
is defined by the direct sums $\FOp(M)(r) = \bigoplus_{\tau\in\Theta(r)}\tau(M)/\equiv$
divided out by the action of tree isomorphisms.
If $M(0) = M(1) = 0$,
then we can restrict the expansion of $\FOp(M)(r)$
to the subcategory of reduced trees.
The triviality of the automorphism groups of reduced trees
implies that no quotient occurs in $\FOp(M)(r)$
when we pick representative of isomorphism classes of trees
to rewrite the expansion of $\FOp(M)(r)$
in a reduced form (see also~\cite[\S 3.6.1]{FressePartitions}).

The dg-module $\FOp(M)(r)$
inherits an action of the symmetric group $\Sigma_r$
from the category of trees $\Theta(r)$.
The operadic composites $\circ_e: \FOp(M)(s)\otimes\FOp(M)(t)\rightarrow\FOp(M)(s+t-1)$
are induced by natural isomorphisms $\sigma(M)\otimes\tau(M)\simeq(\sigma\circ_e\tau)(M)$
associated to all $\sigma\in\Theta(s)$, $\tau\in\Theta(t)$.
The operad unit $1\in\FOp(M)(1)$
is associated to the trivial tree $\downarrow\in\Theta(1)$
with one ingoing-outgoing edge and no vertex.
This gives the composition structure of $\FOp(M)$.

We have a natural isomorphism $M(r)\simeq\psi(M)$
associated to the tree $\psi$
with $1$ single vertex and $r$ ingoing edges (see Figure~\ref{fig:Corolla}).
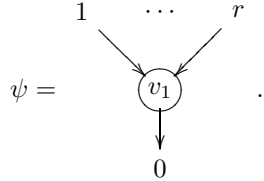
\begin{figure}[t]
\begin{equation*}
\psi = \vcenter{\xymatrix@M=0pt@!R=2mm@!C=2mm{ *+<3mm>{1}\ar[dr] & \cdots & *+<3mm>{r}\ar[dl] \\
& *+<3mm>[o][F]{v_1}\ar[d] & \\ & *+<3mm>{0} & }}.
\end{equation*}
\caption{The corolla.}\label{fig:Corolla}
\end{figure}
The inclusion of the summand $\psi(M)$ in $\FOp(M)(r) = \bigoplus_{\tau\in\Theta(r)}\tau(M)/\equiv$
yields a morphism of $\Sigma_*$-objects $\eta: M\rightarrow\FOp(M)$
and defines the universal morphism of the free operad.

The free operad has a natural weight decomposition $\FOp(M) = \bigoplus_{m=0}^{\infty}\FOp_{m}(M)$,
where $\FOp_{m}(M)$
is the dg-module spanned by tensors $\otimes_{v\in V(\tau)} x_v$
of order $|V(\tau)| = m$.
We have $\FOp_{0}(M) = I$, $\FOp_{1}(M) = M$
and the universal morphism of the free operad $\eta: M\rightarrow\FOp(M)$
is represented by the identity of $M$ with the summand $\FOp_{1}(M)\subset\FOp(M)$.

\subsubsection{Applications of the universal property of free operads}\label{Prelude:KoszulDuality:FreeOperadMonad}
For a connected operad $\POp$,
the universal property implies the existence of a morphism $\lambda_*: \FOp(\bar{\POp})\rightarrow\POp$
which extends the embedding $i: \bar{\POp}\hookrightarrow\POp$
of the augmentation ideal of $\POp$.
Intuitively,
the tensors of $\FOp(\bar{\POp})$
represent formal composites of operations $p_v\in\POp$
and the morphism $\lambda_*: \FOp(\bar{\POp})\rightarrow\POp$
is the operation that performs these composites.

Let $\lambda_2: \FOp_2(\bar{\POp})\rightarrow\POp$
be the restriction of this morphism to the quadratic summand $\FOp_2(\bar{\POp})$.
The components of $\lambda_2$,
defined on each summand $\tau(\bar{\POp})$ associated to a tree with two vertices $\tau$,
are identified with composition operations
\begin{equation*}
\circ_e: \POp(s)\otimes\POp(t)\rightarrow\POp(s+t-1)
\end{equation*}
together with an input permutation (see Figure~\ref{fig:QuadraticComposite}).
\begin{figure}[t]
\begin{equation*}
\vcenter{\xymatrix@M=0pt@!R=2mm@!C=2mm{ && *+<3mm>{j_1}\ar[dr] & \cdots & *+<3mm>{j_t}\ar[dl] & \\
*+<3mm>{i_1}\ar[drr] & \cdots & \cdots & *+<3mm>[F]{q}\ar[dl]|*+<2pt>{e} & \cdots & *+<3mm>{i_s}\ar[dlll] \\
&& *+<3mm>[F]{p}\ar[d] &&& \\
&& *+<3mm>{0} &&& }}
\mapsto\vcenter{\xymatrix@M=0pt@!R=2mm@!C=2mm{ *+<3mm>{i_1}\ar[drrr] & \cdots & *+<3mm>{j_1}\ar[dr] & \cdots &
*+<3mm>{j_t}\ar[dl] & \cdots & *+<3mm>{i_s}\ar[dlll] \\
&&& *+<3mm>[F]{p\circ_e q}\ar[d] &&& \\
&&& *+<3mm>{0} &&& }}
\end{equation*}
\caption{The graphical representation of quadratic composites.
The standard partial composite $p\circ_e q$
corresponds to the input sharing $(i_1,\dots,i_{e-1},j_1,\dots,j_t,i_{e+1},\dots,i_s)
= (1,\dots,e-1,e,\dots,e+t-1,e+t,\dots,s+t-1)$.}\label{fig:QuadraticComposite}
\end{figure}
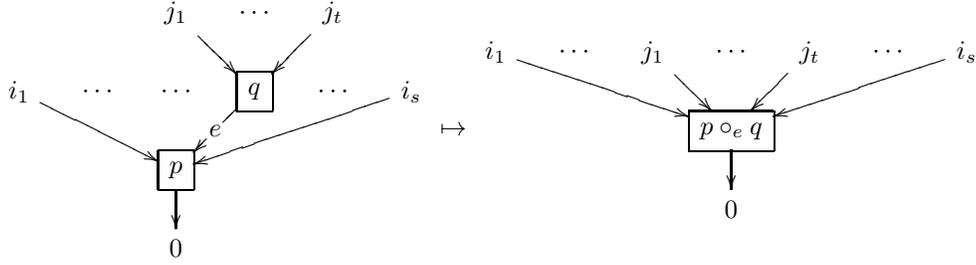
Recall that these operations suffice to specify the composition structure of an operad (we have already used this observation
in the definition of the composition structure of trees).

\subsubsection{Cooperads and the cobar construction}\label{Prelude:KoszulDuality:CobarConstruction}
The $\Sigma_*$-object underlying the free operad $\FOp(M)$
can also be equipped with a cooperad structure
and represents the cofree cooperad associated to $M$.
For detailed explanations on this observation (and other definitions of this paragraph),
we refer to~\cite[\S\S 3.1-3.5]{FresseCylinder}.

Just note that the composition structure of a cooperad $\DOp$
can be defined by a morphism $\rho_*: \DOp\rightarrow\FOp(\bar{\DOp})$.
The projection of this morphism onto the summand $\FOp_{2}(\bar{\DOp})$
gives a morphism $\rho_2: \bar{\DOp}\rightarrow\FOp_{2}(\bar{\DOp})$,
dual to the quadratic composition morphism of an augmented operad,
which also suffices to determine the composition structure of $\DOp$.

The dual dg-modules of an operad $\POp^{\vee}(r) = \Hom_{\C}(\POp(r),\ZZ)$
inherit such a coproduct
$\rho_2: \POp^{\vee}\rightarrow\FOp_{2}(\bar{\POp}^{\vee})$
when each dg-module $\POp(r)$ forms a degreewise finitely generated free $\ZZ$-module,
because we have a natural duality isomorphism $\FOp_{2}(\bar{\POp}^{\vee})\simeq\FOp_{2}(\bar{\POp})^{\vee}$
(see~\cite[Proposition 3.1.4 and \S 3.6.1]{FressePartitions}).
Hence,
we obtain that the $\ZZ$-dual dg-modules of an operad $\POp$ inherit a cooperad structure
under the usual assumption of duality in module categories.

The cobar construction
\begin{equation*}
\BOp^c(\DOp) = (\FOp(\Sigma^{-1}\bar{\DOp}),\partial)
\end{equation*}
is defined by the addition of an operadic derivation of degree $-1$
\begin{equation*}
\partial: \FOp(\Sigma^{-1}\bar{\DOp})\rightarrow\FOp(\Sigma^{-1}\bar{\DOp})
\end{equation*}
to the natural differential of the free operad $\FOp(\Sigma^{-1}\bar{\DOp})$.
The operadic derivation relation, which reads $\partial(p\circ_e q) = \partial(p)\circ_e q + \pm p\circ_e\partial(q)$,
implies that any derivation of a free operad $\partial: \FOp(M)\rightarrow\FOp(M)$
is determined by its restriction to the generating $\Sigma_*$-object $M\subset\FOp(M)$.
The bar differential is defined on generators
\begin{equation*}
\Sigma^{-1}\bar{\DOp}\xrightarrow{\partial|_{\Sigma^{-1}\bar{\DOp}}}\FOp(\Sigma^{-1}\bar{\DOp})
\end{equation*}
by a desuspension of the quadratic coproduct $\rho_2: \bar{\DOp}\rightarrow\FOp_{2}(\bar{\DOp})$
(see for instance~\cite[\S 3.6]{FresseCylinder}).

\subsubsection{The filtration of the cobar construction}\label{Prelude:KoszulDuality:CobarFiltration}
We equip the free operad $\FOp(M)$ with the filtration such that $F_{-s}\FOp(M) = \bigoplus_{r\geq s}\FOp_{r}(M)$.
The composition product of the free operad is homogeneous with respect to this filtration
and the associated graded object $E^0_{-s}\FOp(M) = F_{-s}\FOp(M)/F_{-s-1}\FOp(M)$
satisfies obviously $E^0_{-s}\FOp(M) = F_{-s}\FOp(M)$.
Moreover,
we have clearly $F_{-s}\FOp(M) = \FOp(M)$ for $s\leq 0$.

For the free operad $\FOp(\Sigma^{-1}\bar{\DOp})$
associated to a connected cooperad $\DOp$,
we have
\begin{equation*}
F_{-s}\FOp(\Sigma^{-1}\bar{\DOp})(n) = 0\quad\text{when $s>n-1$},
\end{equation*}
for any fixed arity $n\geq 1$.
Moreover,
the twisting derivation of the cobar construction
satisfies $\partial(F_{-s}\FOp(\Sigma^{-1}\bar{\DOp}))\subset F_{-s-1}\FOp(\Sigma^{-1}\bar{\DOp})$.

Thus,
the cobar construction inherits a filtration
\begin{equation*}
\BOp^c(\DOp) = F_0\BOp^c(\DOp)\supset F_{-1}\BOp^c(\DOp)\supset\cdots\supset F_{-s}\BOp^c(\DOp)\supset\cdots
\end{equation*}
so that:

\begin{fact}[{see~\cite[Lemma 3.6.2]{FressePartitions}}]\label{Prelude:KoszulDuality:CobarSpectralSequence}
We have a strongly convergent spectral sequence of operads
\begin{equation*}
E^r(\BOp^c(\DOp))\Rightarrow H_*(\BOp^c(\DOp)),
\end{equation*}
naturally associated to the cobar construction $\BOp^c(\DOp)$,
so that $E^1 = H_*(\FOp(\Sigma^{-1}\bar{\DOp}))$
is the homology of the free operad on $\Sigma^{-1}\bar{\DOp}$.
\end{fact}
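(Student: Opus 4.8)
The plan is to produce this spectral sequence as the one associated to the decreasing filtration $F_{\bullet}\BOp^c(\DOp)$ introduced just above, and then to verify the two assertions separately: strong convergence, and the identification of the $E^1$-term. First I would observe that the filtration is indeed a filtration of $\BOp^c(\DOp)$ \emph{by operads}: this follows from the fact that the composition product of the free operad $\FOp(\Sigma^{-1}\bar{\DOp})$ is homogeneous for the weight grading (a composite of a weight-$r$ and a weight-$r'$ tensor has weight $r+r'$), hence $F_{-s}\circ_e F_{-s'}\subset F_{-s-s'}$; and the internal differential of $\BOp^c(\DOp)$ is the sum of the natural differential of the free operad, which preserves weight (so preserves each $F_{-s}$), and the twisting derivation $\partial$, which was noted above to satisfy $\partial(F_{-s})\subset F_{-s-1}$. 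Hence $F_{\bullet}$ is a filtration by dg-suboperads and the induced spectral sequence is a spectral sequence of operads.

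Next I would address convergence. The key input is the exhaustiveness and the bounded-below-in-each-arity property: for fixed arity $n$, one has $F_{-s}\FOp(\Sigma^{-1}\bar{\DOp})(n)=0$ as soon as $s>n-1$, because a reduced tree with $n$ leaves has at most $n-1$ vertices (each internal vertex has at least two inputs). Thus in each arity the filtration is finite, $F_0\supset F_{-1}\supset\cdots\supset F_{-(n-1)}\supset F_{-n}=0$, and since it is also exhaustive ($F_0=\BOp^c(\DOp)$), the classical convergence criterion for a finite filtration gives strong convergence $E^r(\BOp^c(\DOp))\Rightarrow H_*(\BOp^c(\DOp))$. (Formally one reduces to a single arity at a time, so no issue arises from working with unbounded dg-modules, as flagged in the conventions.)

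It remains to identify $E^1$. By definition $E^0_{-s}=F_{-s}/F_{-s-1}$, and since the free-operad differential preserves weight while $\partial$ strictly lowers $s$, the differential induced on $E^0$ is exactly the internal differential of the free operad $\FOp(\Sigma^{-1}\bar{\DOp})$ with its twisting term killed; that is, $E^0(\BOp^c(\DOp))=\FOp(\Sigma^{-1}\bar{\DOp})$ equipped with the natural differential of the free operad, and therefore $E^1=H_*(\FOp(\Sigma^{-1}\bar{\DOp}))$, as claimed. Here I would invoke the identification $E^0_{-s}\FOp(M)=F_{-s}\FOp(M)$ recorded above together with the explicit tree-wise description $\FOp(M)(r)=\bigoplus_{\tau}\tau(M)$, which makes the weight grading and the action of the natural differential completely transparent.

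The main obstacle, such as it is, is bookkeeping rather than mathematics: one must be careful that the spectral sequence is being set up in a category of \emph{unbounded} dg-modules, so that "bounded below" in the homological degree is not automatic and convergence has to be deduced from finiteness of the \emph{filtration} in each arity rather than from any degree bound. Once that is isolated, everything reduces to the elementary observation that reduced trees with $n$ leaves have at most $n-1$ vertices, and the statement follows; this is precisely the content cited as \cite[Lemma 3.6.2]{FressePartitions}, so I would simply give the short argument above and refer to that lemma for the routine verifications.
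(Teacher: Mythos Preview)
Your proposal is correct and follows exactly the approach the paper intends: the preceding paragraph~\S\ref{Prelude:KoszulDuality:CobarFiltration} sets up precisely this filtration, records the vanishing $F_{-s}\FOp(\Sigma^{-1}\bar{\DOp})(n)=0$ for $s>n-1$ and the relation $\partial(F_{-s})\subset F_{-s-1}$, and then simply cites the external reference for the resulting spectral sequence rather than spelling out the argument you have given. Your write-up fills in exactly the routine details that the citation to \cite[Lemma~3.6.2]{FressePartitions} is meant to cover.
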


Moreover:

\begin{fact}[{see~\cite[\S\S 3.6.1-2]{FressePartitions}}]\label{Prelude:KoszulDuality:CobarHomology}
If each component $\DOp(r)$ of a cooperad $\DOp$ forms a cofibrant dg-module
and each component $H_*(\DOp(r))$ of the homology of $\DOp$ forms a free graded $\ZZ$-module,
then we have a K\"unneth isomorphism
\begin{equation*}
\FOp(\Sigma^{-1}H_*(\bar{\DOp}))\xrightarrow{\simeq} H_*(\FOp(\Sigma^{-1}\bar{\DOp}))
\end{equation*}
and the $E^1$-term of the spectral sequence can be identified with the cobar construction of the homology of~$\DOp$:
\begin{equation*}
(E^1,d^1) = (\FOp(\Sigma^{-1}H_*(\bar{\DOp})),\partial) = \BOp^c(H_*(\DOp)).
\end{equation*}
\end{fact}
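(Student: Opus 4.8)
The plan is to combine the spectral sequence of Fact~\ref{Prelude:KoszulDuality:CobarSpectralSequence} with a summand-wise application of the K\"unneth theorem, using the tree decomposition of the free operad recalled in~\S\ref{Prelude:KoszulDuality:FreeOperad}. Write $M = \Sigma^{-1}\bar{\DOp}$, so that $\BOp^c(\DOp) = (\FOp(M),\partial)$. Since $M(0) = M(1) = 0$, each $\FOp(M)(n)$ is, up to isomorphism of trees, a \emph{finite} direct sum $\bigoplus_{\tau}\tau(M)$ ranging over reduced trees with $n$ ingoing edges (finite because such a tree has at most $n-1$ vertices), and each summand is identified with a tensor product $\bigotimes_{v\in V(\tau)} M(r_v)$, carrying the appropriate $\Sigma_n$-action and Koszul signs. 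As homology commutes with finite direct sums, the first assertion reduces to computing $H_*(\tau(M))$ for each fixed $\tau$.

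For this, I would observe that each $\DOp(r)$, being cofibrant in the standard model category of dg-modules over $\ZZ$, is degreewise projective and hence degreewise flat; desuspension preserves this property, and a tensor product of degreewise flat complexes is again degreewise flat, so every partial product $M(r_{v_1})\otimes\cdots\otimes M(r_{v_k})$ is a complex of flat $\ZZ$-modules. Iterating the K\"unneth short exact sequence over the vertices of $\tau$ then yields a natural comparison map
\[
\bigotimes_{v\in V(\tau)} H_*(M(r_v))\longrightarrow H_*\Bigl(\bigotimes_{v\in V(\tau)} M(r_v)\Bigr),
\]
which is an isomorphism: at each step the relevant $\operatorname{Tor}_1^{\ZZ}$-term vanishes because the graded modules $H_*(M(r_v)) = \Sigma^{-1}H_*(\bar{\DOp}(r_v))$ are free by hypothesis. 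This gives $\tau(H_*(M))\xrightarrow{\simeq} H_*(\tau(M))$, compatibly with symmetric group actions and signs. Summing over $\tau$ and over $n$ gives the announced isomorphism of $\Sigma_*$-objects $\FOp(\Sigma^{-1}H_*(\bar{\DOp}))\xrightarrow{\simeq} H_*(\FOp(\Sigma^{-1}\bar{\DOp}))$; since the composition products $\circ_e$ of a free operad are induced by the canonical identifications $\sigma(M)\otimes\tau(M)\simeq(\sigma\circ_e\tau)(M)$, which are preserved by the K\"unneth isomorphism, this comparison is in fact a morphism of operads.

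For the identification of $(E^1,d^1)$, recall from~\S\ref{Prelude:KoszulDuality:CobarFiltration} that the filtration on $\BOp^c(\DOp)$ is the weight filtration of $\FOp(M)$, which is finite in each arity, and that the twisting derivation $\partial$ strictly raises the weight. Hence the differential $d^0$ on $E^0 = \FOp(M)$ is just the internal differential of the free operad, and Fact~\ref{Prelude:KoszulDuality:CobarSpectralSequence} identifies $E^1$ with $H_*(\FOp(M))$, which by the previous step is $\FOp(\Sigma^{-1}H_*(\bar{\DOp}))$. The induced differential $d^1$ is the map on homology determined by $\partial$, and by~\S\ref{Prelude:KoszulDuality:CobarConstruction} the derivation $\partial$ is the extension to $\FOp(M)$ of the desuspension of the quadratic coproduct $\rho_2: \bar{\DOp}\rightarrow\FOp_2(\bar{\DOp})$. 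Applying the K\"unneth computation to the weight-two part $\FOp_2(M)$ shows that $H_*(\rho_2)$ coincides, under the isomorphism $\FOp_2(\Sigma^{-1}H_*(\bar{\DOp}))\simeq H_*(\FOp_2(M))$, with the quadratic coproduct of the cooperad $H_*(\DOp)$. Therefore $d^1$ is exactly the cobar twisting derivation built from $H_*(\DOp)$, and $(E^1,d^1) = (\FOp(\Sigma^{-1}H_*(\bar{\DOp})),\partial) = \BOp^c(H_*(\DOp))$.

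The mathematical content is entirely the K\"unneth theorem; the work, and the only place where care is really needed, is the bookkeeping: tracking the $\Sigma_r$-actions and the Koszul signs through the permutations of tensor factors in $\tau(M)$ (in our applications $\DOp = \Lambda^{-n}\EOp_n^{\vee}$, so these also involve the signature twists of the operadic suspension), and verifying that the K\"unneth comparison is natural enough to be a morphism of operads, and of twisting derivations, rather than merely an isomorphism of $\Sigma_*$-objects. One should also keep in mind that the cofibrancy hypothesis on $\DOp$ is used only to provide degreewise flatness, so that the K\"unneth exact sequence applies; the vanishing of its error terms is then forced entirely by the freeness of $H_*(\DOp)$.
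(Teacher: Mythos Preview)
Your argument is correct and is precisely the standard one: decompose $\FOp(M)(n)$ as a finite sum over reduced trees, apply the K\"unneth theorem vertex by vertex using degreewise flatness from cofibrancy and the freeness of $H_*(\DOp)$ to kill the $\operatorname{Tor}$ terms, and then read off $d^1$ from the fact that $\partial$ raises weight by exactly one. The paper does not give its own proof of this Fact but simply cites~\cite[\S\S 3.6.1--2]{FressePartitions}; your write-up is essentially a reconstruction of that argument, so there is nothing to compare.
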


\subsubsection{Homologically Koszul cooperads}\label{Prelude:KoszulDuality:HomologyKoszulCooperads}
The spectral sequence of Fact~\ref{Prelude:KoszulDuality:CobarSpectralSequence}
can be applied to a graded cooperad $\HOp$
equipped with a trivial internal differential $\delta = 0$.
In this case,
the spectral sequence degenerates at $E^2$
and reduces simply to the definition of a natural weight grading on $H_*(\BOp^c(\HOp))$.

We say that a graded cooperad $\HOp$ is Koszul
if we have
\begin{equation*}
E^0_{-s} H_*(\BOp^c(\HOp))(n) = \begin{cases} H_*(\BOp^c(\HOp))(n), & \text{if $s = n-1$}, \\ 0, & \text{otherwise}, \end{cases}
\end{equation*}
for each fixed arity $n\in\NN$,
or, in plain words,
if the weight grading of $H_*(\BOp^c(\HOp))$
collapses onto the arity grading.
(In fact,
we just apply of the general definition of~\cite{FressePartitions} to the cooperad $\HOp$
equipped with the natural arity grading.)
Since we deal with cooperads in $\ZZ$-modules,
we also assume that the components of a Koszul cooperad $\HOp$
are projective as $\ZZ$-modules.

For any graded cooperad $\HOp$,
we have $E^0_{-s} \BOp^c(\HOp)(n) = 0$ for $s>n-1$
since we observe in~\S\ref{Prelude:KoszulDuality:CobarFiltration}
that the free operad filtration vanishes for $s>n-1$.
Hence,
we have an edge morphism
\begin{equation*}
\BOp^c(\HOp)(n)\rightarrow E^0_{1-n}\BOp^c(\HOp)(n)\rightarrow E^0_{1-n} H_*(\BOp^c(\HOp))(n)
\end{equation*}
determined by the spectral sequence filtration
for all $n\in\NN$.
For a Koszul cooperad,
these edge morphisms assemble to a morphism of operads
\begin{equation*}
\edge: \BOp^c(\HOp)\rightarrow H_*(\BOp^c(\HOp)).
\end{equation*}
This morphism is determined on generators of the quasi-free operad $\BOp^c(\HOp) = (\FOp(\Sigma^{-1}\bar{\HOp}),\partial)$
by the natural embedding $\Sigma^{-1}\bar{\HOp}(n)\subset\FOp(\Sigma^{-1}\bar{\HOp})(n)$
in arity $n=2$
and by the null morphism in arity $n\not=2$.
This identity follows from a straightforward inspection of the definition of the filtration.
Naturally,
the morphism $\edge$ defines a weak-equivalence of dg-operads between $\BOp^c(\HOp)$
and the homology operad $H_*(\BOp^c(\HOp))$,
viewed as a dg-operad equipped with a trivial differential.

We say that a dg-cooperad $\DOp$
is homologically Koszul if its homology cooperad $\HOp = H_*(\DOp)$
is Koszul.
The $\ZZ$-modules $H_*(\DOp)(n)$
are implicitely assumed to be projective when we say that $H_*(\DOp)$
is a Koszul cooperad.
For our purpose,
we also assume that the components $\DOp(n)$ of a homologically Koszul cooperad $\DOp$
are cofibrant dg-modules.

\begin{prop}\label{Prelude:KoszulDuality:HomologyKoszulSpectralSequence}
If a cooperad $\DOp$ is homologically Koszul,
then the spectral sequence of Fact~\ref{Prelude:KoszulDuality:CobarSpectralSequence}
degenerates at $E^2$
and we have a natural weak-equivalence of dg-operads
\begin{equation*}
\edge: \BOp^c(H_*(\DOp))\xrightarrow{\sim} H_*(\BOp^c(\DOp))
\end{equation*}
determined on generators of the quasi-free operad $\BOp^c(H_*(\DOp)) = (\FOp(\Sigma^{-1}H_*(\bar{\DOp})),\partial)$
by the homology of the natural embedding $\Sigma^{-1}\bar{\DOp}(n)\subset\FOp(\Sigma^{-1}\bar{\DOp})(n)$
in arity $n=2$
and by the null morphism in arity $n\not=2$.
\end{prop}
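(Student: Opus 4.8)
The plan is to combine the two facts already recorded about the free-operad filtration of the cobar construction. First I would invoke Fact~\ref{Prelude:KoszulDuality:CobarHomology}: since $\DOp$ is homologically Koszul, its components $\DOp(n)$ are cofibrant dg-modules and the homology modules $H_*(\DOp)(n)$ are free, so the K\"unneth formula gives $\FOp(\Sigma^{-1}H_*(\bar{\DOp}))\xrightarrow{\simeq}H_*(\FOp(\Sigma^{-1}\bar{\DOp}))$ and hence an identification of the $E^1$-page of the spectral sequence of Fact~\ref{Prelude:KoszulDuality:CobarSpectralSequence} with the cobar construction $\BOp^c(H_*(\DOp))$, together with its $d^1$-differential. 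At this stage the spectral sequence $(E^r,d^r)$ computing $H_*(\BOp^c(\DOp))$ has $(E^1,d^1)=(\FOp(\Sigma^{-1}H_*(\bar{\DOp})),\partial)$.

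Next I would pass to $E^2$. By definition $E^2=H_*(\BOp^c(H_*(\DOp)))$ with its induced weight grading, and this is exactly the object that the Koszul condition on the graded cooperad $\HOp=H_*(\DOp)$ controls: the hypothesis says precisely that $E^0_{-s}H_*(\BOp^c(\HOp))(n)$ is concentrated in the single filtration degree $s=n-1$ for each arity $n$. Therefore on $E^2$ the weight filtration collapses onto the arity grading. The key observation is that the higher differentials $d^r$ ($r\geq 2$) necessarily decrease the filtration degree $s$ by at least one while preserving arity, so once the nonzero part of $E^2$ in each arity $n$ is concentrated in the single degree $s=n-1$, there is no room for any nonzero $d^r$ with $r\geq 2$. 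Hence the spectral sequence degenerates at $E^2$, and $H_*(\BOp^c(\DOp))\cong E^2=H_*(\BOp^c(H_*(\DOp)))$ as graded operads, the isomorphism being compatible with the convergence of the strongly convergent spectral sequence of Fact~\ref{Prelude:KoszulDuality:CobarSpectralSequence}.

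Finally I would identify this isomorphism with the edge morphism $\edge$. The composite $\BOp^c(H_*(\DOp))(n)\twoheadrightarrow E^0_{1-n}\BOp^c(H_*(\DOp))(n)\to E^0_{1-n}H_*(\BOp^c(H_*(\DOp)))(n)=E^2_{1-n}$ is, by the discussion in~\S\ref{Prelude:KoszulDuality:HomologyKoszulCooperads}, the map $\edge$ that on generators is the embedding $\Sigma^{-1}\bar{\DOp}(n)\subset\FOp(\Sigma^{-1}\bar{\DOp})(n)$ in arity $2$ and zero in all other arities; since these edge morphisms already assemble to an operad morphism and the degeneration just proved makes each of them an isomorphism after passing to homology, $\edge\colon\BOp^c(H_*(\DOp))\to H_*(\BOp^c(\DOp))$ is a weak-equivalence of dg-operads (with $H_*(\BOp^c(\DOp))$ carrying the trivial differential). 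The main obstacle is the bookkeeping with the two filtrations: one must check carefully that the weight grading on $E^2$ coming from the free-operad filtration of $\BOp^c(H_*(\DOp))$ agrees with the filtration degree $s$ inherited from $\BOp^c(\DOp)$, so that the Koszul collapse hypothesis on the former really does force degeneration of the latter — this is where Fact~\ref{Prelude:KoszulDuality:CobarHomology} and the naturality of the weight decomposition of the free operad do the essential work.
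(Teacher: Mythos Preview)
Your proposal is correct and follows essentially the same route as the paper: identify $E^1$ with $\BOp^c(H_*(\DOp))$ via Fact~\ref{Prelude:KoszulDuality:CobarHomology}, use the Koszul hypothesis on $\HOp=H_*(\DOp)$ to see that $E^2$ is concentrated in filtration degree $s=n-1$ in each arity, conclude degeneration at $E^2$, and then identify the resulting isomorphism with the edge morphism described in~\S\ref{Prelude:KoszulDuality:HomologyKoszulCooperads}. The paper phrases the final step as the composite $\BOp^c(\HOp)\xrightarrow{\edge}H_*(\BOp^c(\HOp))\xrightarrow{\simeq}H_*(\BOp^c(\DOp))$, which is exactly what your argument produces.
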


\begin{proof}
To simplify notation, we set $\HOp = H_*(\DOp)$.
The spectral sequence $E^r\Rightarrow H_*(\BOp^c(\DOp))$ satisfies $E^1 = \BOp^c(\HOp)$
by Fact~\ref{Prelude:KoszulDuality:CobarHomology}
and $E^2 = E^0 H_*(\BOp^c(\HOp))$.
If the cooperad $\DOp$ is homology Koszul, then $E^0_s H_*(\BOp^c(\HOp))(n)$
is concentrated on the column $s = n-1$,
for each fixed arity $n$.
Hence, we obtain that the spectral sequence degenerates at $E^2$,
giving an isomorphism $E^0 H_*(\BOp^c(\HOp))\simeq E^0 H_*(\BOp^c(\DOp))$,
from which we deduce the existence of a genuine operad isomorphism $H_*(\BOp^c(\HOp))\simeq H_*(\BOp^c(\DOp))$
since, for Koszul operads, the spectral sequence grading reduces to the arity grading.

The homomorphism specified in the proposition is identified with the composite
\begin{equation*}
\BOp^c(H_*(\DOp)) = \BOp^c(\HOp)\xrightarrow{\edge} H_*(\BOp^c(\HOp))\xrightarrow{\simeq} H_*(\BOp^c(\DOp)),
\end{equation*}
where the first arrow is the edge morphism
of~\S\ref{Prelude:KoszulDuality:HomologyKoszulSpectralSequence}.
Hence the definition of the proposition gives a well-defined morphism of dg-operads
$\edge: \BOp^c(H_*(\DOp))\rightarrow H_*(\BOp^c(\DOp))$
which forms obviously a weak-equivalence.
\end{proof}

The morphism of the proposition
$\edge: \BOp^c(H_*(\DOp))\xrightarrow{\sim} H_*(\BOp^c(\DOp))$
can also be identified with the edge morphisms
\begin{equation*}
\BOp^c(H_*(\DOp))(n)\rightarrow E^1_{1-n}(\BOp^c(\DOp))(n)\rightarrow E^0_{1-n} H_*(\BOp^c(\DOp))(n)
\end{equation*}
yielded by the spectral sequence of $\BOp^c(\DOp)$
together with the relation $E^0_{1-n} H_*(\BOp^c(\DOp))(n) = H_*(\BOp^c(\DOp))(n)$
for a homology Koszul cooperad.
For that reason,
we refer to this morphism as the edge morphism of the homologically Koszul cooperad $\DOp$.

\subsubsection{Recollections: Koszul duality of operads}\label{Prelude:KoszulDuality:KoszulDualDefinition}
There are several definitions for the notion of a Koszul operad.
According to one of them (see~\cite[\S 5.2.8]{FressePartitions}),
a graded operad~$\POp$ (equipped with a trivial differential)
is Koszul
if we have a weak-equivalence $\epsilon: \BOp^c(\KOp(\POp))\xrightarrow{\sim}\POp$,
where $\KOp(\POp)$ is a certain graded cooperad (equipped with a trivial differential)
naturally associated to $\POp$,
the Koszul dual cooperad of $\POp$.
In the sequel,
we refer to this weak-equivalence $\epsilon$
as the Koszul duality equivalence
of the operad~$\POp$.

In applications,
we assume tacitely, as usual,
that the components of Koszul operad $\POp$
are projective $\ZZ$-modules.
According to an observation of~\cite[\S 5.2.8]{FressePartitions},
the components of the Koszul dual $\KOp(\POp)$ of a Koszul operad $\POp$
are automatically projective $\ZZ$-modules.

The Koszul dual $\KOp(\POp)$ of a Koszul operad $\POp$
forms a Koszul cooperad
(see~\cite[Lemma 5.2.10]{FressePartitions}).
If $\POp$ is a binary Koszul operad (see~\cite[\S 5.2]{FressePartitions}),
then the dual Koszul cooperad $\KOp(\POp)$
is Koszul in the sense of~\S\ref{Prelude:KoszulDuality:HomologyKoszulCooperads}.

\subsection{Applications to Gerstenhaber operads}\label{Prelude:GerstenhaberOperads}
In this subsection,
we review the structure of the homology operad $H_*(\EOp_n)$.
The articles~\cite{Cohen,Sinha}
are our references for the computation of $H_*(\EOp_n)$.
For a nice inroduction to this topic, we also refer to~\cite[\S I.6]{KrizMay}.

For $n=1$,
we have an identity $H_*(\EOp_1) = \AOp$
with the operad of associative algebras $\AOp$.
For $n>1$,
the operad $H_*(\EOp_n)$ is identified with another operad $\GOp_n$ defined by generators and relations,
to which we refer as the $n$-Gerstenhaber operad.

The associative operad and the Gerstenhaber operads
are Koszul
and we review the definition of the Koszul duality equivalence $\epsilon: \BOp^c(\KOp(\POp))\xrightarrow{\sim}\POp$
associated to these operads.

Recall that the associative operad $\AOp$
can be defined as the operad generated by an operation $\mu = \mu(x_1,x_2)$ in arity $2$
together with the associativity relation $\mu(\mu(x_1,x_2),x_3) = \mu(x_1,\mu(x_2,x_3))$ in arity $3$.
To begin with,
we recall the similar definition of the Gerstenhaber operads by generators and relations
and we review the definition of the isomorphism $\gamma: \GOp_n\xrightarrow{\simeq} H_*(\EOp_n)$
when $\EOp_n$ is the $n$th layer of the Barratt-Eccles operad $\EOp$.

\subsubsection{The Gerstenhaber operads}\label{Prelude:GerstenhaberOperads:Definition}
The generating operations of $\GOp_n$ consist of an operation $\mu = \mu(x_1,x_2)$, of degree $0$,
and an operation $\lambda_{n-1} = \lambda_{n-1}(x_1,x_2)$, of degree $n-1$,
together with the symmetry relations
\begin{equation}\label{eqn:OperationSymmetry}
\mu(x_1,x_2) = \mu(x_2,x_1)\quad\text{and}\quad\lambda_{n-1}(x_1,x_2) = (-1)^{n}\lambda_{n-1}(x_2,x_1).
\end{equation}
For short, we set $\lambda = \lambda_{n-1}$
in this paragraph.
Let $\ZZ\mu\oplus\ZZ\lambda$ represent the $\Sigma_*$-object spanned by the elements $(\mu,\lambda)$
in arity $2$
together with the action of the symmetric group $\Sigma_2$
determined by (\ref{eqn:OperationSymmetry}).
The $n$-Gerstenhaber operad $\GOp_n$ is the quotient of the free operad $\FOp(\ZZ\mu\oplus\ZZ\lambda)$
by the operadic ideal
generated by the associativity relation
\begin{equation}\label{eqn:Associativity}
\mu(\mu(x_1,x_2),x_3)\equiv\mu(x_1,\mu(x_2,x_3)),
\end{equation}
the Jacobi relation
\begin{equation}\label{eqn:Jacobi}
\lambda(\lambda(x_1,x_2),x_3) + \lambda(\lambda(x_2,x_3),x_1) + \lambda(\lambda(x_3,x_1),x_2)\equiv 0, \\
\end{equation}
and the distribution relation
\begin{equation}\label{eqn:Distribution}
\lambda(\mu(x_1,x_2),x_3)\equiv\mu(\lambda(x_1,x_3),x_2) + \mu(x_1,\lambda(x_2,x_3)).
\end{equation}

According to this definition,
a morphism $\phi: \GOp_n\rightarrow\POp$
toward an operad $\POp$
is fully determined by elements $\mu,\lambda\in\POp(2)$ that satisfy the symmetry relations (\ref{eqn:OperationSymmetry})
and relations (\ref{eqn:Associativity}-\ref{eqn:Distribution}) in $\POp$.
The goal of the next paragraph
is to define such elements in the homology of the $n$th filtration layer of the Barratt-Eccles operad.

The Gerstenhaber operad $\GOp_n$ can also be defined as a composite $\GOp_n = \COp\circ\Lambda^{1-n}\LOp$,
where we consider the commutative operad $\COp$
and the $(n-1)$-fold desuspension of the Lie operad $\LOp$.
We only give a brief review of this definition of $\GOp_n$.
We refer to~\cite{FoxMarkl,Markl} for more explanations.

The commutative operad $\COp$ is identified with the suboperad of $\GOp_n$
generated by the operation $\mu\in\GOp_n(2)$.
The $(n-1)$-fold desuspension of the Lie operad $\Lambda^{1-n}\LOp$
is identified with the suboperad of $\GOp_n$
generated by $\lambda_{n-1}\in\GOp_n(2)$.
The distribution relation between the product $\mu$
and the Lie bracket $\lambda_{n-1}$
determines an interchange morphism $\theta: \Lambda^{1-n}\LOp\circ\COp\rightarrow\COp\circ\Lambda^{1-n}\LOp$.
The composition product of the Gerstenhaber operad,
defined abstractly by a morphism of $\Sigma_*$-objects $\psi: \GOp_n\circ\GOp_n\rightarrow\GOp_n$,
is given by the composite of this interchange morphism
with the composition product of the commutative and Lie operads:
\begin{equation*}
(\COp\circ\Lambda^{1-n}\LOp)\circ(\COp\circ\Lambda^{1-n}\LOp)
\xrightarrow{\COp\circ\theta\circ\Lambda^{1-n}\LOp}
(\COp\circ\COp)\circ(\Lambda^{1-n}\LOp\circ\Lambda^{1-n}\LOp)
\xrightarrow{\psi\circ\psi}\COp\circ\Lambda^{1-n}\LOp.
\end{equation*}

\subsubsection{The representative of the product and of the Browder bracket in the Barratt-Eccles operad}\label{Prelude:GerstenhaberOperads:RepresentativeOperations}
In arity $2$,
the Barratt-Eccles operad $\EOp$
is spanned by the alternate simplices
\begin{equation*}
\mu_d = (\underbrace{\id,\tau,\id,\tau,\dots}_{\in\Sigma_2^{\times d+1}})
\quad\text{and}\quad\tau\mu_d = (\underbrace{\tau,\id,\tau,\id,\dots}_{\in\Sigma_2^{\times d+1}}),
\quad d\in\NN,
\end{equation*}
where $\id$ is the identity permutation of $(1,2)$ and $\tau$ is the transposition $\tau = (2,1)$.
Moreover we have $\delta(\mu_d) = \tau\mu_{d-1} + (-1)^d\mu_{d-1}$.
Hence the dg-module $\EOp(2)$
can be identified with the usual free resolution of the trivial $\Sigma_2$-module:
\begin{equation*}
\ZZ\mu_0\oplus\ZZ\tau\mu_0\xleftarrow{\tau-1}\ZZ\mu_1\oplus\ZZ\tau\mu_1
\xleftarrow{\tau+1}\ZZ\mu_2\oplus\ZZ\tau\mu_2\xleftarrow{\tau-1}\cdots.
\end{equation*}

According to the definition of~\S\ref{Prelude:BarrattEccles:LittleCubesFiltration},
we have $\mu_d\in\EOp_n(2)$ if and only if $d<n$.
Hence,
the dg-module $\EOp_n(2)$ is identified with a truncation of $\EOp(2)$
and we have
\begin{equation*}
H_d(\EOp_n(2)) = \begin{cases} \ZZ, & \text{if $d=0,n-1$}, \\ 0, & \text{otherwise}. \end{cases}
\end{equation*}
Thus we retrieve the standard description of the homology of the space $\COp_n(2)$.

The cycles $\mu = \mu_0$ and $\lambda_{n-1} = \mu_{n-1}+(-1)^{n-1}\mu_{n-1}$
define generating homology classes of $H_*(\EOp_n(2))$.

The operation $\mu = \mu_0$, which belongs to $\EOp_1(2)$,
satisfies clearly the relation of an associative product in $\EOp_1$ (recall that $\EOp_1 = \AOp$).
For $n>1$,
the homology class associated to $\mu_0$ in $H_*(\EOp_n)$
satisfies $\tau\mu_0\equiv\mu_0$
since $\tau\mu_0-\mu_0$ is the boundary of $\mu_1$.
Hence we obtain that $\mu = \mu_0$ represents an associative and commutative product in $H_*(\EOp_n)$,
for every $n>1$.

Recall that the homology of the topological little $n$-cubes operad $\COp_n$
is an operad in graded $\ZZ$-modules
which acts on the homology of $n$-fold loop spaces.
The element of $H_0(\COp_n(2),\ZZ)$ which corresponds to $\mu\in H_0(\EOp_n(2))$
represents in fact the Pontrjagyn product,
the multiplication on the homology of loop spaces induced by the composition of loops.
The generating homology class of $H_{n-1}(\COp_n(2),\ZZ)$ which corresponds to $\lambda_{n-1}\in H_{n-1}(\EOp_n(2))$
represents another standard operation on the homology of iterated loop spaces
called the Browder bracket.

The next assertion follows from a standard result about the representatives of these operations
in the homology of little cubes operad (we refer to~\cite{Cohen,Sinha}):

\begin{fact}\label{Prelude:GerstenhaberOperads:RepresentativeIdentities}
The homology classes of $\mu\in\EOp_n(2)$ and $\lambda_{n-1}\in\EOp_n(2)$
satisfy the identities of the generating operations of the $n$-Gerstenhaber operad
in $H_*(\EOp_n)$.
\end{fact}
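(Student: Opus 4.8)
The plan is to verify, for the homology classes $[\mu]=[\mu_0]$ and $[\lambda_{n-1}]$ in $H_*(\EOp_n)$, the full set of relations defining $\GOp_n$: the symmetry relations (\ref{eqn:OperationSymmetry}), the associativity relation (\ref{eqn:Associativity}), and the Jacobi and distribution relations (\ref{eqn:Jacobi})--(\ref{eqn:Distribution}). The first two families I would settle by direct inspection of the Barratt--Eccles model, and the last two I would reduce to the classical computation of $H_*(\COp_n)$ in \cite{Cohen} (see also \cite{Sinha}). For $n=1$ there is nothing to prove beyond associativity, which holds in $\EOp_1=\AOp$ by Proposition~\ref{Prelude:BarrattEccles:FirstLayerReduction}; so assume $n>1$. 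Relation (\ref{eqn:Associativity}) already holds at the chain level: $\mu=\mu_0$ lies in the suboperad $\EOp_1(2)=\AOp(2)\subset\EOp_n(2)$, where it is a strictly associative product, and this identity is preserved by the inclusion $\EOp_1\hookrightarrow\EOp_n$ and by passage to homology. The symmetry relations (\ref{eqn:OperationSymmetry}) are forced by the computation $H_d(\EOp_n(2))=\ZZ$ for $d=0,n-1$ and $H_d(\EOp_n(2))=0$ otherwise: the transposition $\tau$ must act on each of these rank-one groups by $+1$ or $-1$, and the explicit cycles of \S\ref{Prelude:GerstenhaberOperads:RepresentativeOperations} together with the boundary formula $\delta(\mu_d)=\tau\mu_{d-1}+(-1)^d\mu_{d-1}$ pin the sign to $+1$ in degree $0$ (this is the relation $\tau\mu_0\equiv\mu_0$ already recorded) and to $(-1)^n$ in degree $n-1$.

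It remains to prove the Jacobi relation (\ref{eqn:Jacobi}) and the distribution relation (\ref{eqn:Distribution}), which are identities in $H_{2(n-1)}$ and $H_{n-1}$ of arity $3$. For these I would invoke the chain of operad weak-equivalences between $\EOp_n$ and the chain operad $C_*(\COp_n)$ of Boardman-Vogt little $n$-cubes established in \cite{BergerCell}; this induces an isomorphism of operads in graded $\ZZ$-modules $\Phi: H_*(\EOp_n)\xrightarrow{\simeq}H_*(\COp_n;\ZZ)$. Since $\COp_n(2)$ is homotopy equivalent to $S^{n-1}$, the groups $H_0(\COp_n(2))$ and $H_{n-1}(\COp_n(2))$ are free of rank one, and since $[\mu_0]$ generates $H_0(\EOp_n(2))$ and $[\lambda_{n-1}]$ generates $H_{n-1}(\EOp_n(2))$, the isomorphism $\Phi$ sends $[\mu]$ to a unit multiple of the Pontrjagyn product and $[\lambda_{n-1}]$ to a unit multiple of the Browder bracket. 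Because relations (\ref{eqn:OperationSymmetry})--(\ref{eqn:Distribution}) are insensitive to rescaling $\mu$ and $\lambda_{n-1}$ by units of $\ZZ$, it then suffices to know that the Pontrjagyn product and the Browder bracket on $H_*(\COp_n)$ obey these identities, which is precisely the content of Cohen's description of the operad $H_*(\COp_n)$. Transporting the relations back along $\Phi^{-1}$ completes the argument.

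The main obstacle is the identification made in the previous paragraph: one must make precise that the equivalence of \cite{BergerCell} carries the explicit Barratt--Eccles cycles $\mu_0$ and $\lambda_{n-1}$ to unit multiples of the geometric operations, and this requires keeping track of the sign-twist attached to $H_{n-1}(\EOp_n(2))$ and of the sign conventions in the definition of the filtration layer $\EOp_n$ from \S\ref{Prelude:BarrattEccles:LittleCubesFiltration}. A self-contained alternative, avoiding \cite{BergerCell} altogether, is to verify (\ref{eqn:Jacobi}) and (\ref{eqn:Distribution}) directly inside $\EOp_n(3)$: in the fixed degrees $2(n-1)$ and $n-1$ the dg-module $\EOp_n(3)$ is finitely generated, so one can compute $\lambda_{n-1}\circ_1\lambda_{n-1}$, $\lambda_{n-1}\circ_1\mu$ and $\mu\circ_i\lambda_{n-1}$ via the explicit substitution formula for the composition of the Barratt--Eccles operad recalled in \cite{BergerFresse} and \S\ref{SecondStep:OperadKStructure}, and then exhibit explicit bounding chains for the Jacobi sum and for the difference in (\ref{eqn:Distribution}). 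This is a finite but lengthy verification, which is why I prefer to quote \cite{Cohen,Sinha}.
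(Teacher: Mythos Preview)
Your proposal is correct and follows essentially the same approach as the paper: the paper does not give a detailed proof but simply states that the fact ``follows from a standard result about the representatives of these operations in the homology of little cubes operad (we refer to~\cite{Cohen,Sinha})'' and adds that ``this assertion can also be checked by direct computations with the representative cycles of $\mu$ and $\lambda_{n-1}$ in $\EOp_n$.'' You have supplied more detail---handling symmetry and associativity explicitly, and making the transport along the Berger equivalence explicit for Jacobi and distribution---but the strategy is the same, including the direct-computation alternative you describe at the end.
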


This assertion can also be checked by direct computations with the representative cycles of $\mu$ and $\lambda_{n-1}$
in $\EOp_n$.

Because of the definition of $\GOp_n$
by generators and relations,
Fact~\ref{Prelude:GerstenhaberOperads:RepresentativeIdentities}
implies the existence of an operad morphism $\gamma: \GOp_n\rightarrow H_*(\EOp_n)$,
for all $n>1$.
We have moreover:

\begin{thm}[F. Cohen, see~\cite{Cohen}]\label{Prelude:GerstenhaberOperads:LittleCubesHomology}
The morphism $\gamma: \GOp_n\rightarrow H_*(\EOp_n)$ which maps the generating operations of $\GOp_n$
to the homology classes $\mu\in H_0(\EOp_n(2))$ and $\lambda_{n-1}\in H_{n-1}(\EOp_n(2))$
is an isomorphism for each $n>1$.\qed
\end{thm}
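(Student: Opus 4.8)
The plan is to derive this theorem from the computation of the homology of configuration spaces, viewing $\gamma$ as an operad morphism between two operads whose arity-$r$ components are free graded $\ZZ$-modules of finite total rank. First I would invoke the weak equivalence $\EOp_n\simeq\COp_n$ between the $n$th layer of the Barratt-Eccles operad and the chain operad of little $n$-cubes established in~\cite{BergerCell}, together with the $\Sigma_r$-equivariant operadic homotopy equivalence $\COp_n(r)\simeq F(\mathbb{R}^n,r)$ onto the configuration space of $r$ ordered points of $\mathbb{R}^n$, in order to reduce the statement to the assertion that $H_*(F(\mathbb{R}^n,-))$ is, as an operad in graded $\ZZ$-modules, generated by the class $\mu\in H_0(F(\mathbb{R}^n,2))$ and a generator $\lambda_{n-1}$ of $H_{n-1}(F(\mathbb{R}^n,2))\cong H_{n-1}(S^{n-1})\cong\ZZ$ subject only to the $n$-Gerstenhaber relations. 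Since the relations among $\mu$ and $\lambda_{n-1}$ are already known to hold in $H_*(\EOp_n)$ (Fact~\ref{Prelude:GerstenhaberOperads:RepresentativeIdentities}), so that $\gamma$ is a well-defined operad morphism, it is enough to prove that $\gamma$ is surjective and that both sides have the same rank in each arity and homological degree.

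For surjectivity I would argue by induction on the arity $r$, using the Fadell--Neuwirth fibration
\[
\mathbb{R}^{n}\setminus\{p_{1},\dots,p_{r-1}\}\hookrightarrow F(\mathbb{R}^{n},r)\xrightarrow{\pi}F(\mathbb{R}^{n},r-1)
\]
that forgets the last point: it admits a section, its fibre is homotopy equivalent to a wedge of $r-1$ copies of $S^{n-1}$, its monodromy acts trivially on the homology of the fibre, and the degree-$(n-1)$ homology classes of the fibre (the spheres linking the punctures) are restrictions of bracket classes $\lambda_{n-1}(x_{i},x_{r})\in H_{n-1}(F(\mathbb{R}^{n},r))$. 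By the Leray--Hirsch theorem the Serre spectral sequence of $\pi$ then collapses, $H_{*}(F(\mathbb{R}^{n},r))$ is a free $\ZZ$-module, free of rank $r$ as a module over $H_{*}(F(\mathbb{R}^{n},r-1))$, and --- feeding in the inductive hypothesis --- every homology class of $F(\mathbb{R}^{n},r)$ is a $\ZZ$-linear combination of operadic composites of the arity-$2$ classes $\mu$ and $\lambda_{n-1}$. As $\GOp_{n}(r)$ is by construction spanned by exactly these formal operadic composites, the surjectivity of $\gamma$ follows.

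For the rank count, the same inductive Leray--Hirsch computation yields the Poincaré polynomial
\[
\sum_{d}\operatorname{rank}H_{d}(F(\mathbb{R}^{n},r))\,t^{d}=\prod_{k=1}^{r-1}\bigl(1+k\,t^{n-1}\bigr),
\]
of total rank $r!$. On the algebraic side I would use the splitting $\GOp_{n}=\COp\circ\Lambda^{1-n}\LOp$ recalled in~\S\ref{Prelude:GerstenhaberOperads:Definition}: since $\COp(s)=\ZZ$ and $\Lambda^{1-n}\LOp(s)$ is a free $\ZZ$-module of rank $(s-1)!$ concentrated in degree $(n-1)(s-1)$, the arity-$r$ component of $\COp\circ\Lambda^{1-n}\LOp$ is again $\ZZ$-free --- it carries the forest-of-Lie-monomials basis of~\cite{FoxMarkl,Markl} --- and the exponential formula applied to the generating series $\sum_{s\geq 1}t^{(n-1)(s-1)}x^{s}/s$ shows that its Poincaré polynomial is likewise $\prod_{k=1}^{r-1}(1+k\,t^{n-1})$. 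Hence, in each arity and degree, $\gamma$ is a surjection between free $\ZZ$-modules of the same finite rank, so it is an isomorphism (its kernel splits off as a free summand of rank $0$), and the theorem follows.

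The substantive difficulty is entirely in the topological input: the fact that $H_*(F(\mathbb{R}^n,r))$ is spanned by arity-$2$ operadic composites and has Poincaré polynomial $\prod_{k=1}^{r-1}(1+k\,t^{n-1})$ is Cohen's theorem~\cite{Cohen} (see also~\cite{Sinha}), whose proof requires a careful analysis of the iterated Fadell--Neuwirth spectral sequences; the historically delicate case is $n=2$, where $F(\mathbb{R}^2,r-1)$ is not simply connected (though its monodromy on fibre homology is still trivial). Everything downstream of that --- identifying $\gamma$ with the comparison map, reading off the ranks of $\GOp_n(r)$, and promoting a rank-preserving surjection to an isomorphism --- is formal. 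One can alternatively bypass configuration spaces entirely and verify both the spanning property and the rank count by a direct computation with the representative cycles $\mu_{0}$ and $\lambda_{n-1}$ inside the Barratt-Eccles layers $\EOp_{n}$, using their explicit differential and composition rules; this is the route alluded to by the remark following Fact~\ref{Prelude:GerstenhaberOperads:RepresentativeIdentities}.
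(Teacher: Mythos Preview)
The paper does not prove this theorem at all: the statement is attributed to F.~Cohen, closed with a \verb|\qed|, and followed only by the sentence ``We refer to the cited reference~\cite{Cohen} for the original proof of this theorem. We also refer to~\cite{Sinha}\ldots''. So there is no argument in the paper to compare your proposal against; the result is imported as a black box.

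Your sketch is a correct outline of the standard proof as it appears in the cited literature. The reduction to configuration spaces via~\cite{BergerCell}, the inductive use of the Fadell--Neuwirth fibration with its section and trivial monodromy on fibre homology, the Leray--Hirsch collapse giving the Poincar\'e polynomial $\prod_{k=1}^{r-1}(1+k\,t^{n-1})$, and the matching rank count on the algebraic side via the decomposition $\GOp_n=\COp\circ\Lambda^{1-n}\LOp$ are exactly the ingredients one finds in~\cite{Cohen} and, in a more streamlined form, in~\cite{Sinha}. Your closing paragraph correctly identifies where the real work lies and is honest about the $n=2$ subtlety. In short, you have supplied what the paper deliberately omits, and done so accurately.
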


We refer to the cited reference~\cite{Cohen}
for the original proof of this theorem.
We also refer to~\cite{Sinha}
where this result is revisited with an insightful account of the geometrical picture
underlying the computation of~$H_*(\EOp_n)$.

\medskip
According to this theorem,
any morphism toward an operad $\phi: H_*(\EOp_n)\rightarrow\POp$
is determined by its evaluation on $\mu\in H_0(\EOp_n(2))$ and $\lambda_{n-1}\in H_{n-1}(\EOp_n(2))$
since these operations generate $\GOp_n = H_*(\EOp_n)$.
For the embedding $\iota: \EOp_{n-1}\rightarrow\EOp_n$
and the suspension morphism $\sigma: \EOp_n\rightarrow\Lambda^{-1}\EOp_{n-1}$,
we obtain by a straightforward inspection:

\begin{prop}\label{Prelude:GerstenhaberOperads:LittleCubesMorphismHomology}\hspace*{2mm}
\begin{enumerate}
\item
The morphism $\iota_*: H_*(\EOp_{n-1})\rightarrow H_*(\EOp_{n})$
induced by the embedding $\iota: \EOp_{n-1}\hookrightarrow\EOp_{n}$
satisfies
\begin{equation*}
\iota_*(\mu) = \mu\quad\text{and}\quad\iota_*(\lambda_{n-1}) = 0,\quad\text{for each $n>1$}.
\end{equation*}
For $n=1$, we also have $\iota_*(\mu) = \mu$.
\item
The morphism $\sigma_*: H_*(\EOp_n)\rightarrow H_*(\Lambda^{-1}\EOp_{n-1})$
induced by the suspension morphism $\sigma: \EOp_n\rightarrow\Lambda^{-1}\EOp_{n-1}$
satisfies
\begin{equation*}
\sigma_*(\mu) = 0\quad\text{and}\quad\sigma_*(\lambda_{n-1}) = \lambda_{n-2},\quad\text{for each $n>2$}.
\end{equation*}
To simplify notation, we omit to mark the operadic desuspension in the formula of $\sigma_*(\lambda_{n-1})$.
Note however that this desuspension reverses the parity of the operation $\lambda_{n-2}$.

For $n=2$, we have the same formulas provided we define the degree~$0$ bracket $\lambda_0 = \lambda_0(x_1,x_2)$
by the commutator of the associative product $\mu = \mu(x_1,x_2)$
in $H_*(\EOp_1) = \AOp$.
\qed
\end{enumerate}
\end{prop}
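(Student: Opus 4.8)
By Theorem~\ref{Prelude:GerstenhaberOperads:LittleCubesHomology} the homology operads $H_*(\EOp_m)$ are generated in arity $2$ by the product class $\mu$ and the bracket class $\lambda_{m-1}$, so an operad morphism out of $H_*(\EOp_m)$ is determined by its values on those two classes. The plan is therefore to evaluate $\iota_*$ and $\sigma_*$ directly on the representative cycles of~\S\ref{Prelude:GerstenhaberOperads:RepresentativeOperations}: the $0$-simplex $\mu=\mu_0$, and the cycle $\lambda_{n-1}=\mu_{n-1}\pm\tau\mu_{n-1}$ built from the two top alternating simplices.

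First I would treat~(a). The map $\iota\colon\EOp_{n-1}\hookrightarrow\EOp_n$ is the inclusion of a subcomplex, so $\iota_*$ carries the class of a cycle of $\EOp_{n-1}$ to the class of the same chain viewed in $\EOp_n$. Since $\mu_0\in\EOp_1(2)\subset\EOp_{n-1}(2)$, this already gives $\iota_*(\mu)=\mu$ (and the degenerate $n=1$ statement is the same remark). For the bracket generator $\lambda_{n-2}$ of $H_*(\EOp_{n-1}(2))$ I would invoke the computation recalled in~\S\ref{Prelude:GerstenhaberOperads:RepresentativeOperations} that $H_d(\EOp_n(2))$ is concentrated in degrees $d=0$ and $d=n-1$: for $n>2$ we have $0<n-2<n-1$, hence $H_{n-2}(\EOp_n(2))=0$ and the class $\iota_*(\lambda_{n-2})$ must vanish. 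Concretely, the relation $\delta(\mu_{n-1})=\tau\mu_{n-2}+(-1)^{n-1}\mu_{n-2}$ exhibits the representative cycle of $\lambda_{n-2}$ as a boundary once $\mu_{n-1}$ becomes available in $\EOp_n(2)$. The case $n=2$ is the statement that $\mu$ is commutative in $H_*(\EOp_2)$, so its commutator $\lambda_0$ maps to $0$.

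Next I would treat~(b) by unwinding the cap-product formula $\sgn\cap(w_0,\dots,w_d)=\sgn(w_0,\dots,w_{r-1})\cdot(w_{r-1},\dots,w_d)$ of~\S\ref{Prelude:BarrattEccles:SuspensionMorphisms} in arity $r=2$, where it reads $\sigma(w_0,\dots,w_d)=\sgn(w_0,w_1)\cdot(w_1,\dots,w_d)$ with $\sgn(w_0,w_1)=\pm 1$ according to whether $(w_0(1),w_1(1))$ is $(1,2)$ or $(2,1)$. Since this cap product drops degree by $r-1=1$, it kills $\mu_0$, so $\sigma(\mu_0)=0$ and $\sigma_*(\mu)=0$. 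Applying $\sigma$ to $\mu_{n-1}$ and to $\tau\mu_{n-1}$ and deleting the first two vertices leaves the alternating simplices $\tau\mu_{n-2}$ and $\mu_{n-2}$, carrying $\sgn$-signs $+1$ and $-1$ respectively; hence $\sigma$ sends $\lambda_{n-1}=\mu_{n-1}\pm\tau\mu_{n-1}$ to $\tau\mu_{n-2}\mp\mu_{n-2}$, which up to an overall sign is the bracket cycle $\lambda_{n-2}$ of $\EOp_{n-1}(2)$. Under the identification $H_*(\Lambda^{-1}\EOp_{n-1})=\Lambda^{-1}H_*(\EOp_{n-1})$ — whose signature twist is precisely what reverses the parity of $\lambda_{n-2}$ — this reads $\sigma_*(\lambda_{n-1})=\lambda_{n-2}$. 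The case $n=2$ is the same computation, now with $\mu_0,\tau\mu_0$ the two generators of $\AOp(2)=\ZZ[\Sigma_2]$ and $\tau\mu_0-\mu_0$ representing the commutator $\lambda_0$.

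I expect the only point requiring genuine care to be sign bookkeeping: tracking simultaneously the Koszul signs in the differential $\delta$ of~\S\ref{Prelude:GerstenhaberOperads:RepresentativeOperations}, the signature factors hidden in the cochain $\sgn$, and the sign twist built into the operadic desuspension $\Lambda^{-1}$, so that they combine to yield the stated sign-free identities together with the advertised parity change on $\lambda_{n-2}$. Apart from this, every assertion comes down to direct substitution into the definitions of~\S\S\ref{Prelude:BarrattEccles:LittleCubesFiltration}, \ref{Prelude:BarrattEccles:SuspensionMorphisms} and~\ref{Prelude:GerstenhaberOperads:RepresentativeOperations}.
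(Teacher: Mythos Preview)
Your approach is correct and is exactly what the paper does: the proposition is stated with a $\qed$ and the preceding sentence says it follows ``by a straightforward inspection,'' meaning direct evaluation of $\iota$ and of the cap-product formula $\sgn\cap(-)$ on the explicit cycles $\mu_0$ and $\mu_{n-1}+(-1)^{n-1}\tau\mu_{n-1}$ of~\S\ref{Prelude:GerstenhaberOperads:RepresentativeOperations}. You carry this out in full, correctly reading the bracket generator of $H_*(\EOp_{n-1})$ as $\lambda_{n-2}$ and rightly flagging the sign bookkeeping (the interaction of the $\sgn$-cochain with the signature twist hidden in $\Lambda^{-1}$) as the only place requiring care.
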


We aim to study the Koszul dual of the operads $\EOp_n$.
At the homology level,
we have the following statement:

\begin{fact}\label{Prelude:GerstenhaberOperads:KoszulDualityStatement}\hspace*{2mm}
\begin{enumerate}
\item\label{KoszulDuality:Associative}
The associative operad $\AOp$ is Koszul with $\KOp(\AOp) = \Lambda^{-1}\AOp^{\vee}$.
Let $\mu^{\vee}\in\AOp^{\vee}(2)$
be the dual element of $\mu\in\AOp(2)$.
The Koszul duality equivalence $\epsilon: \BOp^c(\Lambda^{-1}\AOp^{\vee})\xrightarrow{\sim}\AOp$
is determined by
\begin{equation*}
\epsilon(\mu^{\vee}) = \mu.
\end{equation*}
\item\label{KoszulDuality:Gerstenhaber}
The Gerstenhaber operad $\GOp_n$ is Koszul with $\KOp(\GOp_n) = \Lambda^{-n}\GOp_n^{\vee}$
as a Koszul dual cooperad, for each $n>1$.
Let $\mu^{\vee},\lambda_{n-1}^{\vee}\in\GOp_n^{\vee}(2)$
be the dual basis of $\mu,\lambda_{n-1}\in\GOp_n(2)$.
The Koszul duality equivalence $\epsilon: \BOp^c(\Lambda^{-n}\GOp_n^{\vee})\xrightarrow{\sim}\GOp_n$
is determined by
\begin{equation*}
\epsilon(\mu^{\vee}) = \lambda_{n-1}\quad\text{and}\quad\epsilon(\lambda_{n-1}^{\vee}) = \mu.
\end{equation*}
\end{enumerate}
\end{fact}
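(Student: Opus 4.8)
Both assertions are instances of the Koszul duality theory for binary quadratic operads over a ring recalled in \S\ref{Prelude:KoszulDuality:KoszulDualDefinition} and developed in \cite[\S 5.2]{FressePartitions}. Recall that $\AOp$ is the binary quadratic operad generated by $\mu\in\AOp(2)$ subject to the associativity relation, and that $\GOp_n$ ($n>1$) is the binary quadratic operad with the presentation of \S\ref{Prelude:GerstenhaberOperads:Definition}; in particular both carry a Koszul dual cooperad of the form $\KOp(\POp) = \Lambda^{-1}(\POp^{!})^{\vee}$, where $\POp^{!}$ is the quadratic dual operad. The plan is to (i) compute the quadratic duals, (ii) verify Koszulness, and (iii) pin down the duality equivalence on generators by a degree count.

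For (i): in the case of $\AOp$ one checks, as in the classical theory over a field, that the space of associativity relations inside $\FOp_2(\ZZ\mu)(3)$ is its own orthogonal complement, so $\AOp^{!} = \AOp$ and $\KOp(\AOp) = \Lambda^{-1}\AOp^{\vee}$. For $\GOp_n$ I would use the distributive-law presentation $\GOp_n = \COp\circ\Lambda^{1-n}\LOp$ of \S\ref{Prelude:GerstenhaberOperads:Definition} together with the classical dualities $\COp^{!} = \LOp$ and $\LOp^{!} = \COp$: dualizing the presentation exchanges $\mu$ with the bracket $\lambda_{n-1}$, turns the associativity relation into the Jacobi relation and conversely, and leaves the distribution relation self-orthogonal, so $\GOp_n^{!}$ is identified, up to an operadic suspension and the Koszul sign conventions, with $\GOp_n$ with its two generators exchanged. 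Tracking the suspension then gives $\KOp(\GOp_n) = \Lambda^{-n}\GOp_n^{\vee}$, the $\Sigma_2$-twists coming from $\Lambda^{-n}$ being exactly those matching the $(-1)^n$-symmetry of $\lambda_{n-1}$.

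For (ii): Koszulness of $\AOp$ over $\ZZ$ is classical and I would invoke \cite{FressePartitions}; the point is that the bar complex of $\AOp$ has free homology. For $\GOp_n$ I would use the distributive law $\theta\colon\Lambda^{1-n}\LOp\circ\COp\to\COp\circ\Lambda^{1-n}\LOp$: since both $\COp$ and $\LOp$ are Koszul over $\ZZ$ --- the homology of the associated partition posets consisting of free $\ZZ$-modules, as established in \cite{FressePartitions} --- and $\theta$ satisfies the Poincar\'e--Birkhoff--Witt condition, the operad $\GOp_n = \COp\circ\Lambda^{1-n}\LOp$ is again Koszul, with $H_*(\BOp^c(\KOp(\GOp_n)))$ assembled from the cobar homologies of $\COp$ and of $\LOp$. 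I expect this step --- verifying the PBW/diamond property for $\theta$ and the freeness hypotheses over $\ZZ$ rather than over a field --- to be the main obstacle; everything else is formal once the partition-complex input of \cite{FressePartitions} is available.

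For (iii): once $\AOp$ and $\GOp_n$ are known to be binary Koszul, the Koszul duality equivalence $\epsilon\colon\BOp^c(\KOp(\POp))\xrightarrow{\sim}\POp$ exists by definition and coincides, under the identification $\POp\cong H_*(\BOp^c(\KOp(\POp)))$, with the edge morphism of the homologically Koszul cooperad $\KOp(\POp)$ described in \S\ref{Prelude:KoszulDuality:HomologyKoszulCooperads}: it is the null morphism on the arity-$\neq 2$ generators of $\BOp^c(\KOp(\POp))$ and the canonical map $\Sigma^{-1}\overline{\KOp(\POp)}(2)\to\overline{\POp}(2)$ in arity $2$. It remains to unravel the suspensions: the arity-$2$ generators of $\BOp^c(\Lambda^{-n}\GOp_n^{\vee})$ form $\Sigma^{-1}\Lambda^{-n}\GOp_n^{\vee}(2)\cong\Sigma^{n-1}\GOp_n^{\vee}(2)^{\pm}$, in which $\mu^{\vee}$ sits in degree $n-1$ and $\lambda_{n-1}^{\vee}$ in degree $0$, with $\Sigma_2$-actions matching those of $\lambda_{n-1}$ and $\mu$ respectively; so the canonical map is forced to send $\mu^{\vee}\mapsto\lambda_{n-1}$ and $\lambda_{n-1}^{\vee}\mapsto\mu$, and the same count in the case of $\Lambda^{-1}\AOp^{\vee}$ gives $\epsilon(\mu^{\vee}) = \mu$. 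Since a morphism out of a quasi-free operad is determined by its restriction to the generating $\Sigma_*$-object, these formulas characterize $\epsilon$ and conclude the proof.
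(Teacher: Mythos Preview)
Your proposal is correct and follows exactly the line of argument the paper indicates: the paper does not prove this Fact in the body but points to \cite{GinzburgKapranov} and \cite[\S 5.2]{FressePartitions} for~(\ref{KoszulDuality:Associative}), and to the distributive-law argument of \cite{Markl} combined with \cite[Theorems~6.5--6.7]{FressePartitions} (plus the projectivity of $\COp$ and $\LOp$ over $\ZZ$) for~(\ref{KoszulDuality:Gerstenhaber}), with the PBW criterion of \cite{Hoffbeck} mentioned as an alternative route---precisely the ingredients of your steps (i)--(ii). Your step (iii), reading off $\epsilon$ on arity-$2$ generators by matching degrees and $\Sigma_2$-characters after the suspensions, is the standard unpacking of the Koszul duality equivalence and is not spelled out in the paper either.
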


Recall that the Koszul duality equivalences
vanish on generators of arity $r>2$ (see~\S\ref{Prelude:KoszulDuality:HomologyKoszulCooperads}).

Again we omit to mark operadic suspensions on elements though this operation modifies degrees
and symmetric group actions.

Assertion~(\ref{KoszulDuality:Associative})
about the associative operad is very classical (see~\cite{GinzburgKapranov})
and holds in the context of modules over a ring (see~\cite[\S 5.2]{FressePartitions}).
Assertion~(\ref{KoszulDuality:Gerstenhaber})
about the Gerstenhaber operad can be deduced from the argument of~\cite{Markl}
and from~\cite[Theorems 6.5-6.7]{FressePartitions}.
To see that the result holds in the context of modules over a ring,
we have to check (according to the definition of~\cite[\S 5.2.8]{FressePartitions})
that the underlying $\ZZ$-modules of the commutative operad are projective (obvious)
as well as the underlying $\ZZ$-modules of the Lie operad (see~\cite{Bourbaki,Reutenauer} or use the argument of~\cite[\S 5.2.8]{FressePartitions}).
An alternative and direct proof of Fact~\ref{Prelude:GerstenhaberOperads:KoszulDualityStatement}
follows from~\cite{Hoffbeck}
(adapt the examples of this reference to check that the associative operad and the Gerstenhaber operads
are all Poincar\'e-Birkhoff-Witt,
for any choice of ground ring).

The next proposition shows that the Koszul duality equivalences of Gerstenhaber operads
fit nicely together:

\begin{prop}\label{Prelude:GerstenhaberOperads:KoszulPatching}
The Koszul duality gives weak-equivalences
\begin{equation*}
\epsilon_n: \BOp^c(\Lambda^{-n}\GOp_n^{\vee})\xrightarrow{\sim}\GOp_n
\end{equation*}
so that the diagram
\begin{equation*}
\xymatrix{ \BOp^c(\Lambda^{-1}\AOp^{\vee})\ar[r]^(0.5){\sigma^*}\ar[d]^{\sim}_{\epsilon_1=\epsilon} &
\BOp^c(\Lambda^{-2}\GOp_2^{\vee})\ar[r]^(0.6){\sigma^*}\ar[d]^{\sim}_{\epsilon_2} &
\cdots\ar[r]^(0.4){\sigma^*} &
\BOp^c(\Lambda^{-n}\GOp_n^{\vee})\ar[r]^(0.65){\sigma^*}\ar[d]^{\sim}_{\epsilon_n} & \cdots \\
\AOp\ar[r]_{\iota_*} &
\GOp_2\ar[r]_{\iota_*} &
\cdots\ar[r]_{\iota_*} &
\GOp_n\ar[r]_{\iota_*} & \cdots }
\end{equation*}
commutes,
where:
\begin{itemize}
\item
the morphisms $\iota_*$ are induced by the operad embeddings $\iota: \EOp_{n-1}\rightarrow\EOp_n$,
\item
the morphisms $\sigma^*$
are defined by the application of the functor $\BOp^c(\Lambda^{-n} H_*(-)^{\vee}) = \BOp^c(H_*(\Lambda^n -)^{\vee})$
to the suspension morphisms $\sigma: \EOp_n\rightarrow\Lambda^{-1}\EOp_{n-1}$.
\end{itemize}
\end{prop}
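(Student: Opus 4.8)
The assertion that each $\epsilon_n$ is a weak-equivalence is nothing but the Koszul property recorded in Fact~\ref{Prelude:GerstenhaberOperads:KoszulDualityStatement}; the content of the proposition is the commutativity of the squares, and the plan is to reduce this, one square at a time, to a computation on arity~$2$ generators. Fix $n\geq 2$ and consider the square with left and right verticals $\epsilon_{n-1}$, $\epsilon_n$, top map $\sigma^*\colon\BOp^c(\Lambda^{1-n}\GOp_{n-1}^\vee)\to\BOp^c(\Lambda^{-n}\GOp_n^\vee)$, and bottom map $\iota_*\colon\GOp_{n-1}\to\GOp_n$, where we silently use the identification $\GOp_m\simeq H_*(\EOp_m)$ (Theorem~\ref{Prelude:GerstenhaberOperads:LittleCubesHomology}) to read $\iota_*$ and the homology morphism $\sigma_*$ used below. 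By construction $\sigma^*$ is the operadic cobar construction of the cooperad morphism $\Lambda^{1-n}\GOp_{n-1}^\vee\to\Lambda^{-n}\GOp_n^\vee$ obtained by dualizing and operadically (de)suspending $\sigma_*\colon\GOp_n\to\Lambda^{-1}\GOp_{n-1}$; in particular, its restriction to the generating $\Sigma_*$-objects is arity-preserving. Since the source $\BOp^c(\Lambda^{1-n}\GOp_{n-1}^\vee)$ is quasi-free and the two composites around the square are morphisms of dg-operads, each composite is determined by its restriction to the generating $\Sigma_*$-object, so it suffices to compare the two composites on those generators.

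The next step is to cut the comparison down to arity~$2$. By the description of the Koszul duality equivalence recalled in~\S\ref{Prelude:KoszulDuality:HomologyKoszulCooperads} (see also the remark following Fact~\ref{Prelude:GerstenhaberOperads:KoszulDualityStatement}), both $\epsilon_{n-1}$ and $\epsilon_n$ vanish on generators of arity $r>2$; together with the arity-preservation of $\sigma^*$ this forces both composites $\iota_*\circ\epsilon_{n-1}$ and $\epsilon_n\circ\sigma^*$ to vanish on generators of arity $r>2$. We are thus reduced to comparing them on the arity~$2$ part of the generating object, which up to operadic (de)suspension is $\GOp_{n-1}^\vee(2)$, with dual basis $\mu^\vee$, $\lambda_{n-2}^\vee$ of $\mu,\lambda_{n-2}\in\GOp_{n-1}(2)$.

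Assume first $n>2$. On arity~$2$ generators, the cooperad morphism underlying $\sigma^*$ is the linear dual of the arity~$2$ component of $\sigma_*\colon\GOp_n\to\Lambda^{-1}\GOp_{n-1}$, which by Proposition~\ref{Prelude:GerstenhaberOperads:LittleCubesMorphismHomology}(b) sends $\mu\mapsto 0$ and $\lambda_{n-1}\mapsto\lambda_{n-2}$; dualizing yields $\sigma^*(\mu^\vee)=0$ and $\sigma^*(\lambda_{n-2}^\vee)=\lambda_{n-1}^\vee$. Combining this with Fact~\ref{Prelude:GerstenhaberOperads:KoszulDualityStatement}(b) for $\epsilon_n$, with Fact~\ref{Prelude:GerstenhaberOperads:KoszulDualityStatement}(b) for $\epsilon_{n-1}$, and with Proposition~\ref{Prelude:GerstenhaberOperads:LittleCubesMorphismHomology}(a) for $\iota_*$, one finds
\begin{gather*}
\epsilon_n\bigl(\sigma^*(\mu^\vee)\bigr)=0=\iota_*(\lambda_{n-2})=\iota_*\bigl(\epsilon_{n-1}(\mu^\vee)\bigr),\\
\epsilon_n\bigl(\sigma^*(\lambda_{n-2}^\vee)\bigr)=\epsilon_n(\lambda_{n-1}^\vee)=\mu=\iota_*(\mu)=\iota_*\bigl(\epsilon_{n-1}(\lambda_{n-2}^\vee)\bigr).
\end{gather*}
Hence the two composites agree on the two arity~$2$ generators, and therefore on all generators, which gives the commutativity of the square for $n>2$.

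Finally, the leftmost square ($n=2$) has to be handled separately, since there $\GOp_1$ is to be read as $\AOp$ and the bracket $\lambda_0$ as the commutator of the associative product, following the conventions of~\S\ref{Prelude:GerstenhaberOperads:Definition} and Proposition~\ref{Prelude:GerstenhaberOperads:LittleCubesMorphismHomology}(b); using $\epsilon_1(\mu^\vee)=\mu$ from Fact~\ref{Prelude:GerstenhaberOperads:KoszulDualityStatement}(a) and $\iota_*(\mu)=\mu$ from Proposition~\ref{Prelude:GerstenhaberOperads:LittleCubesMorphismHomology}(a), one checks the coincidence of the composites by the same bookkeeping. The step I expect to demand the most care is exactly this bookkeeping: keeping track of the operadic suspensions $\Lambda^{1-n}$ and $\Lambda^{-n}$, of the desuspension $\Sigma^{-1}$ built into the cobar construction, of the linear dualization $(-)^\vee$, and of the sign and parity twists they introduce, so that identifications such as $\sigma^*(\lambda_{n-2}^\vee)=\lambda_{n-1}^\vee$ and the equalities above hold on the nose and not merely up to an unwanted sign. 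This is, however, purely a matter of unwinding conventions: the only genuinely homological input is Proposition~\ref{Prelude:GerstenhaberOperads:LittleCubesMorphismHomology}, which is already at our disposal.
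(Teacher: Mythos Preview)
Your proposal is correct and follows essentially the same approach as the paper's own proof: reduce the commutativity of each square to the generating $\Sigma_*$-object using that the cobar construction is quasi-free, then to arity~$2$ using that the Koszul duality equivalences vanish on generators of higher arity and that $\sigma^*$ preserves arity, and finally check the identity on the arity~$2$ basis elements using Proposition~\ref{Prelude:GerstenhaberOperads:LittleCubesMorphismHomology} and Fact~\ref{Prelude:GerstenhaberOperads:KoszulDualityStatement}. The paper compresses the final verification into a single sentence, whereas you spell out the values $\sigma^*(\mu^\vee)=0$, $\sigma^*(\lambda_{n-2}^\vee)=\lambda_{n-1}^\vee$ and the resulting equalities; but the logical content is identical.
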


\begin{proof}
Since the cobar construction is a quasi-free operad $\BOp^c(\DOp) = (\FOp(\Sigma^{-1}\bar{\DOp}),\partial)$,
it is sufficient to check the commutativity of each square on generating elements of $\BOp^c(\DOp)$.
Recall that the Koszul duality equivalences are supposed to vanish on generating elements of arity $r>2$.
Thus we only have to check identities
\begin{equation*}
\iota_*\epsilon_{n-1}(\gamma^{\vee}) = \epsilon_n\sigma^*(\gamma^{\vee})
\end{equation*}
when $\gamma^{\vee}$ is a basis element of $\Lambda^{-1}\AOp^{\vee}(2)$ for $n=2$,
respectively $\Lambda^{1-n}\GOp_{n-1}^{\vee}(2)$ for $n>2$.
This verification is immediate from the formulas of Proposition~\ref{Prelude:GerstenhaberOperads:LittleCubesMorphismHomology}
and Fact~\ref{Prelude:GerstenhaberOperads:KoszulDualityStatement}.
\end{proof}

\subsection{Statement of the main theorems}\label{Prelude:MainTheorems}
The upper and lower rows of the diagram of Proposition~\ref{Prelude:GerstenhaberOperads:KoszulPatching}
are defined at the chain level.
The main task of this paper is to prove
that the vertical morphisms
have a realization at the chain level as well:

\begin{mainthm}\label{Prelude:RealizationTheorem}
We have a sequence of morphisms
\begin{equation*}
\psi_n: \BOp^c(\Lambda^{-n}\EOp_n^{\vee})\rightarrow\EOp_n,\quad n\geq 1,
\end{equation*}
beginning with the Koszul duality equivalence of the associative operad for $n=1$,
and such that:
\begin{enumerate}
\item\label{Prelude:RealizationTheorem:Diagram}
the diagram
\begin{equation*}
\xymatrix{
\BOp^c(\Lambda^{-1}\EOp_1^{\vee})\ar[r]^(0.5){\sigma^*}\ar[d]^{\sim}_{\psi_1=\epsilon_1} &
\BOp^c(\Lambda^{-2}\EOp_2^{\vee})\ar[r]^(0.6){\sigma^*}\ar@{.>}[d]_{\psi_2} &
\cdots\ar[r]^(0.4){\sigma^*} &
\BOp^c(\Lambda^{-n}\EOp_n^{\vee})\ar[r]^(0.65){\sigma^*}\ar@{.>}[d]_{\psi_n} & \cdots \\
\EOp_1\ar@{^{(}->}[]!R+<4pt,0pt>;[r]_{\iota} &
\EOp_2\ar@{^{(}->}[]!R+<4pt,0pt>;[r]_{\iota} &
\cdots\ar@{^{(}->}[]!R+<4pt,0pt>;[r]_{\iota} &
\EOp_n\ar@{^{(}->}[]!R+<4pt,0pt>;[r]_{\iota} & \cdots },
\end{equation*}
commutes;
\item\label{Prelude:RealizationTheorem:Prescription}
the composite of morphism induced by $\psi_n$ in homology
with the edge morphism of Proposition~\ref{Prelude:KoszulDuality:HomologyKoszulSpectralSequence}
reduces to the Koszul duality equivalence
\begin{equation*}
\epsilon_n: \BOp^c(\Lambda^{-n}\GOp_n^{\vee})\xrightarrow{\sim}\GOp_n
\end{equation*}
of propositions~\ref{Prelude:GerstenhaberOperads:KoszulDualityStatement}-\ref{Prelude:GerstenhaberOperads:KoszulPatching}.
\end{enumerate}
\end{mainthm}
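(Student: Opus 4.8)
The proof spreads over the four sections announced in the contents; here is the plan. First I would construct the morphism $\epsilon\colon\colim_n\BOp^c(\Lambda^{-n}\EOp_n^{\vee})\to\COp$ of~(\ref{CobarAugmentation}). Since each $\BOp^c(\Lambda^{-n}\EOp_n^{\vee})$ is quasi-free and $\COp(r)$ is concentrated in degree~$0$, such a morphism amounts to a family --- compatible with the maps $\sigma^{*}$ --- of degreewise maps on the degree-$0$ generators that annihilate the cobar differential: the arity-$2$ generator $\lambda_{n-1}^{\vee}$, which sits in degree~$0$, is sent to the generating product $\mu\in\COp(2)$, and the remaining degree-$0$ generators are governed by an explicit combinatorial prescription making $\epsilon$ commute with the $\sigma^{*}$. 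Because the $\sigma^{*}$ are cofibrations of operads, $\colim_n\BOp^c(\Lambda^{-n}\EOp_n^{\vee})$ is cofibrant; I would fix $\psi_1=\epsilon_1$, the Koszul duality equivalence of~$\AOp$, as the instance $n=1$, which is compatible with $\epsilon$ via the factorization $\AOp\to\EOp\to\COp$ of~\S\ref{Prelude:BarrattEccles:Degree0}.

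Since the structural weak-equivalence $\EOp\xrightarrow{\sim}\COp$ is degreewise surjective, hence an acyclic fibration of operads, $\epsilon$ lifts along it to an operad morphism $\tilde\epsilon\colon\colim_n\BOp^c(\Lambda^{-n}\EOp_n^{\vee})\to\EOp$ extending~$\psi_1$. The entire point is to choose this lift so that it carries each $\BOp^c(\Lambda^{-n}\EOp_n^{\vee})$ into the $n$th little-cubes layer $\EOp_n\subset\EOp$ --- an unconstrained lift has no reason to respect the two filtrations. To control it I would use the cell structures of~\S\ref{Interlude}, the $\K$-structures: presentations of an operad as a colimit over a category~$\K$ of free pieces, compatible with operadic composition and refining the filtration~(\ref{eqn:NestedEnOperads}), so that $\EOp_n$ is generated by the cells of complexity $<n$; the Barratt--Eccles operad carries such a structure. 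In~\S\ref{SecondStep} I would show that the cobar constructions $\BOp^c(\Lambda^{-n}\EOp_n^{\vee})$ and their colimit inherit a compatible $\K$-structure, the crux being to verify that the twisting derivation $\partial$ is filtered and that the complexity-$<n$ piece of the colimit is exactly $\BOp^c(\Lambda^{-n}\EOp_n^{\vee})$.

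With source and target presented over~$\K$, I would then build $\tilde\epsilon$ one cell at a time, by induction over~$\K$, over the arity and over~$n$: at each cell one extends $\tilde\epsilon$ over the attached free operation while keeping its image inside the layer $\EOp_n$ prescribed by the $\K$-structure, and on the first arity-$2$ cells one moreover picks $\psi_n(\mu^{\vee})$ to be a chosen cycle representing $\lambda_{n-1}$ and $\psi_n(\lambda_{n-1}^{\vee})$ a chosen representative of~$\mu$, so that assertion~(\ref{Prelude:RealizationTheorem:Prescription}) becomes true by construction. \emph{This constrained cellular extension is the main obstacle}: one must show that the obstruction to extending $\tilde\epsilon$ over a cell while staying inside $\EOp_n$ vanishes, which I expect to reduce --- thanks to the compatibility of the $\K$-structures on the two sides --- to the acyclicity of $\EOp\xrightarrow{\sim}\COp$ relative to the corresponding layers. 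This is the technical heart of~\S\ref{FinalStep}.

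Finally, with $\psi_n:=\tilde\epsilon|_{\BOp^c(\Lambda^{-n}\EOp_n^{\vee})}$, assertion~(\ref{Prelude:RealizationTheorem:Diagram}) is automatic: the $\psi_n$ are restrictions of a single morphism out of the colimit, so $\psi_n\sigma^{*}=\psi_{n-1}$ as maps into~$\EOp$, and since $\EOp_{n-1}\hookrightarrow\EOp_n$ is injective this gives $\iota\psi_{n-1}=\psi_n\sigma^{*}$ in~$\EOp_n$. For assertion~(\ref{Prelude:RealizationTheorem:Prescription}), the cooperad $\Lambda^{-n}\EOp_n^{\vee}$ is homologically Koszul, since $H_*(\Lambda^{-n}\EOp_n^{\vee})=\Lambda^{-n}\GOp_n^{\vee}=\KOp(\GOp_n)$ is a Koszul cooperad by Fact~\ref{Prelude:GerstenhaberOperads:KoszulDualityStatement}, so Proposition~\ref{Prelude:KoszulDuality:HomologyKoszulSpectralSequence} supplies the edge equivalence $\edge\colon\BOp^c(\Lambda^{-n}\GOp_n^{\vee})\xrightarrow{\sim}H_*(\BOp^c(\Lambda^{-n}\EOp_n^{\vee}))$; both $H_*(\psi_n)\circ\edge$ and $\epsilon_n$ vanish on generators of arity $>2$, and on the arity-$2$ generators $\mu^{\vee},\lambda_{n-1}^{\vee}$ they agree by the construction choices together with the homology formulas of Proposition~\ref{Prelude:GerstenhaberOperads:LittleCubesMorphismHomology} and Fact~\ref{Prelude:GerstenhaberOperads:KoszulDualityStatement}, exactly as in the proof of Proposition~\ref{Prelude:GerstenhaberOperads:KoszulPatching}. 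Since $\edge$ and $\epsilon_n$ are weak-equivalences, this forces $H_*(\psi_n)=H_*(\epsilon_n)\circ H_*(\edge)^{-1}$, so each $\psi_n$ is a weak-equivalence and $\BOp^c(\Lambda^{-n}\EOp_n^{\vee})$ is the sought cofibrant model of~$\EOp_n$.
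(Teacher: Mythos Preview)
Your overall architecture matches the paper's: build compatible maps $\phi_n:\BOp^c(\DOp_n)\to\COp$, equip both sides with $\K$-structures, and lift to $\EOp$ in a way that respects the filtrations. But you have misplaced the main difficulty, and this is a genuine gap.

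The construction of the $\phi_n$ compatible with $\sigma^*$ is \emph{not} an ``explicit combinatorial prescription.'' A morphism $\BOp^c(\DOp_n)\to\COp$ corresponds to a twisting cochain, equivalently (Proposition~\ref{FirstStep:Equations:AdjointExpression}) to elements $\omega_n(r)\in\EOp_n(r)_{\Sigma_r}$ of degree $n(r-1)-1$ satisfying $\delta(\omega_n(r))+\sum\pm\omega_n(s)\circ_i\omega_n(t)=0$ together with $\sigma(\omega_n(r))=\omega_{n-1}(r)$. The obstruction to extending such a system from arity $<r$ to arity $r$ is a class in
\[
H_{n(r-1)-2}\bigl(\ker\{\sigma_*:\EOp_n(r)_{\Sigma_r}\to\Lambda^{-1}\EOp_{n-1}(r)_{\Sigma_r}\}\bigr),
\]
and proving this vanishes (Lemma~\ref{FirstStep:Equations:ObstructionVanishing}) requires Cohen's computation of $H_*(\COp_n(r)_{\Sigma_r},\FF)$ for $\FF=\QQ,\FF_p$ in terms of Browder brackets and Dyer--Lashof/Frobenius operations, plus an analysis of how $\sigma_*$ acts on the top-degree generators $\xi_{n-1}^l(\gamma_m)$. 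This is the content of \S\ref{FirstStep} and is one of the two technical hearts of the proof; there is no shortcut. The paper in fact notes (end of \S\ref{FirstStep:Conclusion}) that the analogous direct obstruction problem for $\psi_n$ itself has obstructions that do \emph{not} vanish, which is precisely why the detour through $\COp$ and $\K$-structures is forced.

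For the lifting step your intuition is right, but the paper organises it via model categories rather than ad~hoc cell-by-cell extension: it puts a model structure on $\K$-operads (Theorem~\ref{Interlude:GraphOperadModel:KOperadModel}), proves $\sigma^*:\DOp_{n-1}\to\DOp_n$ is a cofibration of symmetric $\K$-diagrams (Proposition~\ref{FinalStep:KModuleCofibration:SuspensionCofibration}), deduces that the relevant pushout is a cofibration of $\K$-operads (Proposition~\ref{Interlude:GraphOperadModel:CofibrantKOperads}), and then performs a single lift against the acyclic fibration $\EOp\to\COp$ of $\K$-operads (Proposition~\ref{FinalStep:KOperadLifting:MainConstruction}); taking $\colim_{\K_n}$ yields $\psi_n$. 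Your ``reduces to the acyclicity of $\EOp\to\COp$ relative to the layers'' is exactly what the $\K$-operad model structure encodes.
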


In this statement,
the notation $\sigma^*: \BOp^c(\Lambda^{1-n}\EOp_{n-1}^{\vee})\rightarrow\BOp^c(\Lambda^{-n}\EOp_n^{\vee})$
refers to the image of the suspension morphism $\sigma: \EOp_n\rightarrow\Lambda^{-1}\EOp_{n-1}$
under the functor $\BOp^c(\Lambda^{-n}(-)^{\vee}) = \BOp^c((\Lambda^n -)^{\vee})$.

To justify the second assertion of the theorem,
note that the cooperad~$\Lambda^{-n}\EOp_n^{\vee}$ is homologically Koszul,
because, according to Fact~\ref{Prelude:GerstenhaberOperads:KoszulDualityStatement},
the homology operads $\GOp_n = H_*(\EOp_n)$
are Koszul with dual $\Lambda^{-n}\GOp_n^{\vee} = H_*(\Lambda^{-n}\EOp_n^{\vee})$.

The construction implies automatically:

\begin{mainthm}\label{Prelude:BarDualityTheorem}
The morphisms of Theorem~\ref{Prelude:RealizationTheorem}
\begin{equation*}
\psi_n: \BOp^c(\Lambda^{-n}\EOp_n^{\vee})\rightarrow\EOp_n
\end{equation*}
are automatically weak-equivalences.
\end{mainthm}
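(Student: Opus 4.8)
The plan is to deduce Theorem~\ref{Prelude:BarDualityTheorem} from Theorem~\ref{Prelude:RealizationTheorem} together with the spectral sequence machinery recollected in~\S\ref{Prelude:KoszulDuality}. The key observation is that each cooperad $\DOp_n = \Lambda^{-n}\EOp_n^{\vee}$ is homologically Koszul: its components $\EOp_n(r)$ are degreewise finitely generated free $\ZZ$-modules (hence the dual is well-behaved and cofibrant), and by Fact~\ref{Prelude:GerstenhaberOperads:KoszulDualityStatement} the homology operad $\GOp_n = H_*(\EOp_n)$ is Koszul with Koszul dual cooperad $\KOp(\GOp_n) = \Lambda^{-n}\GOp_n^{\vee} = H_*(\DOp_n)$, which is a binary Koszul cooperad in the sense of~\S\ref{Prelude:KoszulDuality:HomologyKoszulCooperads}. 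Therefore Proposition~\ref{Prelude:KoszulDuality:HomologyKoszulSpectralSequence} applies and gives a weak-equivalence $\edge: \BOp^c(H_*(\DOp_n)) = \BOp^c(\Lambda^{-n}\GOp_n^{\vee})\xrightarrow{\sim} H_*(\BOp^c(\DOp_n))$.

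Next I would compare the three dg-operads involved. On the one hand, the Koszul duality equivalence of Fact~\ref{Prelude:GerstenhaberOperads:KoszulDualityStatement}(\ref{KoszulDuality:Gerstenhaber}) gives a quasi-isomorphism $\epsilon_n: \BOp^c(\Lambda^{-n}\GOp_n^{\vee})\xrightarrow{\sim}\GOp_n$; composing with the homology of the operad weak-equivalence $\EOp_n\xrightarrow{\sim}\GOp_n$ (more precisely, using $\GOp_n\cong H_*(\EOp_n)$ from Theorem~\ref{Prelude:GerstenhaberOperads:LittleCubesHomology}), we identify the target of $\edge$ with $H_*(\BOp^c(\DOp_n))$ and the source with something carrying the Koszul duality equivalence. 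On the other hand, $\psi_n$ induces a morphism in homology $(\psi_n)_*: H_*(\BOp^c(\DOp_n))\rightarrow H_*(\EOp_n) = \GOp_n$. The crux is assertion~(\ref{Prelude:RealizationTheorem:Prescription}) of Theorem~\ref{Prelude:RealizationTheorem}: it says precisely that the composite $(\psi_n)_*\circ\edge$ equals $\epsilon_n$. Since $\edge$ is a weak-equivalence and $\epsilon_n$ is a weak-equivalence, the two-out-of-three property of weak-equivalences in the model category of dg-operads forces $(\psi_n)_*$ to be an isomorphism. Hence $\psi_n$ itself is a quasi-isomorphism, i.e.\ a weak-equivalence.

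For the base case $n=1$, there is nothing to prove: $\psi_1 = \epsilon_1$ is by definition the Koszul duality equivalence of the associative operad, which is a weak-equivalence by Fact~\ref{Prelude:GerstenhaberOperads:KoszulDualityStatement}(\ref{KoszulDuality:Associative}). For $n>1$ the argument above is uniform. One should take care that the edge morphism $\edge$ in Proposition~\ref{Prelude:KoszulDuality:HomologyKoszulSpectralSequence} is the one appearing in the statement of assertion~(\ref{Prelude:RealizationTheorem:Prescription}), so that the identity $(\psi_n)_*\circ\edge = \epsilon_n$ literally holds as morphisms of dg-operads; this is exactly how that assertion was formulated. I do not expect a serious obstacle here: once Theorem~\ref{Prelude:RealizationTheorem} is granted, Theorem~\ref{Prelude:BarDualityTheorem} is a formal consequence of the Koszulness of $\GOp_n$ (which controls $E^1$ and $E^2$ of the cobar spectral sequence via Facts~\ref{Prelude:KoszulDuality:CobarSpectralSequence}--\ref{Prelude:KoszulDuality:CobarHomology}) and the two-out-of-three axiom. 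The only mild subtlety is to make sure the homology cooperad hypotheses of Proposition~\ref{Prelude:KoszulDuality:HomologyKoszulSpectralSequence} are met over $\ZZ$, namely that each $H_*(\EOp_n)(r) = \GOp_n(r)$ is a free graded $\ZZ$-module (clear from the explicit description $\GOp_n = \COp\circ\Lambda^{1-n}\LOp$ and the freeness of the Lie operad over $\ZZ$) and that each $\EOp_n(r)$ is a cofibrant dg-module (clear since it is degreewise free over $\ZZ$ and non-negatively graded).
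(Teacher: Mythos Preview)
Your proposal is correct and follows essentially the same route as the paper: form the commutative square relating $\epsilon_n$, the edge morphism $\edge$ of Proposition~\ref{Prelude:KoszulDuality:HomologyKoszulSpectralSequence}, and $(\psi_n)_*$, then use two-out-of-three. The extra care you take in verifying the homological Koszul hypotheses over $\ZZ$ is welcome but already implicit in the paper's setup.
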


\begin{proof}
According to the statement of Theorem~\ref{Prelude:RealizationTheorem},
the morphism induced by $\psi_n$
in homology
fits a commutative diagram
\begin{equation*}
\xymatrix{ \BOp^c(\Lambda^{-n}\GOp_n^{\vee})\ar[d]_{\epsilon_n}^{\sim}\ar[r]^(0.43){\edge}_(0.43){\sim} &
H_*(\BOp^c(\Lambda^{-n}\EOp_n^{\vee}))\ar@{.>}[d]^{\psi_n{}_*} \\
\GOp_n\ar[r]_(0.43){\simeq} & H_*(\EOp_n) },
\end{equation*}
where $\edge$ denotes the edge morphism of Fact~\ref{Prelude:KoszulDuality:HomologyKoszulSpectralSequence}.
The conclusion of the theorem is an immediate consequence
of the fact that $\epsilon_n$ and $\edge$
are both weak-equivalences.
\end{proof}

In~\S\ref{FirstStep:Conclusion},
we also check that each morphism $\sigma^*: \BOp^c(\Lambda^{1-n}\EOp_{n-1}^{\vee})\rightarrow\BOp^c(\Lambda^{-n}\EOp_{n}^{\vee})$
is a cofibration, and hence that each operad $\BOp^c(\Lambda^{-n}\EOp_{n}^{\vee})$
is cofibrant.
Thus,
we deduce from our results that $\BOp^c(\Lambda^{-n}\EOp_{n}^{\vee})$
defines a cofibrant replacement of~$\EOp_n$
in the category of operads.

\medskip
The following corollary of theorems~\ref{Prelude:RealizationTheorem}-\ref{Prelude:BarDualityTheorem}
is worth mentioning:

\begin{maincor}
The result of theorems~\ref{Prelude:RealizationTheorem}-\ref{Prelude:BarDualityTheorem}
extends formally to $n=\infty$
when $\Lambda^{-\infty}\EOp_{\infty}^{\vee}$
is defined as the colimit of the cooperads $\Lambda^{-n}\EOp_n^{\vee}$.
\end{maincor}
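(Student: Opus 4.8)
The approach is to deduce the statement by passing the construction of Theorems~\ref{Prelude:RealizationTheorem}--\ref{Prelude:BarDualityTheorem} to the colimit over~$n$, with $\Lambda^{-\infty}\EOp_{\infty}^{\vee} = \colim_n\Lambda^{-n}\EOp_n^{\vee}$ formed, as prescribed, along the cooperad morphisms underlying the maps~$\sigma^*$. The first observation is that the cobar construction commutes with this colimit: since $\BOp^c(\DOp) = (\FOp(\Sigma^{-1}\bar{\DOp}),\partial)$ is obtained from the free operad functor~$\FOp$ --- which is a left adjoint, and in each arity even a finite combination of tensor products, hence preserves filtered colimits --- by adding a derivation natural in~$\DOp$, the canonical comparison
\begin{equation*}
\colim_n\BOp^c(\Lambda^{-n}\EOp_n^{\vee})\xrightarrow{\simeq}\BOp^c(\Lambda^{-\infty}\EOp_{\infty}^{\vee})
\end{equation*}
is an isomorphism of dg-operads; its source is the operad occurring in~(\ref{CobarAugmentation}). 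Moreover, in each fixed arity~$r$ the cobar filtration of~\S\ref{Prelude:KoszulDuality:CobarFiltration} is finite, since it vanishes for $s>r-1$, so the same comparison identifies the spectral sequence of~$\BOp^c(\Lambda^{-\infty}\EOp_{\infty}^{\vee})$ with the colimit of the spectral sequences of the~$\BOp^c(\Lambda^{-n}\EOp_n^{\vee})$, filtered colimits of dg-modules being exact and commuting with finite filtrations.

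Next I would assemble the morphisms. Assertion~(\ref{Prelude:RealizationTheorem:Diagram}) of Theorem~\ref{Prelude:RealizationTheorem} says exactly that the~$\psi_n$ form a morphism of sequential diagrams, so, using $\colim_n\EOp_n = \EOp$ from~(\ref{eqn:NestedEnOperads}), they assemble into a morphism of dg-operads
\begin{equation*}
\psi_{\infty} = \colim_n\psi_n: \BOp^c(\Lambda^{-\infty}\EOp_{\infty}^{\vee})\rightarrow\EOp
\end{equation*}
restricting to $\iota\circ\psi_n$ on each~$\BOp^c(\Lambda^{-n}\EOp_n^{\vee})$; this yields Theorem~\ref{Prelude:RealizationTheorem}~(\ref{Prelude:RealizationTheorem:Diagram}) at $n=\infty$. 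Since homology commutes with filtered colimits, $H_*(\psi_{\infty}) = \colim_n H_*(\psi_n)$; each~$\psi_n$ being a weak-equivalence by Theorem~\ref{Prelude:BarDualityTheorem}, and a filtered colimit of isomorphisms being an isomorphism, $\psi_{\infty}$ is a weak-equivalence --- this is the extension of Theorem~\ref{Prelude:BarDualityTheorem}.

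For the homology prescription, exactness of filtered colimits first gives $H_*(\Lambda^{-\infty}\EOp_{\infty}^{\vee}) = \colim_n\Lambda^{-n}\GOp_n^{\vee}$. Because the morphisms~$\iota_*$ annihilate the brackets (Proposition~\ref{Prelude:GerstenhaberOperads:LittleCubesMorphismHomology}), the colimit $\colim_n\GOp_n$ along the~$\iota_*$ is the commutative operad $\COp = H_*(\EOp)$, so passing the equivalences~$\epsilon_n$ of Proposition~\ref{Prelude:GerstenhaberOperads:KoszulPatching} to the colimit exhibits $\colim_n\Lambda^{-n}\GOp_n^{\vee}$ as a Koszul cooperad whose cobar construction is weakly equivalent to~$\COp$, and hence identifies it, by uniqueness of the Koszul dual, with $\KOp(\COp) = \Lambda^{-1}\LOp^{\vee}$. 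Together with the fact that the components $\Lambda^{-\infty}\EOp_{\infty}^{\vee}(r)$ are cofibrant --- being sequential colimits of cofibrant dg-modules along the cofibrations $\Lambda^{1-n}\EOp_{n-1}^{\vee}(r)\rightarrow\Lambda^{-n}\EOp_n^{\vee}(r)$ --- this shows that $\Lambda^{-\infty}\EOp_{\infty}^{\vee}$ is again homologically Koszul, and that the edge morphism of Proposition~\ref{Prelude:KoszulDuality:HomologyKoszulSpectralSequence} attached to it is $\colim_n\edge$. Passing the commutative ladder of Proposition~\ref{Prelude:GerstenhaberOperads:KoszulPatching} to the colimit then shows that the composite of $H_*(\psi_{\infty})$ with this edge morphism is $\colim_n\epsilon_n$, namely the Koszul duality equivalence $\epsilon: \BOp^c(\Lambda^{-1}\LOp^{\vee})\xrightarrow{\sim}\COp$ of the commutative operad; this gives Theorem~\ref{Prelude:RealizationTheorem}~(\ref{Prelude:RealizationTheorem:Prescription}) at $n=\infty$.

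The extension is thus essentially formal, and the step I expect to demand the most care --- the main, if modest, obstacle --- is the justification of the above interchanges of the cobar construction, of homology, and of the cobar spectral sequence with the sequential colimit. The observation that makes these harmless is the finiteness, arity by arity, of the free operad and of the cobar filtration: it lets one run the entire machinery of~\S\ref{Prelude:KoszulDuality} on~$\Lambda^{-\infty}\EOp_{\infty}^{\vee}$ by merely taking colimits of the already-established finite-stage statements, so that the remaining verifications reduce to elementary properties of sequential colimits of complexes of free $\ZZ$-modules.
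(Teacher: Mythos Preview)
Your argument is correct and follows the same route as the paper: both pass the morphisms~$\psi_n$ to the sequential colimit, using that the cobar construction preserves colimits and that homology commutes with filtered colimits. You go further than the paper's proof of the Corollary, which only records the weak-equivalence; your verification of the homological prescription~(\ref{Prelude:RealizationTheorem:Prescription}) and the identification $H_*(\Lambda^{-\infty}\EOp_{\infty}^{\vee}) = \Lambda^{-1}\LOp^{\vee}$ via the colimit of the Koszul ladder is sound, though the paper obtains that identification more directly (in~\S\ref{FirstStep:Conclusion}) by reading off the morphisms $\sigma^*_*$ on homology through the composite decomposition $\GOp_n = \COp\circ\Lambda^{1-n}\LOp$ rather than invoking uniqueness of the Koszul dual.
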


\begin{proof}
Recall that the forgetful functor from cooperads to $\Sigma_*$-objects creates colimits,
the forgetful functor from operads to $\Sigma_*$-objects create sequential colimits
and the cobar construction, from cooperads to operads,
preserves all colimits.
Therefore, we have
\begin{equation*}
\colim_n\BOp^c(\Lambda^{-n}\EOp_n^{\vee})\simeq\BOp^c(\Lambda^{-\infty}\EOp_{\infty}^{\vee})
\end{equation*}
and the weak-equivalences of Theorem~\ref{Prelude:RealizationTheorem}
give by passing to the colimit $n\rightarrow\infty$
a weak-equivalence:
\begin{equation*}
\BOp^c(\Lambda^{-\infty}\EOp_{\infty}^{\vee})\xrightarrow{\sim}\EOp_{\infty} = \EOp.
\end{equation*}
\end{proof}

In~\S\ref{FirstStep:Conclusion},
we give a short cut to the proof of this corollary.
On the way,
we prove that $\Lambda^{-\infty}\EOp_{\infty}^{\vee}$
is cofibrant as a $\Sigma_*$-object (see Proposition~\ref{FirstStep:Conclusion:ColimitCofibration}).
According to~\cite[Theorem 1.4.12]{FresseCylinder},
this result implies that $\BOp^c(\Lambda^{-\infty}\EOp_{\infty}^{\vee})$
is cofibrant as an operad.
Thus,
we deduce from our results that $\BOp^c(\Lambda^{-\infty}\EOp_{\infty}^{\vee})$
defines a cofibrant replacement of $\EOp$
in the category of operads
(and hence a cofibrant replacement of the commutative operad $\COp$).

\medskip
Theorems~\ref{Prelude:RealizationTheorem}-\ref{Prelude:BarDualityTheorem}
give an answer to the question of~\S\ref{Prelude:BarrattEccles}:
the suspension morphisms $\sigma: \EOp_n\rightarrow\Lambda^{-1}\EOp_{n-1}$
correspond to the embeddings $\iota: \EOp_{n-1}\rightarrow\EOp_n$
under the bar duality of operads.
In the epilogue,
we develop this remark to give an intrinsic representation of the morphism $\nabla_{S^m}: \EOp\rightarrow\End_{\bar{N}^*(S^m)}$
which gives the action of an $E_\infty$-operad on the cochain complex
of the $m$-sphere $S^m$.

\medskip
The next sections \S\S\ref{FirstStep}-\ref{FinalStep}
are entirely devoted to the proof of Theorem~\ref{Prelude:RealizationTheorem}.
From now on,
we use the short notation $\DOp_n = \Lambda^{-n}\EOp_n^{\vee}$
to refer to the dual cooperad
of $\EOp_n$.
In~\S\ref{SecondStep:OperadKStructure} and~\S\ref{FinalStep:KOperadLifting},
we also use the notation $\MOp_n = \Sigma^{-1}\DOp_n$
to refer to the generating $\Sigma_*$-object of $\BOp^c(\DOp_n) = (\FOp(\Sigma^{-1}\DOp_n),\partial)$.

\section{First step: applications of bar duality and obstruction theory}\label{FirstStep}

Since the cobar construction of a cooperad forms a quasi-free operad $\BOp^c(\DOp) = (\FOp(\Sigma^{-1}\bar{\DOp}),\partial)$,
any morphism $\phi: \BOp^c(\DOp)\rightarrow\POp$
toward an operad $\POp$
is fully determined by a homomorphism of degree $-1$
\begin{equation*}
\theta: \bar{\DOp}\rightarrow\POp
\end{equation*}
that represents the restriction of $\phi$ to the generating $\Sigma_*$-object of~$\BOp^c(\DOp)$ (see~\cite[\S 2.3]{GetzlerJones}).
We call this homomorphism $\theta$ the twisting cochain associated to $\phi$.

The commutation of $\phi$ with differentials reduces to a sequence of differential equations
\begin{equation}\label{eqn:TwistingCochains}
\delta(\theta{(r)}) +  \sum_{s+t=r-1} \theta{(s)}\smile\theta{(t)} = 0,
\end{equation}
where $\theta{(r)}: \DOp(r)\rightarrow\POp(r)$ represents the component of $\theta$ in arity $r\geq 2$
and $\smile$ is a certain operation on homomorphisms $f{(r)}: \DOp(r)\rightarrow\POp(r)$
(see also~\cite[\S 2.3]{GetzlerJones}).
To define a morphism $\phi: \BOp^c(\DOp)\rightarrow\POp$,
a natural idea is to construct the homomorphisms $\theta{(r)}: \DOp(r)\rightarrow\POp(r)$
by induction on $r$.
The obstruction to the existence of $\theta{(r)}$
is represented by the homology class of $q(\theta)(r) = \sum_{s+t=r-1} \theta{(s)}\smile\theta{(t)}$
in $\Hom_{\C}(\DOp,\POp)$.

In this section,
we check the vanishing of such obstructions
in order to prove the following result:

\begin{mainlemm}\label{FirstStep:Result}
The composite
\begin{equation*}
\xymatrix{ \BOp^c(\DOp_1)\ar@/^0.5em/@{.>}[]!UR;[rr]!UL^{\phi_1}\ar[r]_(0.7){\epsilon_1} &
\AOp\ar[r]_{\alpha} & \COp },
\end{equation*}
where $\epsilon_1$ is the Koszul duality equivalence
of the associative operad $\EOp_1 = \AOp$,
has extensions
\begin{equation*}
\xymatrix{ \BOp^c(\DOp_1)\ar[r]^(0.5){\sigma^*}\ar[d]_{\phi_1} &
\BOp^c(\DOp_2)\ar[r]^(0.65){\sigma^*}\ar@{.>}[d]_{\exists\phi_2} &
\cdots\ar[r]^(0.4){\sigma^*} &
\BOp^c(\DOp_n)\ar[r]^(0.65){\sigma^*}\ar@{.>}[d]_{\exists\phi_n} & \cdots \\
\COp\ar[r]_{=} &
\COp\ar[r]_{=} &
\cdots\ar[r]_{=} &
\COp\ar[r]_{=} & \cdots }.
\end{equation*}
\end{mainlemm}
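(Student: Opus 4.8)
The plan is to replace the sequence $(\phi_n)$ by a single morphism, convert that into a twisting cochain, and run an obstruction-theoretic induction on the arity; the only genuine computations will occur in arities $2$ and $3$.

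First I would pass to the colimit cooperad $\DOp_\infty = \colim_n\DOp_n$. Since the forgetful functor to $\Sigma_*$-objects creates colimits and the cobar construction preserves them, a family $(\phi_n\colon\BOp^c(\DOp_n)\to\COp)$ fitting the commutative ladder of the statement and beginning with $\phi_1 = \alpha\circ\epsilon_1$ is exactly the datum of one operad morphism $\phi_\infty\colon\BOp^c(\DOp_\infty)\to\COp$ with $\phi_\infty|_{\BOp^c(\DOp_1)} = \alpha\circ\epsilon_1$; the $\phi_n$ are then recovered by restriction along the canonical maps $\BOp^c(\DOp_n)\to\BOp^c(\DOp_\infty)$. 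By the discussion opening \S\ref{FirstStep}, such a $\phi_\infty$ is the same thing as a degree $-1$ twisting cochain $\theta_\infty\colon\bar{\DOp}_\infty\to\COp$ satisfying \eqref{eqn:TwistingCochains}; because $\COp$ carries the trivial differential these equations read $\theta_\infty{(r)}\circ\delta = -q(\theta_\infty){(r)}$, where $q(\theta_\infty){(r)} = \sum_{s+t=r-1}\theta_\infty{(s)}\smile\theta_\infty{(t)}$ involves only arities $<r$. Compatibility with $\phi_1$ becomes $\theta_\infty|_{\bar{\DOp}_1} = \theta_1$, where $\theta_1$ is the twisting cochain of $\alpha\circ\epsilon_1$. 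Here $\theta_1$ is concentrated in arity $2$: the Koszul duality equivalence $\epsilon_1$ vanishes on generators of arity $>2$, and in any case $\bar{\DOp}_1(r) = \Lambda^{-1}\AOp^{\vee}(r)$ is concentrated in degree $r-1$, on which a degree $-1$ homomorphism to $\COp(r)$ must vanish unless $r = 2$. In particular $q(\theta_1){(r)} = 0$ for every $r$.

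Next I would construct $\theta_\infty{(r)}$ by induction on $r\geq 2$. Granting $\theta_\infty{(s)}$ for $s<r$, the element $q(\theta_\infty){(r)}\colon\DOp_\infty(r)\to\COp(r) = \ZZ$ is, by the usual bookkeeping, a cycle of degree $-2$ in $\Hom_\C(\DOp_\infty(r),\COp(r))$, and solving for $\theta_\infty{(r)}$ succeeds once (i) its homology class vanishes and (ii) the resulting homomorphism on the boundaries extends to all of $\DOp_\infty(r)_1$. Step (ii) is never obstructed: each suspension morphism $\sigma$ is a degreewise split epimorphism, hence each $\sigma^*$ a degreewise split monomorphism, so that $\DOp_\infty(r)$ is a degreewise free $\ZZ$-module and the relevant quotient is free. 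For step (i), universal coefficients identify the degree $-2$ homology group of $\Hom_\C(\DOp_\infty(r),\COp(r))$ with $\Hom_{\ZZ}(H_2(\DOp_\infty(r)),\ZZ)\oplus\mathrm{Ext}^1_{\ZZ}(H_1(\DOp_\infty(r)),\ZZ)$. From $H_*(\DOp_n(r))\simeq\Lambda^{-n}\GOp_n^{\vee}(r)$ (universal coefficients together with the freeness of $H_*(\EOp_n(r))$) and the decomposition $\GOp_n = \COp\circ\Lambda^{1-n}\LOp$, which forces $\GOp_n(r)$ into degrees $[0,(n-1)(r-1)]$, one sees that $H_*(\DOp_n(r))$, hence $H_*(\DOp_\infty(r)) = \colim_n H_*(\DOp_n(r))$, is concentrated in degrees $\geq r-1$. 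Therefore $H_1(\DOp_\infty(r)) = H_2(\DOp_\infty(r)) = 0$ for all $r\geq 4$ and the obstruction vanishes automatically there; moreover any choice of $\theta_\infty{(r)}$ with $r\geq 3$ automatically restricts to $0 = \theta_1{(r)}$ on $\bar{\DOp}_1(r)$, which sits in degree $r-1>1$.

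Only arities $2$ and $3$ remain, and that is where the argument has content. For $r = 2$ the equation merely says $\theta_\infty{(2)}$ is a degree $-1$ cocycle, and I would produce one restricting to $\theta_1{(2)}$ on $\bar{\DOp}_1(2)$; I expect this to come down to checking that $\theta_1{(2)}$ kills $\bar{\DOp}_1(2)\cap\delta(\DOp_\infty(2)_2)$, a direct computation with the explicit $\epsilon_1$, with $\Lambda^{-1}\AOp^{\vee}(2)$, and with $\sigma$ in arity $2$. For $r = 3$ the obstruction group is nonzero, but the obstruction $[q(\theta_\infty){(3)}] = [\theta_\infty{(2)}\smile\theta_\infty{(2)}]$ restricts on $\Hom_\C(\DOp_1(3),\COp(3))$ to $q(\theta_1){(3)} = 0$, and the restriction map between the degree $-2$ homology groups of $\Hom_\C(\DOp_\infty(3),\COp(3))$ and $\Hom_\C(\DOp_1(3),\COp(3))$ is injective: it is dual to the surjection $H_2(\DOp_1(3)) = \Lambda^{-1}\AOp^{\vee}(3)\twoheadrightarrow H_2(\DOp_\infty(3))$ induced in the bottom homological degree by the surjection of $\Sigma_*$-objects $\AOp^{\vee}\twoheadrightarrow\LOp^{\vee}$ dual to the inclusion $\LOp\hookrightarrow\AOp$. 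Hence the class vanishes and $\theta_\infty{(3)}$ exists. The main obstacle is precisely this arity-$3$ verification, together with the sign bookkeeping of the arity-$2$ base case; once $\theta_\infty$ is built, the morphisms $\phi_n$ obtained by restricting $\phi_\infty$ along $\BOp^c(\DOp_n)\to\BOp^c(\DOp_\infty)$ automatically make the ladder commute and begin with $\alpha\circ\epsilon_1$, as required.
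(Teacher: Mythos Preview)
Your overall strategy is correct and takes a genuinely different route from the paper. The paper dualizes to elements $\omega_n(r)\in\EOp_n(r)_{\Sigma_r}$, runs a double induction on $n$ and $r$, and for the obstruction vanishing (Lemma~\ref{FirstStep:Equations:ObstructionVanishing}) invokes Cohen's full computation of $H_*(\COp_n(r)_{\Sigma_r};\FF)$ over every prime field, including the Dyer-Lashof and restricted-Lie structure of~\S\ref{FirstStep:Homology}. You instead pass once and for all to $\DOp_\infty$, run a single induction on arity, and use only that $H_*(\DOp_\infty)=\Lambda^{-1}\LOp^{\vee}$ is concentrated in degree $r-1$ (a fact the paper records at the end of~\S\ref{FirstStep:Conclusion} as a consequence of Proposition~\ref{Prelude:GerstenhaberOperads:LittleCubesMorphismHomology}). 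This sidesteps~\S\ref{FirstStep:Homology} entirely; the price is the separate treatment of arities $2$ and $3$.

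There is one genuine gap. A twisting cochain is a morphism of $\Sigma_*$-objects, so the obstruction to $\theta_\infty(r)$ lives in $H_{-2}$ of $\Hom_{\Sigma_r}(\DOp_\infty(r),\COp(r))\simeq\Hom_{\C}\bigl((\DOp_\infty(r))_{\Sigma_r},\ZZ\bigr)$, not in the non-equivariant $\Hom_{\C}(\DOp_\infty(r),\COp(r))$ you wrote; a non-equivariant solution cannot be averaged over $\ZZ$. The fix is painless and in fact simplifies your argument. Since $\DOp_\infty(r)$ is degreewise $\Sigma_r$-free, the coinvariants are degreewise $\ZZ$-free and universal coefficients still applies, now with $H_*\bigl((\DOp_\infty(r))_{\Sigma_r}\bigr)$ in place of $H_*(\DOp_\infty(r))$. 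Because $H_*(\DOp_n(r))$ is concentrated in degrees $\geq r-1$, the hyperhomology spectral sequence $H_p(\Sigma_r;H_q(\DOp_n(r)))\Rightarrow H_{p+q}\bigl((\DOp_n(r))_{\Sigma_r}\bigr)$ (convergent for each finite $n$, then pass to the colimit) forces $H_*\bigl((\DOp_\infty(r))_{\Sigma_r}\bigr)$ into degrees $\geq r-1$ as well, so your $r\geq 4$ step survives verbatim. For $r=3$ the equivariant version is \emph{easier} than your restriction argument: the only possible contribution is $H_2\bigl((\DOp_\infty(3))_{\Sigma_3}\bigr)=H_0(\Sigma_3;\LOp^{\vee}(3)^{\pm})$, and since $\LOp(3)\otimes\sgn$ is rationally the standard representation its coinvariants are pure torsion, whence $\Hom(-,\ZZ)=0$ and the obstruction group already vanishes. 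The arity-$2$ base case is unchanged.
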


In~\S\ref{FirstStep:Equations},
we prove that the construction of twisting cochains $\theta_n: \bar{\DOp}_n\rightarrow\POp$
associated to such morphisms $\phi_n$
amounts to the definition of elements $\omega_n(r)\in\EOp_n(r)_{\Sigma_r}$
satisfying equations equivalent to~(\ref{eqn:TwistingCochains}).
In~\S\S\ref{FirstStep:Homology}-\ref{FirstStep:Conclusion},
we review Cohen's computation of $H_*(\EOp_n(r)_{\Sigma_r}\otimes_{\ZZ}\FF) = H_*(\COp_n(r)_{\Sigma_r},\FF)$,
for $\FF = \QQ$ or $\FF = \FF_p$,
in order to prove the vanishing of homological obstructions to the definition of~$\omega_n(r)$
and we achieve the proof of Lemma~\ref{FirstStep:Result}.

\subsection{The equations of twisting cochains}\label{FirstStep:Equations}
The operadic analogues of the classical twisting cochains of dg-algebras
are introduced in~\cite[\S 2.3]{GetzlerJones}.
The correspondence between twisting cochains $\theta: \DOp\rightarrow\POp$
and morphisms $\phi: \BOp^c(\DOp)\rightarrow\POp$
goes back to this article. We also refer to the papers~\cite{FressePartitions,FresseCylinder},
of which we take the conventions, for a review of this correspondence.
Be careful that the authors of~\cite{GetzlerJones} study applications of operads
in the context of dg-modules over a field of characteristic zero.
Some equations have to be modified in that reference,
but the overall correspondence holds for any choice of ground ring (see~\cite{FressePartitions,FresseCylinder}).

In this subsection,
we make explicit the application of the general theory to the cooperad $\DOp = \DOp_n$ and the operad $\POp = \COp$
involved in our obstruction problem.
Since $\COp(r) = \ZZ$, for every $r\geq 1$,
and the Barratt-Eccles operad is degreewise finite dimensional,
we have immediately:

\begin{fact}\label{FirstStep:Equations:ElementAdjunction}
We have a bijective correspondence between homomorphisms of degree~$-1$
\begin{equation*}
\theta{(r)}: \DOp_n(r)\rightarrow\COp(r)
\end{equation*}
and homogeneous elements of degree $n(r-1)-1$
\begin{equation*}
\omega{(r)}\in\EOp_n(r)_{\Sigma_r},
\end{equation*}
for every $r\in\NN$.
\end{fact}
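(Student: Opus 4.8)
The plan is to unravel the definition of the cooperad $\DOp_n=\Lambda^{-n}\EOp_n^{\vee}$ and to reduce the asserted bijection to a chain of elementary identifications of graded $\ZZ$-modules, all of which are forced once one keeps track of the degree shift and the signature twist carried by the operadic desuspension $\Lambda^{-n}$. First I would recall that a twisting cochain $\theta$ is, by construction, the restriction to generators of an operad morphism $\phi\colon\BOp^c(\DOp_n)\to\COp$, so that $\theta(r)\colon\DOp_n(r)\to\COp(r)$ is a $\Sigma_r$-equivariant homomorphism of degree $-1$. Since $\COp(r)=\ZZ$ carries the trivial $\Sigma_r$-action, such a $\theta(r)$ is the same datum as a $\ZZ$-linear form of degree $-1$ on the coinvariants $(\DOp_n(r))_{\Sigma_r}$, i.e.\ an element of $\Hom_{\ZZ}\bigl(((\DOp_n(r))_{\Sigma_r})_1,\ZZ\bigr)$.

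Next I would perform the bookkeeping for $\Lambda^{-n}$ and for the arity-wise $\ZZ$-dual: $\Lambda^{-n}$ raises the internal degree in arity $r$ by $n(r-1)$ and twists the $\Sigma_r$-action by $\sgn^{\otimes n}$, while $(-)^{\vee}$ reverses the internal degree and dualizes the action. This yields $(\DOp_n(r))_1\cong\Hom_{\ZZ}\bigl(\EOp_n(r)_{n(r-1)-1},\ZZ\bigr)$, with the dual $\Sigma_r$-action twisted by $\sgn^{\otimes n}$. The one structural input about the Barratt-Eccles operad needed here is that $\EOp(r)=N_*(E\Sigma_r)$, being the normalized complex of a free $\Sigma_r$-simplicial set, has all of its chain modules finitely generated and \emph{free} over $\ZZ[\Sigma_r]$, and that the filtration layer $\EOp_n(r)$ is a $\Sigma_r$-subcomplex spanned by a $\Sigma_r$-invariant set of non-degenerate simplices, hence is again degreewise finitely generated free over $\ZZ[\Sigma_r]$.

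For a finitely generated free $\ZZ[\Sigma_r]$-module $W$ the twist $W\otimes\sgn^{\otimes n}$ is again free, double dualization is the identity, and the norm map identifies coinvariants with invariants; consequently the signature twist and the coinvariants-versus-invariants distinction both disappear, and one is left with a bijection
\[
\bigl\{\,\theta(r)\text{ of degree }-1\,\bigr\}\;\cong\;\Hom_{\ZZ}\bigl(((\DOp_n(r))_{\Sigma_r})_1,\ZZ\bigr)\;\cong\;(\EOp_n(r)_{\Sigma_r})_{n(r-1)-1},
\]
which is exactly the group of homogeneous elements $\omega(r)$ of degree $n(r-1)-1$ in $\EOp_n(r)_{\Sigma_r}$. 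The cases $r=0,1$ are degenerate, with both sides vanishing. The only point requiring any care is the interaction of the signature twist produced by $\Lambda^{-n}$ with the passage to $\Sigma_r$-coinvariants, and this is precisely what is neutralized by the freeness of the Barratt-Eccles chains over $\ZZ[\Sigma_r]$; the rest is degree accounting together with the degreewise finiteness already recorded in \S\ref{Prelude:BarrattEccles:Degree0}.
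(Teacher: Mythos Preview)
Your argument is correct and follows the same underlying idea as the paper's justification: unwind $\DOp_n(r)=\Lambda^{-n}\EOp_n^{\vee}(r)$, use the degreewise finite freeness of $\EOp_n(r)$, and read off the degree shift $n(r-1)-1$. The paper is terser: it simply records the explicit pairing formula $\theta_\omega(c)=\pm c(\omega(r))$ and declares the correspondence ``immediate'' from $\COp(r)=\ZZ$ and finite dimensionality, leaving the signature twist and the passage between invariants and coinvariants implicit. Your version makes explicit the one point the paper elides, namely that the $\sgn^{\otimes n}$-twist from $\Lambda^{-n}$ and the invariants/coinvariants distinction both wash out because $\EOp_n(r)$ is degreewise free over $\ZZ[\Sigma_r]$; this is a genuine clarification rather than a different route.
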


We use that an element of degree $d$ in $\DOp_n(r) = \Lambda^{-n}\EOp_n^{\vee}(r)$
is equivalent to a homomorphism $c: \EOp_n(r)\rightarrow\ZZ$
of degree $-n(r-1)+d$.
The homomorphism $\theta{(r)} = \theta_{\omega}{(r)}$ associated to $\omega{(r)}\in\EOp_n(r)_{\Sigma_r}$
is simply defined by the formula:
\begin{equation*}
\theta_{\omega}{(r)}(c) = \pm c(\omega{(r)}),
\end{equation*}
for all $c\in\DOp_n(r)$.
The sign comes from the symmetry isomorphism $\EOp_n(r)\otimes\EOp_n^{\vee}(r)\simeq\EOp_n^{\vee}(r)\otimes\EOp_n(r)$
involved in this relation.

\medskip
We have then:

\begin{lemm}\label{FirstStep:Equations:AdjointEquations}
For homomorphisms $\theta{(r)}: \DOp_n(r)\rightarrow\COp(r)$
associated to a collection of elements $\omega{(r)}\in\EOp_n(r)_{\Sigma_r}$,
the equation of twisting cochains~(\ref{eqn:TwistingCochains})
is equivalent to the equation
\begin{equation}\label{eqn:TwistingAdjointElements}
\delta(\omega{(r)}) + \sum_{s+t-1=r}\Bigl\{\sum_{i=1}^{s}\pm\omega{(s)}\circ_i\omega{(t)}\Bigr\} = 0
\end{equation}
in the component of degree $n(r-1)-2$ of~$\EOp_n(r)_{\Sigma_r}$,
where $\circ_i$ refers to the partial composition operations of the operad~$\EOp_n$.
\end{lemm}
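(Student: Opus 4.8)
The plan is to dualize the equation of twisting cochains~(\ref{eqn:TwistingCochains}) term by term. The key point is that each $\EOp_n(r)$ is a degreewise finitely generated free $\ZZ$-module, so the evaluation pairing between $\DOp_n(r) = \Lambda^{-n}\EOp_n^{\vee}(r)$ and $\EOp_n(r)$ — twisted by the operadic desuspension $\Lambda^{-n}$ — is perfect, and it carries the $\Sigma_r$-equivariance of a homomorphism $\theta{(r)}: \DOp_n(r)\rightarrow\COp(r)$ (where $\COp(r) = \ZZ$ has trivial action) precisely into membership of the associated $\omega{(r)}$ in the twisted coinvariants $\EOp_n(r)_{\Sigma_r}$, as in Fact~\ref{FirstStep:Equations:ElementAdjunction}. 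Since $\theta_{\omega}{(r)}(c) = \pm c(\omega{(r)})$, it suffices to show that, after pairing each side of~(\ref{eqn:TwistingCochains}) against an arbitrary $c\in\DOp_n(r)$, one obtains exactly the scalar identity gotten by pairing $c$ against the left-hand side of~(\ref{eqn:TwistingAdjointElements}); perfectness of the pairing then upgrades this to the element identity in $\EOp_n(r)_{\Sigma_r}$.

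For the differential term, since $\COp$ carries the trivial differential, $\delta(\theta{(r)})$ is the homomorphism $c\mapsto\pm\theta{(r)}(\delta c)$, and because the internal differential of $\DOp_n(r) = \Lambda^{-n}\EOp_n^{\vee}(r)$ is the (suspension-twisted) dual of $\delta$ on $\EOp_n(r)$, this corresponds under the pairing to $\pm\delta(\omega{(r)})$, with the sign already fixed in Fact~\ref{FirstStep:Equations:ElementAdjunction}. For the cup-product term, I would unfold $\theta{(s)}\smile\theta{(t)}$ from the definition of $\smile$ recalled in~\cite[\S 2.3]{GetzlerJones} and corrected in~\cite{FressePartitions,FresseCylinder}: it is the composite of the quadratic coproduct $\rho_2: \bar{\DOp}_n(r)\rightarrow\FOp_2(\bar{\DOp}_n)(r)$ — whose components run over the two-vertex trees, i.e. over the partial composition schemes — with $\theta{(s)}\otimes\theta{(t)}$ and the quadratic product of $\COp$. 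By the duality isomorphism $\FOp_2(\bar{\POp}^{\vee})\simeq\FOp_2(\bar{\POp})^{\vee}$ of~\cite[Proposition 3.1.4 and \S 3.6.1]{FressePartitions} (recalled in~\S\ref{Prelude:KoszulDuality:CobarConstruction}), together with the bookkeeping of the operadic desuspension $\Lambda^{-n}$, the quadratic coproduct of $\DOp_n$ is identified with the suitably twisted $\ZZ$-dual of the quadratic composition morphism of $\EOp_n$, whose components are the partial composites $\circ_i: \EOp_n(s)\otimes\EOp_n(t)\rightarrow\EOp_n(r)$. Dualizing, pairing $\theta{(s)}\smile\theta{(t)}$ against $c$ yields $\pm c\bigl(\sum_{i=1}^{s}\omega{(s)}\circ_i\omega{(t)}\bigr)$, so summing over the arity decomposition recovers the inner double sum of~(\ref{eqn:TwistingAdjointElements}).

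The delicate point — essentially the only real work — is the bookkeeping of signs and of the symmetric-group twist, where three contributions interact: the desuspension $\Sigma^{-1}$ built into the cobar construction, responsible for the Koszul sign already visible in~(\ref{eqn:TwistingCochains}); the operadic desuspension $\Lambda^{-n}$ relating $\DOp_n$ to $\EOp_n^{\vee}$, responsible for the degree shift $n(r-1)$ and for the signature twist turning $\Sigma_r$-equivariant functionals on $\EOp_n(r)^{\vee}$ into elements of $\EOp_n(r)_{\Sigma_r}$; and the symmetry isomorphisms of the internal hom and of $\FOp_2$ used above. I would pin these down by first checking the cases $r=2$ and $r=3$ explicitly, using the alternate simplices $\mu_d,\tau\mu_d$ of~\S\ref{Prelude:GerstenhaberOperads:RepresentativeOperations} as a concrete basis, and then appeal to the general twisting-cochain correspondence of~\cite{FressePartitions,FresseCylinder}, which already incorporates the desuspension signs, to conclude that the two equations match on the nose.
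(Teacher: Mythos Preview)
Your proposal is correct and is essentially the same argument as the paper's: the paper's proof is a single sentence invoking a ``direct application of the formula of~\cite[\S 3.7]{FresseCylinder}'' for the product $\theta{(s)}\smile\theta{(t)}$, and what you have written is precisely that direct application spelled out --- dualizing the differential term via the perfect pairing, identifying the quadratic coproduct of $\DOp_n$ with the dual of the partial composites of $\EOp_n$ via the isomorphism $\FOp_2(\bar{\POp}^{\vee})\simeq\FOp_2(\bar{\POp})^{\vee}$, and tracking the desuspension and signature twists. Your plan to check $r=2,3$ by hand to pin down signs is a reasonable sanity check, though the general correspondence in the cited references already handles this uniformly.
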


\begin{proof}
Direct application of the formula of~\cite[\S 3.7]{FresseCylinder}
(see also \cite[\S 2.3]{GetzlerJones} in the original reference)
for the product $\theta{(s)}\smile\theta{(t)}$
when we take homomorphisms $\theta{(r)}: \DOp_n(r)\rightarrow\COp(r)$
associated to elements $\omega{(r)}\in\EOp_n(r)_{\Sigma_r}$, $r\in\NN$.
\end{proof}

\begin{prop}\label{FirstStep:Equations:AdjointExpression}\hspace*{2mm}
\begin{enumerate}
\item\label{AdjointExpressions:Morphisms}
We have a bijective correspondence between operad morphisms
\begin{equation*}
\phi_n: \BOp^c(\DOp_n)\rightarrow\COp
\end{equation*}
and collections of elements
\begin{equation*}
\omega_n{(r)}\in\EOp_n(r)_{\Sigma_r},\quad r\in\NN,
\end{equation*}
with $\deg(\omega_n{(r)}) = n(r-1)-1$
and such that equation~(\ref{eqn:TwistingAdjointElements})
of Lemma~\ref{FirstStep:Equations:AdjointEquations} holds.
\item\label{AdjointExpressions:TowerCommutation}
The commutativity of the diagrams
\begin{equation*}
\xymatrix{ \BOp^c(\DOp_{n-1})\ar[rr]^{\sigma^*}\ar@{.>}[dr]_{\phi_{n-1}} &&
\BOp^c(\DOp_n)\ar@{.>}[dl]^{\phi_n} \\ & \COp & }
\end{equation*}
amounts to the verification of equations
\begin{equation*}
\omega_{n-1}{(r)} = \sigma(\omega_{n}{(r)}),\quad r\in\NN,
\end{equation*}
for the elements $\omega_n{(r)}$
associated to the morphisms $\phi_n$.
\end{enumerate}
\end{prop}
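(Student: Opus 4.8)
The plan is to derive both parts of the proposition formally, by assembling three facts that are already in place: the correspondence between morphisms $\phi\colon\BOp^c(\DOp)\to\POp$ and twisting cochains $\theta\colon\bar{\DOp}\to\POp$ recalled at the beginning of~\S\ref{FirstStep}; the element/homomorphism dictionary of Fact~\ref{FirstStep:Equations:ElementAdjunction}; and the translation of the twisting cochain equation~(\ref{eqn:TwistingCochains}) into equation~(\ref{eqn:TwistingAdjointElements}) supplied by Lemma~\ref{FirstStep:Equations:AdjointEquations}. No new input is needed; the argument is pure bookkeeping.

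For assertion~(\ref{AdjointExpressions:Morphisms}): since $\BOp^c(\DOp_n) = (\FOp(\Sigma^{-1}\bar{\DOp}_n),\partial)$ is quasi-free, giving an operad morphism $\phi_n\colon\BOp^c(\DOp_n)\to\COp$ is the same as giving a degree~$-1$ homomorphism $\theta_n\colon\bar{\DOp}_n\to\COp$ solving the equations~(\ref{eqn:TwistingCochains}). By Fact~\ref{FirstStep:Equations:ElementAdjunction}, prescribing $\theta_n$ arity by arity is the same as prescribing elements $\omega_n(r)\in\EOp_n(r)_{\Sigma_r}$ of degree $n(r-1)-1$, via $\theta_n(r) = \theta_{\omega_n}(r)$; and by Lemma~\ref{FirstStep:Equations:AdjointEquations}, under this dictionary the system~(\ref{eqn:TwistingCochains}) for $\theta_n$ becomes exactly the system~(\ref{eqn:TwistingAdjointElements}) for $(\omega_n(r))_{r}$. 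Composing these bijections gives~(\ref{AdjointExpressions:Morphisms}).

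For assertion~(\ref{AdjointExpressions:TowerCommutation}): I would use that $\sigma^*\colon\BOp^c(\DOp_{n-1})\to\BOp^c(\DOp_n)$ is, by construction, $\BOp^c$ applied to a cooperad morphism $\sigma^*\colon\bar{\DOp}_{n-1}\to\bar{\DOp}_n$, itself the image of the operad morphism $\sigma\colon\EOp_n\to\Lambda^{-1}\EOp_{n-1}$ under the functor $\POp\mapsto(\Lambda^n\POp)^{\vee}$. Because the cobar construction is functorial and the correspondence $\phi\leftrightarrow\theta$ is natural in the cooperad variable, the twisting cochain attached to $\phi_n\circ\sigma^*$ is the composite $\theta_n\circ\sigma^*\colon\bar{\DOp}_{n-1}\to\bar{\DOp}_n\to\COp$; hence the triangle commutes if and only if $\theta_{n-1} = \theta_n\circ\sigma^*$. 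It then suffices to identify the element attached to $\theta_n\circ\sigma^*$ by Fact~\ref{FirstStep:Equations:ElementAdjunction}. Unwinding the definitions, for $c\in\DOp_{n-1}(r)$ regarded as a homomorphism on $\EOp_{n-1}(r)$, the cooperad map $\sigma^*$ sends $c$ to a sign times $c\circ\sigma$ on $\EOp_n(r)$, modulo the operadic desuspensions built into $\DOp_n$ and $\DOp_{n-1}$; therefore $(\theta_n\circ\sigma^*)(r)(c) = \pm(\sigma^* c)(\omega_n(r)) = \pm\, c(\sigma(\omega_n(r)))$, which is precisely the homomorphism $\theta_{\sigma(\omega_n)}(r)$ associated to the collection $\sigma(\omega_n(r))$, read in degree $(n-1)(r-1)-1$ via the suspension isomorphism relating $(\Lambda^{-1}\EOp_{n-1})(r)$ to a shift of $\EOp_{n-1}(r)$. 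By the bijection of~(\ref{AdjointExpressions:Morphisms}), $\phi_{n-1} = \phi_n\circ\sigma^*$ is then equivalent to $\omega_{n-1}(r) = \sigma(\omega_n(r))$ for all $r$.

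The only delicate point is the verification in the last step that the operad-level pairing $c(\sigma(\xi)) = \pm(\sigma^* c)(\xi)$, combined with the $n$-fold operadic desuspension hidden in $\DOp_n = \Lambda^{-n}\EOp_n^{\vee}$ and the $(n-1)$-fold one in $\DOp_{n-1}$, collapses to the suspension-free identity $\omega_{n-1}(r) = \sigma(\omega_n(r))$ in the single degree $(n-1)(r-1)-1$, with all signs coming out consistent with the conventions of~\cite{FresseCylinder}. Everything else is formal functoriality of $\BOp^c$ together with the dictionaries already established, so this sign-and-suspension bookkeeping is the main, and rather mild, obstacle.
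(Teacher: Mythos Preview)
Your proposal is correct and follows essentially the same route as the paper: for (\ref{AdjointExpressions:Morphisms}) you compose the morphism/twisting-cochain bijection with Fact~\ref{FirstStep:Equations:ElementAdjunction} and Lemma~\ref{FirstStep:Equations:AdjointEquations}, and for (\ref{AdjointExpressions:TowerCommutation}) you reduce the triangle to the equality $\theta_{n-1}=\theta_n\circ\sigma^*$ of twisting cochains and then unwind the element dictionary to obtain $\omega_{n-1}(r)=\sigma(\omega_n(r))$, exactly as the paper does. Your extra caution about the sign-and-suspension bookkeeping is warranted but, as the paper notes, the calculation is straightforward once the conventions are fixed.
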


\begin{proof}
Assertion~(\ref{AdjointExpressions:Morphisms}) is a direct corollary
of Fact~\ref{FirstStep:Equations:ElementAdjunction}
and Lemma~\ref{FirstStep:Equations:AdjointEquations}
since we have a bijective correspondence between operad morphisms
and twisting cochains.

The commutativity of the diagram of Assertion~(\ref{AdjointExpressions:TowerCommutation})
holds if and only if the twisting cochains associated to the morphisms $\phi_{n-1}\sigma^*$ and $\phi_n$ agree.
This relation amounts to the commutativity of the diagram
\begin{equation*}
\xymatrix{ \DOp_{n-1}\ar[rr]^{\sigma^*}\ar@{.>}[dr]_{\theta_{\omega_{n-1}}} &&
\DOp_n\ar@{.>}[dl]^{\theta_{\omega_n}} \\ & \COp & },
\end{equation*}
where $\theta_n$ refers to the twisting cochain associated to $\phi_n$.
(Just take the restriction of these morphisms $\phi_{n-1}\sigma^*$ and $\phi_n$
to the generating $\Sigma_*$-objects
of the cobar constructions.)
The conclusion of Assertion~(\ref{AdjointExpressions:TowerCommutation})
follows readily
since a straightforward calculation gives $\theta_{n}(\sigma^* c) = \pm\sigma(\omega_n{(r)})(c)$
when the twisting cochain $\theta_n = \theta_{\omega_n}$
is associated to a collection of elements $\omega_n(r)$, $r\in\NN$,
and we have $\theta_{n-1}(c) = \pm\omega_{n-1}{(r)}(c)$
by definition of $\theta_{n-1} = \theta_{\omega_{n-1}}$.
\end{proof}

The next lemmas are used in~\S\ref{FirstStep:Conclusion}
to conclude to the existence of collections $\omega_n{(r)}$, $r\in\NN$,
that fulfil the requirements of Proposition~\ref{FirstStep:Equations:AdjointExpression}.

\begin{mainseclemm}\label{FirstStep:Equations:SuspensionSurjectivity}
The morphism $\sigma: \EOp_n(r)\rightarrow\Lambda^{-1}\EOp_{n-1}(r)$
is surjective for each $r\in\NN$ and every $n\geq 2$.
\end{mainseclemm}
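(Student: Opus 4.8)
I want to show that the chain-level suspension morphism $\sigma\colon\EOp_n(r)\to\Lambda^{-1}\EOp_{n-1}(r)$ is surjective for every $r$ and every $n\geq 2$. Recall from~\S\ref{Prelude:BarrattEccles:SuspensionMorphisms} that $\sigma$ is the cap product with the cochain $\sgn\colon\EOp(r)\to\ZZ$ of degree $r-1$, so that on a simplex $\underline{w}=(w_0,\dots,w_d)$ we have $\sgn\cap(w_0,\dots,w_d)=\sgn(w_0,\dots,w_{r-1})\cdot(w_{r-1},\dots,w_d)$, and this is nonzero precisely when $(w_0(1),\dots,w_{r-1}(1))$ is a permutation of $(1,\dots,r)$. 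In view of Observation~\ref{Prelude:BarrattEccles:SuspensionLittleCubesLayers}, $\sigma$ does restrict to $\EOp_n\to\Lambda^{-1}\EOp_{n-1}$, so the only issue is surjectivity of this restriction in each arity. The natural approach is to exhibit, for each basis simplex $\underline{v}=(v_0,\dots,v_e)$ of $\EOp_{n-1}(r)$, an explicit preimage simplex in $\EOp_n(r)$: namely, I want to prepend a suitable ``fan'' of $r-1$ permutations $(w_0,\dots,w_{r-2},v_0)$ to $\underline v$, chosen so that the concatenation $(w_0,\dots,w_{r-2},v_0,v_1,\dots,v_e)$ is a nondegenerate simplex lying in $\EOp_n(r)$, with $\sgn(w_0,\dots,w_{r-2},v_0)=\pm1$.

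\textbf{Construction of the prefix.} The key combinatorial device is a standard sorting path on $\Sigma_r$: there is a sequence of permutations $w_0,w_1,\dots,w_{r-1}=v_0$ in which each $w_{k}$ is obtained from $w_{k-1}$ by a single adjacent transposition, with $(w_0(1),\dots,w_{r-1}(1))$ running through all of $\{1,\dots,r\}$ exactly once (for instance, move the entry $1$ to the front, then $2$, etc., or use the ``snake'' through the permutohedron). This makes $(w_0,\dots,w_{r-1}=v_0)$ a nondegenerate $(r-1)$-simplex, and it is evaluated to $\pm1$ by $\sgn$ by construction. The crucial point is to choose this path so that for every pair $\{i,j\}$, the restricted sequence $(w_0|_{ij},\dots,w_{r-1}|_{ij})$ has at most \emph{one} variation. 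This can be arranged: in the ``move $1$ to front, then $2$ to position $2$, \dots'' sorting, once a small value has reached its final slot it never moves again relative to anything to its right, and relative to values already placed it never moves at all; a careful bookkeeping shows each pair $\{i,j\}$ flips order at most once along the whole prefix. Hence $\mu_{ij}$ of the prefix alone is at most $1$. Concatenating with $\underline v$, whose pairwise counts are all $<n-1$, the count $\mu_{ij}$ of the full simplex is at most $1+(n-2)=n-1<n$ in the worst case where a flip in the prefix is not ``absorbed'' by the first step of $\underline v$; a slightly more careful choice, aligning the terminal order of the prefix with $v_0|_{ij}$, removes the potential double-count and gives $\mu_{ij}<n$ in all cases. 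Thus the concatenation lies in $\EOp_n(r)$, it is nondegenerate since $w_{r-2}\neq v_0$ and $v_0\neq v_1$, and $\sigma$ sends it to $\pm\underline v$.

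\textbf{Passing to coinvariants.} Finally, $\sigma$ is $\Sigma_r$-equivariant (up to the signature twist built into $\Lambda^{-1}$), so surjectivity of $\sigma\colon\EOp_n(r)\to\Lambda^{-1}\EOp_{n-1}(r)$ on the nose implies surjectivity on $\Sigma_r$-coinvariants, which is what downstream applications such as Proposition~\ref{FirstStep:Equations:AdjointExpression} need. It therefore suffices to produce the preimages above for a set of $\Sigma_r$-orbit representatives.

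\textbf{Main obstacle.} The delicate point is the simultaneous control of all the pairwise variation numbers $\mu_{ij}$ of the concatenated simplex: I must exhibit one prefix path whose every two-element restriction contributes at most the ``missing'' variation needed to land in $\EOp_n(r)$ rather than $\EOp_{n+1}(r)$, while still having $\sgn$-value $\pm1$. Verifying that such a uniform choice exists (rather than a pair-by-pair choice) is the crux; once the right sorting order on $\Sigma_r$ is fixed, the remaining verifications are a direct inspection of the definitions of $\sgn$, of $\mu_{ij}$, and of the filtration in~\S\ref{Prelude:BarrattEccles:LittleCubesFiltration}.
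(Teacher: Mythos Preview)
Your strategy is exactly the paper's: given $\underline{v}=(v_0,\dots,v_e)\in\EOp_{n-1}(r)$, prepend an $(r-1)$-step prefix $(w_0,\dots,w_{r-2},v_0)$ whose first column $(w_0(1),\dots,w_{r-2}(1),v_0(1))$ is a permutation of $\{1,\dots,r\}$ (so $\sgn=\pm1$) and in which every pair $\{i,j\}$ flips at most once, so that the concatenated simplex lands in $\EOp_n(r)$. Where your write-up falls short is that you never actually produce such a prefix: ``move $1$ to the front, then $2$, \dots'' (selection sort) has variable length, not $r-1$, and does not guarantee the first-column condition; the ``snake through the permutohedron'' is left undefined. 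You correctly flag this as the main obstacle, but you do not resolve it.

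The paper's resolution is explicit and clean. Write $v_0=(i_1,\dots,i_r)$ and set
\[
t_r=v_0,\quad t_{r-1}=(i_2,i_1,i_3,\dots,i_r),\quad t_{r-2}=(i_3,i_2,i_1,i_4,\dots,i_r),\quad\dots,\quad t_1=(i_r,i_{r-1},\dots,i_1),
\]
i.e.\ $t_{r-k}$ reverses the first $k+1$ entries of $v_0$. Then $(t_1(1),\dots,t_r(1))=(i_r,\dots,i_1)$ is a permutation, so $\sgn(t_1,\dots,t_r)=\pm1$; consecutive $t_k$ differ, so the prefix is nondegenerate; and a direct check shows that for every pair $\{i_a,i_b\}$ the restricted sequence $(t_1|_{i_ai_b},\dots,t_r|_{i_ai_b})$ has \emph{exactly} one variation. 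Hence $\mu_{ij}(t_1,\dots,t_{r-1},v_0,\dots,v_e)=\mu_{ij}(\underline{v})+1<n$ for all $ij$, and $\sigma(t_1,\dots,t_{r-1},v_0,\dots,v_e)=\pm\underline{v}$.

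Two minor points. First, once you have ``prefix contributes $\leq 1$ variation per pair'' you are done: $\mu_{ij}\leq 1+(n-2)=n-1<n$, so your sentence about a ``slightly more careful choice to remove the potential double-count'' is unnecessary and suggests confusion about the inequality. Second, the ``passing to coinvariants'' paragraph is extraneous: the lemma asserts surjectivity before taking coinvariants, and that is what you should prove.
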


\begin{proof}
Let $\underline{w} = (w_0,\dots,w_d)\in\EOp(r)$.
Suppose $w_0 = (i_1,\dots,i_r)$.
Form the sequence of permutations
\begin{multline*}
t_r = w_0 = (i_1,i_2,i_3,i_4,\dots,i_r),\\
\quad t_{r-1} = (i_2,i_1,i_3,i_4,\dots,i_r),
\quad t_{r-2} = (i_3,i_2,i_1,i_4,\dots,i_r),\quad\ldots\\
\ldots\quad t_1 = (i_r,i_{r-1},i_{r-2},\dots,i_1),
\end{multline*}
and the simplex $\underline{t} = (t_1,\dots,t_{r-1},w_0,\dots,w_d)$.
We have $\mu_{i j}(\underline{t}) = \mu_{i j}(\underline{w}) + 1$, $\forall i j$,
and hence ``$\underline{w}\in\EOp_{n-1}(r)\Rightarrow\underline{t}\in\EOp_n(r)$''.
Moreover, we have clearly $\sigma(\underline{t}) = \sgn\cap\underline{t} = \pm\underline{w}$
by definition of the cochain $\sgn: \EOp(r)\rightarrow\ZZ$.
\end{proof}

\begin{mainseclemm}\label{FirstStep:Equations:ObstructionVanishing}
We have $H_{n(r-1)-2}(\ker\{\sigma_*: \EOp_n(r)_{\Sigma_r}\rightarrow\Lambda^{-1}\EOp_{n-1}(r)_{\Sigma_r}\}) = 0$ for all $r>2$.
\end{mainseclemm}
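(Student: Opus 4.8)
The plan is to reduce the statement to a computation with field coefficients and then to feed in Cohen's description of the homology of the spaces $\COp_m(r)_{\Sigma_r}$.

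First I would record that $\EOp_n(r)$, $\EOp_{n-1}(r)$, and hence $\Lambda^{-1}\EOp_{n-1}(r)$, are in each degree free $\ZZ[\Sigma_r]$-modules: the symmetric group permutes the basis of simplices freely, and the inequalities $\mu_{ij}(\underline{w})<n$ defining the little cubes filtration are insensitive to a reindexing of the inputs. By Lemma~\ref{FirstStep:Equations:SuspensionSurjectivity} the suspension morphism $\sigma\colon\EOp_n(r)\to\Lambda^{-1}\EOp_{n-1}(r)$ is therefore, in each degree, a surjection onto a projective $\ZZ[\Sigma_r]$-module, hence degreewise split; consequently the short exact sequence $0\to\ker\sigma\to\EOp_n(r)\to\Lambda^{-1}\EOp_{n-1}(r)\to 0$ of complexes of $\Sigma_r$-modules stays short exact after taking coinvariants (identifying $\ker\{\sigma_*\}$ with $(\ker\sigma)_{\Sigma_r}$) and, since $\Lambda^{-1}\EOp_{n-1}(r)_{\Sigma_r}$ is degreewise $\ZZ$-free, after tensoring with an arbitrary field $\FF$. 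Because $\EOp_n(r)$ is degreewise finitely generated, so is $\ker\{\sigma_*\}$ and so is its homology; hence it is enough to prove $H_{n(r-1)-2}(\ker\{\sigma_*\}\otimes_{\ZZ}\FF)=0$ for $\FF=\QQ$ and $\FF=\FF_p$.

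Over such a field, the long exact sequence of $0\to\ker\{\sigma_*\}\otimes\FF\to\EOp_n(r)_{\Sigma_r}\otimes\FF\xrightarrow{\sigma_*}\Lambda^{-1}\EOp_{n-1}(r)_{\Sigma_r}\otimes\FF\to 0$ shows that the required vanishing follows as soon as $\sigma_*$ is onto in homological degree $n(r-1)-1$ and injective in homological degree $n(r-1)-2$. I would then identify $H_*(\EOp_n(r)_{\Sigma_r}\otimes\FF)=H_*(\COp_n(r)_{\Sigma_r};\FF)$ and, taking the degree shift by $r-1$ and the sign twist built into the operadic desuspension $\Lambda^{-1}$ into account, $H_d(\Lambda^{-1}\EOp_{n-1}(r)_{\Sigma_r}\otimes\FF)=H_{d-r+1}(\COp_{n-1}(r)_{\Sigma_r};\FF^{\pm})$, where $\FF^{\pm}$ is the coefficient system twisted by the sign of permutations. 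By Cohen's computation, reviewed in~\S\S\ref{FirstStep:Homology}--\ref{FirstStep:Conclusion}, each of these homology groups is concentrated in degrees $\leq(m-1)(r-1)$, with $m=n$ for the first and $m=n-1$ for the second. Since $n(r-1)-2\geq(n-1)(r-1)$, with equality precisely for $r=3$, this settles the case $r\geq 4$: there both $H_{n(r-1)-1}$ and $H_{n(r-1)-2}$ of $\COp_n(r)_{\Sigma_r}$ vanish, and $H_{n(r-1)-1}(\Lambda^{-1}\EOp_{n-1}(r)_{\Sigma_r}\otimes\FF)=H_{(n-1)(r-1)-1}(\COp_{n-1}(r)_{\Sigma_r};\FF^{\pm})$ vanishes as well because $(n-1)(r-1)-1>(n-2)(r-1)$ for $r>2$, so the long exact sequence yields $H_{n(r-1)-2}(\ker\{\sigma_*\}\otimes\FF)=0$ at once.

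There remains only the boundary case $r=3$, where $n(r-1)-2=2(n-1)$ is exactly the top degree. Here the same dimension count kills the cokernel term in degree $2n-1$, so the long exact sequence reduces the claim to the injectivity of $\sigma_*\colon H_{2(n-1)}(\COp_n(3)_{\Sigma_3};\FF)\to H_{2(n-2)}(\COp_{n-1}(3)_{\Sigma_3};\FF^{\pm})$. To establish this I would use Cohen's explicit generators for the top homology of $\COp_n(3)_{\Sigma_3}$ together with the fact — recorded in Proposition~\ref{Prelude:GerstenhaberOperads:LittleCubesMorphismHomology} and realized at the chain level by the cap product with the cochain $\sgn$ — that $\sigma_*$ sends $\mu$ to $0$, sends $\lambda_{n-1}$ to $\lambda_{n-2}$, and respects operadic composition; one then checks generator by generator that no nonzero class of $H_{2(n-1)}(\COp_n(3)_{\Sigma_3};\FF)$ is annihilated. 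This verification is the one genuine obstacle: over $\FF_p$ the top homology of $\COp_n(3)_{\Sigma_3}$ need not be exhausted by the bracket word $\lambda_{n-1}(\lambda_{n-1}(x,x),x)$ — it may also contain Dyer--Lashof-type classes, so that the source of $\sigma_*$ is nonzero for reasons invisible to the rational picture and a pure dimension argument does not apply — and one must show that precisely these classes are detected by $\sigma_*$, the degree shift and the sign twist of $\Lambda^{-1}$ being what prevents them from lying in its kernel.
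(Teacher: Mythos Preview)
Your approach is essentially the paper's: reduce to primary fields, use the long exact sequence of $0\to\ker\sigma_*\to\EOp_n(r)_{\Sigma_r}\to\Lambda^{-1}\EOp_{n-1}(r)_{\Sigma_r}\to 0$, and feed in Cohen's degree bounds and explicit top-degree generators. Two organizational differences are worth noting. First, rather than splitting into $r\geq 4$ (pure dimension count) versus $r=3$ (injectivity on the top class), the paper proves a single statement, Lemma~\ref{FirstStep:Homology:SuspensionAction}: $\sigma_*$ is an \emph{isomorphism} in the top degree $(n-1)(r-1)$; since $(n-1)(r-1)\leq n(r-1)-2$ for all $r>2$ with equality exactly at $r=3$, this handles every arity at once. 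The isomorphism is established precisely as you sketch for $r=3$ --- $\sigma_*$ carries $\lambda_{n-1}$ to $\lambda_{n-2}$ and $\xi_{n-1}$ to $\xi_{n-2}$ (Propositions~\ref{FirstStep:Homology:SuspensionBracket}--\ref{FirstStep:Homology:SuspensionFrobenius}), hence sends the top-degree basis $\xi_{n-1}^{l}(\gamma_m)$ bijectively to $\xi_{n-2}^{l}(\gamma_m)$. Second, the paper sidesteps your twisted-coefficient bookkeeping $\FF^{\pm}$ by reading $(\Lambda^{-1}\EOp_{n-1}(r))_{\Sigma_r}$ as the weight-$r$ summand of the free $\EOp_{n-1}$-algebra on a generator $y$ of degree~$1$: the sign twist is then absorbed into the Koszul sign of permuting copies of $y$, so Cohen's description applies verbatim and no separate argument for the twisted top-degree bound is needed.
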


The proof of this lemma is postponed to the end of the next subsection,
because we have to review homology computations of~\cite{Cohen}
in order to reach its conclusion.

\subsection{Applications of Cohen's results}\label{FirstStep:Homology}
In this subsection (and in this subsection only),
we have to deal with dg-modules (and algebras) over extensions of $\ZZ$.
To be explicit,
we take a field $\FF$ which is either the field of rationals $\QQ$, or a finite primary field $\FF_p$,
and we consider the category of dg-modules over~$\FF$
for which we adopt the notation $\C_{\FF}$.
Note that every dg-module of $\C_{\FF}$
is cofibrant
since $\FF$ is a field.
The extension of the Barratt-Eccles operad
to $\FF$
\begin{equation*}
(\EOp_n\otimes_{\ZZ}\FF)(r) = \EOp_n(r)\otimes_{\ZZ}\FF
\end{equation*}
defines an operad in $\C_{\FF}$
and we consider the category of algebras over this operad in $\C_{\FF}$.

The theorems of~\cite{Cohen} gives the homology of $H_*(\COp_n(r)_{\Sigma_r},\FF)$
when we take $\FF = \QQ,\FF_p$
as coefficients.
The result is written in terms of natural operations
acting on the homology of $E_n$-algebras.
We use this representation for computing the action of our suspension morphism
on $H_*((\EOp_n\otimes_{\ZZ}\FF)(r)_{\Sigma_r}) = H_*(\COp_n(r)_{\Sigma_r},\FF)$
and we apply standard arguments of homological algebra
to prove the vanishing property of Lemma~\ref{FirstStep:Equations:ObstructionVanishing}.

First of all,
we review a definition of the homology operations
acting on the homology of algebras over an operad.

\subsubsection{Recollections: free algebras over an operad}\label{FirstStep:Homology:FreeOperadAlgebras}
Recall that the free algebra over an operad $\POp$
associated to an object $E$
in a symmetric monoidal category $\E$
is realized by a generalized symmetric algebra
\begin{equation*}
S(\POp,E) = \bigoplus_{r=0}^{\infty} \POp(r)\otimes_{\Sigma_r} E^{\otimes r},
\end{equation*}
where $\otimes$ refers to the tensor product of $\E$.

In fact,
the generalized symmetric algebra functor $S(\POp,-): \E\rightarrow\E$
associated to an operad $\POp$
defines a monad on the category $\E$.
In certain references on operads,
an algebra over an operad $\POp$
is defined as an algebra over this monad $S(\POp,-): \E\rightarrow\E$
associated to $\POp$ (this definition goes back to the introduction of operads in~\cite{May}).
In this setting,
the structure of a $\POp$-algebra
is determined by a morphism $\lambda: S(\POp,A)\rightarrow A$
satisfying natural associativity and unit relations
with respect to structure morphisms of $S(\POp,-)$.
The assertion that $S(\POp,E)$
represents the free $\POp$-algebra associated to $E$
is a tautological
consequence of this definition.

In the context of a closed symmetric monoidal category $\E$,
the definition of a $\POp$-algebra as an algebra over the monad $S(\POp,-)$
is equivalent to the definition by endomorphism operads (the definition used in~\S\ref{Prelude:BarrattEccles:EndomorphismOperads})
because:
\begin{itemize}
\item
the structure morphism $\lambda: S(\POp,A)\rightarrow A$
amounts to a collection of morphisms
\begin{equation*}
\nabla: \POp(r)\rightarrow\Hom_{\E}(A^{\otimes r},A)
\end{equation*}
commuting with symmetric group actions;
\item
the natural associativity and unit relations of monad actions
amounts to the requirement that $\nabla$
defines an operad morphism from $\POp$ to the endomorphism operad of $A$.
\end{itemize}

\subsubsection{Free algebras over operads and homology operations}\label{FirstStep:Homology:Operations}
Suppose $\POp$ is a $\Sigma_*$-cofibrant operad in the category of dg-modules $\C_{\FF}$.
Then we have a monad~$T(\POp,-)$
on the category of graded modules such that $H_*(S(\POp,E)) = T(\POp,H_*(E))$,
for every $E\in\C_{\FF}$.
Moreover,
the structure morphism of a $\POp$-algebra $\lambda: S(\POp,A)\rightarrow A$
induces a morphism
\begin{equation*}
T(\POp,H_*(A)) = H_*(S(\POp,A))\xrightarrow{\lambda_*} H_*(A)
\end{equation*}
which provides the homology module $H_*(A)$
with the structure of an algebra over this monad $T(\POp,-)$.
Naturally,
the evaluation of the monad $T(\POp,-)$
on a graded $\FF$-module $E$
is defined by the formula $T(\POp,E) = H_*(S(\POp,E))$,
where we identify $E$
with a dg-module equipped with a trivial differential
to form the dg-module of the right hand side $S(\POp,E)$.

This definition of the monad of homological operations associated to an operad
is a reminiscence of the approach of~\cite{FresseSimpAlg,Goerss}
for the definition of homotopical operations
associated to simplicial algebras over monads.
For the purpose of this paper,
we only need the overall idea that elements of $T(\POp,E) = H_*(S(\POp,E))$,
where $E$ runs over graded $\FF$-modules,
represent natural operations
acting on the homology of $\POp$-algebras (we refer to~\cite{MayOperations} for the origin of this idea).

To be explicit,
let $E = \bigoplus_{i=1}^{m}\FF e_i$
be the graded $\FF$-module generated by homogeneous elements $e_i$
of degree $\deg(e_i) = d_i$
together with a trivial differential $\delta(e_i) = 0$.
The elements $\pi\in H_d(S(\POp,E))$
represent homology operations $q_{\pi}: H_{d_1}(A)\times\dots\times H_{d_m}(A)\rightarrow H_{d}(A)$.
Formally,
any choice of representatives $a_i\in A$ of homology classes $c_i\in H_{d_\alpha}(A)$
determines a dg-module morphism $c: E\rightarrow A$
such that $c(e_i) = a_i$.
The evaluation of $q_{\pi}$
on the collection of classes $\{c_i\}_i$
is determined by the image of $\pi\in H_d(S(\POp,E))$
under the morphism
\begin{equation*}
H_*(S(\POp,E))\xrightarrow{S(\POp,c)_*} H_*(S(\POp,A))\xrightarrow{\lambda_*} H_*(A)
\end{equation*}
induced by $c: E\rightarrow A$
and the structure morphism of the $\POp$-algebra $A$.
The homology class $\pi\in H_d(S(\POp,E))$
represents itself the evaluation of this operation $q_{\pi}$
on the generating elements $e_i$
in the homology of the free $\POp$-algebra $H_*(S(\POp,E))$
when we identify these elements $e_i\in E$
with fundamental homology classes of $H_*(S(\POp,E))$.

\subsubsection{Homology operations and weight gradings}\label{FirstStep:Homology:OperationWeight}
For a graded module $E = \FF e_1\oplus\dots\oplus\FF e_m$,
the free $\POp$-algebra $S(\POp,E)$
inherits a natural splitting
\begin{equation}\label{eqn:FreeAlgebraSplitting}
S(\POp,E) = \bigoplus_{(r_1,\dots,r_m)} S_{(r_1,\dots,r_m)}(\POp,E) \\
\end{equation}
such that $S_{(r_1,\dots,r_m)}(\POp,E)
= \POp(r)\otimes_{\Sigma_{r_*}} (\FF e_1)^{\otimes r_1}\otimes\dots\otimes(\FF e_m)^{\otimes r_m}$,
where we set $r = r_1+\dots+r_m$ and $\Sigma_{r_*} = \Sigma_{r_1}\times\dots\times\Sigma_{r_m}$.
Moreover,
we have an obvious isomorphism
\begin{equation*}
S_{(r_1,\dots,r_m)}(\POp,E) = \Sigma^{d}\POp(r)_{\Sigma_{r_*}},
\end{equation*}
where $d = r_1\deg(e_1) + \dots + r_m\deg(e_m)$.
Hence,
any homology class $c\in H_*(\POp(r)_{\Sigma_{r_*}})$
determines a whole collection of classes
$\pi_c\in H_*(S(\POp,\FF e_1\oplus\dots\oplus\FF e_m))$
by suspension
and a whole collection of homology operations
\begin{equation*}
q_{\pi_c}: H_{d_1}(A)\times\dots\times H_{d_m}(A)\rightarrow H_{d}(A)
\end{equation*}
such that $d = r_1 d_1 + \dots + r_m d_m + \deg(c)$,
where $d_1,\dots,d_m\in\ZZ$.

The collection $(r_1,\dots,r_m)\in\NN^{m}$
is a weight
naturally associated to $c$
and the corresponding operations $q_{\pi_c}$.
The splitting of~(\ref{eqn:FreeAlgebraSplitting})
gives a splitting of the module of homology operations $T(\POp,E) = H_*(S(\POp,E))$
into homogeneous weight components.

\medskip
Our next purpose is to describe the homology modules $H_*((\EOp_n\otimes_{\ZZ}\FF)(r)_{\Sigma_r})$
associated to $\FF$-extensions of the operads $\EOp_n$.
For this aim,
we use that $H_*((\EOp_n\otimes_{\ZZ}\FF)(r)_{\Sigma_r})$
is identified with the weight $r$ component
of the module of homology operations $T(\EOp_n\otimes_{\ZZ}\FF,\FF x)$
of one variable $x$ in degree~$0$.

\subsubsection{Rational homology operations associated to $E_n$-algebras}\label{FirstStep:Homology:RationalOperations}
In~\S\S\ref{Prelude:GerstenhaberOperads:RepresentativeOperations}-\ref{Prelude:GerstenhaberOperads:RepresentativeIdentities},
we have already mentioned the existence of operations giving:
\begin{enumerate}
\item
a product $\mu: H_*(A)\otimes H_*(A)\rightarrow H_*(A)$ of degree $0$
\item
and a bracket $\lambda_{n-1}: H_*(A)\otimes H_*(A)\rightarrow H_{*+n-1}(A)$
of degree $n-1$
\end{enumerate}
on the homology of any $\EOp_n$-algebra.
These operations are associated to elements $\mu\in H_0(\EOp_n(2))$
and $\lambda_{n-1}\in H_0(\EOp_{n-1}(2))$
defined over $\ZZ$.
In the representation of~\S\ref{FirstStep:Homology:Operations},
we form the free $\EOp_n$-algebra on two generating elements $S(\EOp_n,\ZZ x_1\oplus\ZZ x_2)$
and we use the natural embedding
\begin{equation*}
\EOp_n(2)\otimes(\ZZ x_1)\otimes(\ZZ x_2)\subset\EOp_n(2)\otimes_{\Sigma_2}(\ZZ x_1\oplus\ZZ x_2)^{\otimes 2}
\end{equation*}
to define the homology classes
\begin{equation*}
\mu(x_1,x_2),\lambda_{n-1}(x_1,x_2)\in H_*(S(\EOp_n,\ZZ x_1\oplus\ZZ x_2))
\end{equation*}
associated to these operations.

If we change the ground ring to the field of rationals $\FF = \QQ$,
then all natural operations on the homology of an $\EOp_n$-algebra are composites of the product
and the Browder bracket,
because we have K\"unneth isomorphisms
\begin{equation*}
H_*(S(\EOp_n\otimes_{\ZZ}\QQ,E))
\simeq S(H_*(\EOp_n\otimes_{\ZZ}\QQ),H_*(E))
\simeq S(H_*(\EOp_n)\otimes_{\ZZ}\QQ,H_*(E)),
\end{equation*}
for every $E\in\C_{\QQ}$,
and we have already mentioned that $\GOp_n = H_*(\EOp_n)$
is the operad generated by these operations.
Consequently:

\begin{fact}\label{FirstStep:Homology:RationalCase}
The homology $H_*((\EOp_n\otimes_{\ZZ}\QQ)(r)_{\Sigma_r})$
is the component of weight $r$
of a free algebra $U(L_{n-1})$,
where:
\begin{itemize}
\item
the notation $L_{n-1}$ represents the free Lie algebra
on a generator $x$ of degree $0$,
together with a bracket $\lambda_{n-1}: L_{n-1}\otimes L_{n-1}\rightarrow L_{n-1}$ of degree $n-1$;
the elements of $L_{n-1}$ are Lie monomials
\begin{equation*}
\gamma_m(x,\dots,x) = \lambda_{n-1}(\cdots\lambda_{n-1}(\lambda_{n-1}(x,x),x),\dots,x)
\end{equation*}
in weight $r = m$ and in degree $d = (n-1)(m-1)$,
where $m$ is the number of occurrences of the variable $x$;
\item
the notation $U(L_{n-1})$ represents the free commutative algebra
generated by $L_{n-1}$;
the weight of a product is defined by $\weight(a b) = \weight(a) + \weight(b)$.
\end{itemize}
\end{fact}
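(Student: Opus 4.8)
The plan is to read off the Fact almost formally from the K\"unneth isomorphism displayed just above, the weight decomposition of \S\ref{FirstStep:Homology:OperationWeight}, Cohen's identification of $H_*(\EOp_n)$ (Theorem~\ref{Prelude:GerstenhaberOperads:LittleCubesHomology}), and the presentation $\GOp_n = \COp\circ\Lambda^{1-n}\LOp$ of \S\ref{Prelude:GerstenhaberOperads:Definition}. First I would specialize the weight splitting~(\ref{eqn:FreeAlgebraSplitting}) to a single variable $x$ placed in degree $0$; as recalled in \S\ref{FirstStep:Homology:OperationWeight}, this identifies $H_*((\EOp_n\otimes_{\ZZ}\QQ)(r)_{\Sigma_r})$ with the component of weight $r$ of the module of homology operations $T(\EOp_n\otimes_{\ZZ}\QQ,\QQ x) = H_*(S(\EOp_n\otimes_{\ZZ}\QQ,\QQ x))$. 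The K\"unneth isomorphism then yields
\begin{equation*}
H_*(S(\EOp_n\otimes_{\ZZ}\QQ,\QQ x))\simeq S(H_*(\EOp_n)\otimes_{\ZZ}\QQ,\QQ x),
\end{equation*}
the free algebra on the class $x$ over the operad $H_*(\EOp_n)\otimes_{\ZZ}\QQ$, which is $\GOp_n\otimes_{\ZZ}\QQ$ for $n>1$ and $\AOp\otimes_{\ZZ}\QQ$ for $n=1$. So the question reduces to describing, together with its weight grading, the free $n$-Gerstenhaber algebra (resp.\ the free associative, hence polynomial, algebra) on one generator in degree $0$.

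The second step is to feed in the decomposition $\GOp_n = \COp\circ\Lambda^{1-n}\LOp$ coming from the distribution relation~(\ref{eqn:Distribution}): since the composite of two operads corresponds to iterating the associated free-algebra functors, the free $\GOp_n$-algebra on a dg-module $V$ is the free commutative algebra on the free $\Lambda^{1-n}\LOp$-algebra generated by $V$. With $V = \QQ x$ this produces exactly the object $U(L_{n-1})$ of the statement, where $L_{n-1}$ is the free $\Lambda^{1-n}\LOp$-algebra on $\QQ x$ and the weight is the grading carried by the arity grading of $\Lambda^{1-n}\LOp$ on $L_{n-1}$ together with the rule $\weight(ab)=\weight(a)+\weight(b)$ on products (for $n=1$ one uses directly that the free associative algebra on a single generator is the polynomial algebra $\QQ[x] = U(L_0)$ with $L_0 = \QQ x$). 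A free algebra over $\Lambda^{1-n}\LOp$ on one generator is spanned by iterated self-brackets, and the Jacobi relation~(\ref{eqn:Jacobi}) reduces these to the left-normed monomials $\gamma_m = \lambda_{n-1}(\cdots\lambda_{n-1}(\lambda_{n-1}(x,x),x),\dots,x)$, with $\gamma_m$ of weight $m$ and of internal degree $(n-1)(m-1)$, each of the $m-1$ applications of $\lambda_{n-1}$ raising degrees by $n-1$. Reading off weights in the free commutative algebra $U(L_{n-1})$ then exhibits $H_*((\EOp_n\otimes_{\ZZ}\QQ)(r)_{\Sigma_r})$ as spanned by the monomials $\gamma_{m_1}\cdots\gamma_{m_k}$ with $m_1+\dots+m_k = r$, which is the assertion of the Fact.

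The only step that is more than bookkeeping is the explicit computation of $L_{n-1}$ over $\QQ$. Here one tracks the Koszul signs produced by a bracket of degree $n-1$ applied to a degree-$0$ generator, using the symmetry relation~(\ref{eqn:OperationSymmetry}) and~(\ref{eqn:Jacobi}), to record which $\gamma_m$ genuinely survive rationally: up to an $(n-1)$-fold suspension, $L_{n-1}$ is the free graded Lie algebra on a single generator $x' = \Sigma^{1-n}x$ of degree $1-n$, whose self-bracket $[x',x']$ already vanishes over $\QQ$ when $1-n$ is even, and whose triple self-bracket $[[x',x'],x']$ is $3$-torsion, hence zero over $\QQ$, when $1-n$ is odd. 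Everything else---the K\"unneth isomorphism over the field $\QQ$, the $\Sigma_*$-cofibrancy of the Barratt-Eccles operad (its components are $\Sigma_r$-free in each degree), Theorem~\ref{Prelude:GerstenhaberOperads:LittleCubesHomology}, and the composite-operad structure of $\GOp_n$---is taken from earlier in the paper.
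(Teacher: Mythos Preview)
Your proposal is correct and follows essentially the same approach as the paper: the paper gives no separate proof for this Fact, but presents it as an immediate consequence (``Consequently:'') of the K\"unneth isomorphism displayed just before and the identification $H_*(\EOp_n)=\GOp_n$, with the composite description $\GOp_n=\COp\circ\Lambda^{1-n}\LOp$ from \S\ref{Prelude:GerstenhaberOperads:Definition} in the background. You have simply spelled out the bookkeeping the paper leaves implicit, including the remark on which $\gamma_m$ survive rationally, which the paper records in the sentence immediately following the Fact.
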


Naturally, the Lie monomials $\gamma_m(x,\dots,x)$ vanish for $m>1$ when $n$ is odd,
for $m>2$ when $n$ is even,
because of the symmetry and Jacobi relations.

\subsubsection{Homology operations associated to $E_n$-algebras mod $p$}\label{FirstStep:Homology:PrimaryOperations}
If we change the ground ring to a finite primary field $\FF = \FF_p$ such that $p>2$,
then we have besides the product and the Browder bracket:
\begin{enumerate}
\item
a Frobenius operation $\xi_{n-1}: H_d(A)\rightarrow H_{p d + (n-1)(p-1)}(A)$,
defined when $d+n-1$ is even,
\item
a Bockstein Frobenius $\zeta_{n-1}: H_d(A)\rightarrow H_{p d + (n-1)(p-1)-1}(A)$,
defined when $d+n-1$ is even too,
\item
a Bockstein $\beta: H_d(A)\rightarrow H_{d-1}(A)$
and Dyer-Lashof operations $Q^i: H_d(A)\rightarrow H_{d + 2i(p-1)}(A)$
defined when $2 i<d+n-1$.
\end{enumerate}

For the sequel,
it is convenient to set
\begin{equation*}
Q^{d+n-1/2}(a) = \xi_{n-1}(a)\quad\text{and}\quad\beta Q^{d+n-1/2}(a) = \zeta_{n-1}(a),
\end{equation*}
for any class $a$ of degree $d$
such that $d+n-1$ is even.
Note however that these top operations are not additive
and, according to the definition of~\cite{Cohen},
the operation $\zeta_{n-1}$ differs from the Bockstein of $\xi_{n-1}$
by a corrective term.

In the representation of~\S\ref{FirstStep:Homology:Operations},
the operations $\beta^{\epsilon} Q^i$
correspond to generating classes of the homology modules $H_*((\EOp_n\otimes_{\ZZ}\FF_p)(p)_{\Sigma_p})
= H_d(\COp_n(p)_{\Sigma_p},\FF_p)$,
for which we have (see~\cite{Cohen}):
\begin{equation*}
H_d((\EOp_n\otimes_{\ZZ}\FF_p)(p)_{\Sigma_p})
= \begin{cases} \FF_p, & \text{if $d$ is of the form $d = 2 i (p-1) - \epsilon$}, \\
& \quad\text{for $i = 0,1,2,\dots$ and $\epsilon = 0,1$} \\
& \quad\text{such that $0\leq d\leq(n-1)(p-1)$}, \\
0, & \text{otherwise}. \end{cases}
\end{equation*}

In the case $p=2$,
we still have:
\begin{enumerate}
\item
a Frobenius operation $\xi_{n-1}: H_d(A)\rightarrow H_{2 d + (n-1)}(A)$,
defined for all $d$,
\item
and Araki-Kudo operations $Q^i: H_d(A)\rightarrow H_{d+i}(A)$,
defined for $i<d+n-1$.
\end{enumerate}
For the sequel,
we also set
\begin{equation*}
Q^{d+n-1}(a) = \xi_{n-1}(a)
\end{equation*}
when $a$ has degree $d$.
The operations $Q^i$
correspond to generating classes of the homology modules:
\begin{equation*}
H_d((\EOp_n\otimes_{\ZZ}\FF_2)(2)_{\Sigma_2})
= \begin{cases} \FF_2, & \text{for $d = 0,1,\dots,n-1$}, \\
0, & \text{otherwise}. \end{cases}
\end{equation*}

The operation $\xi_{n-1}$ is an analogue, with respect to the Browder bracket $\lambda_{n-1}$,
of the Frobenius of restricted Lie algebras.

\begin{fact}[{see~\cite{Cohen}}]\label{FirstStep:Homology:FiniteFieldCase}
In the case $p>2$,
the homology $H_*(\EOp_n(r)_{\Sigma_r}\otimes_{\ZZ}\FF_p)$
is the component of weight $r$
of a free algebra $U(R_{n-1} L_{n-1})$,
where:
\begin{itemize}
\item
the notation $L_{n-1}$ represents a free (unrestricted) Lie algebra
on a generator $x$ of degree $0$
together with a bracket $\lambda_{n-1}: L_{n-1}\otimes L_{n-1}\rightarrow L_{n-1}$ of degree $n-1$,
as in the rational case;
\item
the notation $R_{n-1} L_{n-1}$ represents the free $\FF_p$-module
generated by composites of Dyer-Lashof operations
\begin{equation*}
\beta^{\epsilon_1} Q^{i_1}\beta^{\epsilon_2} Q^{i_2}\cdots \beta^{\epsilon_l} Q^{i_l}(\gamma)
\end{equation*}
in weight $r = p^l\weight(\gamma)$
and in degree $d = \sum_k 2 i_k (p-1) - \epsilon_k + \deg(\gamma)$
satisfying admissibility
\begin{equation*}
i_1\leq p i_2 - \epsilon_2,\ i_2\leq p i_3 - \epsilon_3,\ \dots\ i_{l-1}\leq p i_l - \epsilon_l
\end{equation*}
and excess conditions
\begin{equation*}
\begin{array}{ccccc}
d_l & \leq & 2 i_l (p-1) - \epsilon_l & \leq & d_l+n-1, \\
& \vdots && \vdots & \\
d_2 & \leq & 2 i_2 (p-1) - \epsilon_2 & \leq & d_2+n-1, \\
d_1 & \leq & 2 i_1 (p-1) - \epsilon_1 & \leq & d_1+n-1,
\end{array}
\end{equation*}
where we set $d_k = 2 i_{k+1} (p-1) - \epsilon_{k+1} + \dots + 2 i_l (p-1) - \epsilon_l + \deg(\gamma)$,
for $k = 1,\dots,l$;
\item
the notation $U(R_{n-1} L_{n-1})$ refers to the free commutative algebra generated by $R_{n-1} L_{n-1}$
divided out by relations $Q^{\deg(a)}(a)\equiv a^p$,
for every element $a\in U(R_{n-1} L_{n-1})$
such that $\deg(a)$ is even.
\end{itemize}

In the case $p=2$,
the same result holds provided that we drop Bockstein operations,
we replace terms $2 i_k (p-1) - \epsilon_k$ by $i_k$
in the definition of $R_{n-1} L_{n-1}$,
and we remove the parity condition in the relation $Q^{\deg(a)}(a)\equiv a^2$.
\end{fact}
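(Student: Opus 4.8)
The plan is to reduce the statement to F.\ Cohen's computation of the $\FF_p$-homology of the orbit spaces $\COp_n(r)/\Sigma_r$, i.e.\ of free algebras over the little $n$-cubes operad \cite{Cohen}, and then to recall the structure of that computation. First I would invoke the weight splitting of~\S\ref{FirstStep:Homology:OperationWeight} for a single generator $x$ in degree $0$: it identifies $H_*((\EOp_n\otimes_{\ZZ}\FF_p)(r)_{\Sigma_r})$ with the weight-$r$ summand of $T(\EOp_n\otimes_{\ZZ}\FF_p,\FF_p x)=H_*(S(\EOp_n\otimes_{\ZZ}\FF_p,\FF_p x))$, and since $x$ sits in even degree one has $S(\EOp_n\otimes_{\ZZ}\FF_p,\FF_p x)=\bigoplus_{r}(\EOp_n\otimes_{\ZZ}\FF_p)(r)_{\Sigma_r}$ with differential the simplicial one of the Barratt--Eccles operad. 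Because the non-degenerate simplices spanning $\EOp_n(r)$ form a free $\Sigma_r$-set, $(\EOp_n\otimes_{\ZZ}\FF_p)(r)$ is degreewise $\FF_p[\Sigma_r]$-free and it models the $\FF_p$-chains on $\COp_n(r)$ as a $\Sigma_r$-complex; as the $\Sigma_r$-action on $\COp_n(r)$ is free, $(\EOp_n\otimes_{\ZZ}\FF_p)(r)_{\Sigma_r}$ computes $H_*(\COp_n(r)/\Sigma_r;\FF_p)$. So the whole content is the $\FF_p$-homology of $\coprod_r\COp_n(r)/\Sigma_r$, the free little $n$-cubes space on one point.

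The main step is to identify this free algebra as a free object in a suitable variety of $\FF_p$-algebras. The operations present are: the product $\mu$ of degree $0$ from operad composition; the Browder bracket $\lambda_{n-1}$ of degree $n-1$, carried by the fundamental class of $\COp_n(2)\simeq S^{n-1}$; and the Araki--Kudo/Dyer--Lashof operations $Q^i$, with Bocksteins $\beta Q^i$ when $p>2$, induced by the maps $H_*(B\Sigma_p;\FF_p)\to H_*(\COp_n(p)_{h\Sigma_p};\FF_p)$ coming from the inclusion of a point of $\COp_n(p)$; the ``top'' operations $\xi_{n-1},\zeta_{n-1}$ are the $Q^i,\beta Q^i$ sitting at the upper edge of the admissible range, where $2i(p-1)-\epsilon$ meets $d+n-1$. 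I would first record the relations these satisfy --- the symmetry and Jacobi identities for $\lambda_{n-1}$, the Cartan and Adem relations for the $Q^i$, the mixed Cartan/Adem relations tying $\lambda_{n-1}$ to the $Q^i$, the Frobenius relation $Q^{\deg a}(a)\equiv a^p$, and the unstable admissibility and excess conditions, the upper excess bound $2i_k(p-1)-\epsilon_k\leq d_k+n-1$ encoding that the top nonzero class of $\COp_n(p)/\Sigma_p$ lies in degree $(n-1)(p-1)$ --- and then prove that no further relations occur, i.e.\ freeness.

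I would prove freeness by induction on $n$. The base cases are $n=1$, where the little intervals operad gives the James construction and $H_*(\COp_1 Y;\FF_p)$ is the free associative algebra on $\tilde H_*(Y;\FF_p)$, and $n=\infty$, where $\COp_\infty Y$ group-completes to $\Omega^\infty\Sigma^\infty Y$ and its $\FF_p$-homology is the classical Araki--Kudo/Dyer--Lashof/Nakaoka answer. The intermediate stages are governed by the Fadell--Neuwirth fibrations $F(\mathbb{R}^n,r)\to F(\mathbb{R}^n,r-1)$, whose fibre is a wedge of $(n-1)$-spheres, assembled $\Sigma_r$-equivariantly; running the resulting equivariant fibration spectral sequences against the already known $(n-1)$- and $\infty$-stage answers --- with $\lambda_{n-1}$ detecting the top fibre class and the $Q^i$ transported from the stable computation --- forces the homology into exactly the asserted shape, and a Poincar\'e--Birkhoff--Witt count for the restricted-Lie part of $R_{n-1}L_{n-1}$ then matches $\FF_p$-dimensions weight by weight and degree by degree, ruling out any collapse. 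Extracting the weight-$r$ summand yields the description of $H_*((\EOp_n\otimes_{\ZZ}\FF_p)(r)_{\Sigma_r})$ as the weight-$r$ part of $U(R_{n-1}L_{n-1})$.

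For $p=2$ the argument runs identically, except that there are no Bockstein operations, $Q^i$ raises degree by $i$ instead of $2i(p-1)$ (so every $2i_k(p-1)-\epsilon_k$ becomes $i_k$ and the admissibility and excess inequalities simplify accordingly), and $Q^{\deg a}(a)\equiv a^2$ holds without any parity restriction. The main obstacle throughout is the freeness statement mod $p$: over $\QQ$ it is immediate from formality and the Koszulness of $\GOp_n$ (this is Fact~\ref{FirstStep:Homology:RationalCase}), whereas mod $p$ one must keep track of the homology of $\Sigma_p$ and, above all, of the interaction between the Browder bracket and the Dyer--Lashof operations --- including the corrective term that makes $\zeta_{n-1}$ differ from the Bockstein of $\xi_{n-1}$ --- which is the technical heart of \cite{Cohen}.
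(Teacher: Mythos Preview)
The paper does not give its own proof of this statement: it is recorded as a \emph{Fact} with the citation ``see~\cite{Cohen}'' and is used as a black box in the subsequent degree estimates. Your proposal is consistent with this --- you correctly reduce the claim, via the weight splitting of~\S\ref{FirstStep:Homology:OperationWeight} and the freeness of the $\Sigma_r$-action on the Barratt--Eccles simplices, to Cohen's computation of $H_*(\COp_n(r)/\Sigma_r;\FF_p)$, and then sketch the architecture of Cohen's argument (operations and relations, the Fadell--Neuwirth inductive scheme, the PBW dimension count). That is exactly the intended route, and indeed more than the paper itself supplies; there is nothing to correct.
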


From Fact~\ref{FirstStep:Homology:RationalCase}
and Fact~\ref{FirstStep:Homology:FiniteFieldCase},
we obtain:

\begin{prop}\label{FirstStep:Homology:TopDegree}\hspace*{2mm}
In each case $\FF = \QQ,\FF_p,\FF_2$,
the homology $H_d((\EOp_n\otimes_{\ZZ}\FF)(r)_{\Sigma_r})$
\begin{enumerate}
\item
vanishes in degree $d>(n-1)(r-1)$;
\item
and is spanned in degree $d = (n-1)(r-1)$ by elements of the form
\begin{equation*}
\xi^l_{n-1}(\gamma_m) = \xi_{n-1}^l(\lambda_{n-1}(\cdots\lambda_{n-1}(\lambda_{n-1}(x,x),x),\dots,x))
\end{equation*}
with $r = p^l m$,
where $m$ denotes the number of occurrences of the variable $x$ in the Lie monomial.
(Just forget the power of the Frobenius $\xi_{n-1}^l$ in the case $\FF = \QQ$.)
\end{enumerate}
\end{prop}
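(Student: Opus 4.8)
The plan is to read off both assertions directly from the explicit presentations of $H_*((\EOp_n\otimes_{\ZZ}\FF)(r)_{\Sigma_r})$ supplied by Fact~\ref{FirstStep:Homology:RationalCase} and Fact~\ref{FirstStep:Homology:FiniteFieldCase}. In each case this homology module is the weight~$r$ component of a free commutative algebra, hence is spanned by monomials $a_1\cdots a_k$ whose factors $a_j$ are the indecomposable generators (a Lie monomial $\gamma$, or a Dyer--Lashof composite $\beta^{\epsilon_1}Q^{i_1}\cdots\beta^{\epsilon_l}Q^{i_l}(\gamma)$) and whose weights $r_j = \weight(a_j)$ satisfy $r_1+\dots+r_k = r$. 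Since weight is additive under products, it suffices to bound the degree of a single generator of weight~$r'$ by $(n-1)(r'-1)$ and to identify the equality cases; the statement for a general monomial then follows from $\sum_j\deg(a_j)\leq\sum_j(n-1)(r_j-1) = (n-1)(r-k)\leq(n-1)(r-1)$, with equality forcing $k=1$.

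For the generators I would first record that a Lie monomial $\gamma$ of weight~$m$ has degree $(n-1)(m-1)$, since each of its $m-1$ brackets $\lambda_{n-1}$ contributes $n-1$ to the degree; this already settles the rational case, where $l=0$ throughout, the weight~$r$ part of $U(L_{n-1})$ being spanned in top degree $(n-1)(r-1)$ by the single Lie monomial $\gamma_r = \xi_{n-1}^{0}(\gamma_r)$. For $\FF = \FF_p$ or $\FF_2$ the core of the argument is an induction on the number $l$ of Dyer--Lashof operations, proving that a generator $a = \beta^{\epsilon_1}Q^{i_1}\cdots\beta^{\epsilon_l}Q^{i_l}(\gamma)$, of weight $r' = p^l\weight(\gamma)$, satisfies $\deg(a)\leq(n-1)(r'-1)$, with equality exactly when $\gamma = \gamma_m$ is a Lie monomial of the full weight and every operation $\beta^{\epsilon_k}Q^{i_k}$ is the Frobenius $\xi_{n-1}$, i.e. $a = \xi_{n-1}^{l}(\gamma_m)$ with $r' = p^l m$. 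In the inductive step one writes $a = \beta^{\epsilon_1}Q^{i_1}(a')$ with $\weight(a') = r'/p$ and, by induction, $\deg(a')\leq(n-1)(r'/p-1)$; the excess conditions of Fact~\ref{FirstStep:Homology:FiniteFieldCase} bound the degree that a single operation $\beta^{\epsilon_1}Q^{i_1}$ raises a class of degree $d' = \deg(a')$ to by $p\,d' + (n-1)(p-1)$ --- the value attained by $\xi_{n-1}$ --- while multiplying the weight by~$p$, and substituting gives
\begin{equation*}
\deg(a)\leq p\,d' + (n-1)(p-1)\leq p(n-1)\bigl(r'/p-1\bigr) + (n-1)(p-1) = (n-1)(r'-1),
\end{equation*}
equality forcing both $\deg(a') = (n-1)(r'/p-1)$ and $\beta^{\epsilon_1}Q^{i_1} = \xi_{n-1}$, i.e. $a = \xi_{n-1}^{l}(\gamma_m)$. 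The affine map $d\mapsto p\,d + (n-1)(p-1)$ is conjugate, via $d\mapsto d+(n-1)$, to multiplication by~$p$, which makes the telescoping over the $l$ operations transparent and accounts for the constraint $r' = p^l m$; the case $\FF = \FF_2$ is identical, with $\xi_{n-1}\colon H_d\to H_{2d+(n-1)}$ replacing $\xi_{n-1}\colon H_d\to H_{pd+(n-1)(p-1)}$ and $2$ in place of~$p$.

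Granting this, assertion~(1) is the displayed bound $\deg(a_1\cdots a_k)\leq(n-1)(r-1)$, and assertion~(2) follows because a homogeneous class of degree $(n-1)(r-1)$ is a combination of monomials of that degree, each of which must have $k=1$ and single factor $\xi_{n-1}^{l}(\gamma_m)$ with $r = p^l m$, so that these iterated Frobenius elements span $H_{(n-1)(r-1)}((\EOp_n\otimes_{\ZZ}\FF)(r)_{\Sigma_r})$. I would also note that the defining relation $Q^{\deg(a)}(a)\equiv a^p$ of $U(R_{n-1}L_{n-1})$ does not disturb this bookkeeping: both sides carry weight $p\,\weight(a)$ and degree $p\deg(a)$, so the degree and weight estimates descend to the quotient. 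The one genuinely delicate point is precisely the degree accounting for the Dyer--Lashof composites --- that one operation can at most apply $d\mapsto p\,d + (n-1)(p-1)$ to the degree while scaling the weight by~$p$, with the extremal case being exactly the Frobenius --- while everything else reduces to the routine additivity of weights under products.
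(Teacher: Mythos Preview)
Your proof is correct and follows essentially the same approach as the paper's own proof: both use the explicit bases from Fact~\ref{FirstStep:Homology:RationalCase} and Fact~\ref{FirstStep:Homology:FiniteFieldCase} to compare degree against weight, observing that the Frobenius $\xi_{n-1}$ is the extremal operation and that non-trivial products strictly lose. Your version is somewhat more explicit than the paper's terse argument---you spell out the induction on $l$, the affine conjugacy $d\mapsto d+(n-1)$ that linearizes the telescoping, the additivity bound $\sum_j(n-1)(r_j-1)=(n-1)(r-k)$ forcing $k=1$, and the compatibility of the relation $Q^{\deg(a)}(a)\equiv a^p$ with the degree--weight bookkeeping---whereas the paper simply asserts $\deg(a)<(n-1)(\weight(a)-1)$ for lower composites and for products.
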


\begin{proof}
We have clearly $\deg(a) = (n-1)(\weight(a)-1)$ for elements of the form $a = \xi^l_{n-1}(\gamma_m)$.
We have $\deg(a)<(n-1)(\weight(a)-1)$
for lower composites of operations
\begin{equation*}
a = \beta^{\epsilon_1} Q^{i_1}\beta^{\epsilon_2} Q^{i_2}\cdots \beta^{\epsilon_l} Q^{i_l}(\gamma),
\end{equation*}
because the degree increase is lower for an operation $\beta^{\epsilon_k} Q^{i_k}$
below the Frobenius $\xi_{n-1}$.
We have similarly $\deg(a b)<(n-1)(\weight(a b)-1)$
for any non-trivial product $a b$.

We obtain from these observations that the largest increase of degree relatively to a weight variation
is reached by the application of brackets and Frobenius operations
and agrees with the bound of the proposition.
We draw our conclusion from this assertion.
\end{proof}

We compute the action of the suspension morphism $\sigma: \EOp_n\rightarrow\Lambda^{-1}\EOp_{n-1}$
on the top homology component of $H_*((\EOp_n\otimes_{\ZZ}\FF)(r)_{\Sigma_r})$.
We rather determine the morphism
\begin{equation*}
\sigma_*: H_*(S(\EOp_n\otimes_{\ZZ}\FF,E))\rightarrow H_*(S(\Lambda^{-1}\EOp_{n-1}\otimes_{\ZZ}\FF,E))
\end{equation*}
induced by $\sigma$
on certain homology classes of the free $\EOp_n\otimes_{\ZZ}\FF$-algebra $A = S(\EOp_n\otimes_{\ZZ}\FF,E)$,
for any choice of $E$.

We already know that $\sigma_*(\lambda_{n-1}) = \lambda_{n-2}$
in $H_*(\EOp_{n-1}(2))$.
We deduce from this result:

\begin{prop}\label{FirstStep:Homology:SuspensionBracket}
Let $\FF$ be any ring over $\ZZ$.
For any pair of elements $(a,b)$
in the homology of a free $\EOp_n\otimes_{\ZZ}\FF$-algebra $A = S(\EOp_n\otimes_{\ZZ}\FF,E)$,
we have the relation
\begin{equation*}
\sigma_*(\lambda_{n-1}(a,b)) = \lambda_{n-2}(\sigma_*(a),\sigma_*(b))
\end{equation*}
in $H_*(S(\Lambda^{-1}\EOp_{n-1}\otimes_{\ZZ}\FF,E))$.
\end{prop}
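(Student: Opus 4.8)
The plan is to deduce the formula from two facts: that $\sigma\colon\EOp_n\to\Lambda^{-1}\EOp_{n-1}$ is a morphism of operads, and that $\sigma_*(\lambda_{n-1})=\lambda_{n-2}$ in $H_{n-1}(\Lambda^{-1}\EOp_{n-1}(2))$, which is Proposition~\ref{Prelude:GerstenhaberOperads:LittleCubesMorphismHomology}(b). First I would record, for a general operad morphism $f\colon\POp\to\QOp$ and a general object $E$, the arity-wise map
\begin{equation*}
f_E\colon S(\POp,E)=\bigoplus_{r}\POp(r)\otimes_{\Sigma_r}E^{\otimes r}\longrightarrow\bigoplus_{r}\QOp(r)\otimes_{\Sigma_r}E^{\otimes r}=S(\QOp,E).
\end{equation*}
By freeness of $S(\POp,E)$, this $f_E$ is the unique morphism of $\POp$-algebras extending the inclusion $E\hookrightarrow S(\QOp,E)$ of generators, where the target is viewed as a $\POp$-algebra by restriction of structure along $f$ (equivalently, $S(f,-)$ is a morphism of the associated monads). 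Taking $f=\sigma\otimes_{\ZZ}\FF$ produces precisely the chain map inducing the morphism $\sigma_*$ of the statement, and it is a morphism of $\EOp_n\otimes_{\ZZ}\FF$-algebras onto the restriction of $A'=S(\Lambda^{-1}\EOp_{n-1}\otimes_{\ZZ}\FF,E)$.

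Next I would use that $\lambda_{n-1}(-,-)$ is the binary homology operation attached to $\EOp_n\otimes_{\ZZ}\FF$-algebras by the $\ZZ$-defined class $\lambda_{n-1}\in H_{n-1}(\EOp_n(2))$, in the sense of~\S\ref{FirstStep:Homology:Operations}. Two elementary naturality properties of such operations then finish the argument: they commute with morphisms of $\EOp_n\otimes_{\ZZ}\FF$-algebras (a morphism sends a cycle computing $\lambda_{n-1}(a,b)$ to one computing the operation on the images), and the operation attached to a class $\pi\in H_*(\EOp_n(2))$, evaluated on an algebra obtained by restriction along an operad morphism $f$, agrees with the operation attached to $f_*(\pi)$ through the genuine structure. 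Applying these to $f_E=\sigma_E$ and $f=\sigma\otimes_{\ZZ}\FF$ gives
\begin{equation*}
\sigma_*\bigl(\lambda_{n-1}(a,b)\bigr)=q_{\sigma_*(\lambda_{n-1})}\bigl(\sigma_*(a),\sigma_*(b)\bigr)=\lambda_{n-2}\bigl(\sigma_*(a),\sigma_*(b)\bigr),
\end{equation*}
where the last equality is Proposition~\ref{Prelude:GerstenhaberOperads:LittleCubesMorphismHomology}(b). If one prefers, $\sigma_*(\lambda_{n-1})=\lambda_{n-2}$ can be checked at the chain level: the formula for the cochain $\sgn$ of~\S\ref{Prelude:BarrattEccles:CircleCochainMultiplications} gives $\sigma(\mu_{n-1})=\pm\tau\mu_{n-2}$ and $\sigma(\tau\mu_{n-1})=\pm\mu_{n-2}$, so $\sigma$ carries the representative cycle of $\lambda_{n-1}$ to $\pm\lambda_{n-2}$, an identity that base-changes to an arbitrary ground ring~$\FF$.

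The one place requiring care, rather than a genuine obstacle, is the justification of the first step: one must verify that the arity-wise map $\sigma_E$ really is a morphism of $\EOp_n\otimes_{\ZZ}\FF$-algebras onto the restriction of $A'$ — this is the standard restriction/extension of structure for algebras over operads, and the hypothesis that $A$ is free is exactly what makes the comparison map $\sigma_E$ available. The only arithmetic to watch is signs: the operadic desuspension $\Lambda^{-1}$ twists the $\Sigma_2$-action by the signature and shifts degrees, so the identification $\sigma_*(\lambda_{n-1})=\lambda_{n-2}$ incorporates the parity reversal noted after Proposition~\ref{Prelude:GerstenhaberOperads:LittleCubesMorphismHomology}; with the conventions of~\S\S\ref{Prelude:BarrattEccles:SuspensionMorphisms}--\ref{Prelude:GerstenhaberOperads:Definition} these signs cause no trouble.
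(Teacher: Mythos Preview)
Your proposal is correct and follows essentially the same approach as the paper: the paper's proof is the two-line observation that $\sigma$ being an operad morphism gives $\sigma(p(a_1,\dots,a_r)) = \sigma(p)(\sigma(a_1),\dots,\sigma(a_r))$ for any composite in the free algebra, from which the result is immediate given $\sigma_*(\lambda_{n-1})=\lambda_{n-2}$. You have simply unpacked this one-liner into the explicit construction of $\sigma_E$ and the naturality properties of homology operations, which is a faithful elaboration of the same idea.
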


\begin{proof}
We have $\sigma(p(a_1,\dots,a_r)) = \sigma(p)(\sigma(a_1),\dots,\sigma(a_r))$
for any composite $p(a_1,\dots,a_r)$
in the free algebra $S(\EOp_n\otimes_{\ZZ}\FF,E)$,
because $\sigma$ is an operad morphism.
The proposition follows immediately.
\end{proof}

\begin{prop}\label{FirstStep:Homology:SuspensionFrobenius}
Let $\FF = \FF_p$ be any finite primary field.
For any class $a$
in the homology of a free $\EOp_n\otimes_{\ZZ}\FF$-algebra $A = S(\EOp_n\otimes_{\ZZ}\FF,E)$,
we have the relation
\begin{equation*}
\sigma_*(\xi_{n-1}(a)) = \xi_{n-2}(\sigma_*(a))
\end{equation*}
in $H_*(S(\Lambda^{-1}\EOp_{n-1}\otimes_{\ZZ}\FF,E))$.
\end{prop}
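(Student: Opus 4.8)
The plan is to follow the proof of Proposition~\ref{FirstStep:Homology:SuspensionBracket}, except that the reduction to arity $2$ is replaced by a reduction to arity $p$. Recall from~\S\S\ref{FirstStep:Homology:Operations}-\ref{FirstStep:Homology:OperationWeight} that the Frobenius $\xi_{n-1}$ is the operation $q_{\pi_n}$ attached to a homology class $\pi_n\in H_{(n-1)(p-1)}((\EOp_n\otimes_{\ZZ}\FF_p)(p)_{\Sigma_p})$: if $z$ is a representative cycle of $\pi_n$ and $a'$ a representative of a class $a$, then $\xi_{n-1}(a)$ is represented by the composite $z(a',\dots,a')$ in the free $\EOp_n\otimes_{\ZZ}\FF_p$-algebra. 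Since $\sigma$ is a morphism of operads we have $\sigma(z(a',\dots,a')) = \sigma(z)(\sigma(a'),\dots,\sigma(a'))$, so $\sigma_*(\xi_{n-1}(a)) = q_{\sigma_*(\pi_n)}(\sigma_*(a))$, and the statement reduces to the single identity $\sigma_*(\pi_n) = \pi_{n-1}$ in $H_*((\Lambda^{-1}\EOp_{n-1}\otimes_{\ZZ}\FF_p)(p)_{\Sigma_p})$, where $\pi_{n-1}$ denotes the class representing $\xi_{n-2}$.

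Next I would locate the target group. As a degree-preserving map $\EOp_n(p)\rightarrow\Lambda^{-1}\EOp_{n-1}(p) = \Sigma^{p-1}\EOp_{n-1}(p)^{\pm}$, the suspension morphism carries a class of degree $(n-1)(p-1)$ to a class which, after the shift built into $\Lambda^{-1}$, sits in the component of degree $(n-2)(p-1)$ of $H_*((\EOp_{n-1}\otimes_{\ZZ}\FF_p)(p)_{\Sigma_p})$ --- exactly the top homological degree for $\EOp_{n-1}$ in arity $p$. By Proposition~\ref{FirstStep:Homology:TopDegree}, combined with Cohen's computation of these modules (Fact~\ref{FirstStep:Homology:FiniteFieldCase}), this component is one-dimensional, generated by $\pi_{n-1}$ --- the competing Lie monomial $\gamma_p$ being either zero, which holds for odd $p$ by the Jacobi relation, or proportional to $\xi_{n-2}(x)$. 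Consequently $\sigma_*(\pi_n) = c\,\pi_{n-1}$ for some $c\in\FF_p$, and everything comes down to showing $c\neq 0$; the value $c=1$ then follows from the compatibility of the normalizations of Cohen's operations, or from a direct chain computation, but only the non-vanishing is essential for the applications that follow.

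For the non-vanishing I would compose the successive suspensions into $\sigma^{n-1}: \EOp_n\rightarrow\Lambda^{1-n}\EOp_1 = \Lambda^{1-n}\AOp$, which at the chain level is the $(n-1)$-fold iterate of the cap product with the cochains $\sgn$ of~\S\ref{Prelude:BarrattEccles:SuspensionMorphisms}. Since $\sigma^{n-1}$ preserves homological degree, $\sigma^{n-1}_*(\pi_n)$ lies in the single non-zero degree of $\Lambda^{1-n}\AOp(p)_{\Sigma_p}$, namely $(\FF_p[\Sigma_p]^{\pm})_{\Sigma_p}\cong\FF_p$, which is generated by the $p$-th power operation. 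It then suffices to exhibit a representative cycle $z$ of $\pi_n$ carried by simplices $\underline{w}$ lying in the top layer of the little-cubes filtration of~\S\ref{Prelude:BarrattEccles:LittleCubesFiltration}, and to track, using the formula $\sgn\cap(w_0,\dots,w_d) = \sgn(w_0,\dots,w_{p-1})(w_{p-1},\dots,w_d)$, the additivity $\mu_{ij}(w_0,\dots,w_d) = \mu_{ij}(w_0,\dots,w_{p-1}) + \mu_{ij}(w_{p-1},\dots,w_d)$, and Observation~\ref{Prelude:BarrattEccles:SuspensionLittleCubesLayers}, that after the $n-1$ cap products one is left with a non-zero class in $(\FF_p[\Sigma_p]^{\pm})_{\Sigma_p}$. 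Non-vanishing of $\sigma^{n-1}_*(\pi_n)$ then forces $\sigma_*(\pi_n)\neq 0$, as needed.

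The delicate point is this last chain-level verification. Unlike the Browder bracket $\lambda_{n-1}$, whose representative cycle exhibited in~\S\ref{Prelude:GerstenhaberOperads:RepresentativeOperations} is completely explicit and makes Proposition~\ref{Prelude:GerstenhaberOperads:LittleCubesMorphismHomology} a matter of inspection, the Frobenius class $\pi_n$ is the top cell of a mod $p$ cell structure on $E\Sigma_p$, and producing a workable $\Sigma_p$-equivariant representative together with control of the iterated cap product is where the real effort lies. A shortcut is to appeal instead to F.~Cohen's computation of the effect of the loop-space suspension on $H_*(\COp_n(p)_{\Sigma_p})$ in~\cite{Cohen}, which already records that this suspension carries the top operation for $\EOp_n$ onto the top operation for $\EOp_{n-1}$; this yields $\sigma_*(\pi_n) = \pi_{n-1}$ directly, and with it the proposition.
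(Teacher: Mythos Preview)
Your reduction to the universal class $\pi_n$ in arity $p$ and the observation that the target group is one-dimensional, forcing $\sigma_*(\pi_n) = c\,\pi_{n-1}$ for some scalar $c\in\FF_p$, match the paper's setup exactly. The divergence is in how you determine $c$.

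You propose either a direct chain-level computation or an appeal to Cohen. Both are problematic as written. The chain computation --- producing an explicit mod-$p$ representative of the Frobenius class in the Barratt-Eccles complex and tracking it through $n-1$ iterated cap products --- is precisely the kind of thing one wants to avoid, and you do not carry it out. The appeal to Cohen is not justified without further argument: Cohen's suspension is the topological loop-space suspension on $\COp_n$, whereas the $\sigma$ here is the combinatorial cap-product morphism of~\S\ref{Prelude:BarrattEccles:SuspensionMorphisms} on the Barratt-Eccles model. That these induce the same map on $H_*((\EOp_n\otimes\FF_p)(p)_{\Sigma_p})$ is not established anywhere in the paper and would itself need proof. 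So the step that actually fixes $c$ is missing.

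The paper closes this gap with a short algebraic trick that bootstraps from the bracket case. Cohen's Frobenius satisfies the restriction relation
\[
\lambda_{n-1}(\xi_{n-1}(x),y) = \ad_{n-1}^{p}(x)(y)
\]
in the homology of the free algebra on two generators $x,y$. Apply $\sigma_*$ to both sides. By Proposition~\ref{FirstStep:Homology:SuspensionBracket}, already proven, the left side becomes $\lambda_{n-2}(c\,\xi_{n-2}(\sigma_*x),\sigma_*y) = c\cdot\ad_{n-2}^{p}(\sigma_*x)(\sigma_*y)$, while the right side becomes $\ad_{n-2}^{p}(\sigma_*x)(\sigma_*y)$. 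This iterated bracket is a nonzero basis element of the free Lie algebra on $\sigma_*x,\sigma_*y$, so $c=1$. No chain-level work is needed, and the argument stays entirely within the homology operations already set up. (Your remark that only $c\neq 0$ matters for Lemma~\ref{FirstStep:Homology:SuspensionAction} is correct, but the proposition as stated asserts equality, and the paper's method delivers it for free.)
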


\begin{proof}
Let $x$ be a variable of degree $0$.
Recall that $\xi_{n-1}(x)$
represents a generating element of the homology module $H_d(\EOp_{n}\otimes_{\ZZ}\FF)(p)_{\Sigma_p})$
in degree $d = (p-1)(n-1)$.
The image of $\xi_{n-1}(x)$ under the morphism $\sigma_*$
is defined by an element of the homology module
\begin{equation*}
H_{d}(\Lambda^{-1}(\EOp_{n-1}\otimes_{\ZZ}\FF)(p)_{\Sigma_p})
= H_{d-p+1}((\EOp_{n-1}\otimes_{\ZZ}\FF)(p)_{\Sigma_p}),
\end{equation*}
which is also one dimensional.
Accordingly,
the image of $\xi_{n-1}(x)$
under the suspension morphism
\begin{equation*}
H_*(S(\EOp_{n}\otimes_{\ZZ}\FF,\FF x))
\xrightarrow{\sigma_*} H_*(S(\Lambda^{-1}\EOp_{n-1}\otimes_{\ZZ}\FF,\FF x))
= H_{*+1}(S(\EOp_{n-1}\otimes_{\ZZ}\FF,\FF y))
\end{equation*}
is a multiple of $\xi_{n-2}(y)$,
where the variable $y$ is a one-fold suspension of $x$.
By definition of the operation $\xi_{n-1}(a)$
we have a relation
\begin{equation*}
\sigma_*(\xi_{n-1}(a)) = \Cst\cdot\xi_{n-2}(\sigma_*(a))
\end{equation*}
for any class $a$ in the homology of a free $\EOp_{n}\otimes_{\ZZ}\FF$-algebra $S(\EOp_n\otimes_{\ZZ}\FF,E)$,
where $\Cst$ is a multiplicative constant (possibly null).

According to~\cite{Cohen},
the Frobenius operation $\xi_{n-1}$
satisfies the restriction relation $\lambda_{n-1}(\xi_{n-1}(x),y) = \ad_{n-1}^{p}(x)(y)$,
for every pair of variables $(x,y)$,
where we set $\ad_{n-1}(x) = \lambda_{n-1}(x,-)$.
By applying the morphism $\sigma_*$
to this relation,
we obtain an equation:
\begin{align*}
&& \Cst\cdot\lambda_{n-2}(\xi_{n-2}(\sigma_*(x)),\sigma_*(y)) & = \ad_{n-2}^p(\sigma_*(x))(\sigma_*(y)) \\
\Rightarrow && \Cst\cdot\ad_{n-2}^p(\sigma_*(x))(\sigma_*(y)) & = \ad_{n-2}^p(\sigma_*(x))(\sigma_*(y)),
\end{align*}
from which we deduce the identity $\Cst = 1$.
Hence,
we conclude that $\sigma_*(\xi_{n-1}(a)) = \xi_{n-2}(\sigma_*(a))$,
for every $a$.
\end{proof}

According to Proposition~\ref{FirstStep:Homology:TopDegree},
these propositions imply immediately:

\begin{lemm}\label{FirstStep:Homology:SuspensionAction}
The morphism
\begin{equation*}
\sigma_*: H_d(\EOp_n(r)_{\Sigma_r}\otimes_{\ZZ}\FF)\rightarrow H_{d-r+1}(\EOp_{n-1}(r)_{\Sigma_r}\otimes_{\ZZ}\FF)
\end{equation*}
induced by the suspension $\sigma: \EOp_n\rightarrow\Lambda^{-1}\EOp_{n-1}$
is an isomorphism in degree $d = (r-1)(n-1)$,
for each primary field $\FF = \QQ,\FF_p,\FF_2$.\qed
\end{lemm}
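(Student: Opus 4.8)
The plan is to deduce the statement directly from Proposition~\ref{FirstStep:Homology:TopDegree}, using the two compatibility formulas of Propositions~\ref{FirstStep:Homology:SuspensionBracket}--\ref{FirstStep:Homology:SuspensionFrobenius} to track how $\sigma_*$ acts on the explicit generators provided by that proposition. The first thing I would do is the degree bookkeeping. Since $d = (r-1)(n-1)$, we have $d-r+1 = (r-1)(n-2)$, which by Proposition~\ref{FirstStep:Homology:TopDegree}(a), applied with $n$ replaced by $n-1$, is precisely the top possibly non-vanishing degree of $\EOp_{n-1}(r)_{\Sigma_r}\otimes_{\ZZ}\FF$; the shift by $r-1$ is exactly the effect of the operadic desuspension $\Lambda^{-1}$ appearing in $\sigma: \EOp_n\to\Lambda^{-1}\EOp_{n-1}$ in arity~$r$ (which carries along a signature twist of the $\Sigma_r$-action that is suppressed in the notation of the statement). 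So the claim is that $\sigma_*$ identifies the top-degree homology of $\EOp_n(r)_{\Sigma_r}\otimes_{\ZZ}\FF$ with the top-degree homology of $\EOp_{n-1}(r)_{\Sigma_r}\otimes_{\ZZ}\FF$.

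Next I would make both sides explicit via Proposition~\ref{FirstStep:Homology:TopDegree}(b). Reading $H_*(\EOp_n(r)_{\Sigma_r}\otimes_{\ZZ}\FF)$ as the weight-$r$ summand of the homology operations $H_*(S(\EOp_n\otimes_{\ZZ}\FF,\FF x))$ on a single generator $x$ of degree~$0$, the source group in degree $(r-1)(n-1)$ is spanned by the classes $\xi_{n-1}^l(\gamma_m(x,\dots,x))$ with $r = p^l m$ (forgetting the Frobenius power when $\FF=\QQ$); in particular no product class occurs in this top degree, so the relation $\sigma_*(\mu)=0$ plays no role. Using the identification $S(\Lambda^{-1}\EOp_{n-1}\otimes_{\ZZ}\FF,\FF x)\simeq\Sigma^{-1}S(\EOp_{n-1}\otimes_{\ZZ}\FF,\FF y)$ with $y$ a one-fold suspension of $x$, the target group is likewise spanned by the classes $\xi_{n-2}^l(\gamma_m(y,\dots,y))$ with $r = p^l m$. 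Since $\sigma$ restricts to the identity in arity~$1$ we have $\sigma_*(x)=y$, iterating Proposition~\ref{FirstStep:Homology:SuspensionBracket} gives $\sigma_*(\gamma_m(x,\dots,x))=\gamma_m(y,\dots,y)$, and iterating Proposition~\ref{FirstStep:Homology:SuspensionFrobenius} gives $\sigma_*(\xi_{n-1}^l(-))=\xi_{n-2}^l(\sigma_*(-))$. Hence $\sigma_*$ carries the spanning class indexed by $(l,m)$ on the source side to the spanning class indexed by $(l,m)$ on the target side; this gives surjectivity immediately, and injectivity follows once one checks that the two indexing families coincide, i.e.\ that $\gamma_m(x,\dots,x)$ is non-zero on the source side exactly when $\gamma_m(y,\dots,y)$ is non-zero on the target side.

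The verification of this last coincidence is the only point that needs a hands-on argument, and it is where the signature twist attached to $\Lambda^{-1}$ earns its keep: the vanishing of a left-normed iterated bracket on a single generator is governed by the parity of the bracket together with the parity of the generator, through the symmetry relation~(\ref{eqn:OperationSymmetry}) and the Jacobi relation~(\ref{eqn:Jacobi}) (and through the explicit arity-$2$ representatives of \S\ref{Prelude:GerstenhaberOperads:RepresentativeOperations} in the case $\FF=\FF_2$). Passing from $\EOp_n$ to $\EOp_{n-1}$ lowers the bracket degree from $n-1$ to $n-2$, reversing that parity, while the desuspension $\Lambda^{-1}$ simultaneously replaces the even generator $x$ by the odd generator $y$, reversing it back; so exactly the same left-normed brackets survive on the two sides, the two indexing families agree, and $\sigma_*$ takes a basis to a basis. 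This argument runs uniformly over $\FF=\QQ,\FF_p,\FF_2$, from the corresponding cases of Propositions~\ref{FirstStep:Homology:TopDegree}, \ref{FirstStep:Homology:SuspensionBracket} and~\ref{FirstStep:Homology:SuspensionFrobenius}. I expect this parity cross-check to be the only genuine --- though entirely routine --- obstacle; all the substantial homological work is already contained in those three propositions.
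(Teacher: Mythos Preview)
Your proposal is correct and follows essentially the same approach as the paper: the paper simply states that the lemma follows ``immediately'' from Proposition~\ref{FirstStep:Homology:TopDegree} together with Propositions~\ref{FirstStep:Homology:SuspensionBracket}--\ref{FirstStep:Homology:SuspensionFrobenius}, and you have supplied the details of that immediate deduction. Your parity cross-check on the surviving iterated brackets is a point the paper leaves entirely implicit, but your argument for it is correct and is indeed the only thing one needs to verify beyond reading off the generators.
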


Lemma~\ref{FirstStep:Equations:ObstructionVanishing}
requires to determine the action of the suspension morphism
in degree $d = (r-1)n-2$.
Observe that:

\begin{obsv}\label{FirstStep:Homology:ObstructionBounds}
We have $(r-1)(n-1) = (r-1)n-2$ for $r=3$ and $(r-1)(n-1)<(r-1)n-2$ for $r>3$.
\end{obsv}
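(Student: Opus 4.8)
The plan is simply to compare the two quantities by computing their difference. First I would write
\begin{equation*}
\bigl((r-1)n-2\bigr) - (r-1)(n-1) = (r-1)n - 2 - (r-1)n + (r-1) = r-3,
\end{equation*}
so that the asserted relation between $(r-1)(n-1)$ and $(r-1)n-2$ is entirely governed by the sign of the integer $r-3$. For $r=3$ this difference vanishes, which gives the claimed equality $(r-1)(n-1) = (r-1)n-2$; for $r>3$ the quantity $r-3$ is a positive integer, hence $(r-1)(n-1) < (r-1)n-2$, as stated.

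There is no genuine obstacle here: the observation is a bookkeeping remark that isolates the two arity ranges in which the obstruction degree $d = (r-1)n-2$ appearing in Lemma~\ref{FirstStep:Equations:ObstructionVanishing} interacts with the top homological degree $(r-1)(n-1)$ computed in Proposition~\ref{FirstStep:Homology:TopDegree}. The reason for recording it is that for $r=3$ the obstruction sits exactly in the top degree, where Lemma~\ref{FirstStep:Homology:SuspensionAction} provides an isomorphism under $\sigma_*$, whereas for $r>3$ the obstruction lies strictly below the top degree, which is the regime in which the vanishing argument of the next subsection can be organized separately.
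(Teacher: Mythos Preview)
Your arithmetic is correct and is exactly what the paper implicitly relies on; the observation is stated without proof in the paper since the identity $(r-1)n-2-(r-1)(n-1)=r-3$ is immediate.

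One remark on your contextual paragraph: you have the direction of the inequality's consequence reversed. For $r>3$ the inequality $(r-1)(n-1)<(r-1)n-2$ means the obstruction degree $d=(r-1)n-2$ lies \emph{above} the top nonvanishing degree $(r-1)(n-1)$ of Proposition~\ref{FirstStep:Homology:TopDegree}, not below it. This is precisely why the argument following the observation works: for $r>3$ the relevant homology groups vanish outright in degree $d=(r-1)n-2$, so the suspension is trivially an isomorphism there; it is only at $r=3$ that one actually needs the isomorphism of Lemma~\ref{FirstStep:Homology:SuspensionAction} at the top degree. This does not affect the validity of your proof of the observation itself, but you should correct the surrounding explanation.
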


Hence,
since all homology groups vanish in degree $d>(r-1)(n-1)$,
we obtain that the suspension induces an isomorphism in degree $d = (r-1)n-2$
for all $r>2$.
We use this observation to deduce:

\begin{lemm}\label{FirstStep:Homology:ObstructionVanishing}
We have
\begin{equation*}
H_{n(r-1)-2}(\ker\{\sigma_*: \EOp_n(r)_{\Sigma_r}\rightarrow\Lambda^{-1}\EOp_{n-1}(r)_{\Sigma_r}\}\otimes_{\ZZ}\FF) = 0
\end{equation*}
when $r>2$,
for each primary field $\FF = \QQ,\FF_p,\FF_2$.
\end{lemm}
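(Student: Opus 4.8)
The plan is to realize the suspension morphism as a degreewise split surjection of $\Sigma_r$-complexes, pass to coinvariants and to $\FF$-coefficients, and then run the long exact homology sequence, which is controlled entirely by the vanishing range of Proposition~\ref{FirstStep:Homology:TopDegree} and the single top-degree isomorphism of Lemma~\ref{FirstStep:Homology:SuspensionAction}. The first ingredient is a freeness observation: in each degree $\EOp_n(r)$ is a free $\ZZ[\Sigma_r]$-module, because $\Sigma_r$ permutes the basis of non-degenerate simplices of permutations freely (if $w\cdot\underline{w}=\underline{w}$ for a non-degenerate simplex $\underline{w}$, then $w=\id$), and likewise $\Lambda^{-1}\EOp_{n-1}(r)$ is degreewise $\ZZ[\Sigma_r]$-free, the sign twist carrying the regular representation to itself. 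Hence $\EOp_n(r)_{\Sigma_r}$ and $\Lambda^{-1}\EOp_{n-1}(r)_{\Sigma_r}$ are degreewise free over $\ZZ$. Since $\sigma\colon\EOp_n(r)\rightarrow\Lambda^{-1}\EOp_{n-1}(r)$ is degreewise surjective by Lemma~\ref{FirstStep:Equations:SuspensionSurjectivity}, the short exact sequence of $\Sigma_r$-complexes
\begin{equation*}
0\rightarrow\ker\sigma\rightarrow\EOp_n(r)\xrightarrow{\sigma}\Lambda^{-1}\EOp_{n-1}(r)\rightarrow 0
\end{equation*}
is degreewise split (its target is degreewise projective over $\ZZ[\Sigma_r]$). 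Applying the additive functors $(-)_{\Sigma_r}$ and then $(-)\otimes_{\ZZ}\FF$, which preserve split exact sequences, yields a short exact sequence of complexes of $\FF$-modules
\begin{equation*}
0\rightarrow L(r)\rightarrow\EOp_n(r)_{\Sigma_r}\otimes_{\ZZ}\FF\xrightarrow{\sigma_*}\Lambda^{-1}\EOp_{n-1}(r)_{\Sigma_r}\otimes_{\ZZ}\FF\rightarrow 0,
\end{equation*}
where $L(r)=\ker\{\sigma_*\colon\EOp_n(r)_{\Sigma_r}\rightarrow\Lambda^{-1}\EOp_{n-1}(r)_{\Sigma_r}\}\otimes_{\ZZ}\FF$ is precisely the complex appearing in the statement.

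Next I would fix $d=n(r-1)-2$ and run the long exact homology sequence of this short exact sequence. By Proposition~\ref{FirstStep:Homology:TopDegree}, $H_*(\EOp_n(r)_{\Sigma_r}\otimes_{\ZZ}\FF)$ vanishes above degree $(n-1)(r-1)$; since $\Lambda^{-1}$ raises degrees by $r-1$ and replaces $n$ by $n-1$, $H_*(\Lambda^{-1}\EOp_{n-1}(r)_{\Sigma_r}\otimes_{\ZZ}\FF)$ vanishes above degree $(n-2)(r-1)+(r-1)=(n-1)(r-1)$ as well. Observation~\ref{FirstStep:Homology:ObstructionBounds} records that $d-(n-1)(r-1)=r-3$. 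For $r>3$ this gives $d>(n-1)(r-1)$ and $d+1>(n-1)(r-1)$, so in the segment
\begin{equation*}
H_{d+1}(\Lambda^{-1}\EOp_{n-1}(r)_{\Sigma_r}\otimes_{\ZZ}\FF)\rightarrow H_d(L(r))\rightarrow H_d(\EOp_n(r)_{\Sigma_r}\otimes_{\ZZ}\FF)
\end{equation*}
the two outer terms vanish, forcing $H_d(L(r))=0$. For the remaining case $r=3$ we have $d=2(n-1)=(n-1)(r-1)$, the top degree, while $d+1$ still lies strictly above it, so $H_{d+1}(\Lambda^{-1}\EOp_{n-1}(r)_{\Sigma_r}\otimes_{\ZZ}\FF)=0$ and the long exact sequence gives $H_d(L(r))\cong\ker\{\sigma_*\colon H_d(\EOp_n(r)_{\Sigma_r}\otimes_{\ZZ}\FF)\rightarrow H_d(\Lambda^{-1}\EOp_{n-1}(r)_{\Sigma_r}\otimes_{\ZZ}\FF)\}$. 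Since $d=(r-1)(n-1)$, Lemma~\ref{FirstStep:Homology:SuspensionAction} asserts that $\sigma_*$ is an isomorphism in this degree, in particular injective, whence $H_d(L(r))=0$ here too. This settles all $r>2$.

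The argument is almost entirely formal once the two vanishing ranges and the isomorphism are in place, so I do not expect any serious obstacle; the one point requiring care is the bookkeeping of the degree shift and the sign twist in $\Lambda^{-1}$, so that the map denoted $\sigma_*$ on homology above is matched with the normalization under which Lemma~\ref{FirstStep:Homology:SuspensionAction} identifies the suspension action on $H_*(\EOp_{n-1}(r)_{\Sigma_r}\otimes_{\ZZ}\FF)$. Everything else reduces to the observation that $\EOp_n(r)_{\Sigma_r}\otimes_{\ZZ}\FF$ and $\Lambda^{-1}\EOp_{n-1}(r)_{\Sigma_r}\otimes_{\ZZ}\FF$ have the \emph{same} homological top degree $(n-1)(r-1)$, and that $\sigma_*$ is an isomorphism exactly there.
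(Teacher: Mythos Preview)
Your proof is correct and follows essentially the same route as the paper: pass to a short exact sequence of $\FF$-complexes, run the long exact homology sequence around degree $d=n(r-1)-2$, and use the vanishing range of Proposition~\ref{FirstStep:Homology:TopDegree} together with the top-degree isomorphism of Lemma~\ref{FirstStep:Homology:SuspensionAction}. The paper packages the case split slightly differently (it first observes, via Observation~\ref{FirstStep:Homology:ObstructionBounds}, that $\sigma_*$ is an isomorphism in degree $d$ for all $r>2$, then runs the exact sequence once), and justifies exactness after coinvariants and $\otimes_{\ZZ}\FF$ by noting that the coinvariants are degreewise free $\ZZ$-modules rather than invoking the $\ZZ[\Sigma_r]$-split; but these are cosmetic variations of the same argument.
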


\begin{proof}
Since $\sigma_*: \EOp_n(r)_{\Sigma_r}\rightarrow\Lambda^{-1}\EOp_{n-1}(r)_{\Sigma_r}$
forms a surjective morphism of free $\ZZ$-modules in all degrees by Lemma~\ref{FirstStep:Equations:SuspensionSurjectivity},
we have
\begin{multline*}
\ker\{\sigma_*: \EOp_n(r)_{\Sigma_r}\rightarrow\Lambda^{-1}\EOp_{n-1}(r)_{\Sigma_r}\}\otimes_{\ZZ}\FF\\
= \ker\{\sigma_*: (\EOp_n\otimes_{\ZZ}\FF)(r)_{\Sigma_r}\rightarrow\Lambda^{-1}(\EOp_{n-1}\otimes_{\ZZ}\FF)(r)_{\Sigma_r}\}
\end{multline*}
and we obtain a short exact sequence of dg-modules
\begin{equation*}
0\rightarrow\ker\{\sigma_*\}\otimes_{\ZZ}\FF
\rightarrow(\EOp_n\otimes_{\ZZ}\FF)(r)_{\Sigma_r}
\xrightarrow{\sigma_*}(\EOp_n\otimes_{\ZZ}\FF)(r)_{\Sigma_r}
\rightarrow 0.
\end{equation*}
Form the associated homology exact sequence
\begin{multline*}
0 = H_{(n-1)(r-1)-1}((\EOp_{n-1}\otimes_{\ZZ}\FF)(r)_{\Sigma_r})\rightarrow
H_{n(r-1)-2}(\ker\{\sigma_*\}\otimes_{\ZZ}\FF)\\
\rightarrow H_{n(r-1)-2}((\EOp_n\otimes_{\ZZ}\FF)(r)_{\Sigma_r})
\xrightarrow{\sigma_*} H_{(n-1)(r-1)-2}((\EOp_{n-1}\otimes_{\ZZ}\FF)(r)_{\Sigma_r})
\end{multline*}
and use the result of Lemma~\ref{FirstStep:Homology:SuspensionAction}
to conclude.
\end{proof}

Since the vanishing property of Lemma~\ref{FirstStep:Homology:ObstructionVanishing}
holds for every primary field $\FF = \QQ,\FF_p,\FF_2$,
we conclude that the homology of the unreduced dg-module
\begin{equation*}
\ker\{\sigma_*: \EOp_n(r)_{\Sigma_r}\rightarrow\Lambda^{-1}\EOp_{n-1}(r)_{\Sigma_r}\}
\end{equation*}
vanishes in degree $d = n(r-1)-2$
and this achieves the proof of Lemma~\ref{FirstStep:Equations:ObstructionVanishing}.
\qed

\subsection{Conclusion of the first step}\label{FirstStep:Conclusion}
We can now solve the obstruction problems
stated in~\S\ref{FirstStep:Equations}:

\begin{lemm}\label{FirstStep:Conclusion:TwistingElementExistence}
We have a full set of elements $\omega_n(r)\in\EOp_n(r)$, $r\geq 2$, for $n\geq 1$,
satisfying the requirements of Proposition~\ref{FirstStep:Equations:AdjointExpression}:
\begin{align}
\delta(\omega_n(r)) & + \sum_{s+t-1 = r}\Bigl\{\sum_{i=1}^{s}\pm\omega_n(s)\circ_i\omega_n(t)\Bigr\} = 0,
\quad\text{for all $r>2$, $n>1$}, \\
\sigma(\omega_n(r)) & = \omega_{n-1}(r),
\end{align}
with $\deg(\omega_n(r)) = n(r-1)-1$.
\end{lemm}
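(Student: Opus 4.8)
Via the dictionary of Proposition~\ref{FirstStep:Equations:AdjointExpression}, the statement is equivalent to Lemma~\ref{FirstStep:Result}: producing elements $\omega_n(r)$ that solve~(\ref{eqn:TwistingAdjointElements}) and satisfy $\sigma(\omega_n(r))=\omega_{n-1}(r)$ amounts to producing a tower of operad morphisms $\phi_n:\BOp^c(\DOp_n)\rightarrow\COp$ compatible with the maps $\sigma^*$. The plan is to build the $\omega_n(r)$ by a nested induction, the outer one on $n$ and, for each fixed $n$, the inner one on the arity $r$, each inductive step being settled by the homological obstruction theory recalled at the beginning of~\S\ref{FirstStep}.

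For the outer base $n=1$ we invoke the Koszul duality of the associative operad $\EOp_1=\AOp$: the composite $\phi_1=\alpha\circ\epsilon_1$ has a twisting cochain concentrated in arity~$2$, so that $\omega_1(2)$ is the image of the product $\mu$ while $\omega_1(r)=0$ for $r>2$ (the latter being forced by $\deg(\omega_1(r))=r-2>0$, whereas $\EOp_1(r)$ sits in degree~$0$). Fix now $n\geq 2$ and assume the lemma proved for $n-1$. For the inner base $r=2$ the double sum in~(\ref{eqn:TwistingAdjointElements}) is empty, so it is enough to produce a $\delta$-cycle $\omega_n(2)$ of degree $n-1$ in $\EOp_n(2)_{\Sigma_2}$ with $\sigma(\omega_n(2))=\omega_{n-1}(2)$; this is a direct verification from the explicit chain complexes $\EOp_n(2)$ of~\S\ref{Prelude:GerstenhaberOperads:RepresentativeOperations} and the cap-product formula for $\sigma$ of~\S\ref{Prelude:BarrattEccles:SuspensionMorphisms}.

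For the inner inductive step, fix $r>2$ and suppose the elements $\omega_n(s)$ already constructed for $2\leq s<r$ (satisfying the lemma's relations), together with $\omega_{n-1}(r)$ furnished by the outer induction. Set
\[
q_n(r)=\sum_{s+t-1=r}\Bigl\{\sum_{i=1}^{s}\pm\,\omega_n(s)\circ_i\omega_n(t)\Bigr\}\in\EOp_n(r)_{\Sigma_r},
\]
a homogeneous element of degree $n(r-1)-2$; the twisting equations in lower arities make it $\delta$-closed (the standard closedness of the obstruction expression). Applying the operad morphism $\sigma:\EOp_n\rightarrow\Lambda^{-1}\EOp_{n-1}$ and using $\sigma(\omega_n(s))=\omega_{n-1}(s)$ for $s<r$, one finds that $\sigma(q_n(r))$ is the analogous expression formed from the $\omega_{n-1}(s)$, which equals $-\delta(\omega_{n-1}(r))$ by the twisting equation already established for $n-1$. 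By Lemma~\ref{FirstStep:Equations:SuspensionSurjectivity} the map $\sigma$ is surjective in each degree, so we may choose $\tilde\omega\in\EOp_n(r)_{\Sigma_r}$ of degree $n(r-1)-1$ with $\sigma(\tilde\omega)=\omega_{n-1}(r)$. Then $c:=-q_n(r)-\delta(\tilde\omega)$ lies in $K_n(r):=\ker\{\sigma:\EOp_n(r)_{\Sigma_r}\rightarrow\Lambda^{-1}\EOp_{n-1}(r)_{\Sigma_r}\}$, because $\sigma(c)=-\delta(\omega_{n-1}(r))+\delta(\omega_{n-1}(r))=0$, and $c$ is a cycle of degree $n(r-1)-2$. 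By Lemma~\ref{FirstStep:Equations:ObstructionVanishing} we have $H_{n(r-1)-2}(K_n(r))=0$, hence $c=\delta(u)$ for some $u\in K_n(r)$ of degree $n(r-1)-1$. Setting $\omega_n(r):=\tilde\omega+u$ then gives $\sigma(\omega_n(r))=\omega_{n-1}(r)$ (since $u\in K_n(r)$) and $\delta(\omega_n(r))=\delta(\tilde\omega)+c=-q_n(r)$, that is, equation~(\ref{eqn:TwistingAdjointElements}). This closes both inductions and, through Proposition~\ref{FirstStep:Equations:AdjointExpression}, proves the lemma.

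The only ingredient above that is not formal homological algebra, and hence the main obstacle, is the vanishing of Lemma~\ref{FirstStep:Equations:ObstructionVanishing}: it rests on Cohen's computation of $H_*(\COp_n(r)_{\Sigma_r};\FF)$ for $\FF=\QQ,\FF_p$ and, above all, on the identification of the top-degree behaviour of the suspension $\sigma$ (Lemma~\ref{FirstStep:Homology:SuspensionAction}), which is precisely what forces the obstruction group to vanish in the degree $n(r-1)-2$ occurring in the recursion. Once this degreewise vanishing is available, the lift $\tilde\omega$ can be corrected so as to be at once $\delta$-closed and compatible with $\sigma$, and the construction runs to completion without further difficulty.
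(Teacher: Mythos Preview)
Your proof is correct and follows essentially the same approach as the paper: a nested induction on $n$ and $r$, lifting $\omega_{n-1}(r)$ along the surjection $\sigma$ and then correcting the lift by an element of $\ker\sigma$ using the vanishing of Lemma~\ref{FirstStep:Equations:ObstructionVanishing}. Your $\tilde\omega$ and $u$ are the paper's $\omega^0_n(r)$ and $\chi_n(r)$; you are slightly more explicit than the paper in separating out the inner base case $r=2$ (where Lemma~\ref{FirstStep:Equations:ObstructionVanishing} does not apply and a direct verification is needed), which is a welcome clarification.
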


\begin{proof}
We define the elements $\omega_n(r)$
by an induction on $n$ and $r$
starting with $\omega_n(1) = 0$ for every $n\geq 1$.
For $n=1$,
we take
\begin{equation*}
\omega_1(r) = \begin{cases} \mu\in\EOp_1(2), & \text{if $r=2$}, \\ 0, & \text{otherwise}, \end{cases}
\end{equation*}
since this element corresponds to the usual twisting cochain $\theta_1: \Lambda^{-1}\bar{\AOp}^{\vee}\rightarrow\AOp$.
Suppose we have defined suitable elements $\omega_m(s)$
for $m<n$
and for $m=n$ and $s<r$.
Since $\sigma$ is surjective (by Lemma~\ref{FirstStep:Equations:SuspensionSurjectivity}),
we can pick an element $\omega^0_n(r)$
such that $\sigma(\omega^0_n(r)) = \omega_{n-1}(r)$.

For an element of the form $\omega_n(r) = \omega^0_n(r) + \chi_n(r)$,
with $\chi_n(r)\in\ker\sigma_*$,
Equation~(\ref{eqn:TwistingAdjointElements})
amounts to:
\begin{equation}\label{eqn:TwistingKernelElement}
\delta(\chi_n(r)) = - \sum_{s+t-1 = r}\Bigl\{\sum_{i=1}^{s}\pm\omega_n(s)\circ_i\omega_n(t)\Bigr\}
- \delta(\omega^0_n(r)).
\end{equation}
The suspension morphism maps the right-hand side of this equation
to
\begin{multline*}
\pm\sum_{s+t-1 = r}\Bigl\{\sum_{i=1}^{s}\pm\sigma(\omega_n(s))\circ_i\sigma(\omega_n(t))\Bigr\}
+ \pm\delta(\sigma(\omega^0_n(r))) \\
= \pm\sum_{s+t-1 = r}\Bigl\{\sum_{i=1}^{s}\pm\omega_{n-1}(s)\circ_i\omega_{n-1}(t)\Bigr\}
+ \pm\delta(\omega_{n-1}(r)) = 0.
\end{multline*}
By induction, we also have:
\begin{equation*}
\delta\Bigl\{\sum_{s+t-1 = r}\Bigl\{\sum_{i=1}^{s}\pm\omega_n(s)\circ_i\omega_n(t)\Bigr\}\Bigr\} = 0.
\end{equation*}
Since we prove that the homology of $\ker\sigma_*$
vanishes in degree $n(r-1)-2$,
Equation~(\ref{eqn:TwistingKernelElement})
admits a solution $\chi_n(r)\in\ker\sigma_*$
and this achieves the definition of the element $\omega_n(r)$.
\end{proof}

And from Proposition~\ref{FirstStep:Equations:AdjointExpression}
we conclude:

\begin{thm}\label{FirstStep:Conclusion:Result}
We have a sequence of operad morphisms:
\begin{equation*}
\xymatrix{ \BOp^c(\DOp_1)\ar[r]^(0.5){\sigma^*}\ar[d]_{\phi_1} &
\BOp^c(\DOp_2)\ar[r]^(0.65){\sigma^*}\ar@{.>}[d]_{\exists\phi_2} &
\cdots\ar[r]^(0.4){\sigma^*} &
\BOp^c(\DOp_n)\ar[r]^(0.65){\sigma^*}\ar@{.>}[d]_{\exists\phi_n} & \cdots \\
\COp\ar[r]_{=} &
\COp\ar[r]_{=} &
\cdots\ar[r]_{=} &
\COp\ar[r]_{=} & \cdots }.
\end{equation*}
as asserted in Lemma~\ref{FirstStep:Result}.\qed
\end{thm}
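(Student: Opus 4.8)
The plan is to obtain the morphisms $\phi_n$ by feeding the elements constructed in Lemma~\ref{FirstStep:Conclusion:TwistingElementExistence} into the dictionary of Proposition~\ref{FirstStep:Equations:AdjointExpression}, so that, modulo the obstruction-theoretic work already carried out in \S\ref{FirstStep:Equations} and \S\ref{FirstStep:Homology}, the theorem reduces to pure bookkeeping. First I would invoke Lemma~\ref{FirstStep:Conclusion:TwistingElementExistence} to fix, for every $n\geq 1$, a collection of elements $\omega_n(r)\in\EOp_n(r)_{\Sigma_r}$ of degree $n(r-1)-1$ that solve equation~(\ref{eqn:TwistingAdjointElements}) and satisfy the compatibility relations $\sigma(\omega_n(r)) = \omega_{n-1}(r)$ for all $r$.

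Then, by part (a) of Proposition~\ref{FirstStep:Equations:AdjointExpression}, each such collection is equivalent to an operad morphism $\phi_n\colon\BOp^c(\DOp_n)\rightarrow\COp$, namely the one whose associated twisting cochain is $\theta_{\omega_n}$; this produces the vertical arrows of the diagram. Next I would verify commutativity: by part (b) of the same proposition, the triangle $\phi_{n-1}\circ\sigma^* = \phi_n$ holds if and only if $\sigma(\omega_n(r)) = \omega_{n-1}(r)$ for all $r$, which is exactly the second family of identities guaranteed by Lemma~\ref{FirstStep:Conclusion:TwistingElementExistence}; pasting these triangles gives the whole commutative ladder, all of whose bottom maps are the identity of $\COp$.

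For the base case one notes that the collection chosen in the proof of Lemma~\ref{FirstStep:Conclusion:TwistingElementExistence}, namely $\omega_1(2)=\mu$ and $\omega_1(r)=0$ for $r\neq 2$, is precisely the classical twisting cochain $\Lambda^{-1}\bar{\AOp}^{\vee}\rightarrow\AOp$; hence $\phi_1$ coincides with the composite $\BOp^c(\DOp_1)\xrightarrow{\epsilon_1}\AOp\xrightarrow{\alpha}\COp$, where $\epsilon_1$ is the Koszul duality equivalence of the associative operad, as demanded in Lemma~\ref{FirstStep:Result}.

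The main obstacle of this section does not lie in this final assembly but in the inputs just invoked: the existence of the elements $\omega_n(r)$ hinges on the vanishing $H_{n(r-1)-2}(\ker\sigma_*)=0$ of Lemma~\ref{FirstStep:Equations:ObstructionVanishing}, which itself depends on Cohen's computation of $H_*((\EOp_n\otimes_{\ZZ}\FF)(r)_{\Sigma_r})$ and on the top-degree identities $\sigma_*(\lambda_{n-1})=\lambda_{n-2}$ and $\sigma_*(\xi_{n-1})=\xi_{n-2}$. The one delicate bookkeeping point in the assembly itself is to keep the sign conventions of Fact~\ref{FirstStep:Equations:ElementAdjunction} and Lemma~\ref{FirstStep:Equations:AdjointEquations} straight, so that equation~(\ref{eqn:TwistingAdjointElements}) is genuinely the adjoint of~(\ref{eqn:TwistingCochains}), and to confirm that the solution extending $\phi_1$ really is $\alpha\circ\epsilon_1$ and not merely some morphism $\BOp^c(\DOp_1)\to\COp$.
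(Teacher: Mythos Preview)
Your proposal is correct and is exactly the paper's own argument: the theorem is stated immediately after Lemma~\ref{FirstStep:Conclusion:TwistingElementExistence} with a bare \qed, the proof being precisely the application of Proposition~\ref{FirstStep:Equations:AdjointExpression} to the collections $\{\omega_n(r)\}$ that lemma supplies. One small slip: the commuting triangle should read $\phi_n\circ\sigma^* = \phi_{n-1}$, not $\phi_{n-1}\circ\sigma^* = \phi_n$, since $\sigma^*$ goes from $\BOp^c(\DOp_{n-1})$ to $\BOp^c(\DOp_n)$.
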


Hence, the proof of Lemma~\ref{FirstStep:Result}
is complete.\qed

\medskip
Lemma~\ref{FirstStep:Equations:SuspensionSurjectivity}
can be improved to:

\begin{lemm}\label{FirstStep:Conclusion:SuspensionSection}
The suspension morphism $\sigma: \EOp_n(r)\rightarrow\Lambda^{-1}\EOp_{n-1}(r)$
is surjective
and its kernel forms a projective $\Sigma_r$-module
in all degrees and for every $r\in\NN$.\qed
\end{lemm}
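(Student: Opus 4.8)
The surjectivity statement is already established in Lemma~\ref{FirstStep:Equations:SuspensionSurjectivity}; moreover the construction used there produces, for every basis simplex $\underline{w}$, a preimage lying in a single homogeneous degree, so that $\sigma: \EOp_n(r)\rightarrow\Lambda^{-1}\EOp_{n-1}(r)$ is in fact surjective in each degree separately. The plan is therefore to exhibit, degreewise, $\sigma$ as a surjection of \emph{free} $\ZZ[\Sigma_r]$-modules, after which the kernel is automatically a projective $\ZZ[\Sigma_r]$-module.

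First I would recall that $\EOp(r)_d = N_d(E\Sigma_r)$ is a finitely generated free $\ZZ[\Sigma_r]$-module: the $d$-simplices of $E\Sigma_r$ are the tuples $(w_0,\dots,w_d)\in\Sigma_r^{d+1}$, the diagonal $\Sigma_r$-action on them is free, and the non-degenerate tuples form a $\Sigma_r$-stable finite subset, so choosing one representative in each orbit yields a $\ZZ[\Sigma_r]$-basis of $\EOp(r)_d$. Next, since $\EOp_n$ and $\EOp_{n-1}$ are suboperads, the subspaces $\EOp_n(r)_d$ and $\EOp_{n-1}(r)_d$ are $\Sigma_r$-submodules, and each is spanned by a $\Sigma_r$-stable subset of the above basis: reindexing the values $1,\dots,r$ by a permutation only relabels the restricted sequences $\underline{w}|_{i j}$ and permutes the pairs $\{i,j\}$, hence leaves the conditions $\mu_{i j}(\underline{w})<n$ globally unchanged. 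A free $\Sigma_r$-action permuting a basis is still free, so $\EOp_n(r)_d$ and $\EOp_{n-1}(r)_d$ are again finitely generated free $\ZZ[\Sigma_r]$-modules. Finally $\Lambda^{-1}\EOp_{n-1}(r)_d$ is, up to a degree shift, the signature twist $\EOp_{n-1}(r)_{d-r+1}^{\pm}$, and tensoring a free $\ZZ[\Sigma_r]$-module with the sign representation yields a free $\ZZ[\Sigma_r]$-module (the assignment $g\mapsto\sgn(g)\,g$ is an automorphism of $\ZZ[\Sigma_r]$), so $\Lambda^{-1}\EOp_{n-1}(r)_d$ is free over $\ZZ[\Sigma_r]$ as well.

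Granting these facts, for each degree $d$ we have a short exact sequence of $\ZZ[\Sigma_r]$-modules
\[
0\longrightarrow(\ker\sigma)_d\longrightarrow\EOp_n(r)_d\xrightarrow{\ \sigma\ }\Lambda^{-1}\EOp_{n-1}(r)_d\longrightarrow 0
\]
whose middle and right-hand terms are finitely generated free. The right-hand term being projective, the sequence splits, so $(\ker\sigma)_d$ is a direct summand of a finitely generated free $\ZZ[\Sigma_r]$-module, hence finitely generated projective. As $d$ and $r$ were arbitrary (the cases $r=0,1$ being trivial, with vanishing kernel), this is exactly the assertion of the lemma.

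I do not anticipate a genuine obstacle here: the whole argument is bookkeeping with free simplicial $\Sigma_r$-sets. The only step deserving a line of justification is the assertion that the little cubes filtration is spanned by $\Sigma_r$-stable subsets of the tautological basis of $\EOp(r)_d$, which follows at once from the description of the filtration in~\S\ref{Prelude:BarrattEccles:LittleCubesFiltration} together with the $\Sigma_r$-equivariance of the numbers $\mu_{i j}$ up to relabeling of pairs.
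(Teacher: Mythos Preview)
Your argument is correct and follows the same route as the paper's own (one-line) proof: once you know $\sigma$ is degreewise surjective and that the target $\Lambda^{-1}\EOp_{n-1}(r)$ is a free $\ZZ[\Sigma_r]$-module in each degree, an equivariant section exists and the kernel is a summand of the free module $\EOp_n(r)$, hence projective. You have simply spelled out the verifications (free $\Sigma_r$-action on simplices, stability of the filtration, invariance of freeness under the sign twist) that the paper leaves implicit.
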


(Use that $\EOp_{n-1}(r)$ forms a free $\Sigma_r$-module in each degree
to define a $\Sigma_r$-equivariant
section of $\sigma$.)

\medskip
We also have:

\begin{lemm}\label{FirstStep:Conclusion:BarrattEcclesDegreeBounds}
Let $n\in\NN$.
The underlying chain complex of $\EOp_n(r)$ is bounded,
for each $r\in\NN$.
\end{lemm}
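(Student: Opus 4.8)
The plan is to show that $\EOp_n(r)$ is concentrated in the degree range $0\leq d\leq(n-1)\binom{r}{2}$, which yields boundedness at once. Since $\EOp_n(r)$ is a subcomplex of $\EOp(r)=N_*(E\Sigma_r)$, it already vanishes in negative degrees, and the cases $r=0,1$ are trivial (there $\EOp_n(r)$ is $0$ or $\ZZ$, concentrated in degree $0$). So it suffices, for $r\geq 2$, to bound the degree $d$ of a non-degenerate simplex $\underline{w}=(w_0,\dots,w_d)$ of permutations of $\{1,\dots,r\}$ subject to $\mu_{i j}(\underline{w})<n$ for every pair $\{i,j\}\subset\{1,\dots,r\}$.

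The first point I would isolate is the elementary observation that a permutation $w\in\Sigma_r$ is completely determined by the family of its pairwise restrictions $(w|_{i j})_{\{i,j\}}$: knowing, for each pair $\{i,j\}$, which of $i$ and $j$ occurs first in the sequence $(w(1),\dots,w(r))$ recovers $w$. Consequently, in a non-degenerate simplex we have $w_{k-1}\neq w_k$ for $k=1,\dots,d$, so each consecutive pair $(w_{k-1},w_k)$ must differ on at least one restriction, i.e.\ there is some pair $\{i,j\}$ with $w_{k-1}|_{i j}\neq w_k|_{i j}$, and this contributes one variation to the sequence $\underline{w}|_{i j}$. Summing these contributions over all pairs and all $d$ consecutive steps gives
\begin{equation*}
d\leq\sum_{\{i,j\}\subset\{1,\dots,r\}}\mu_{i j}(\underline{w}),
\end{equation*}
since each of the $d$ steps is counted by at least one term on the right-hand side. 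If $\underline{w}$ generates a nonzero class of $\EOp_n(r)$, then $\mu_{i j}(\underline{w})\leq n-1$ for each of the $\binom{r}{2}$ pairs, and therefore $d\leq(n-1)\binom{r}{2}$, as required.

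There is no genuine obstacle in this argument. The only step calling for a moment's care is the reconstruction of a permutation from its pairwise restrictions, together with the bookkeeping that in a non-degenerate simplex every one of the $d$ steps adds at least one unit to $\sum_{\{i,j\}}\mu_{i j}(\underline{w})$; both are immediate from the definition of the little cubes filtration recalled in \S\ref{Prelude:BarrattEccles:LittleCubesFiltration}. (For $r=2$ the bound reads $d\leq n-1$, recovering the description of $\EOp_n(2)$ given there.)
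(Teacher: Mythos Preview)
Your proof is correct and follows essentially the same approach as the paper: both argue that a permutation is determined by its pairwise restrictions, so each step in a non-degenerate simplex contributes at least one variation to some $\mu_{ij}$, yielding $d\leq\sum_{\{i,j\}}\mu_{ij}(\underline{w})\leq(n-1)\binom{r}{2}$. The paper's version is more terse and phrases the final bound as the strict inequality $d<n\,r(r-1)/2$, but the content is identical.
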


\begin{proof}
Let $\underline{w} = (w_0,\dots,w_d)$ be a simplex of $\EOp_n(r)$.
Since $(w_0,\dots,w_d)$ is supposed to be non-degenerate,
we have $w_k|_{i j}\not=w_{k+1}|_{i j}$ for some pair $i j$
(otherwise we would have $w_k = w_{k+1}$).
Hence,
the weights of $\underline{w}$ satisfy the relation $d\leq\sum_{i j}\mu_{i j}(\underline{w})$,
from which we deduce the inequality $d<n r (r-1)/2$
since, by definition of $\EOp_n(r)$, we have $\mu_{i j}(\underline{w})<n$ for all pairs $i j$.
\end{proof}

These statements imply:

\begin{prop}\label{FirstStep:Conclusion:SigmaCofibrations}
The morphism $\sigma^*: \DOp_{n-1}\rightarrow\DOp_n$
defines a cofibration of $\Sigma_*$-objects.
\end{prop}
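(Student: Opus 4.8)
The plan is to reduce the statement to an arity-wise verification on dg-modules over the group rings $\ZZ[\Sigma_r]$. Recall (with the conventions of~\cite[\S 1.4]{FresseCylinder}) that a morphism of $\Sigma_*$-objects is a cofibration if and only if it is, in each arity $r$, a cofibration of dg-modules over $\ZZ[\Sigma_r]$ in the standard model structure of that category; moreover, in a category of dg-modules over a ring, any monomorphism whose cokernel is a cofibrant dg-module is a cofibration, and a bounded complex of projective modules is cofibrant. So I would begin by reducing the proposition to the following claim: for each $r$, the morphism $\sigma^*: \DOp_{n-1}(r)\rightarrow\DOp_n(r)$ is a monomorphism of dg-modules over $\ZZ[\Sigma_r]$ whose cokernel is a bounded complex of projective $\ZZ[\Sigma_r]$-modules.

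The next step is to obtain this by dualizing the properties of $\sigma$ established above. By Lemma~\ref{FirstStep:Conclusion:SuspensionSection}, the suspension morphism $\sigma: \EOp_n(r)\rightarrow\Lambda^{-1}\EOp_{n-1}(r)$ is surjective with kernel $K(r)$ a projective $\ZZ[\Sigma_r]$-module in each degree, and (since $\Lambda^{-1}\EOp_{n-1}(r)$ is degreewise free over $\ZZ[\Sigma_r]$) the surjection splits degreewise over $\ZZ[\Sigma_r]$; thus $\EOp_n(r)$ decomposes, as a graded $\Sigma_r$-module, as $K(r)\oplus\Lambda^{-1}\EOp_{n-1}(r)$. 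I would then invoke the fact that each $\EOp_n(r)$ is a bounded complex (Lemma~\ref{FirstStep:Conclusion:BarrattEcclesDegreeBounds}) of degreewise finitely generated free $\ZZ$-modules (\S\ref{Prelude:BarrattEccles:Degree0}), properties inherited by the subcomplex $K(r)$, so that $\ZZ$-linear duality is exact on these objects, preserves boundedness, and --- because $\ZZ[\Sigma_r]$ is self-dual as a module over itself --- sends degreewise projective $\ZZ[\Sigma_r]$-modules to degreewise projective $\ZZ[\Sigma_r]$-modules. Dualizing the decomposition and then applying the operadic desuspension $\Lambda^{-n}$, which respects all of this, identifies $\sigma^*: \DOp_{n-1}(r)\rightarrow\DOp_n(r)$ with the inclusion of a graded-$\Sigma_r$ direct summand with complement $\Lambda^{-n}K(r)^{\vee}$, a bounded complex of projective $\ZZ[\Sigma_r]$-modules. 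This is exactly the claim from the first step, so the proposition follows.

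I expect no real obstacle here: the content is entirely the bookkeeping of the finiteness, freeness and projectivity hypotheses needed for $\ZZ$-linear duality to behave well and to interact correctly with the $\Sigma_r$-actions --- which is precisely why Lemma~\ref{FirstStep:Conclusion:SuspensionSection} was phrased with ``projective $\Sigma_r$-module'' and why the boundedness of $\EOp_n(r)$ (Lemma~\ref{FirstStep:Conclusion:BarrattEcclesDegreeBounds}) was isolated beforehand. The final passage, from ``monomorphism with cofibrant cokernel'' to ``cofibration'', uses only standard facts about the model category of dg-modules over a ring and no special feature of the Barratt-Eccles operad.
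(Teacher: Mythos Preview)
Your proposal is correct and follows essentially the same route as the paper: the paper's proof is a one-line reference to the characterization of cofibrations in dg-modules over a ring in~\cite[\S\S 2.3.6-9]{Hovey}, and you have simply unpacked that reference, showing explicitly how Lemmas~\ref{FirstStep:Conclusion:SuspensionSection} and~\ref{FirstStep:Conclusion:BarrattEcclesDegreeBounds} feed into that characterization after dualizing.
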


\begin{proof}
Use the characterization of~\cite[\S\S 2.3.6-9]{Hovey}
for cofibrations in a category of dg-modules
over a ring.
\end{proof}

Hence, according to~\cite[Proposition 1.4.13]{FresseCylinder}, we obtain:

\begin{prop}\label{FirstStep:Conclusion:CobarCofibrations}
The morphism $\sigma^*: \BOp^c(\DOp_{n-1})\rightarrow\BOp^c(\DOp_n)$
induced by the suspension morphisms
defines a cofibration of operads.\qed
\end{prop}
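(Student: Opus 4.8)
The plan is to deduce the statement directly from Proposition~\ref{FirstStep:Conclusion:SigmaCofibrations} and \cite[Proposition 1.4.13]{FresseCylinder}, which asserts that the operadic cobar construction carries a morphism of connected cooperads whose underlying morphism of $\Sigma_*$-objects is a cofibration to a cofibration in the model category of dg-operads. Since $\sigma^*: \DOp_{n-1}\rightarrow\DOp_n$ is a cofibration of $\Sigma_*$-objects by Proposition~\ref{FirstStep:Conclusion:SigmaCofibrations}, the induced morphism $\sigma^*: \BOp^c(\DOp_{n-1})\rightarrow\BOp^c(\DOp_n)$ is then a cofibration of operads, which is exactly what we want.

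For a self-contained account of why that cited statement applies here, I would realize $\BOp^c(\DOp_n)$ as a relative quasi-free operad over $\BOp^c(\DOp_{n-1})$. Since $\sigma^*$ is a cofibration of $\Sigma_*$-objects in dg-modules over $\ZZ$, the coaugmentation coideal splits as $\bar{\DOp}_n\cong\bar{\DOp}_{n-1}\oplus N$, where $N$ is a cofibrant $\Sigma_*$-object concentrated in arities $r\geq 2$; by Lemma~\ref{FirstStep:Conclusion:BarrattEcclesDegreeBounds} one may moreover take each $N(r)$ to be a bounded complex of projective $\ZZ$-modules. Because the free operad functor preserves coproducts, this yields an identity $\FOp(\Sigma^{-1}\bar{\DOp}_n)\cong\FOp(\Sigma^{-1}\bar{\DOp}_{n-1})\vee\FOp(\Sigma^{-1} N)$ in the category of operads. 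As $\sigma^*$ is a morphism of cooperads, the cobar twisting derivation $\partial$ restricts to the cobar differential of $\BOp^c(\DOp_{n-1})$ on the subobject $\FOp(\Sigma^{-1}\bar{\DOp}_{n-1})$, while on the new generators $\Sigma^{-1} N$ it reduces to a desuspension of the quadratic cocomposition of $\DOp_n$, hence lands in the weight-two part $\FOp_2(\Sigma^{-1}\bar{\DOp}_n)$ and decreases the free operad filtration recalled in~\S\ref{Prelude:KoszulDuality:CobarFiltration}.

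It then remains to invoke the general principle — which is precisely the content of \cite[Proposition 1.4.13]{FresseCylinder} — that a relative quasi-free extension of an operad by a cofibrant $\Sigma_*$-object whose twisting derivation is compatible with a well-founded filtration of the generating object is a cofibration of operads: one builds it as a transfinite composite of pushouts of the generating cofibrations $\FOp(C)\hookrightarrow\FOp(C')$ of free operads, with $C\hookrightarrow C'$ ranging over generating cofibrations of $\Sigma_*$-objects, along the attaching maps prescribed by $\partial$. In the case at hand, the combination of the free operad weight filtration with the arity filtration provides such a well-founded filtration, all objects involved being connected and bounded in each arity (Lemma~\ref{FirstStep:Conclusion:BarrattEcclesDegreeBounds}). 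The only genuinely delicate point in the whole argument is the one already settled upstream, namely that $\sigma^*$ is a \emph{cofibration} of $\Sigma_*$-objects and not merely a degreewise split monomorphism; this is Proposition~\ref{FirstStep:Conclusion:SigmaCofibrations}, and it rests on the projectivity of $\ker\sigma_*$ supplied by Lemma~\ref{FirstStep:Conclusion:SuspensionSection}.
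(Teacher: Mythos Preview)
Your proposal is correct and takes exactly the same approach as the paper: the paper's entire proof is the sentence ``Hence, according to~\cite[Proposition 1.4.13]{FresseCylinder}, we obtain'' preceding the statement, together with the $\qed$ box, i.e., a direct application of that cited proposition to the $\Sigma_*$-cofibration of Proposition~\ref{FirstStep:Conclusion:SigmaCofibrations}. Your first paragraph reproduces this, and the remainder is a (reasonable) unpacking of what \cite[Proposition 1.4.13]{FresseCylinder} asserts rather than a different argument.
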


From this proposition,
we deduce the existence of a lifting $\tilde{\phi_n}$ in the diagram:
\begin{equation*}
\xymatrix{ \BOp^c(\DOp_1)\ar@{>->}[]!D-<0pt,4pt>;[d]\ar[r] &
\EOp_1\ar@{^{(}->}[]!R+<4pt,0pt>;[r] & \cdots\ar@{^{(}->}[]!R+<4pt,0pt>;[r] &
\EOp_n\ar@{^{(}->}[]!R+<4pt,0pt>;[r] & \cdots\ar@{^{(}->}[]!R+<4pt,0pt>;[r] & \EOp\ar@{->>}[ddd]^{\sim} \\
\ar@{.}[d] &&&&& \\
\ar@{>->}[]!D-<0pt,4pt>;[d] &&&&& \\
\BOp^c(\DOp_n)\ar@{.>}[]!UR;[uuurrr]^{\exists ? \psi_n}
\ar@{.>}[]!UR;[uuurrrrr]_{\exists\tilde{\phi}_n}\ar[rrrrr]_{\phi_n} &&&&& \COp }.
\end{equation*}
But the desired morphism is $\psi_n$.

\subsubsection{Why do not we try to prove the existence of $\psi_n$ by the same method as $\phi_n$?}
The morphisms $\psi_n$
are associated to twisting cochains
\begin{equation*}
\eta_n(r): \DOp_n(r)\rightarrow\EOp_n(r),\ r\geq 2,
\end{equation*}
such that $\iota\eta_{n-1} = \eta_n\sigma^*$,
or equivalently to $\Sigma_r$-coinvariant elements of degree $-1$
\begin{equation*}
\nu_n(r)\in\{\Lambda^n\EOp_n(r)\otimes\EOp_n(r)\}_{\Sigma_r}
\end{equation*}
such that $\id\otimes\iota(\nu_{n-1}(r)) = \sigma\otimes\id(\nu_n(r))$.
The obstructions to the existence of $\nu_n(r)$
are represented by homology classes of degree $-2$
in
\begin{equation*}
H_*(\ker\{(\sigma\otimes\id)_*: \{\Lambda^n\EOp_n(r)\otimes\EOp_{n}(r)\}_{\Sigma_r}
\rightarrow\{\Lambda^{n-1}\EOp_{n-1}(r)\otimes\EOp_n(r)\}_{\Sigma_r}\}).
\end{equation*}
The problem is that this homology does not vanish in degree $-2$.
Hence, obstructions might occur in the construction of our morphism $\psi_n$.

The idea is to refine the lifting argument
in order to go through these obstructions.
For this purpose,
we use cell structures that refine filtration~(\ref{eqn:NestedEnOperads})
of the Barratt-Eccles operad $\EOp$.
The objective of the next subsection is to give a global abstract definition
of these cell structures
in a form suitable for our application.

\medskip
To conclude this section,
observe that propositions~\ref{FirstStep:Conclusion:SigmaCofibrations}-\ref{FirstStep:Conclusion:CobarCofibrations}
give as a corollary:

\begin{mainsecprop}\label{FirstStep:Conclusion:ColimitCofibration}
The cooperad $\DOp_{\infty} = \colim_n\DOp_n$
is cofibrant as a $\Sigma_*$-object
and the associated cobar construction $\BOp^c(\DOp_{\infty})$
is cofibrant as an operad.\qed
\end{mainsecprop}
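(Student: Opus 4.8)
The plan is to exhibit $\DOp_\infty$ as a sequential colimit of cofibrations of $\Sigma_*$-objects starting from a cofibrant object, and then to transport this through the cobar construction.

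First I would check that the initial term $\DOp_1 = \Lambda^{-1}\AOp^\vee$ is cofibrant as a $\Sigma_*$-object. In arity $r$, the dg-module $\AOp^\vee(r) = \Hom_\ZZ(\ZZ[\Sigma_r],\ZZ)$ is the free $\ZZ[\Sigma_r]$-module concentrated in degree $0$, and the operadic desuspension $\Lambda^{-1}$ merely shifts this single degree and twists the $\Sigma_r$-action by the signature; so $\DOp_1(r)$ is a free $\ZZ[\Sigma_r]$-module concentrated in one degree, hence cofibrant in the projective model structure on dg-modules over $\ZZ[\Sigma_r]$ by the characterization of~\cite[\S\S 2.3.6-9]{Hovey}. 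Since the model structure on $\Sigma_*$-objects is arity-wise, $\DOp_1$ is cofibrant, equivalently the morphism $0\to\DOp_1$ is a cofibration of $\Sigma_*$-objects.

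Next, Proposition~\ref{FirstStep:Conclusion:SigmaCofibrations} asserts that each $\sigma^*\colon\DOp_{n-1}\to\DOp_n$ is a cofibration of $\Sigma_*$-objects, so
\begin{equation*}
0\to\DOp_1\xrightarrow{\sigma^*}\DOp_2\xrightarrow{\sigma^*}\cdots\xrightarrow{\sigma^*}\DOp_n\xrightarrow{\sigma^*}\cdots
\end{equation*}
is a chain of cofibrations. As cofibrations are stable under sequential (transfinite) composition, and the forgetful functor from cooperads to $\Sigma_*$-objects creates colimits, the canonical morphism $0\to\DOp_\infty=\colim_n\DOp_n$ is again a cofibration; hence $\DOp_\infty$ is cofibrant as a $\Sigma_*$-object.

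Finally, $\BOp^c(\DOp_\infty)=(\FOp(\Sigma^{-1}\bar\DOp_\infty),\partial)$ is a quasi-free operad whose generating $\Sigma_*$-object $\Sigma^{-1}\bar\DOp_\infty$ is cofibrant (a desuspension of the cofibrant object $\DOp_\infty$, with the harmless modification $\bar\DOp_\infty(1)=0$). By~\cite[Theorem 1.4.12]{FresseCylinder}, a quasi-free operad on a cofibrant $\Sigma_*$-object is cofibrant in the category of operads, so $\BOp^c(\DOp_\infty)$ is a cofibrant operad. Equivalently, since the cobar construction preserves colimits, $\BOp^c(\DOp_\infty)=\colim_n\BOp^c(\DOp_n)$ is a sequential composite of the operad cofibrations of Proposition~\ref{FirstStep:Conclusion:CobarCofibrations} with cofibrant first term $\BOp^c(\DOp_1)$, hence cofibrant. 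There is no serious obstacle in this argument; the only points needing a moment's care are verifying that $\DOp_1$ is genuinely cofibrant and that the colimits along the tower are formed arity-wise in both $\Sigma_*$-objects and operads.
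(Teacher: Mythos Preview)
Your proposal is correct and follows essentially the same route as the paper: the paper states the proposition as an immediate corollary of Propositions~\ref{FirstStep:Conclusion:SigmaCofibrations}--\ref{FirstStep:Conclusion:CobarCofibrations} (sequential composition of the cofibrations $\sigma^*$), and invokes~\cite[Theorem 1.4.12]{FresseCylinder} for the operad cofibrancy just as you do. Your explicit verification that $\DOp_1$ is cofibrant is a useful detail the paper leaves implicit.
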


In the prologue,
we observe that the main results of the paper, theorems~\ref{Prelude:RealizationTheorem}-\ref{Prelude:BarDualityTheorem},
imply that $\BOp^c(\DOp_{\infty})$
forms an $E_\infty$-operad,
but this assertion can already be gained from the results of this section:
we have $H_*(\DOp_{\infty}) = \colim_n H_*(\DOp_n) = \Lambda^{-1}\LOp^{\vee}$
because the verifications of Proposition~\ref{Prelude:GerstenhaberOperads:KoszulPatching}
imply that the morphism $\sigma_*: H(\DOp_n)\rightarrow H_*(\DOp_{n-1})$ reduces to a composite
\begin{equation*}
\Lambda^{-n}\COp^{\vee}\circ\Lambda^{-1}\LOp^{\vee}
\rightarrow\Lambda^{-1}\LOp^{\vee}
\hookrightarrow\Lambda^{1-n}\COp^{\vee}\circ\Lambda^{-1}\LOp^{\vee}
\end{equation*}
when we take the representation of $\GOp_n$ as a composite $\Sigma_*$-object (see~\S\ref{Prelude:GerstenhaberOperads:Definition});
take the colimit $n\rightarrow\infty$
of the morphisms $\phi_n$
of Theorem~\ref{FirstStep:Conclusion:Result} (Lemma~\ref{FirstStep:Result})
to define a morphism $\phi_{\infty}: \BOp^c(\DOp_{\infty})\rightarrow\COp$;
we can check readily that the composite of $\phi_{\infty}$
with the edge morphism of Proposition~\ref{Prelude:KoszulDuality:HomologyKoszulSpectralSequence}
reduces to the Koszul duality equivalence for the commutative operad;
hence,
the argument of Theorem~\ref{Prelude:BarDualityTheorem}
can be applied to prove directly that $\phi_{\infty}$ is a weak-equivalence of operads.

\section{Interlude: operads shaped on complete graph posets}\label{Interlude}

The cell structures we are going to use have been introduced by C. Berger in~\cite{BergerCell,BergerSummary}
as a device to compare nested sequences of operads
to the sequence of little cubes operads.
The existence of a homotopy equivalence between the topological realization of Smith's filtration of the Barratt-Eccles operad
and little cubes operads has arisen as a significant success
of Berger's article.

The cell structure is defined for operads in topological spaces in the original reference.
The input data consists of a topological operad $\POp$
whose term $\POp(2)$ is a cellular model of the infinite dimensional sphere $S^{\infty}$.
Berger's idea was to use restriction operations, included in the composition structure of $\POp$,
in order to derive a cell decomposition of each $\POp(r)$
from the cell structure of $\POp(2)$.
The universal poset $\K(r)$ underlying the decomposition
\begin{equation}\label{eqn:CellUnion}
\POp(r) = \cup_{\kappa\in\K(r)}\POp(\kappa)
\end{equation}
is defined by complete edge-label graphs with $r$ vertices.
Such an operad inherits a natural filtration
by subcomplexes $\POp_n(r) = \cup_{\kappa\in\K_n(r)}\POp(\kappa)$,
where $\K_n(r)$ is a subposet of $\K$
defined by bounding edge-labels.
The existence of a homotopy equivalence between filtration layers $\POp_n$
and little cubes operads $\COp_n$
is ensured when:
\begin{enumerate}
\item\label{Interlude:Intro:Contractible}
the cells $\POp(\kappa)$
are contractible,
\item\label{Interlude:Intro:CellInclusion}
the cell inclusions $\POp(\alpha)\subset\POp(\beta)$
satisfy standard cofibrancy conditions.
\end{enumerate}
The idea of~\cite{BergerCell} is to arrange the situation when bad cells are unessential and decomposition~(\ref{eqn:CellUnion})
reduces to a union of good cells along a poset equivalent to $\K(r)$.
This refinement is necessary to handle the cell structure of the genuine little cubes operads (see~\cite{BergerCell,BrunFiedVogt}).

The applications of this paper demand to introduce an opposite idea,
because we use cell decompositions~(\ref{eqn:CellUnion}) for defining morphisms
and not for determining the homotopy of operads.
For that reason,
we completely revisit definitions of~\cite{BergerCell}.
In summary,
we use that the complete graph posets form an operad in posets, called the complete graph operad $\K$,
and we define a category of diagram operads, called $\K$-operads,
whose objects are diagram sequences $\POp(r) = \{\POp(\kappa)\}_{\kappa\in\K(r)}$, $r\in\NN$,
equipped with a composition structure
shaped on the composition structure of $\K$.
Our main example of a $\K$-operad, before the Barratt-Eccles operad,
is the commutative operad which can be viewed as a constant $\K$-diagram $\COp(\kappa) = \COp(r)$.
Our idea is to interpret condition~(\ref{Interlude:Intro:Contractible})
in terms of a model structure
within the category of $\K$-operads.

The notion of a $\K$-operad makes sense in any surrounding symmetric monoidal category.
This comprises the category of topological spaces, used in~\cite{BergerCell},
but also the category of dg-modules.
For our purpose, we focus on applications to operads in dg-modules.

In~\S\ref{Interlude:CompleteGraphOperad},
we review the definition of the complete graph operad $\K$
and we give the definition of our category of $\K$-operads.
In~\S\ref{Interlude:GraphOperadModel},
we define the model structure of the category of $\K$-operads.

\subsection{Operads shaped on complete graph posets}\label{Interlude:CompleteGraphOperad}
The complete graph operad~$\K$ is defined in~\cite{BergerCell}.
The name refers to a nice representation of the elements of this operad
by complete edge-label graphs.
Since the structure of this operad
is essential for our purpose,
we prefer to recall the definition of~$\K$ first.
We give the definition of a $\K$-operad afterwards.

As we only deal with connected operads,
we will assume $\K(0) = \emptyset$
and this convention differs from~\cite{BergerCell}.

\subsubsection{Complete graph posets}\label{Interlude:CompleteGraphOperad:Poset}
The $r$th term of the complete graph operad $\K$
is the set of pairs $\kappa = (\mu,\sigma)$
where
$\mu = \{\mu_{i j}\}_{i j}$
is a collection of non-negative integers $\mu_{i j}\in\NN$,
indexed by pairs $\{i,j\}\subset\{1,\dots,r\}$,
and $\sigma$ is a permutation
of $\{1,\dots,r\}$.
Recall that $\sigma|_{i j}$ denotes the permutation of $\{i,j\}$
defined by the occurrences of $\{i,j\}$
in the sequence $\sigma = (\sigma(1),\dots,\sigma(r))$.

The pair $\kappa = (\mu,\sigma)$
is represented by an edge-label graph, with $\{1,\dots,r\}$ as a vertex set
and one edge for each pair $\{i,j\}$,
each edge being equipped with a weight, defined by the non-negative integer $\mu_{i j}\in\NN$,
and an orientation, defined by the permutation $\sigma|_{i j}\in\{(i,j),(j,i)\}$.
In the paper, we also refer to the elements $\kappa = (\mu,\sigma)$
as oriented weight systems.

Note that the collection of orientations $\{\sigma|_{i j}\}_{i j}$
is sufficient to determine the permutation $\sigma$,
but only the collections $\{\theta_{i j}\in(i,j),(j,i)\}_{i j}$
which assemble to a global ordering of the set $\{1,\dots,r\}$
are associated to permutations $\sigma\in\Sigma_r$.
Figure~\ref{fig:EdgeLabelGraph} gives an example of a complete edge-label graph
that defines an element of~$\K(r)$ for $r=4$.
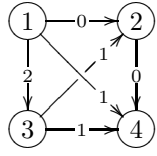
\begin{figure}[t]%
\[\vcenter{\xymatrix@M=0pt@!R=6mm@!C=6mm{ *+<3mm>[o][F]{1}\ar[r]|*+<2pt>{\scriptstyle 0}
\ar[d]|*+<2pt>{\scriptstyle 2}
\ar[dr]|(0.7)*+<2pt>{\scriptstyle 1}
& *+<3mm>[o][F]{2}
\ar[d]|*+<2pt>{\scriptstyle 0} \\
*+<3mm>[o][F]{3}\ar[r]|*+<2pt>{\scriptstyle 1}
\ar[ur]|*+<1mm>{\hole}|(0.7)*+<2pt>{\scriptstyle 1} &
*+<3mm>[o][F]{4} }}\]
\caption{This figure represents the edge-label graph associated to the oriented weight-system $(\mu,\sigma)\in\K(4)$
such that $\mu_{1 2} = \mu_{2 4} = 0$, $\mu_{1 4} = \mu_{2 3} = \mu_{3 4} = 1$, $\mu_{1 3} = 2$
and $\sigma = (1,3,2,4)$.}\label{fig:EdgeLabelGraph}
\end{figure}

The comparison relation $(\mu,\sigma)\leq(\nu,\tau)$ in $\K(r)$
is defined by the requirement
that:
\begin{equation*}
(\mu_{i j}<\nu_{i j})\qquad\text{or}\qquad(\mu_{i j},\sigma|_{i j})=(\nu_{i j},\tau|_{i j}),
\end{equation*}
for all pairs $\{i,j\}\subset\{1,\dots,r\}$.

\subsubsection{The operad structure of complete graph posets}\label{Interlude:CompleteGraphOperad:CompositionStructure}
The symmetric group $\Sigma_r$
acts on $\K(r)$
by poset morphisms.
The element $w\kappa\in\K(r)$ returned by the action of a permutation $w\in\Sigma$
on a pair $\kappa = (\mu,\sigma)\in\K(r)$
is defined by the pair $w\kappa = (w\mu,w\sigma)$,
where $w\mu$ is the collection such that $w\mu_{i j} = \mu_{w^{-1}(i) w^{-1}(j)}$
and $w\sigma$
is the composite of $\sigma$ and $w$
in the symmetric group.
This definition amounts to applying the permutation $w$
to the vertices of the edge-label graph $\kappa$.

The collection of complete graph posets $\K(r)$
is also equipped with partial composition products
\begin{equation*}
\K(s)\times\K(t)\xrightarrow{\circ_e}\K(s+t-1),\quad e = 1,\dots,s,
\end{equation*}
and hence has the full structure of an operad in posets.
To be explicit,
let $\alpha = (\mu,\sigma)\in\K(s)$ and $\beta = (\nu,\tau)\in\K(t)$,
the composite $\alpha\circ_e\beta\in\K(s+t-1)$
is defined by a pair $(\mu\circ_e\nu,\sigma\circ_e\tau)$
such that
\begin{equation*}
(\mu\circ_e\nu)_{i j} = \begin{cases} \mu_{i\,j}, & \text{for $i = 1,\dots,e-1$, $j = 1,\dots,e-1$}, \\
\mu_{i\,e}, & \text{for $i = 1,\dots,e-1$, $j = e,\dots,e+t-1$}, \\
\mu_{i\,j-t+1}, & \text{for $i = 1,\dots,e-1$, $j = e+t,\dots,s+t-1$}, \\
\nu_{i-e+1\,j-e+1}, & \text{for $i = e,\dots,e+t-1$, $j = e,\dots,e+t-1$}, \\
\mu_{e\,j-t+1}, & \text{for $i = e,\dots,e+t-1$, $j = e+t,\dots,s+t-1$},
\end{cases}
\end{equation*}
and $\sigma\circ_e\tau$
is the composite of $\sigma$ and $\tau$
in the permutation operad.

The idea is to plug $\beta$ into the $e$th vertex of $\alpha$.
The edges from a vertex of $\beta$ to the $i$th vertex of $\alpha$
in the composite edge-label graph $\alpha\circ_e\beta$
are just copies of the edge $\{e,i\}$
of the edge-label graph $\alpha$.
The other edges of $\alpha\circ_e\beta$ are copies of the internal edges of $\beta$
and of the edges $\{i,j\}$, $i,j\not=e$,
inside $\alpha$.
We perform the standard operadic index shifts $(1,\dots,t)\mapsto(e,\dots,e+t-1)$ on vertices of $\beta$
and $(e+1,\dots,s)\mapsto(e+t,\dots,s+t-1)$ on vertices of $\alpha$
in order to index vertices of $\alpha\circ_e\beta$
by the set $\{1,\dots,s+t-1\}$.
An example is represented in Figure~\ref{fig:GraphComposite}.
To help understanding, we have marked the substitution array by a dotted frame in the composite graph.
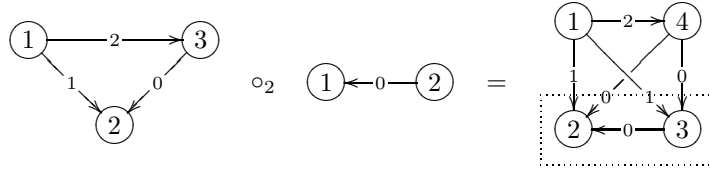
\begin{figure}[t]%
\[\vcenter{\xymatrix@M=0pt@!R=6mm@!C=3mm{ *+<3mm>[o][F]{1}\ar[dr]|*+<2pt>{\scriptstyle 1}\ar[rr]|*+<2pt>{\scriptstyle 2} &&
*+<3mm>[o][F]{3}\ar[dl]|*+<2pt>{\scriptstyle 0} \\
& *+<3mm>[o][F]{2} & }}
\quad\circ_2\quad\vcenter{\xymatrix@M=0pt@!R=6mm@!C=6mm{ *+<3mm>[o][F]{1} & *+<3mm>[o][F]{2}\ar[l]|*+<2pt>{\scriptstyle 0} }}
\quad=\quad\vcenter{\xymatrix@M=0pt@!R=6mm@!C=6mm{ *+<3mm>[o][F]{1}\ar[r]|*+<2pt>{\scriptstyle 2}
\ar[d]|*+<2pt>{\scriptstyle 1}
\ar[dr]|(0.7)*+<2pt>{\scriptstyle 1}
& *+<3mm>[o][F]{4}\ar[dl]|*+<1mm>{\hole}|(0.7)*+<2pt>{\scriptstyle 0}
\ar[d]|*+<2pt>{\scriptstyle 0} \\
*+<3mm>[o][F]{2} &
*+<3mm>[o][F]{3}\ar[l]|*+<2pt>{\scriptstyle 0}\save[]!C.[l]!C *+<4mm>[F.]\frm{}\restore }}\]
\caption{A composite in the complete graph operad}\label{fig:GraphComposite}
\end{figure}

\subsubsection{The category of $\K$-operads}\label{Interlude:CompleteGraphOperad:KOperads}
We call $\K$-diagram of dg-modules
a collection of functors $M: \K(r)\rightarrow\C$
on the posets $\K(r)$, $r\in\NN$.
We use the notation $M(\kappa)$ for the image of an element $\kappa\in\K(r)$ under $M: \K(r)\rightarrow\C$
and the notation $i_*: M(\alpha)\rightarrow M(\beta)$
for the morphism of dg-modules associated to an order relation $\alpha\leq\beta$
in $\K(r)$.

A $\K$-operad is a $\K$-diagram $\POp$
equipped with:
\begin{enumerate}
\item
actions of the symmetric groups $\Sigma_r$, $r\in\NN$,
defined by collections of morphisms
\begin{equation*}
\POp(\kappa)\xrightarrow{w_*}\POp(w\kappa)
\end{equation*}
associated to each permutation $w\in\Sigma_r$,
so that the diagram
\begin{equation*}
\xymatrix{ \POp(\alpha)\ar[r]^{w_*}\ar[d]_{i_*} & \POp(w\alpha)\ar[d]^{i_*} \\
\POp(\beta)\ar[r]_{w_*} & \POp(w\beta) }
\end{equation*}
commutes whenever we have a relation $\alpha\leq\beta$,
\item
and partial composition products
\begin{equation*}
\POp(\alpha)\otimes\POp(\beta)\xrightarrow{\circ_e}\POp(\alpha\circ_e\beta),
\end{equation*}
defined for $e = 1,\dots,s$ when $\alpha\in\K(r)$,
so that the diagram
\begin{equation*}
\xymatrix{ \POp(\alpha)\otimes\POp(\beta)\ar[r]^{\circ_e}\ar[d]_{i_*\otimes i_*} &
\POp(\alpha\circ_e\beta)\ar[d]^{i_*} \\
\POp(\gamma)\otimes\POp(\delta)\ar[r]_{\circ_e} & \POp(\gamma\circ_e\delta) }
\end{equation*}
commutes whenever we have relations $\alpha\leq\gamma$ and $\beta\leq\delta$.
\end{enumerate}
We also assume the existence of a unit element $1\in\POp(1)$
and that a natural extension of the usual axioms of operads
holds for this structure.
We say that a $\K$-operad is connected if we have $\POp(0) = 0$
and $\POp(1) = \ZZ$.

We adopt the notation $\Op_1^{\K}$ for the category of connected $\K$-operads.
A morphism of $\K$-operads $\phi: \POp\rightarrow\QOp$ is obviously a morphism of $\K$-diagrams
which preserves symmetric group actions and composition structures.

\subsubsection{Constant $\K$-operads and colimits}\label{Interlude:CompleteGraphOperad:ConstantKOperads}
Let $\POp\in\Op_1$ be a usual operad.
The constant $\K(r)$-diagrams
\begin{equation*}
\POp(\kappa) = \POp(r),\quad\kappa\in\K(r),
\end{equation*}
inherit an obvious $\K$-operad structure.
Hence
we have a constant diagram functor $\cst: \Op_1\rightarrow\Op_1^{\K}$
from the usual category of operads $\Op_1$
to the category of $\K$-operads $\Op_1^{\K}$.

In the other direction,
any $\K$-operad $\POp$
has an associated usual operad $\colim_{\K}\POp$
defined by the colimits
of its underlying $\K(r)$-diagrams:
\begin{equation*}
(\colim_{\K}\POp)(r) = \colim_{\kappa\in\K(r)}\POp(\kappa).
\end{equation*}
The action of symmetric groups and the composition structure of $\colim_{\K}\POp$
is defined by patching the action of symmetric groups and the composition structure of $\POp$
on colimits.
Hence,
we also have a functor $\colim_{\K}: \Op_1^{\K}\rightarrow\Op_1$
from the category of $\K$-operads $\Op_1^{\K}$
to the usual category of operads $\Op_1$.

In the sequel,
we say that an operad in dg-modules $\POp$ has a $\K$-structure if we have a $\K$-operad $\POp(\kappa)$
such that $\POp(r) = \colim_{\kappa\in\K(r)}\POp(\kappa)$.
Similarly,
we say that an operad morphism $f: \POp\rightarrow\QOp$ is realized by a morphism of $\K$-operads
if the operads $\POp$ and $\QOp$
have a $\K$-structure and $f$ is the colimit
of a given morphism of $\K$-operads.

Throughout the paper,
we take the same notation for the underlying $\K$-operad $\POp(\kappa)$
of an operad $\POp(r) = \colim_{\kappa\in\K(r)}\POp(\kappa)$
equipped with a $\K$-structure.
The letter $\POp$ can denote either the one or the other object.
Usually,
the structure to which we refer is clearly determined
by the context.
Otherwise,
we simply use letters $r,s,t,\dots\in\NN$ and $\alpha,\beta,\kappa,\dots\in\K(r)$
as dummy variables to mark the distinction.

The usual adjunction between colimits and constant diagrams gives:

\begin{prop}\label{Interlude:CompleteGraphOperad:ColimitKOperadAdjunction}
The colimit functor $\colim_{\K}: \Op_1^{\K}\rightarrow\Op_1$
is left adjoint to the constant functor $\cst: \Op_1\rightarrow\Op_1^{\K}$.
\end{prop}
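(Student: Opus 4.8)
The plan is to deduce this adjunction from the standard adjunction between colimits and constant diagrams in the base category $\C$, by checking that the resulting bijections are compatible with the extra structure --- symmetric group actions and partial composition products --- carried by $\K$-operads and by operads. To begin, recall that for each arity $r\in\NN$ the colimit functor $\colim_{\K(r)}\colon\C^{\K(r)}\to\C$ is left adjoint to the constant diagram functor $\cst\colon\C\to\C^{\K(r)}$, so that for a $\K(r)$-diagram $M$ and an object $N\in\C$ we have a natural bijection
\[
\Hom_{\C}\bigl(\colim_{\kappa\in\K(r)}M(\kappa),N\bigr)\cong\Hom_{\C^{\K(r)}}(M,\cst N),
\]
sending $g$ to the family $(g\circ c_\kappa)_{\kappa}$, where $c_\kappa\colon M(\kappa)\to\colim_{\K(r)}M$ are the universal morphisms. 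The claim to be proved is that this bijection restricts to a bijection between operad morphisms $\colim_{\K}\POp\to\QOp$ and $\K$-operad morphisms $\POp\to\cst\QOp$.

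In one direction, a morphism of $\K$-operads $\phi\colon\POp\to\cst\QOp$ amounts to a family of morphisms $\phi_\kappa\colon\POp(\kappa)\to\QOp(r)$, $\kappa\in\K(r)$, commuting with the poset maps $i_*$, with the symmetric group actions, and with the partial composites; by the universal property of colimits these assemble into unique morphisms $\bar\phi(r)\colon(\colim_{\K}\POp)(r)\to\QOp(r)$ with $\bar\phi(r)\circ c_\kappa=\phi_\kappa$. To see that $\bar\phi$ is a morphism of operads, I would check equivariance and compatibility with composition by precomposing with the $c_\kappa$ and invoking the uniqueness clause of the universal property: since each permutation $w$ acts on $\K(r)$ by a poset isomorphism, the colimit of $\POp$ over $\K(r)$ reindexed by $w$ is canonically identified with $(\colim_{\K}\POp)(r)$, so $w_*\circ\bar\phi(r)$ and $\bar\phi(r)\circ w_*$ agree after precomposition with every $c_\kappa$ and hence agree; and since $\otimes$ preserves colimits in each variable, $(\colim_{\K}\POp)(s)\otimes(\colim_{\K}\POp)(t)\cong\colim_{(\alpha,\beta)}\POp(\alpha)\otimes\POp(\beta)$ with $(\alpha,\beta)$ ranging over $\K(s)\times\K(t)$, and compatibility with $\circ_e$ follows from the corresponding relation among the $\phi_\kappa$ together with uniqueness; the unit element is evidently preserved. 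Conversely, an operad morphism $g\colon\colim_{\K}\POp\to\QOp$ yields the family $g\circ c_\kappa$, which is a morphism of $\K$-operads because restricting the equivariance and composition relations for $g$ along the $c_\kappa$ gives exactly the corresponding relations for the $\phi_\kappa$. These two assignments are mutually inverse and natural in $\POp$ and $\QOp$ by the arity-wise adjunction above.

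The only point needing genuine care is that neither the symmetric group action nor the partial composites act on a single fixed $\K(r)$-diagram: they permute, respectively combine, the indexing posets, so one cannot directly quote a colimit–constant adjunction for a fixed diagram category. The remedy is the observation already implicit in~\S\ref{Interlude:CompleteGraphOperad:ConstantKOperads}, namely that $w\colon\K(r)\to\K(r)$ is an isomorphism of posets and that $\circ_e\colon\K(s)\times\K(t)\to\K(s+t-1)$ is a functor, so the relevant colimits are canonically identified and the universal property can be applied on the nose. Everything else is a formal verification.
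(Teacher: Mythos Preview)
Your proof is correct and follows essentially the same idea as the paper: both reduce the question to the standard colimit--constant adjunction for diagrams and then check compatibility with the operad structure. The only difference is presentational: the paper verifies that the unit $\eta\colon\POp(\kappa)\to\colim_{\kappa\in\K(r)}\POp(\kappa)$ and counit $\epsilon\colon\colim_{\K(r)}(\POp(r))\to\POp(r)$ of the underlying diagram adjunction preserve operad structures, whereas you verify directly that the hom-set bijection restricts to operad morphisms on both sides---these are equivalent formulations of the same adjunction.
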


\begin{proof}
Straightforward: check that the augmentation $\epsilon: \colim_{\K(r)}(\POp(r))\rightarrow\POp(r)$
and the unit $\eta: \POp(\kappa)\rightarrow\colim_{\kappa\in\K(r)}\POp(\kappa)$
yielded by the usual adjunction of colimits preserve operad structures.
\end{proof}

\subsubsection{The nested sequence associated to a $\K$-operad}\label{Interlude:CompleteGraphOperad:NestedSuboperads}
The complete graph operad
has a nested sequence of suboperads
\begin{equation*}
\K_1\subset\dots\subset\K_n\subset\dots\subset\colim_n\KOp_n = \K
\end{equation*}
whose terms $\K_n(r)$ consist of oriented weight-systems $\kappa = (\mu,\sigma)\in\K(r)$
satisfying $\max_{i j}(\mu_{i j})<n$.
By convention, we also set $\K_{\infty} = \K$.

The construction of~\S\ref{Interlude:CompleteGraphOperad:ConstantKOperads}
can be applied to produce, from the structure of a $\K$-operad $\POp$,
a sequence of operads
\begin{equation*}
\colim_{\K_1}\POp\rightarrow\dots\rightarrow\colim_{\K_n}\POp\rightarrow\dots\rightarrow\colim_n\{\colim_{\K_n}\POp\} = \colim_{\K}\POp
\end{equation*}
so that $(\colim_{\K_n}\POp)(r) = \colim_{\kappa\in\K_n(r)}\POp(\kappa)$.
The morphisms $\colim_{\K_{n-1}}\POp\rightarrow\colim_{\K_{n}}\POp$
are yielded by the poset embeddings $\K_{n-1}(r)\subset\K_{n}(r)$.

\medskip
In the next paragraph,
we revisit the definition of the filtration layers of the Barratt-Eccles operad
in order to prove that they arise from a $\K$-operad structure.
In this case,
the identity $\EOp_n(r) = \colim_{\kappa\in\K_n(r)}\EOp(\kappa)$
is an abstract reformulation of an observation of~\cite{BergerCell}.

\subsubsection{The example of the Barratt-Eccles operad}\label{Interlude:CompleteGraphOperad:BarrattEccles}
Recall that we associate to each simplex of permutations $\underline{w} = (w_0,\dots,w_d)\in\Sigma_r^{\times d+1}$
a collection of weights $\mu(\underline{w}) = \{\mu_{i j}(\underline{w})\}_{i j}$
defined by the variation numbers of the sequences $\underline{w}|_{i j} = (w_0|_{i j},\dots,w_d|_{i j})$.
Let $\kappa(\underline{w}) = (\mu(\underline{w}),\sigma(\underline{w}))$
be the element of $\K(r)$
defined by this weight collection $\mu(\underline{w})$
and by the last permutation $\sigma(\underline{w}) = w_d$
of the simplex $\underline{w} = (w_0,\dots,w_d)$.

To each $\kappa\in\K(r)$
we associate the module $\EOp(\kappa)\subset\EOp(r)$ spanned by the simplices $\underline{w}$
such that $\kappa(\underline{w})\leq\kappa$.

In~\cite{BergerCell},
an analogous collection of subobjects $\WOp(\kappa)$
is defined at the level of the simplicial Barratt-Eccles operad $\WOp$.
In this original article,
the subobjects $\WOp(\kappa)$ are defined from the skeletal filtration of $\WOp(2)$
by an intersection
\begin{equation*}
\WOp(\kappa) = \bigcap_{i j} r_{i j}^{-1}(\sk_{\mu_{i j}}\WOp(2)),
\end{equation*}
where $r_{i j}: \WOp(\kappa)\rightarrow\WOp(2)$
is a restriction operation deduced from the operad structure of $\WOp$.
The module $\EOp(\kappa)$
is just the submodule of $\EOp(r) = N_*(\WOp(r))$
spanned by the non-degenerate simplices of~$\WOp(\kappa)\subset\WOp(r)$
(we use this observation next, in~\S\ref{SecondStep:DualModuleKStructure:Definition}).

The next statement can be proved by an easy inspection
of definitions in \emph{loc. cit.}:

\begin{obsv}[see~\cite{BergerCell} and~\cite{BergerFresse}]\label{Interlude:CompleteGraphOperad:BarrattEcclesWeightRelations}\hspace*{2mm}
\begin{enumerate}
\item
For any simplex $\underline{w} = (w_0,\dots,w_d)$
and $k = 0,\dots,d$,
we have the relation
\begin{equation*}
\kappa(w_0,\dots,\widehat{w_k},\dots,w_d)\leq\kappa(w_0,\dots,w_d).
\end{equation*}
\item
For any simplex $\underline{w}\in\EOp(r)$ and any permutation $s\in\Sigma_r$,
we have the relations $\mu_{i j}(s\cdot\underline{w}) = \mu_{s^{-1}(i) s^{-1}(j)}(\underline{w})$, $\forall i j$,
and $\sigma(s\cdot\underline{w}) = s\cdot\sigma(\underline{w})$,
from which we deduce
\begin{equation*}
\kappa(s\cdot\underline{w}) = s\cdot\kappa(\underline{w}),
\end{equation*}
where $s\cdot\kappa(\underline{w})$
refers to the action of $s$ on the oriented weight-system associated to $\underline{w}$.
\item
For any pair of simplices $\underline{u}\in\EOp(s)$ and $\underline{v}\in\EOp(t)$ and any $e = 1,\dots,r$,
we have the identity
\begin{equation*}
\kappa(\underline{u}\circ_e\underline{v}) = \kappa(\underline{u})\circ_e\kappa(\underline{v}).
\end{equation*}
\end{enumerate}
\end{obsv}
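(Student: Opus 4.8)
The plan is to reduce all three assertions to two elementary facts about the sequences of permutations of a two-element set obtained by restricting a simplex $\underline{w}=(w_0,\dots,w_d)$ to a pair $\{i,j\}$, namely $\underline{w}|_{i j}=(w_0|_{i j},\dots,w_d|_{i j})$, together with the reminder that $\mu_{i j}(\underline{w})$ is the number of variations of $\underline{w}|_{i j}$ and that $\sigma(\underline{w})=w_d$. The two facts are: (F1) removing a term from a finite sequence with values in a $2$-element set never increases its number of variations, and removing the \emph{last} term $a_d$ decreases this number by exactly $1$ if $a_{d-1}\neq a_d$ and by $0$ otherwise; (F2) relabelling the values through a bijection leaves the number of variations unchanged. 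First I would record these, then translate each of (1)--(3) into the comparison, symmetry, and composition data of~\S\ref{Interlude:CompleteGraphOperad:Poset}--\S\ref{Interlude:CompleteGraphOperad:CompositionStructure}.

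For~(1) I would separate the face operations with $k<d$ from the one with $k=d$. When $k<d$ the final permutation is unchanged, so $\sigma(w_0,\dots,\widehat{w_k},\dots,w_d)|_{i j}=w_d|_{i j}$ for every pair, while $\mu_{i j}$ can only drop by (F1); hence for each pair the datum $(\mu_{i j},\sigma|_{i j})$ is either literally preserved or its weight strictly decreases, which is precisely the defining condition for $\kappa(w_0,\dots,\widehat{w_k},\dots,w_d)\leq\kappa(\underline{w})$. When $k=d$, I would split the pairs according to whether $w_{d-1}|_{i j}=w_d|_{i j}$: if so, (F1) shows the weight $\mu_{i j}$ is unchanged and the orientation $w_{d-1}|_{i j}=w_d|_{i j}$ is unchanged as well; if not, (F1) shows $\mu_{i j}$ strictly decreases. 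In both cases the comparison relation holds.

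For~(2) the computation of the weights is a direct application of (F2): since the $\ell$-th permutation of $s\cdot\underline{w}$ is $s\circ w_\ell$, the sequence $(s\cdot\underline{w})|_{i j}$ is obtained from $\underline{w}|_{s^{-1}(i)\,s^{-1}(j)}$ by relabelling along $s$, whence $\mu_{i j}(s\cdot\underline{w})=\mu_{s^{-1}(i)\,s^{-1}(j)}(\underline{w})$, and the last permutation gives $\sigma(s\cdot\underline{w})=s\circ w_d=s\cdot\sigma(\underline{w})$. Comparing with the description of the $\Sigma_r$-action $w\kappa=(w\mu,w\sigma)$, $w\mu_{i j}=\mu_{w^{-1}(i)\,w^{-1}(j)}$, from~\S\ref{Interlude:CompleteGraphOperad:CompositionStructure} yields $\kappa(s\cdot\underline{w})=s\cdot\kappa(\underline{w})$ at once.

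For~(3) I would use the explicit substitution rule defining the partial composite $\underline{u}\circ_e\underline{v}$ of two simplices in the Barratt-Eccles operad, recalled in~\S\ref{SecondStep:OperadKStructure}: for a pair $\{i,j\}\subset\{1,\dots,s+t-1\}$, the restricted sequence $(\underline{u}\circ_e\underline{v})|_{i j}$ is, up to relabelling, equal to $\underline{v}|_{i-e+1\,j-e+1}$ when $i$ and $j$ both lie in the inserted block $\{e,\dots,e+t-1\}$, and to the restriction of $\underline{u}$ to the pair obtained by collapsing that whole block onto the single input $e$ otherwise; the last permutation of $\underline{u}\circ_e\underline{v}$ is $u_d\circ_e v_{d'}$. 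Running this through the definitions of $\mu(\underline{w})$ and $\sigma(\underline{w})$ reproduces exactly the formulas for $\mu\circ_e\nu$ and $\sigma\circ_e\tau$ of~\S\ref{Interlude:CompleteGraphOperad:CompositionStructure}, so $\kappa(\underline{u}\circ_e\underline{v})=\kappa(\underline{u})\circ_e\kappa(\underline{v})$. I expect~(3) to be the only real obstacle, precisely because it is the step that needs the genuine combinatorics of the Barratt-Eccles composition product rather than mere bookkeeping of variation numbers; should one prefer to avoid unpacking that product here, one can instead quote the corresponding verification from~\cite{BergerCell,BergerFresse}.
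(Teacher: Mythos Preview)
Your argument is correct and is precisely the ``easy inspection of definitions'' that the paper invokes in lieu of a proof; the decomposition via the two elementary facts (F1)--(F2) and the case split $k<d$ versus $k=d$ in~(1) is the natural way to carry it out. One small clarification for~(3): the chain-level composite $\underline{u}\circ_e\underline{v}$ is a signed sum of simplices indexed by shuffle paths (as in~\S\ref{SecondStep:OperadKStructure:GenuineBasisDifferential}), so the identity must be read termwise; for each such term $\underline{w}$ the restricted sequence $\underline{w}|_{ij}$ is not literally $\underline{v}|_{i-e+1\,j-e+1}$ (resp.\ the relevant restriction of $\underline{u}$) but that sequence with some entries repeated consecutively along the path, hence with the same variation number---so your conclusion and the final permutation computation $\sigma(\underline{w})=u_m\circ_e v_n$ go through unchanged.
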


A first application of these relations gives:

\begin{lemm}[see~\cite{BergerCell} and~\cite{BergerFresse}]\label{Interlude:CompleteGraphOperad:BarrattEcclesKStructureVerifications}\hspace*{2mm}
\begin{enumerate}
\item
The modules $\EOp(\kappa)\subset\EOp(r)$ are preserved by the differential of the Barratt-Eccles operad $\delta: \EOp(r)\rightarrow\EOp(r)$
and form a collection of dg-submodules of~$\EOp(r)$.
\item
The morphism $w: \EOp(r)\rightarrow\EOp(r)$ defined by the action of a permutation $w\in\Sigma_r$ on $\EOp(r)$
maps the submodule $\EOp(\kappa)\subset\EOp(r)$
into $\EOp(w\kappa)\subset\EOp(r)$,
for all $\kappa\in\K(r)$.
\item
The partial composition products $\circ_e: \EOp(s)\otimes\EOp(t)\rightarrow\EOp(s+t-1)$
maps the submodule $\EOp(\alpha)\otimes\EOp(\beta)\subset\EOp(s)\otimes\EOp(t)$
into $\EOp(\alpha\circ_e\beta)\subset\EOp(s+t-1)$,
for all $\alpha\in\K(s)$, $\beta\in\K(t)$.\qed
\end{enumerate}
\end{lemm}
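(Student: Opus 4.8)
The plan is to derive all three assertions directly from Observation~\ref{Interlude:CompleteGraphOperad:BarrattEcclesWeightRelations} and the order-theoretic properties of the complete graph operad, using only that $\EOp(\kappa)\subset\EOp(r)$ is by definition spanned by the non-degenerate simplices $\underline{w}$ with $\kappa(\underline{w})\leq\kappa$. In each of the three cases it then suffices to check that the simplices occurring in the output of the operation under consideration have associated oriented weight-system bounded by the prescribed $\kappa$.

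First I would treat the stability under the differential. Given a non-degenerate simplex $\underline{w} = (w_0,\dots,w_d)$ with $\kappa(\underline{w})\leq\kappa$, I would expand $\delta(\underline{w}) = \sum_{k=0}^{d}\pm(w_0,\dots,\widehat{w_k},\dots,w_d)$; each face that survives in the normalized complex is a non-degenerate simplex, and the face inequality of the Observation together with transitivity of $\leq$ in $\K(r)$ gives $\kappa(w_0,\dots,\widehat{w_k},\dots,w_d)\leq\kappa(\underline{w})\leq\kappa$, so every term of $\delta(\underline{w})$ lies in $\EOp(\kappa)$; the degenerate faces vanish in $N_*$. The equivariance statement is equally short: the equivariance relation of the Observation gives $\kappa(w\cdot\underline{w}) = w\cdot\kappa(\underline{w})$, and since $\Sigma_r$ acts on $\K(r)$ by poset morphisms (\S\ref{Interlude:CompleteGraphOperad:CompositionStructure}), the relation $\kappa(\underline{w})\leq\kappa$ propagates to $w\cdot\kappa(\underline{w})\leq w\kappa$, so $w$ carries $\EOp(\kappa)$ into $\EOp(w\kappa)$.

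The composition statement requires slightly more care, and is where I expect the only genuine bookkeeping. At the chain level the partial composite of simplices $\underline{u}\in\EOp(s)$ and $\underline{v}\in\EOp(t)$ is obtained by applying the Eilenberg--Zilber shuffle map followed by the normalized chains of the simplicial composition of $\WOp$; concretely $\underline{u}\circ_e\underline{v}$ is, in each degree, a signed combination of simplices, each of which is the pointwise partial composite of an iterated degeneracy of $\underline{u}$ with an iterated degeneracy of $\underline{v}$. The point to record is that $\kappa(-)$ is insensitive to degeneracy operators — inserting a repeated permutation alters neither the variation numbers $\mu_{ij}$ nor the final permutation — so $\kappa(s_J\underline{u}) = \kappa(\underline{u})$ and $\kappa(s_I\underline{v}) = \kappa(\underline{v})$, and the composition relation of the Observation then shows that the oriented weight-system of every such term equals $\kappa(\underline{u})\circ_e\kappa(\underline{v})$. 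Since the composition products of $\K$ are poset morphisms — each component of $\alpha\circ_e\beta$ being a copy of a component of $\alpha$ or of $\beta$ — the hypotheses $\kappa(\underline{u})\leq\alpha$ and $\kappa(\underline{v})\leq\beta$ yield $\kappa(\underline{u})\circ_e\kappa(\underline{v})\leq\alpha\circ_e\beta$, so every term of $\underline{u}\circ_e\underline{v}$ lies in $\EOp(\alpha\circ_e\beta)$.

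In short, the substance of the lemma is entirely contained in Observation~\ref{Interlude:CompleteGraphOperad:BarrattEcclesWeightRelations}; the main obstacle, such as it is, is making the third argument watertight across the passage to the normalized complex and the unfolding of the Eilenberg--Zilber map, which is settled once one notes the degeneracy-invariance of $\kappa(-)$, after which only transitivity of $\leq$ and monotonicity of the composition and symmetric-group operations on the complete graph posets are used.
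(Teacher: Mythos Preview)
Your proposal is correct and follows exactly the route the paper intends: the lemma is stated with a terminal \qed\ and introduced by ``A first application of these relations gives,'' so the paper's proof is precisely the reduction to Observation~\ref{Interlude:CompleteGraphOperad:BarrattEcclesWeightRelations} that you carry out. Your extra care in part~(c)---unfolding the Eilenberg--Zilber shuffle and noting the degeneracy-invariance of $\kappa(-)$---is a welcome elaboration of a step the paper leaves implicit (and only spells out later, in \S\ref{SecondStep:OperadKStructure:GenuineBasisDifferential}, via the path description of the composite).
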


From which we deduce:

\begin{prop}\label{Interlude:CompleteGraphOperad:BarrattEcclesKStructure}
The collection of dg-modules $\{\EOp(\kappa)\}_{\kappa\in\K(r)}$, $r\in\NN$,
inherits a $\K$-operad structure.\qed
\end{prop}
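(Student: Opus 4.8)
The plan is to read the statement off from Observation~\ref{Interlude:CompleteGraphOperad:BarrattEcclesWeightRelations} and Lemma~\ref{Interlude:CompleteGraphOperad:BarrattEcclesKStructureVerifications}: those results already contain all the nontrivial compatibilities, and what remains is to package them as the data of a $\K$-operad in the sense of~\S\ref{Interlude:CompleteGraphOperad:KOperads}, checking that the required diagrams commute and that the operad axioms hold.

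First I would observe that $\kappa\mapsto\EOp(\kappa)$ is a functor on each poset $\K(r)$. Indeed $\EOp(\kappa)$ is by definition the submodule of $\EOp(r)$ spanned by the non-degenerate simplices $\underline{w}$ with $\kappa(\underline{w})\leq\kappa$, so a relation $\alpha\leq\beta$ in $\K(r)$ yields an inclusion $\EOp(\alpha)\subseteq\EOp(\beta)$, which we take as the structure morphism $i_*\colon\EOp(\alpha)\to\EOp(\beta)$; transitivity of $\leq$ makes the assignment functorial, and Lemma~\ref{Interlude:CompleteGraphOperad:BarrattEcclesKStructureVerifications}(1) ensures that each $\EOp(\kappa)$ is a dg-submodule, so we do obtain a $\K$-diagram of dg-modules.

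Next I would install the remaining structure. For a permutation $w\in\Sigma_r$, Lemma~\ref{Interlude:CompleteGraphOperad:BarrattEcclesKStructureVerifications}(2) says the action of $w$ on $\EOp(r)$ restricts to a morphism $w_*\colon\EOp(\kappa)\to\EOp(w\kappa)$, and Lemma~\ref{Interlude:CompleteGraphOperad:BarrattEcclesKStructureVerifications}(3) says the partial composites of $\EOp$ restrict to $\circ_e\colon\EOp(\alpha)\otimes\EOp(\beta)\to\EOp(\alpha\circ_e\beta)$. In both cases the commuting squares of~\S\ref{Interlude:CompleteGraphOperad:KOperads} are just restrictions of identities that already hold on the nose in $\EOp$, since $i_*$, $w_*$ and $\circ_e$ are all restrictions of the corresponding structure maps of $\EOp$ to submodules; and the operad axioms (equivariance, associativity and unitality of the $\circ_e$) likewise descend from $\EOp$, with Observation~\ref{Interlude:CompleteGraphOperad:BarrattEcclesWeightRelations}(2)--(3) guaranteeing that these restrictions land in the submodules indexed by the right elements of the operad $\K$ (for associativity one uses $\kappa(\underline{u}\circ_e\underline{v})=\kappa(\underline{u})\circ_e\kappa(\underline{v})$, and similarly for equivariance). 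The unit is $1\in\EOp(1)=\ZZ$, using that $\K(1)$ is a one-point poset and the conventions $\EOp(0)=0$, $\K(0)=\emptyset$, so the $\K$-operad obtained is connected. I do not expect a genuine obstacle here: the mathematical content is entirely carried by the preceding lemma, and the proof is the bookkeeping that turns its three items into the axioms of~\S\ref{Interlude:CompleteGraphOperad:KOperads}; the only point meriting a line of care is the monotonicity $\alpha\leq\beta\Rightarrow\EOp(\alpha)\subseteq\EOp(\beta)$, which is immediate from the span-of-simplices definition of $\EOp(\kappa)$.
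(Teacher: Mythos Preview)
Your proposal is correct and matches the paper's approach exactly: the paper gives no proof at all (the statement carries a \qed), treating the proposition as an immediate consequence of Lemma~\ref{Interlude:CompleteGraphOperad:BarrattEcclesKStructureVerifications}. You have simply spelled out the bookkeeping that the paper leaves implicit, namely that the structure maps are restrictions from $\EOp$ and hence inherit all the operad axioms on the nose.
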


We have moreover:

\begin{prop}\label{Interlude:CompleteGraphOperad:BarrattEcclesLayers}
We have a natural isomorphism
of operads
\begin{equation*}
\colim_{\kappa\in\K_n(r)}\EOp(\kappa)\xrightarrow{\simeq}\EOp_n(r),
\end{equation*}
for all $n$, including $n = \infty$.
\end{prop}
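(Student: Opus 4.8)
The plan is to produce the isomorphism directly from the dg-submodule inclusions $\EOp(\kappa)\subset\EOp(r)$ and then to compute the colimit on the level of the underlying free $\ZZ$-modules. First, for $\kappa\in\K_n(r)$, the definition of the comparison relation in~\S\ref{Interlude:CompleteGraphOperad:Poset} shows that $\underline{w}\in\EOp(\kappa)$, i.e. $\kappa(\underline{w})\leq\kappa$, forces $\mu_{i j}(\underline{w})\leq\mu_{i j}(\kappa)<n$ for every pair $\{i,j\}$, so that $\EOp(\kappa)\subset\EOp_n(r)$. The inclusions $\EOp(\kappa)\hookrightarrow\EOp_n(r)$, $\kappa\in\K_n(r)$, form a cocone over the $\K_n(r)$-diagram $\kappa\mapsto\EOp(\kappa)$ and hence induce a natural morphism
\begin{equation*}
\phi:\colim_{\kappa\in\K_n(r)}\EOp(\kappa)\longrightarrow\EOp_n(r).
\end{equation*}
Since $\EOp_n$ is a suboperad of $\EOp$ and the cocone $\EOp(\kappa)\hookrightarrow\EOp_n(r)\subset\EOp(r)$ is compatible with symmetric group actions and partial composites by Lemma~\ref{Interlude:CompleteGraphOperad:BarrattEcclesKStructureVerifications} (see also the construction of~\S\ref{Interlude:CompleteGraphOperad:ConstantKOperads}--\S\ref{Interlude:CompleteGraphOperad:NestedSuboperads}), the map $\phi$ respects operad structures, and it commutes with differentials because it is assembled from dg-submodule inclusions. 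So the whole task reduces to checking that $\phi$ is an isomorphism of dg-modules in each arity. Surjectivity is immediate: a simplex $\underline{w}\in\EOp_n(r)$ satisfies $\mu_{i j}(\underline{w})<n$, hence $\kappa(\underline{w})\in\K_n(r)$, and $\underline{w}$ lies in $\EOp(\kappa(\underline{w}))$ tautologically.

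For injectivity I would pass to underlying graded $\ZZ$-modules and use that, in each internal degree, $\EOp(\kappa)$ is the free $\ZZ$-module on the set $B(\kappa)=\{\,\underline{w}:\kappa(\underline{w})\leq\kappa\,\}$ of non-degenerate simplices, the transition maps being the set inclusions $B(\alpha)\subseteq B(\beta)$ inside the basis $B$ of all non-degenerate simplices of $\EOp(r)$. The free $\ZZ$-module functor is a left adjoint, hence preserves colimits, so
\begin{equation*}
\colim_{\kappa\in\K_n(r)}\EOp(\kappa)=\ZZ\bigl[\,\textstyle\colim_{\kappa\in\K_n(r)}B(\kappa)\,\bigr],
\end{equation*}
the colimit inside the brackets being computed in the category of sets, and $\phi$ is the linearization of the canonical map from this set-theoretic colimit to $B$. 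That colimit is the quotient of $\coprod_{\kappa}B(\kappa)$ by the equivalence relation generated by the elementary identifications $(\underline{w},\alpha)\sim(\underline{w},\beta)$ for $\alpha\leq\beta$; along any chain of such identifications the simplex itself never changes, so distinct simplices yield distinct classes and $\phi$ is injective as soon as the canonical map $\colim_{\kappa}B(\kappa)\to B$ is injective, i.e. as soon as all copies of a fixed simplex lie in one class.

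The key observation — recorded already in Observation~\ref{Interlude:CompleteGraphOperad:BarrattEcclesWeightRelations} — is that $\kappa(\underline{w})$ is the \emph{smallest} element $\kappa\in\K(r)$ with $\underline{w}\in\EOp(\kappa)$, since $\underline{w}\in\EOp(\kappa)$ is by definition the condition $\kappa(\underline{w})\leq\kappa$. Consequently, whenever $\underline{w}\in B(\kappa)$ with $\kappa\in\K_n(r)$ one has $\kappa(\underline{w})\leq\kappa$ and (as noted above) $\kappa(\underline{w})\in\K_n(r)$, so that $(\underline{w},\kappa)\sim(\underline{w},\kappa(\underline{w}))$; thus every copy of a given simplex of $\EOp_n(r)$ is identified with one and the same class. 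Hence $\colim_{\kappa\in\K_n(r)}B(\kappa)$ is in bijection, via the canonical map, with the set of non-degenerate simplices of $\EOp_n(r)$, and linearizing shows $\phi$ is an isomorphism of dg-modules, hence of operads. The case $n=\infty$ is the same argument with $\K_\infty=\K$, or follows from the finite cases by passing to the colimit over $n$, using $\K=\colim_n\K_n$ and the commutation of colimits. The one genuine subtlety — and the point where Berger's combinatorics is essential — is that $\K_n(r)$ is not a directed poset, so a priori the colimit is a nontrivial quotient of a coproduct rather than a union; minimality of $\kappa(\underline{w})$ is exactly what collapses this quotient, by furnishing the zig-zag $\alpha\geq\kappa(\underline{w})\leq\beta$ between any two copies of a simplex.
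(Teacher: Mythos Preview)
Your proof is correct and follows essentially the same approach as the paper: both rest on the minimality of $\kappa(\underline{w})$ among the $\kappa$ with $\underline{w}\in\EOp(\kappa)$, together with the observation that $\kappa(\underline{w})\in\K_n(r)$ whenever $\underline{w}\in\EOp_n(r)$. The only cosmetic difference is that the paper packages this as an explicit inverse map $\EOp_n(r)\to\colim_{\kappa\in\K_n(r)}\EOp(\kappa)$ sending $\underline{w}$ to itself in the summand indexed by $\kappa(\underline{w})$, whereas you verify surjectivity and injectivity separately; your injectivity argument via the zig-zag $\alpha\geq\kappa(\underline{w})\leq\beta$ is precisely what makes that inverse well-defined.
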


\begin{proof}
We have clearly $\EOp(\kappa)\subset\EOp_n(r)$, for all $\kappa\in\K_n(r)$,
and we have by definition $\underline{w}\in\EOp(\kappa)$
if and only if $\kappa(\underline{w})\leq\kappa$, for all $\underline{w}\in\EOp(r)$.
We have a map $\EOp_n(r)\rightarrow\colim_{\kappa\in\K_n(r)}\EOp(\kappa)$
sending a basis element $\underline{w}\in\EOp_n(r)$
to the same element in the summand $\EOp_n(\kappa(\underline{w}))$
of the colimit $\colim_{\kappa\in\K_n(r)}\EOp(\kappa)$.
It is easy to check that this mapping gives an inverse bijection
of the natural morphism $\colim_{\kappa\in\K_n(r)}\EOp(\kappa)\rightarrow\EOp_n(r)$
yielded by the embeddings $\EOp(\kappa)\subset\EOp_n(r)$.
\end{proof}

In the case $n = \infty$,
the proposition asserts that the colimit $\colim_{\kappa\in\K(r)}\EOp(\kappa)$
is isomorphic to $\EOp(r) = \EOp_{\infty}(r)$.
Hence,
we obtain that the Barratt-Eccles operad $\EOp$ comes equipped with a $\K$-structure
such that the operads of~\S\ref{Interlude:CompleteGraphOperad:NestedSuboperads}
\begin{equation*}
\colim_{\K_1}\EOp\rightarrow\dots\rightarrow\colim_{\K_n}\EOp\rightarrow\dots\rightarrow\colim_n\{\colim_{\K_n}\EOp\} = \colim_{\K}\EOp
\end{equation*}
are identified with the layers $\EOp_n\subset\EOp$
of the filtration of~\S\ref{Prelude:BarrattEccles:LittleCubesFiltration}.

The simplicial analogue of the identity $\EOp_n(r) = \colim_{\kappa\in\K_n(r)}\EOp(\kappa)$
is used in the definition of the homotopy equivalence between the filtration layers of the Barratt-Eccles operad
and the operads of little $n$-cubes.
The proof of the existence of these homotopy equivalences in~\cite{BergerCell}
involves a homotopical study of the $\K$-diagram
underlying the Barratt-Eccles operad.

For our purpose, we just record:

\begin{fact}[see~\cite{BergerCell,BergerFresse}]\label{Interlude:CompleteGraphOperad:BarrattEcclesAcyclicity}
Each dg-module $\EOp(\kappa)\subset\EOp(r)$, $\kappa\in\K(r)$, is contractible so that the augmentation $\epsilon: \EOp(r)\xrightarrow{\sim}\ZZ$
restricts to a weak-equivalence on~$\EOp(\kappa)$.
\end{fact}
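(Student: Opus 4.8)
The plan is to contract the simplicial set $\WOp(\kappa)$ and then to invoke the identification $\EOp(\kappa) = N_*(\WOp(\kappa))$ recalled above, whose simplices are exactly the simplices $\underline{w}$ of $\WOp(r)$ with $\kappa(\underline{w}) \le \kappa$. The combinatorial heart of the matter is the observation that, for $\kappa = (\mu,\sigma) \in \K(r)$, appending the permutation $\sigma$ at the end of a simplex carries $\WOp(\kappa)$ into itself, i.e.\ $\underline{w} = (w_0,\dots,w_d) \in \WOp(\kappa)$ implies $(w_0,\dots,w_d,\sigma) \in \WOp(\kappa)$. To check this, recall that membership in $\WOp(\kappa)$ means $\kappa(\underline{w}) \le \kappa$ for the order relation of~\S\ref{Interlude:CompleteGraphOperad:Poset}. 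The terminal permutation of $(w_0,\dots,w_d,\sigma)$ is $\sigma$, so the orientation part of $\kappa(w_0,\dots,w_d,\sigma)$ agrees with that of $\kappa$, and only the weights need bounding. By definition of the weights, $\mu_{ij}(w_0,\dots,w_d,\sigma)$ equals $\mu_{ij}(\underline{w})$, increased by one exactly when $w_d|_{ij} \ne \sigma|_{ij}$; if $\mu_{ij}(\underline{w}) < \mu_{ij}$ this stays $\le \mu_{ij}$, while if $\mu_{ij}(\underline{w}) = \mu_{ij}$ then $\kappa(\underline{w}) \le \kappa$ forces $w_d|_{ij} = \sigma|_{ij}$, so no increase occurs. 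Hence $\mu_{ij}(w_0,\dots,w_d,\sigma) \le \mu_{ij}$ for every pair $\{i,j\}$, i.e.\ $\kappa(w_0,\dots,w_d,\sigma) \le \kappa$. (The case $d=0$ already shows that every vertex of $\WOp(\kappa)$ is joined to $\sigma$, so $\WOp(\kappa)$ is nonempty and connected.)

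Granting this, I would conclude via the classical principle that a simplicial set which can be ``coned off'' onto a fixed vertex is contractible. Concretely, on the unnormalized chains $C_*(\WOp(\kappa))$ put the degree $+1$ map $h(w_0,\dots,w_n) = (-1)^{n+1}(w_0,\dots,w_n,\sigma)$; using $\partial_i(\underline{w},\sigma) = (\partial_i\underline{w},\sigma)$ for $i \le d$ together with $\partial_{d+1}(\underline{w},\sigma) = \underline{w}$, a direct computation gives $\delta h + h\delta = \id - u\epsilon$, where $u : \ZZ \to C_*(\WOp(\kappa))$ sends $1$ to the $0$-simplex $\sigma$ and $\epsilon$ is the augmentation. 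Since $h$ sends degenerate chains to degenerate chains, this identity passes to the quotient $N_*(\WOp(\kappa)) = \EOp(\kappa)$, so there $\id$ is chain homotopic to $u\epsilon$; combined with $\epsilon u = \id_{\ZZ}$ this makes $\epsilon : \EOp(\kappa) \to \ZZ$ a chain homotopy equivalence, in particular a weak-equivalence, split by the $0$-simplex $\sigma$. (Equivalently, the standard simplicial contraction of $\WOp(r) = E\Sigma_r$ onto the vertex $\sigma$ — which on a simplex $\underline{w}$ produces the degenerate simplices $(w_0,\dots,w_i,\sigma,\dots,\sigma)$ — restricts to $\WOp(\kappa)$, because each $(w_0,\dots,w_i,\sigma)$ is obtained by appending $\sigma$ to the face $(w_0,\dots,w_i)$ of a simplex of $\WOp(\kappa)$; applying $N_*$ then yields the same conclusion.)

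I expect the only nonformal ingredient to be the observation of the first paragraph, and this is exactly where the shape of the order relation on $\K(r)$ is used: the disjunctive clause allowing, for each edge, either a strictly smaller weight or equal weight together with matching terminal orientation is precisely what makes coning off by the permutation $\sigma$ weight-nonincreasing. The verification itself is an elementary unwinding of definitions, and the remainder of the argument is standard.
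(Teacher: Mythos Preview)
Your argument is correct. The paper does not supply its own proof of this fact; it simply records the statement with a citation to~\cite{BergerCell,BergerFresse}. What you have written is precisely the standard argument one would extract from those references: the key combinatorial point is that appending the terminal permutation~$\sigma$ of~$\kappa=(\mu,\sigma)$ to any simplex of~$\WOp(\kappa)$ stays inside~$\WOp(\kappa)$, and your case analysis on the disjunctive order relation of~$\K(r)$ verifies this cleanly. The rest---the extra-degeneracy/cone contraction of a simplicial set onto a fixed vertex, and its passage to normalized chains---is routine. One small cosmetic remark: in your chain homotopy you switch from~$d$ to~$n$ as the simplicial index; harmonizing the notation would avoid confusion.
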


\subsection{Model structures}\label{Interlude:GraphOperadModel}
We apply a usual process to provide the category of $\K$-operads
with a model structure.
We study first a category of symmetric $\K$-diagrams $\C_1^{\K\Sigma_*}$
which only retain the action of symmetric groups and the $\K$-diagram structure of a connected $\K$-operad.
We have an obvious forgetful functor $U: \Op_1^{\K}\rightarrow\C_1^{\K\Sigma_*}$.
We adapt the definition of the free operad to prove that this functor has a left adjoint $F: \C_1^{\K\Sigma_*}\rightarrow\Op_1^{\K}$.
We check that symmetric $\K$-diagrams form a cofibrantly generated model category
and we use the adjunction $F: \C_1^{\K\Sigma_*}\rightleftarrows\Op_1^{\K} :U$
to transport this model structure to the category of $\K$-operads $\Op_1^{\K}$.

\subsubsection{The category of symmetric $\K$-diagrams}\label{Interlude:GraphOperadModel:SymmetricKDiagrams}
A symmetric $\K$-diagram is just a sequence of $\K(r)$-diagrams $\{M(\kappa)\}_{\kappa\in\K(r)}$, $r\in\NN$,
together with symmetric group actions defined by collections of morphisms
\begin{equation*}
M(\kappa)\xrightarrow{w_*} M(w\kappa)
\end{equation*}
associated to each permutation $w\in\Sigma_r$,
so that the diagram of~\S\ref{Interlude:CompleteGraphOperad:KOperads}
commutes.
A morphism of symmetric $\K$-diagrams $f: M\rightarrow N$
is a collection of $\K(r)$-diagram morphisms $f: M(\kappa)\rightarrow N(\kappa)$
preserving symmetric group actions.

Let $\K\Sigma_*$ be the category formed by oriented weight-systems $\kappa\in\K(r)$
as objects
and the composites of order relations $\alpha\xrightarrow{\leq}\beta$
and permutations $\kappa\xrightarrow{w_*} w\kappa$
as morphisms
with the convention that the diagram
\begin{equation*}
\xymatrix{ \alpha\ar[r]^{w_*}\ar[d]_{\leq} & w\alpha\ar[d]^{\leq} \\ \beta\ar[r]_{w_*} & w\beta }
\end{equation*}
commutes in $\K\Sigma_*$.
The next assertion
is obvious from this definition:

\begin{fact}\label{Interlude:GraphOperadModel:SymmetricKDiagramsAsFunctors}
The category of symmetric $\K$-diagrams
is isomorphic to the category of functors $M: \K\Sigma_*\rightarrow\C$.
\end{fact}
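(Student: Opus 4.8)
The plan is to prove the isomorphism of categories by simply unpacking the two definitions and matching up the data on both sides. First I would observe that a functor $M\colon\K\Sigma_*\rightarrow\C$ restricts, along the inclusion of the poset $\K(r)$ into $\K\Sigma_*$ (the subcategory of objects of arity $r$ and order relations), to a $\K(r)$-diagram for each $r$; and that the images under $M$ of the generating morphisms $\kappa\xrightarrow{w_*} w\kappa$ supply morphisms $w_*\colon M(\kappa)\rightarrow M(w\kappa)$. Functoriality of $M$ on composites $\kappa\xrightarrow{v_*} v\kappa\xrightarrow{w_*} wv\kappa$ then forces the identities $(wv)_* = w_* v_*$ and $\id_* = \id$, so that these morphisms assemble into $\Sigma_r$-actions, while functoriality on the defining relation of $\K\Sigma_*$ gives exactly the commuting square required of a symmetric $\K$-diagram. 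Thus from a functor I get a symmetric $\K$-diagram.

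Conversely, I would build $M$ out of a symmetric $\K$-diagram by setting $M$ on objects as given, on order-relation generators via the $\K(r)$-diagram structure, and on permutation generators via the maps $w_*$. Since every morphism of $\K\Sigma_*$ is by construction a composite of these generators and, using the single defining relation, can be rewritten in the form ``an order relation followed by a permutation'' (equivalently, a permutation followed by an order relation), the only thing to verify for well-definedness is that any two such factorizations of a given morphism induce the same map $M(\alpha)\rightarrow M(\beta)$---and this is precisely the commuting square in the definition of a symmetric $\K$-diagram, together with the $\Sigma_r$-action identities already available for the $w_*$. I would then note that these two constructions are mutually inverse on objects, and that a natural transformation of functors $\K\Sigma_*\rightarrow\C$ is exactly a family of $\K(r)$-diagram morphisms commuting with the maps $w_*$, i.e. a morphism of symmetric $\K$-diagrams, which yields the isomorphism on morphisms as well.

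The one point that needs genuine care---and the closest thing here to an obstacle---is to be precise about the presentation of the category $\K\Sigma_*$: one must confirm that the order relations and the permutations, subject only to the single commuting square (and to the relations that automatically hold among the $w_*$ because they come from a group action), generate $\K\Sigma_*$ with no further relations, so that a functor out of $\K\Sigma_*$ is neither more nor less data than the structure of a symmetric $\K$-diagram. Since $\K\Sigma_*$ was defined in exactly this way, this is immediate, and the whole argument then requires no computation---which is why the statement can be flagged as obvious.
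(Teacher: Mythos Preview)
Your proposal is correct and matches the paper's approach: the paper simply states that the assertion ``is obvious from this definition'' and gives no further proof, and your unpacking of the two definitions is exactly the content of that obviousness. There is nothing to add.
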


We adopt the notation $\C_1^{\K\Sigma_*}$ for the category of symmetric $\K$-diagrams such that $M(0) = M(1) = 0$.
We only consider symmetric $\K$-diagrams of this category in the sequel.

By~\cite[Theorem 11.6.1]{Hirschhorn},
any category of functors $F: \I\rightarrow\C$
from a small category $\I$
toward a cofibrantly generated model category $\C$
inherits a cofibrantly generated model structure.
In the case of symmetric $\K$-diagrams,
this statement returns:

\begin{prop}\label{Interlude:GraphOperadModel:SymmetricKDiagramModel}
The category of symmetric $\K$-diagrams in dg-modules
inherit a model structure so that:
\begin{itemize}
\item
the weak-equivalences, respectively fibrations, are the morphisms of symmetric $\K$-diagrams $f: M\rightarrow N$
such that $f(\kappa): M(\kappa)\rightarrow N(\kappa)$ forms a weak-equivalence, respectively a fibration,
of dg-modules, for all $\kappa\in\K$;
\item
the cofibrations are the morphisms of symmetric $\K$-diagrams
which have the right lifting property with respect to acyclic fibrations.
\end{itemize}
This model category also inherits a set of generating (acyclic) cofibrations.\qed
\end{prop}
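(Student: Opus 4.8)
The plan is to deduce this from the standard existence result for projective (``objectwise'') model structures on diagram categories, which has already been quoted in the lines preceding the statement. First I would use Fact~\ref{Interlude:GraphOperadModel:SymmetricKDiagramsAsFunctors} to view a symmetric $\K$-diagram as a functor $M\colon\K\Sigma_*\rightarrow\C$. The key observation to record is that $\K\Sigma_*$ is a \emph{small} category: each $\K(r)$ is a set, and every morphism of $\K\Sigma_*$ --- a composite of order relations $\alpha\leq\beta$ and permutation morphisms $\kappa\rightarrow w\kappa$ --- preserves the arity $r$, so $\K\Sigma_*$ is the coproduct over $r\in\NN$ of the small categories with object set $\K(r)$. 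Since the category $\C$ of unbounded dg-modules over $\ZZ$ is cofibrantly generated (see~\cite{Hovey}), \cite[Theorem~11.6.1]{Hirschhorn} then equips the category of all functors $M\colon\K\Sigma_*\rightarrow\C$ with a cofibrantly generated model structure whose weak-equivalences and fibrations are detected objectwise, i.e. on each $\kappa\in\K$, whose generating (acyclic) cofibrations are obtained by applying the left adjoints of the evaluation functors $\ev_\kappa$ to the generating (acyclic) cofibrations of $\C$, and whose cofibrations are forced to be the maps with the left lifting property against acyclic fibrations --- exactly the three clauses of the statement.

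It remains to pass from this functor category to the subcategory $\C_1^{\K\Sigma_*}$ cut out by the normalization $M(0)=M(1)=0$. Here I would note that, with our conventions $\K(0)=\emptyset$ and $\K(1)$ a one-point poset, and since morphisms of $\K\Sigma_*$ preserve arity, the normalization simply amounts to discarding the arities $r\leq 1$: restriction along the inclusion of the full subcategory of $\K\Sigma_*$ spanned by the oriented weight-systems of arity $\geq 2$ identifies $\C_1^{\K\Sigma_*}$ with the category of functors from that (still small) subcategory to $\C$. Applying \cite[Theorem~11.6.1]{Hirschhorn} to this indexing category yields precisely the asserted model structure, with a set of generating (acyclic) cofibrations indexed by the pairs $(\kappa,i)$, where $\kappa\in\K(r)$ with $r\geq 2$ and $i$ is a generating (acyclic) cofibration of~$\C$.

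I do not expect a genuine obstacle here: the whole content is packaged in Hirschhorn's transfer theorem, and its hypotheses --- a small indexing category and a cofibrantly generated target --- are verified at once in our situation. The one point that deserves to be spelled out rather than left implicit is the handling of the normalization $M(0)=M(1)=0$, which is taken care of by the arity-splitting remark above; all else is a routine unwinding of the general machinery.
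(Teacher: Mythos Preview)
Your proposal is correct and follows exactly the route the paper takes: the paper records Fact~\ref{Interlude:GraphOperadModel:SymmetricKDiagramsAsFunctors}, invokes \cite[Theorem~11.6.1]{Hirschhorn} for functor categories over a small source into a cofibrantly generated target, and then states the proposition with a \qed and no further argument. Your write-up simply spells out what the paper leaves implicit --- the smallness of $\K\Sigma_*$ and the handling of the normalization $M(0)=M(1)=0$ by restricting to the full subcategory of arity $\geq 2$ --- so there is nothing to compare beyond level of detail.
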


\subsubsection{Latching objects}\label{Interlude:GraphOperadModel:LatchingObjects}
The cofibrations of symmetric $\K$-diagrams can be characterized effectively as retracts of relative cell complexes,
as in any cofibrantly generated category.
But, in this paper,
we use another characterization of cofibrations of symmetric $\K$-diagrams
which arises from a generalization of the notion of a Reedy cofibration
to categories with isomorphisms.

Any symmetric $\K$-diagram $M$ has a latching object $LM$
defined by the collections of dg-modules
\begin{equation*}
LM(\kappa) = \colim_{\alpha\lneqq\kappa} M(\alpha)
\end{equation*}
and the morphisms $M(\alpha)\rightarrow M(\kappa)$
assemble to a natural latching morphism $\lambda: LM(\kappa)\rightarrow M(\kappa)$,
for each $\kappa$.
Observe that $LM$
inherits symmetric group actions
and structure morphisms $i_*: LM(\alpha)\rightarrow LM(\beta)$, for every relation $\alpha\leq\beta$,
so that $LM$ forms itself a symmetric $\K$-diagram and the latching morphisms $\lambda: LM(\kappa)\rightarrow M(\kappa)$
define a morphism of symmetric $\K$-diagrams $\lambda: LM\rightarrow M$.

For a morphism of symmetric $\K$-diagrams $f: M\rightarrow N$,
we form the pushout morphism
\begin{equation*}
\xymatrix{ LM\ar[r]^{Lf}\ar[d]_{\lambda} & LN\ar@{.>}[d]\ar@/^5mm/[ddr]^{\lambda} & \\
M\ar@{.>}[r]\ar@/_5mm/[rrd]_{f} & M\bigoplus_{LM} LN\ar@{.>}[dr]_(0.4){(f,\lambda)} & \\
&& N }.
\end{equation*}
We have:

\begin{prop}\label{Interlude:GraphOperadModel:SymmetricKDiagramCofibrations}
Let $i: M\rightarrow N$
be a morphism of symmetric $\K$-diagrams.
If the pushout morphisms
\begin{equation*}
(i,\lambda): M(\kappa)\bigoplus_{L M(\kappa)} L N(\kappa)\rightarrow N(\kappa)
\end{equation*}
are cofibrations of dg-modules for all $\kappa\in\K(r)$ and all $r\in\NN$,
then $i$ is a cofibration of symmetric $\K$-diagrams.
\end{prop}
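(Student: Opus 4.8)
The plan is to prove that $i\colon M\to N$ has the left lifting property with respect to every acyclic fibration of symmetric $\K$-diagrams, by a transfinite induction along the weight grading of the complete graph posets. For an oriented weight-system $\kappa=(\mu,\sigma)\in\K(r)$, set $|\kappa|=\sum_{i j}\mu_{i j}$. The key combinatorial observation, immediate from the definition of the comparison relation in~\S\ref{Interlude:CompleteGraphOperad:Poset}, is that $\alpha\lneqq\beta$ forces $|\alpha|<|\beta|$ (a pair carrying equal weights must carry the same orientation, and the orientations determine the permutation), while the action of $\Sigma_r$ preserves $|\kappa|$ since it only reindexes the weight collection. Consequently the full subcategories $\K\Sigma_*^{\le n}$ spanned by the objects of weight $\le n$ exhaust $\K\Sigma_*$, and for any $\kappa$ with $|\kappa|=n$ the latching object $L M(\kappa)=\colim_{\alpha\lneqq\kappa}M(\alpha)$ depends only on the restriction of $M$ to $\K\Sigma_*^{<n}$.

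First I would set up the induction. Given a lifting square with $i$ on the left and an acyclic fibration $p\colon X\to Y$ on the right, suppose a lift has been constructed on $\K\Sigma_*^{<n}$, compatibly with all structure morphisms and symmetric group actions. To extend it over the objects of weight exactly $n$, choose one representative $\kappa$ in each orbit of such objects under the symmetric group actions. Extending the lift at $\kappa$ amounts to solving, in the category of dg-modules, the lifting problem between the pushout morphism $(i,\lambda)\colon M(\kappa)\bigoplus_{L M(\kappa)}L N(\kappa)\to N(\kappa)$ and $p(\kappa)\colon X(\kappa)\to Y(\kappa)$: the source records exactly the data already fixed — the given morphism out of $M(\kappa)$ and the lift already defined on every $N(\alpha)$ with $\alpha\lneqq\kappa$, assembled through $L N(\kappa)$ — and a lift $N(\kappa)\to X(\kappa)$ over these is precisely what is wanted. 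By hypothesis $(i,\lambda)$ is a cofibration of dg-modules and $p(\kappa)$ is an acyclic fibration, so the lift exists by the model structure on dg-modules.

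Then I would propagate this choice over the whole orbit of $\kappa$ by equivariance, defining the lift at $w\kappa$ to be $w_*$ conjugating the lift at $\kappa$. The point that makes this unambiguous — and the step I expect to be the main obstacle to formulate correctly — is that the objects of $\K\Sigma_*$ have \emph{trivial automorphism groups}: since the permutation component of $w\kappa$ is the composite $w\circ\sigma$, the equality $w\kappa=\kappa$ forces $w=\mathrm{id}$, and since $|\cdot|$ strictly increases along non-invertible morphisms there are no other endomorphisms; thus $\K\Sigma_*$ behaves like a direct Reedy category in which the degree-preserving morphisms are exactly the isomorphisms, and no further coherence beyond this transport is needed. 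It then remains to verify that the morphisms $N(\kappa)\to X(\kappa)$ for all $\kappa$ with $|\kappa|\le n$ assemble into a morphism of symmetric $\K$-diagrams: compatibility with permutations holds by construction, compatibility with an order relation $\alpha\le\beta$ is the inductive hypothesis when $|\beta|<n$, follows from the defining universal property of $L N(\kappa)$ when $|\alpha|<n=|\beta|$, and is vacuous when $|\alpha|=|\beta|=n$ since then $\alpha=\beta$ by the monotonicity of $|\cdot|$. Passing to the union over $n$ produces the desired lift, and hence $i$ is a cofibration of symmetric $\K$-diagrams.
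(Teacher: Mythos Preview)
Your proof is correct and follows essentially the same approach as the paper's own proof: both introduce the degree function $|\kappa|=\sum_{ij}\mu_{ij}$, observe its strict monotonicity along non-identity comparisons and invariance under permutations, and then construct the lift by induction on this degree, solving the latching lifting problem in dg-modules at one representative per orbit and propagating by equivariance. Your account is in fact slightly more explicit than the paper's about why the equivariant propagation is unambiguous (trivial automorphism groups) and about the verification of compatibility with the structure morphisms.
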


\begin{proof}
We prove that $i$ has the right-lifting-property with respect
to acyclic fibrations:
\begin{equation}\label{eqn:ReedyLifting}
\xymatrix{ M\ar[d]_{i}\ar[r]^{f} & P\ar@{->>}[d]_{\sim}^{p} \\
N\ar[r]_{g}\ar@{.>}[ur]^{h} & Q }.
\end{equation}

Define the degree of an oriented weight-system $\kappa = (\mu,\sigma)$
by $\deg(\kappa) = \sum_{i j}\mu_{i j}$.
Observe that $\deg(w\kappa) = \deg(\kappa)$, for every permutation $w$,
and $\alpha\lneqq\beta$
implies $\deg(\alpha)<\deg(\beta)$.

Let $m\in\NN$.
Suppose we have morphisms $h: N(\alpha)\rightarrow P(\alpha)$
commuting with the action of permutations and with the structure morphisms of $\K$-diagrams
for every $\alpha$ such that $\deg(\alpha)<m$.

Let $\kappa$ be an element of the form $\kappa = (\mu,\sigma)$,
where $\sigma = \id$
is the identity permutation
and $\deg(\kappa) = m$.
Since $\alpha\lneqq\kappa$ implies $\deg(\alpha)<\deg(\kappa) = m$,
the already defined morphisms $h: N(\kappa)\rightarrow P(\kappa)$
assemble to a morphism $\lambda\cdot Lh: LN(\kappa)\rightarrow P(\kappa)$
on the latching object $LN$
so that we have a commutative diagram:
\begin{equation*}
\xymatrix{ M(\kappa)\bigoplus_{LM(\kappa)} LN(\kappa)\ar[d]_{(i,\lambda)}\ar[rr]^(0.6){(f,\lambda\cdot Lh)} &&
P(\kappa)\ar@{->>}[d]_{\sim}^{p} \\
N(\kappa)\ar[rr]_{g}\ar@{.>}[urr]^{h} && Q(\kappa) }.
\end{equation*}
The right lifting property in dg-modules implies the existence of a fill-in morphism
$h: N(\kappa)\rightarrow P(\kappa)$
that makes this diagram commute.
The commutativity of the diagram implies that this morphism commutes with the structure morphisms of $\K$-diagrams
associated to relations $\alpha\lneqq\kappa$.

By equivariance,
we have a whole collection of morphisms $h: N(\kappa)\rightarrow P(\kappa)$
commuting with the action of permutations,
for all pairs $\kappa = (\mu,\sigma)$ such that $\deg(\mu) = m$.
These morphisms $h: N(\mu,\sigma)\rightarrow P(\mu,\sigma)$
still commute with the structure morphisms of $\K$-diagrams.

By induction on $m = \deg(\mu)$,
we obtain a whole collection of morphisms $h: N(\mu,\sigma)\rightarrow P(\mu,\sigma)$
commuting with the structure morphisms of symmetric $\K$-diagrams.
Hence,
we have a morphism of symmetric $\K$-diagrams $h: N\rightarrow P$
which solves the lifting problem~(\ref{eqn:ReedyLifting}).

This construction proves that the morphism $i$ has the right lifting property with respect to acyclic fibrations
and hence forms a cofibration in the category of symmetric $\K$-diagrams.
\end{proof}

\subsubsection{The construction of free $\K$-operads}\label{Interlude:GraphOperadModel:FreeKOperad}
We adapt the construction of~\S\ref{Prelude:KoszulDuality:FreeOperad}
to define a free object functor $F: \C_1^{\K\Sigma_*}\rightarrow\Op_1^{\K}$
left adjoint to the forgetful functor $U: \Op_1^{\K}\rightarrow\C_1^{\K\Sigma_*}$.

First,
the construction of the free operad can be applied to the complete graph operad $\K$
and returns an operad in posets $\FOp(\K)$
together with an operad morphism $\lambda_*: \FOp(\K)\rightarrow\K$.
In this context,
the summand $\tau(\K)$
of the expansion of the free operad
\begin{equation*}
\FOp(\K)(r) = \coprod_{\tau\in\Theta(r)}\tau(\K)/\equiv,
\end{equation*}
is the set of collections $\alpha_* = \{\alpha_v\}_{v\in V(\tau)}$,
where each $\alpha_v\in\K(r_v)$ is an oriented weight-system associated to a vertex $v\in V(\tau)$
together with a bijection between $\{1,\dots,r_v\}$
and the ingoing edges of $v$.
Each summand $\tau(\K)$, defined by a cartesian product of posets $\K(r_v)$,
has an internal poset structure
and elements of $\FOp(\K)$ are comparable only if they belong to the same summand $\tau(\K)$
(up to tree isomorphisms).

Let $M$ be a symmetric $\K$-diagram.
To a tree $\tau\in\Theta(r)$
and a collection $\alpha_* = \{\alpha_v\}_{v\in V(\tau)}\in\tau(\K)$,
we associate the dg-module
\begin{equation*}
\tau(M,\alpha_*) = \bigotimes_{v\in V(\tau)} M(\alpha_v).
\end{equation*}
For comparable elements $\alpha_*\leq\beta_*$
we have a morphism $i_*: \tau(M,\alpha_*)\rightarrow\tau(M,\beta_*)$
defined by the tensor product of the morphisms $i_*: M(\alpha_v)\rightarrow M(\beta_v)$
associated to the relations $\alpha_v\leq\beta_v$.
Hence,
the collection $\{\tau(M,\alpha_*)\}_{\alpha_*\in\tau(\K)}$
defines a functor on the poset $\tau(\K)$.

To each oriented weight-system $\kappa\in\K(r)$,
we associate the dg-module
\begin{equation*}
\tau(M)(\kappa) = \colim_{\lambda_*(\alpha_*)\leq\kappa} \tau(M,\alpha_*),
\end{equation*}
where the colimit ranges over the subposet of collections $\alpha_*\in\tau(\K)$
such that~$\lambda_*(\alpha_*)\leq\kappa$.
This construction is clearly functorial in $\tau\in\Theta(r)$.
The free $\K$-operad $\FOp(M)$
is defined at $\kappa\in\K(r)$
by the sum
\begin{equation*}
\FOp(M)(\kappa) = \bigoplus_{\tau\in\Theta(r)}\tau(M)(\kappa)/\equiv
\end{equation*}
divided out by the action of automorphisms.
Again~(see~\S\ref{Prelude:KoszulDuality:FreeOperad}),
the assumption $M(0) = M(1) = 0$
implies that we can restrict the expansion of $\FOp(M)(r)$
to the subcategory of reduced trees,
and by triviality of automorphism groups of reduced trees,
we can rewrite this expansion in a reduced form in which no quotient occurs.

For the tree with one vertex of~\S\ref{Prelude:KoszulDuality:FreeOperad},
we have an isomorphism $M(\kappa)\simeq\psi(M)(\kappa)$,
for each $\kappa\in\K(r)$,
and the identity of $M(\kappa)$
with this summand yields a natural morphism
\begin{equation*}
\eta: M(\kappa)\rightarrow\FOp(M)(\kappa).
\end{equation*}

Our purpose is to check:

\begin{prop}\label{Interlude:GraphOperadModel:FreeOperadStructure}
The collection of dg-modules $\{\FOp(M)(\kappa)\}_{\kappa\in\K(r)}$, $r\in\NN$, inherits the structure of a $\K$-operad
and represents the free $\K$-operad associated to $M$
together with the universal morphism $\eta: M\rightarrow\FOp(M)$
defined in~\S\ref{Interlude:GraphOperadModel:FreeKOperad}.
\end{prop}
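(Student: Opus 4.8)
The plan is to follow the construction of the ordinary free operad recalled in~\S\ref{Prelude:KoszulDuality:FreeOperad}, but carrying along the poset indexing provided by the operad in posets $\FOp(\K)$ and its augmentation $\lambda_*: \FOp(\K)\rightarrow\K$. Throughout, the two facts that make everything work are that $\lambda_*$ is a morphism of operads in posets, so that $\lambda_*(\alpha_*\circ_e\beta_*)=\lambda_*(\alpha_*)\circ_e\lambda_*(\beta_*)$, and that each partial composite $\circ_e: \K(s)\times\K(t)\rightarrow\K(s+t-1)$ is monotone.

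First I would equip $\{\FOp(M)(\kappa)\}_{\kappa\in\K(r)}$, $r\in\NN$, with its symmetric $\K$-diagram structure. A permutation $w\in\Sigma_r$ permutes the tree summands compatibly with the action on $\K(r)$, and sends the indexing subposet $\{\alpha_*\in\tau(\K):\lambda_*(\alpha_*)\leq\kappa\}$ isomorphically onto $\{\alpha_*\in(w\tau)(\K):\lambda_*(\alpha_*)\leq w\kappa\}$, so passing to the colimit yields morphisms $w_*: \FOp(M)(\kappa)\rightarrow\FOp(M)(w\kappa)$. For a relation $\kappa\leq\kappa'$ in $\K(r)$, the inclusion of indexing subposets gives the structure morphism $i_*: \FOp(M)(\kappa)\rightarrow\FOp(M)(\kappa')$; the compatibility square of~\S\ref{Interlude:CompleteGraphOperad:KOperads} commutes by functoriality of colimits.

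Next I would define the partial composition products. Grafting $\sigma\in\Theta(s)$ and $\tau\in\Theta(t)$ produces $\sigma\circ_e\tau\in\Theta(s+t-1)$, and collections $\alpha_*\in\sigma(\K)$, $\beta_*\in\tau(\K)$ assemble to $\alpha_*\circ_e\beta_*\in(\sigma\circ_e\tau)(\K)$ with a natural isomorphism $\sigma(M,\alpha_*)\otimes\tau(M,\beta_*)\simeq(\sigma\circ_e\tau)(M,\alpha_*\circ_e\beta_*)$. If $\lambda_*(\alpha_*)\leq\gamma$ and $\lambda_*(\beta_*)\leq\delta$, then $\lambda_*(\alpha_*\circ_e\beta_*)=\lambda_*(\alpha_*)\circ_e\lambda_*(\beta_*)\leq\gamma\circ_e\delta$, so the isomorphism above carries $\sigma(M,\alpha_*)\otimes\tau(M,\beta_*)$ into the colimit $(\sigma\circ_e\tau)(M)(\gamma\circ_e\delta)\subset\FOp(M)(\gamma\circ_e\delta)$. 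Taking colimits over $\alpha_*$ and $\beta_*$ and summing over trees yields $\circ_e: \FOp(M)(\gamma)\otimes\FOp(M)(\delta)\rightarrow\FOp(M)(\gamma\circ_e\delta)$, and equivariance, associativity, the unit axiom, and compatibility with the $\K$-diagram morphisms all reduce --- via the same monotonicity observation --- to the verifications already carried out for the ordinary free operad.

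Finally, for the universal property I would fix a connected $\K$-operad $\QOp$ and a morphism of symmetric $\K$-diagrams $f: M\rightarrow U\QOp$, and construct $\bar f: \FOp(M)\rightarrow\QOp$ as follows: on a summand $\tau(M,\alpha_*)=\bigotimes_{v\in V(\tau)}M(\alpha_v)$ apply $f$ vertexwise to land in $\bigotimes_v\QOp(\alpha_v)$, contract the tree $\tau$ using the iterated partial composites of $\QOp$ to reach $\QOp(\lambda_*(\alpha_*))$, then apply the structure morphism $\QOp(\lambda_*(\alpha_*))\rightarrow\QOp(\kappa)$ attached to $\lambda_*(\alpha_*)\leq\kappa$. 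Naturality of this recipe in $\alpha_*$, which is exactly what lets it pass to the colimit defining $\tau(M)(\kappa)$, follows from the commutativity of composites with structure morphisms in a $\K$-operad; invariance under tree isomorphisms is immediate. One then checks, by the same manipulations as in the classical case, that $\bar f$ is a morphism of $\K$-operads satisfying $\bar f\eta=f$ and that it is the unique such morphism, since $\FOp(M)$ is generated as a $\K$-operad by the image of $\eta$. I expect the main obstacle to be the bookkeeping around the nested colimit $\FOp(M)(\kappa)=\bigoplus_\tau\colim_{\lambda_*(\alpha_*)\leq\kappa}\tau(M,\alpha_*)$: one must check carefully that monotonicity of the $\circ_e$ on $\K$ together with $\lambda_*$ being an operad map really does make every square needed for the composition structure commute, and, on the universal-property side, that the ``contract-then-push-forward'' map is natural in $\alpha_*$ --- this is where the full $\K$-operad axioms (rather than just the ordinary operad axioms) actually get used.
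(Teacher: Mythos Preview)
Your proposal is correct and follows essentially the same approach as the paper: both build the symmetric $\K$-diagram structure and the partial composition products from the tree-wise isomorphisms $\sigma(M,\alpha_*)\otimes\tau(M,\beta_*)\simeq(\sigma\circ_e\tau)(M,\alpha_*\circ_e\beta_*)$, using the key observation that $\lambda_*(\alpha_*)\leq\gamma$ and $\lambda_*(\beta_*)\leq\delta$ imply $\lambda_*(\alpha_*\circ_e\beta_*)\leq\gamma\circ_e\delta$. The only difference is that the paper dispatches the universal property in one line by referring to the classical case, whereas you spell out the ``apply $f$ vertexwise, contract the tree in $\QOp$, then push forward along $\QOp(\lambda_*(\alpha_*))\rightarrow\QOp(\kappa)$'' recipe explicitly --- which is exactly the straightforward generalization the paper has in mind.
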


\begin{proof}
The operad structure of $\FOp(M)$
is defined by a natural extension of the constructions of~\S\ref{Prelude:KoszulDuality:FreeOperad}.

First,
for comparable elements $\alpha\leq\beta$,
the relation $\lambda_*(\gamma_*)\leq\alpha\leq\beta$
implies that each summand of $\tau(M)(\alpha)$
defines a summand of $\tau(M)(\beta)$.
Hence,
we have morphisms $i_*: \tau(M)(\alpha)\rightarrow\tau(M)(\beta)$, for each $\tau\in\Theta(r)$,
and an induced morphism $i_*: \FOp(M)(\alpha)\rightarrow\FOp(M)(\beta)$
at the level of $\FOp(M)$,
so that $\FOp(M)$ inherits the structure of a $\K$-diagram.

Recall that the action of a permutation on a tree reduces to a reindexing of the ingoing edges.
For any permutation $w\in\Sigma_r$,
we have a natural isomorphism $w_*: \tau(M,\alpha_*)\xrightarrow{\simeq}(w\tau)(M,w_*(\alpha_*))$,
where $w_*(\alpha_*)\in(w\tau)(\K)$
is the same as $\{\alpha_v\}_{v\in V(\tau)}$
in $V(w\tau) = V(\tau)$.
Since $\lambda_*(\alpha_*)\leq\kappa$ implies $\lambda_*(w_*(\alpha_*))\leq w\kappa$,
these isomorphisms yield an isomorphism $w_*: \tau(M)(\kappa)\xrightarrow{\simeq}(w\tau)(M)(w\kappa)$ on each $\tau(M)$,
and an isomorphism $w_*: \FOp(M)(\kappa)\rightarrow\FOp(M)(w\kappa)$
at the level of the $\K$-diagram $\FOp(M)$.
Hence,
we obtain that $\FOp(M)$ inherits the structure of a symmetric $\K$-diagram.

For trees $\sigma\in\Theta(s),\tau\in\Theta(t)$
and collections $\alpha_*\in\sigma(\K),\beta_*\in\tau(\K)$,
we have a natural isomorphism
$\sigma(M,\alpha_*)\otimes\tau(M,\beta_*)\simeq(\sigma\circ_e\tau)(M,\alpha_*\circ_e\beta_*)$,
where $\alpha_*\circ_e\beta_*\in\sigma\circ_e\tau(\K)$
represents the composite of $\alpha_*$ and $\beta_*$ in the free operad $\FOp(\K)$.
Since $\lambda_*(\alpha_*)\leq\gamma$ and $\lambda_*(\beta_*)\leq\delta$
implies $\lambda_*(\alpha_*\circ_e\beta_*)\leq\gamma\circ_e\delta$,
these isomorphisms assemble to an isomorphism
$\sigma(M)(\gamma)\otimes\tau(M)(\delta)\simeq\sigma\circ_e\tau(M)(\gamma\circ_e\delta)$,
for each $\gamma\in\K(s)$ and $\delta\in\K(t)$,
where $\gamma\circ_e\delta\in\K(s+t-1)$
represents the composite of $\gamma$ and $\delta$ in $\K$,
from which we deduce the existence of morphisms
$\circ_e: \FOp(M)(\gamma)\otimes\FOp(M)(\delta)\rightarrow\FOp(M)(\gamma\circ_e\delta)$
which provide $\FOp(M)$
with the composition structure of a $\K$-operad.

The proof that $\FOp(M)$ represents the free $\K$-operad associated to $M$
follows from a straightforward generalization of the case of usual operads
for which we refer to~\cite{GetzlerJones,GinzburgKapranov}
(see also~\cite[\S\S 1.2.4-1.2.10]{FresseCylinder}).
\end{proof}

The forgetful functor $U: \Op_1^{\K}\rightarrow\C_1^{\K\Sigma_*}$
creates limits in the category of $\K$-operads.
The forgetful functor creates coequalizers and filtered colimits as well,
but not all colimits.
Nevertheless,
any colimit can be identified with a reflexive coequalizer of free $\K$-operads.
Hence,
we obtain that the category of $\K$-operads
has all colimits.

We have moreover:

\begin{thm}\label{Interlude:GraphOperadModel:KOperadModel}
The category of connected $\K$-operads in dg-modules
inherits a model structure
so that:
\begin{itemize}
\item
a morphism $f: \POp\rightarrow\QOp$
is a weak-equivalence, respectively a fibration, of $\K$-operads
if $f$ defines a weak-equivalence, respectively a fibration, of symmetric $\K$-diagrams
(we say that the forgetful functor creates weak-equivalences and fibrations);
\item
the cofibrations of $\K$-operads
are the morphisms which have the right-lifting-property with respect to acyclic fibrations.
\end{itemize}
This model category also inherits a set of generating (acyclic) cofibrations
defined by the morphism of free $\K$-operads $\FOp(i): \FOp(M)\rightarrow\FOp(N)$
such that $i: M\rightarrow N$ runs over the generating (acyclic) cofibrations
of the category of symmetric $\K$-diagrams.\qed
\end{thm}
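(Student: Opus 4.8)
The plan is to obtain this model structure by transfer along the free--forgetful adjunction $\FOp: \C_1^{\K\Sigma_*}\rightleftarrows\Op_1^{\K} :U$ from the cofibrantly generated model category of symmetric $\K$-diagrams of Proposition~\ref{Interlude:GraphOperadModel:SymmetricKDiagramModel}. Writing $I$, respectively $J$, for its generating cofibrations, respectively generating acyclic cofibrations, the standard transfer principle for cofibrantly generated model categories (see~\cite{Hirschhorn}, and~\cite[\S 1.4]{FresseCylinder} for the parallel statement about ordinary operads in dg-modules over a ring) produces the asserted model structure on $\Op_1^{\K}$---with $\FOp(I)$ and $\FOp(J)$ as generating (acyclic) cofibrations, $U$ creating weak-equivalences and fibrations, and cofibrations detected by the left lifting property against acyclic fibrations---as soon as three conditions hold: (i) $\Op_1^{\K}$ is complete and cocomplete; (ii) the domains of $\FOp(I)$ and of $\FOp(J)$ are small relative to the relative $\FOp(I)$-cell complexes, respectively $\FOp(J)$-cell complexes; (iii) $U$ carries every relative $\FOp(J)$-cell complex to a weak-equivalence of symmetric $\K$-diagrams. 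The last clause of the statement, about generating cofibrations, is then immediate, since these are by construction the morphisms $\FOp(i)$ with $i$ running over the generating (acyclic) cofibrations of symmetric $\K$-diagrams.

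Conditions~(i) and~(ii) I would dispose of exactly as in the case of ordinary operads. The forgetful functor $U$ creates all limits, so $\Op_1^{\K}$ is complete; it also creates filtered colimits and coequalizers, and any colimit of $\K$-operads is a reflexive coequalizer of free $\K$-operads (using that $\FOp$ is a left adjoint at the level of symmetric $\K$-diagrams), so $\Op_1^{\K}$ is cocomplete. For smallness, every object of $\C_1^{\K\Sigma_*}$ is small---it is a $\K\Sigma_*$-indexed collection of dg-modules over $\ZZ$, with $\K\Sigma_*$ small---and, since $\FOp\dashv U$ and $U$ preserves filtered colimits, hence the transfinite compositions occurring in the relative cell complexes, this smallness passes from the domains of $I$ and $J$ to those of $\FOp(I)$ and $\FOp(J)$. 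The indexing over $\K\Sigma_*$ is harmless: every structure morphism of a $\K$-operad is defined componentwise, so the analysis of pushouts of free $\K$-operads is the one of~\cite[\S\S 1.2--1.3]{FresseCylinder} performed at each oriented weight-system separately.

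The crux---and the point I expect to cost the most work---is the acyclicity condition~(iii), which I would obtain by the path-object argument. Since every $\K$-operad is automatically $U$-fibrant (every dg-module is fibrant), it is enough to produce, for each $\K$-operad $\POp$, a factorization of the diagonal
\begin{equation*}
\POp\xrightarrow{\sim}\POp^{\mathcal{C}}\longrightarrow\POp\times\POp
\end{equation*}
in $\Op_1^{\K}$ with first morphism a weak-equivalence and second morphism a fibration of symmetric $\K$-diagrams. I would build $\POp^{\mathcal{C}}$ by cotensoring with a contractible coalgebra. Let $\mathcal{C} = N_*(\Delta^1)$ be the normalized chain complex of the simplicial interval, equipped with its Alexander--Whitney coassociative counital coproduct $\Delta: \mathcal{C}\rightarrow\mathcal{C}\otimes\mathcal{C}$ and counit $\epsilon: \mathcal{C}\xrightarrow{\sim}\ZZ$. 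Set $\POp^{\mathcal{C}}(\kappa) = \Hom_{\C}(\mathcal{C},\POp(\kappa))$ for $\kappa\in\K(r)$ with $r\geq 2$, and $\POp^{\mathcal{C}}(1) = \ZZ$; the coproduct $\Delta$ supplies composition products $f\circ_e g = (f\otimes g)\circ\Delta$, while the $\K$-diagram morphisms and the symmetric group actions are inherited componentwise from $\POp$, so that $\POp^{\mathcal{C}}$ is a connected $\K$-operad. The counit $\epsilon$ induces the diagonal $\POp\rightarrow\POp^{\mathcal{C}}$, a weak-equivalence because $\mathcal{C}$ is a bounded complex of finitely generated free $\ZZ$-modules chain-homotopy equivalent to $\ZZ$, so that $\Hom_{\C}(-,\POp(\kappa))$ turns $\epsilon$ into a chain-homotopy equivalence. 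The inclusion $\partial\Delta^1\hookrightarrow\Delta^1$ induces a degreewise split monomorphism of coalgebras $N_*(\partial\Delta^1)\hookrightarrow\mathcal{C}$, hence a componentwise split epimorphism of dg-modules---that is, a fibration of symmetric $\K$-diagrams---$\POp^{\mathcal{C}}\rightarrow\POp\times\POp$ whose composite with $\POp\rightarrow\POp^{\mathcal{C}}$ is the diagonal. This yields the required path objects, and hence condition~(iii).

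Combining~(i)--(iii) in the transfer theorem gives the model structure together with all its stated properties. Compared with the already-known case of ordinary operads in dg-modules over a ring, the only genuinely new verification is that the path object $\POp^{\mathcal{C}}$ is again a $\K$-operad, which holds precisely because every piece of its structure is defined componentwise over $\K\Sigma_*$.
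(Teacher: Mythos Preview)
Your overall strategy---transfer along the free--forgetful adjunction, with conditions (i)--(iii)---is correct and is exactly what the paper has in mind. The paper's own proof is just a one-line citation to Hinich's analysis of pushouts in~\cite{HinichHomotopy}, with Berger--Moerdijk~\cite{BergerMoerdijk} mentioned as an alternative; your write-up is a genuine elaboration of the latter route.

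There is, however, a real gap in your verification of condition~(iii). The path-object construction $\POp^{\mathcal{C}}(\kappa)=\Hom_{\C}(\mathcal{C},\POp(\kappa))$ with $\mathcal{C}=N_*(\Delta^1)$ and the Alexander--Whitney diagonal does \emph{not} yield an operad: the parallel associativity axiom $(p\circ_i q)\circ_{j+t-1} r=(p\circ_j r)\circ_i q$ fails. Writing $\Delta^{(2)}(c)=\sum c_{(1)}\otimes c_{(2)}\otimes c_{(3)}$, the two sides of this axiom evaluate on $c$ to
\[
\sum\bigl(f(c_{(1)})\circ_j h(c_{(3)})\bigr)\circ_i g(c_{(2)})
\quad\text{and}\quad
\sum\bigl(f(c_{(1)})\circ_j h(c_{(2)})\bigr)\circ_i g(c_{(3)}),
\]
and these agree for all $f,g,h$ only if $\sum c_{(1)}\otimes c_{(2)}\otimes c_{(3)}=\pm\sum c_{(1)}\otimes c_{(3)}\otimes c_{(2)}$, i.e.\ if $\Delta$ is cocommutative. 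The Alexander--Whitney diagonal on $N_*(\Delta^1)$ is not: $\Delta([01])=[0]\otimes[01]+[01]\otimes[1]$. This is precisely why Berger--Moerdijk require a \emph{cocommutative} Hopf interval, which is available in characteristic zero (via polynomial forms on $\Delta^1$) but not over~$\ZZ$ in the form you use.

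The fix is to abandon the path-object argument and follow the paper's primary reference: Hinich's direct analysis shows that a pushout of a free operad morphism $\FOp(M)\to\FOp(N)$ along a generating acyclic cofibration of the base carries a filtration whose associated graded is built from iterated tensor products with the acyclic cofibration, hence is itself a weak-equivalence. This argument is insensitive to the ground ring and transports verbatim to the $\K$-operad setting because, as you correctly observe, the free $\K$-operad and its pushouts are computed componentwise over each $\kappa\in\K(r)$. Your treatment of (i) and (ii) needs no change.
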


\begin{proof}
Straightforward generalization of the analysis of~\cite{HinichHomotopy}.
The elegant argument of~\cite{BergerMoerdijk}
can also be extended to the context of $\K$-operads.
\end{proof}

The assertion of Fact~\ref{Interlude:CompleteGraphOperad:BarrattEcclesAcyclicity}
has the following interpretation:

\begin{prop}\label{Interlude:GraphOperadModel:BarrattEcclesAugmentation}
The augmentation morphism of the Barratt-Eccles operad $\epsilon: \EOp\rightarrow\COp$
defines an acyclic fibration
of $\K$-operads
when we equip $\EOp$ with the $\K$-structure
of~\S\S\ref{Interlude:CompleteGraphOperad:BarrattEccles}-\ref{Interlude:CompleteGraphOperad:BarrattEcclesLayers}
and the commutative operad $\COp$ is viewed as a constant $\K$-operad.\qed
\end{prop}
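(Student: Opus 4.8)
The plan is to unwind the definitions of the two model structures involved so that the assertion reduces to a levelwise statement about dg-modules, which is then covered by Fact~\ref{Interlude:CompleteGraphOperad:BarrattEcclesAcyclicity} together with a trivial observation in degree~$0$.

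First I would record that $\epsilon: \EOp\rightarrow\COp$ genuinely defines a morphism of $\K$-operads when $\COp$ carries the constant $\K$-operad structure. Since the cells $\EOp(\kappa)\subset\EOp(r)$ are submodules of $\EOp(r)$ stable under differentials, symmetric group actions, the structure morphisms $i_*$, and partial composites (Lemma~\ref{Interlude:CompleteGraphOperad:BarrattEcclesKStructureVerifications} and Proposition~\ref{Interlude:CompleteGraphOperad:BarrattEcclesKStructure}), and since $\epsilon: \EOp\rightarrow\COp$ is already a morphism of operads, the restrictions $\epsilon(\kappa): \EOp(\kappa)\rightarrow\ZZ$ automatically assemble into a morphism of $\K$-operads. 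This bookkeeping is the only mildly tedious part of the argument, and it is entirely forced by the construction of the cells; there is no genuine obstacle.

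Next, by Theorem~\ref{Interlude:GraphOperadModel:KOperadModel} the forgetful functor $U: \Op_1^{\K}\rightarrow\C_1^{\K\Sigma_*}$ creates weak-equivalences and fibrations, so it suffices to prove that the underlying morphism of symmetric $\K$-diagrams is an acyclic fibration; by Proposition~\ref{Interlude:GraphOperadModel:SymmetricKDiagramModel} this means precisely that $\epsilon(\kappa): \EOp(\kappa)\rightarrow\COp(\kappa)=\ZZ$ must be a weak-equivalence and a degreewise surjection of dg-modules for every $\kappa\in\K(r)$ and every $r\in\NN$. The weak-equivalence part is exactly the content of Fact~\ref{Interlude:CompleteGraphOperad:BarrattEcclesAcyclicity}: each $\EOp(\kappa)$ is contractible and the augmentation is a quasi-isomorphism onto $\ZZ$. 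For the fibration part, $\ZZ=\COp(\kappa)$ is concentrated in degree~$0$, so only surjectivity in degree~$0$ is in question; writing $\kappa=(\mu,\sigma)$, the $0$-simplex $(\sigma)$ has all weights $\mu_{i j}((\sigma))=0$ and terminal permutation $\sigma$, hence $\kappa((\sigma))\leq\kappa$ and $(\sigma)\in\EOp(\kappa)_0$, while $\epsilon(\kappa)((\sigma))=1$ generates $\ZZ$. This completes the proof.
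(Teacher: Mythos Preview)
Your proof is correct and follows exactly the approach the paper intends: the proposition is stated immediately after the sentence ``The assertion of Fact~\ref{Interlude:CompleteGraphOperad:BarrattEcclesAcyclicity} has the following interpretation'' and closed with a \qed, so the paper treats it as a direct reformulation of that fact via the definitions of the model structures. Your argument unwinds precisely those definitions (Theorem~\ref{Interlude:GraphOperadModel:KOperadModel} and Proposition~\ref{Interlude:GraphOperadModel:SymmetricKDiagramModel}) and additionally spells out the surjectivity in degree~$0$, which the paper leaves implicit.
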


One proves further:

\begin{prop}\label{Interlude:GraphOperadModel:BarrattEcclesCofibrancy}
The Barratt-Eccles operad, though not cofibrant in the category of $\K$-operads,
is cofibrant as a symmetric $\K$-diagram.\qed
\end{prop}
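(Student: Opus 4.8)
The plan is to establish the positive assertion---that the symmetric $\K$-diagram underlying $\EOp$ is cofibrant in $\C_1^{\K\Sigma_*}$---by showing that the initial morphism $0\to\EOp$ is a cofibration of symmetric $\K$-diagrams, via the latching-object criterion of Proposition~\ref{Interlude:GraphOperadModel:SymmetricKDiagramCofibrations}. Specialised to $M = 0$ (so that $LM = 0$), that criterion asks only that each latching morphism $\lambda: L\EOp(\kappa)\to\EOp(\kappa)$ be a cofibration of dg-modules, for every $r\in\NN$ and every $\kappa\in\K(r)$; the cases $r\leq 1$ being trivial, I would concentrate on $r\geq 2$.

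First I would compute the latching object. By definition $L\EOp(\kappa) = \colim_{\alpha\lneqq\kappa}\EOp(\alpha)$, a colimit along the inclusions of the subcomplexes $\EOp(\alpha)\subset\EOp(r)$. A simplex $\underline{w}$ occurs in some $\EOp(\alpha)$ with $\alpha\lneqq\kappa$ exactly when $\kappa(\underline{w})\lneqq\kappa$ (take $\alpha = \kappa(\underline{w})$), and whenever $\underline{w}$ lies in two of these subcomplexes $\EOp(\alpha)$, $\EOp(\beta)$ it already lies in $\EOp(\gamma)$ for the common lower bound $\gamma = \kappa(\underline{w})\leq\alpha,\beta$; a routine verification then identifies the canonical map $L\EOp(\kappa)\to\EOp(r)$ with the inclusion of the subcomplex $\Span\{\underline{w} : \kappa(\underline{w})\lneqq\kappa\}$. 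Consequently $\lambda$ is the inclusion into $\EOp(\kappa) = \Span\{\underline{w} : \kappa(\underline{w})\leq\kappa\}$ of the sub-dg-module spanned by a subset of its distinguished simplex basis. In particular $\lambda$ is a degreewise split monomorphism whose cokernel $\EOp(\kappa)/L\EOp(\kappa) = \Span\{\underline{w} : \kappa(\underline{w}) = \kappa\}$ is, by Lemma~\ref{FirstStep:Conclusion:BarrattEcclesDegreeBounds} (applied through the inclusion $\EOp(\kappa)\subset\EOp_n(r)$ for any $n$ with $\kappa\in\K_n(r)$), a bounded, degreewise finitely generated free $\ZZ$-module, hence a cofibrant dg-module. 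By the characterization of cofibrations of dg-modules over a ring of~\cite[\S\S 2.3.6-9]{Hovey}, a monomorphism with cofibrant cokernel is a cofibration, so $\lambda$ is a cofibration of dg-modules; Proposition~\ref{Interlude:GraphOperadModel:SymmetricKDiagramCofibrations} then delivers that $0\to\EOp$ is a cofibration in $\C_1^{\K\Sigma_*}$. The step I expect to be the main obstacle is precisely the concrete identification of $L\EOp(\kappa)$---that is, verifying that the colimit defining the latching object is genuinely computed by the union of the subcomplexes $\EOp(\alpha)$, $\alpha\lneqq\kappa$, inside $\EOp(r)$; the rest is formal.

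For the negative assertion I would invoke left Quillen functoriality. The functor $\colim_{\K}: \Op_1^{\K}\to\Op_1$ is left adjoint to $\cst$ by Proposition~\ref{Interlude:CompleteGraphOperad:ColimitKOperadAdjunction}, and $\cst$ preserves fibrations and weak-equivalences (a constant $\K$-diagram on a degreewise surjection, resp.\ on a homology isomorphism, is one levelwise), so $\cst$ is right Quillen and $\colim_{\K}$ is left Quillen; in particular $\colim_{\K}$ preserves cofibrant objects. Were $\EOp$ cofibrant as a $\K$-operad, $\colim_{\K}\EOp$ would therefore be cofibrant in $\Op_1$; but $\colim_{\K}\EOp = \EOp$ by Proposition~\ref{Interlude:CompleteGraphOperad:BarrattEcclesLayers} (case $n = \infty$), and the Barratt--Eccles operad is not cofibrant as an operad. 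Hence $\EOp$ is not cofibrant in $\Op_1^{\K}$.
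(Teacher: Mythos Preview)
The paper does not actually prove this proposition: immediately after the statement it records that ``Proposition~\ref{Interlude:GraphOperadModel:BarrattEcclesCofibrancy} is stated as a remark and will not be proved in this article.'' So there is no paper proof to compare against.

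Your argument for the positive assertion is sound and, in fact, follows exactly the template the paper later deploys in~\S\ref{FinalStep:KModuleCofibration} to establish the parallel Proposition~\ref{FinalStep:KModuleCofibration:SuspensionCofibration}: identify the latching object as the span of basis simplices $\underline{w}$ with $\kappa(\underline{w})\lneqq\kappa$ (your computation is the primal analogue of Lemma~\ref{FinalStep:KModuleCofibration:LatchingObject}), observe that the latching morphism is then a split inclusion with bounded free cokernel (cf.\ Lemma~\ref{FinalStep:KModuleCofibration:LatchingSplitting} and Fact~\ref{FinalStep:KModuleCofibration:DegreeBounds}), and conclude via Hovey's characterization and Proposition~\ref{Interlude:GraphOperadModel:SymmetricKDiagramCofibrations}. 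The colimit computation you flag as the delicate point is handled by the same argument as in Proposition~\ref{Interlude:CompleteGraphOperad:BarrattEcclesLayers} and Lemma~\ref{FinalStep:KModuleCofibration:LatchingObject}: any $\underline{w}$ appearing in two summands $\EOp(\alpha)$, $\EOp(\beta)$ already lies in $\EOp(\kappa(\underline{w}))$ with $\kappa(\underline{w})\leq\alpha,\beta$, which forces the colimit to be the union.

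Your argument for the negative assertion, however, has a genuine gap. The Quillen-adjunction reduction to $\Op_1$ is correct, but you then assert without justification that ``the Barratt--Eccles operad is not cofibrant as an operad.'' This is not proved anywhere in the paper, and it is not as immediate as one might hope: $\EOp$ is $\Sigma_*$-cofibrant, so the usual obstruction (non-projective symmetric group actions) does not apply. You would need either an external reference for this fact or a direct argument---for instance, exhibiting an explicit lifting problem against an acyclic fibration that $\EOp$ (or its $\K$-operad avatar) fails to solve. As written, the negative half of the proposition remains unproved.
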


Proposition~\ref{Interlude:GraphOperadModel:BarrattEcclesCofibrancy}
is stated as a remark and will not be proved in this article.

\medskip
The arguments of~\cite{BergerCell}
can be adapted to prove that the nested sequence of~\S\ref{Interlude:CompleteGraphOperad:NestedSuboperads}
associated to any $\K$-operad
satisfying propositions~\ref{Interlude:GraphOperadModel:BarrattEcclesAugmentation}-\ref{Interlude:GraphOperadModel:BarrattEcclesCofibrancy}
is equivalent to the sequence of the chain little cubes operads
as a sequence of operads in dg-modules.
But we do not use $\K$-structures that way.
Instead, we are going to use structures,
like constant $\K$-operads,
for which Proposition~\ref{Interlude:GraphOperadModel:BarrattEcclesCofibrancy} fails.

\subsubsection{Quasi-free objects}\label{Interlude:GraphOperadModel:QuasiFreeKOperads}
The last purpose of this subsection is to give an effective construction of cofibrations in the category of $\K$-operads.
For this aim, we use a natural generalization
of the notion of a quasi-free object
in the context of $\K$-operads.
The structure of a quasi-free $\K$-operad is defined explicitly by a pair $\POp = (\FOp(M),\partial)$,
where $\FOp(M)$ is a free $\K$-operad and $\partial$ is a derivation of the free $\K$-operad $\FOp(M)$
which is added to the natural differential of~$\FOp(M)$
to produce the differential of $\POp$.
This derivation $\partial$ consists of a collection of homomorphisms of degree $-1$
\begin{equation*}
\partial: \FOp(M)(\kappa)\rightarrow\FOp(M)(\kappa),\quad\kappa\in\K(r),
\end{equation*}
commuting with the structure morphisms $i_*: \FOp(M)(\alpha)\rightarrow\FOp(M)(\beta)$,
for every pair $\alpha\leq\beta$,
with the action of permutations $w_*: \FOp(M)(\kappa)\rightarrow\FOp(M)(w\kappa)$
and so that we have the derivation relation $\partial(p\circ_e q) = \partial(p)\circ_e q+\pm p\circ_e\partial(q)$
for every composite in the free operad $\FOp(M)$.
The sum $\delta+\partial$
gives a well defined differential on $\FOp(M)$
if and only if the derivation satisfies the relation $\delta(\partial) + \partial^2 = 0$,
where $\delta(-)$ refers to the differential of homomorphisms in the dg-modules $\Hom_{\C}(\FOp(M)(\kappa),\FOp(M)(\kappa))$.

The derivation relation implies that $\partial$
is uniquely determined by its restriction to the generating symmetric $\K$-diagram $M\subset\FOp(M)$.

The free $\K$-operad associated to a symmetric $\K$-diagram
inherits a splitting $\FOp(M) = \bigoplus_{m=0}^{\infty}\FOp_{m}(M)$
like the usual free operad of $\Sigma_*$-objects.
Moreover,
we have $\FOp_{0}(M) = I$, $\FOp_{1}(M) = M$
and $\bar{\FOp}(M) = \bigoplus_{m=1}^{\infty}\FOp_{m}(M)$
represents the augmentation ideal of $\FOp(M)$.
In general (see explanations in~\cite[\S 1.4.9]{FresseCylinder}),
we assume that the restriction $\partial|_{M}: M\rightarrow\FOp(M)$
satisfies the relation $\partial(M)\subset\bigoplus_{m\geq 2}\FOp_{m}(M)$.

\subsubsection{Morphism on quasi-free operads}\label{Interlude:GraphOperadModel:QuasiFreeMorphisms}
For a quasi-free $\K$-operad $\POp = (\FOp(M),\partial)$,
a morphism of $\K$-operads $\phi: (\FOp(M),\partial)\rightarrow\QOp$
is uniquely determined by its restriction to $M\subset\FOp(M)$,
just like the derivation $\partial$.
In fact,
any homomorphism of degree $0$
\begin{equation*}
f: M\rightarrow\QOp
\end{equation*}
gives rise to a unique homomorphism $\phi_f: \FOp(M)\rightarrow\POp$
commuting with composition structures.
This homomorphism defines a morphism of $\K$-operads $\phi_f: (\FOp(M),\partial)\rightarrow\QOp$
if and only if we have the commutation relation $\delta\phi_f = \phi_f\delta + \phi_f\partial$
with respect to the differential of $\POp = (\FOp(M),\partial)$
and the internal differential of $\QOp$.

In the next section,
we consider morphisms of quasi-free $\K$-operads
\begin{equation*}
\phi_f: (\FOp(M),\partial)\rightarrow(\FOp(N),\partial)
\end{equation*}
induced by morphisms of symmetric $\K$-diagrams
\begin{equation*}
M\xrightarrow{f} N\subset\FOp(N).
\end{equation*}
In that case,
we have an identity $\phi_f = \FOp(f)$, where $\FOp(f)$ is the morphism of free $\K$-operads associated to $f$,
and the commutation of $\phi_f$ with differentials amounts to the relation $\partial\phi_f = \phi_f\partial$,
because $\phi_f = \FOp(f)$
commutes automatically with the internal differential of free $\K$-operads
when $f$ is a genuine morphism of symmetric $\K$-diagrams.

\subsubsection{Quasi-free operad filtrations}\label{Interlude:GraphOperadModel:QuasiFreeFiltration}
By~\cite[Lemma 1.4.11]{FresseCylinder},
the quasi-free operad $\POp = (\FOp(M),\partial)$
associated to a connected $\Sigma_*$-object $M$
inherits a canonical filtration by quasi-free operads $\POp_{\leq r} = (\FOp(M_{\leq r}),\partial)$
such that $M_{\leq r}\subset M$ is the $\Sigma_*$-object formed by the components $M(n)$
of arity $n\leq r$ of $M$.

This observation has a straightforward generalization in the context of $\K$-operads:
to a symmetric $\K$-diagram $M$
we associate the subobject $M_{\leq r}\subset M$
such that
\begin{equation*}
M_{\leq r}(\kappa) = \begin{cases} M(\kappa), & \text{for all $\kappa\in\K(n)$ when $n\leq r$}, \\
0, & \text{otherwise}. \end{cases}
\end{equation*}
Under the assumption $M(0) = 0$,
the derivation of $\POp = (\FOp(M),\partial)$
satisfies automatically $\partial(M_{\leq r})\subset\FOp(M_{\leq r})$ (compare with~\cite[Lemma 1.4.11]{FresseCylinder})
so that we have a quasi-free $\K$-operad $\POp_{\leq r} = (\FOp(M_{\leq r}),\partial)$
such that $\POp_{\leq r}\subset\POp$.
Under the assumption $M(0) = M(1) = 0$ and $\partial(M)\subset\bigoplus_{m\geq 2}\FOp_{m}(M)$,
we have better, namely: $\partial(M_{\leq r})\subset\FOp(M_{\leq r-1})$.
This assumption is not necessary for the definition of the quasi-free operad $\POp_{\leq r}$
but is needed for the proof of the next proposition.

The filtration of a quasi-free $\K$-operad is canonical
in the sense that a morphism
\begin{equation*}
\phi_f: \underbrace{(\FOp(M),\partial)}_{\POp}\rightarrow\underbrace{(\FOp(N),\partial)}_{\QOp}
\end{equation*}
induced by a morphism of symmetric $\K$-diagrams $f: M\rightarrow N$
satisfies $\phi_f(\POp_{\leq r})\subset\QOp_{\leq r}$.
Thus we have a commutative diagram of $\K$-operad morphisms
\begin{equation*}
\xymatrix{ \POp_{\leq r}\ar[d]_{i_r}\ar@{.>}[r]^{\phi_f} &
\QOp_{\leq r}\ar[d]^{j_r} \\
\POp\ar[r]_{\phi_f} & \QOp }
\end{equation*}
for each $r\in\NN$.
For our purpose, we record the following result:

\begin{prop}\label{Interlude:GraphOperadModel:CofibrantKOperads}
Suppose that the symmetric $\K$-diagrams $M$ and $N$ are reduced ($M(0) = M(1) = 0$ and $N(0) = N(1) = 0$)
and the derivations of the quasi-free operads $\POp$ and $\QOp$
satisfy $\partial(M)\subset\subset\bigoplus_{m\geq 2}\FOp_{m}(M)$
and $\partial(N)\subset\bigoplus_{m\geq 2}\FOp_{m}(N)$.
If $f$ is a cofibration of symmetric $\K$-diagrams,
then every pushout morphism
\begin{equation*}
\POp\bigvee_{\POp_{\leq r}}\QOp_{\leq r}\xrightarrow{(\phi_f,j_r)}\QOp,\ r\in\NN,
\end{equation*}
forms a cofibration of $\K$-operads.
\end{prop}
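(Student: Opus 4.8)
The plan is to realize the morphism of the statement as a transfinite composition of cofibrations, using the arity filtration of quasi-free $\K$-operads recalled in~\S\ref{Interlude:GraphOperadModel:QuasiFreeFiltration}. Set $\mathcal{R}_s = \POp\bigvee_{\POp_{\leq s}}\QOp_{\leq s}$ for $s\in\NN$. The inclusions $\POp_{\leq s}\subset\POp_{\leq s+1}$, together with the canonical morphisms $\POp_{\leq s+1}\rightarrow\POp$ and $\POp_{\leq s+1}\rightarrow\QOp_{\leq s+1}$, induce a sequence $\mathcal{R}_r\rightarrow\mathcal{R}_{r+1}\rightarrow\cdots$; since $\POp = \colim_s\POp_{\leq s}$ and $\QOp = \colim_s\QOp_{\leq s}$ in the category of $\K$-operads (which has all colimits, see~\S\ref{Interlude:GraphOperadModel:FreeKOperad}), and colimits commute with colimits, we get $\colim_s\mathcal{R}_s = \POp\bigvee_{\POp}\QOp = \QOp$, so that $(\phi_f,j_r): \mathcal{R}_r\rightarrow\QOp$ is identified, up to this isomorphism, with the composite of the morphisms $\mathcal{R}_s\rightarrow\mathcal{R}_{s+1}$ for $s\geq r$. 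Since cofibrations of $\K$-operads are closed under transfinite composition, it is enough to prove that each morphism $\mathcal{R}_s\rightarrow\mathcal{R}_{s+1}$ is a cofibration.

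For this I would establish a pushout presentation $\mathcal{R}_{s+1} = \mathcal{R}_s\bigvee_{\FOp(A_s)}\FOp(B_s)$, in which $A_s\rightarrow B_s$ is a cofibration of symmetric $\K$-diagrams supported in arity $s+1$. Write $M_{s+1}$, respectively $N_{s+1}$, for the symmetric $\K$-diagram concentrated in arity $s+1$ and agreeing there with $M$, respectively $N$. The hypotheses $M(0) = M(1) = 0$ and $\partial(M)\subset\bigoplus_{m\geq 2}\FOp_{m}(M)$, together with their analogues for $N$, guarantee, exactly as in~\S\ref{Interlude:GraphOperadModel:QuasiFreeFiltration} and~\cite[Lemma~1.4.11]{FresseCylinder}, that the twisting derivation carries the arity-$(s+1)$ generators into the free $\K$-operad $\FOp(M_{\leq s})$, respectively $\FOp(N_{\leq s})$, generated by strictly lower arities; combining this with the relation $\partial\circ\FOp(f) = \FOp(f)\circ\partial$, one checks that $\POp_{\leq s+1}$ and $\QOp_{\leq s+1}$ are obtained from $\POp_{\leq s}$ and $\QOp_{\leq s}$ by attaching cones on desuspensions of $M_{s+1}$ and $N_{s+1}$, and that $\mathcal{R}_{s+1}$ is the pushout as claimed, with $A_s\rightarrow B_s$ the morphism produced from the arity-$(s+1)$ component $f_{s+1}: M_{s+1}\rightarrow N_{s+1}$ of $f$ by the standard mapping-cylinder construction absorbing the twisting derivation --- the step-by-step analogue, transposed to $\K$-operads, of the construction in the proof of~\cite[Theorem~1.4.12]{FresseCylinder}.

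It then remains to observe that $f_{s+1}$ is a cofibration of symmetric $\K$-diagrams. This holds because the category $\K\Sigma_*$ is the disjoint union of its full subcategories on objects of a fixed arity, so by Fact~\ref{Interlude:GraphOperadModel:SymmetricKDiagramsAsFunctors} the category $\C_1^{\K\Sigma_*}$ is a product of functor categories indexed by the arities, its model structure (Proposition~\ref{Interlude:GraphOperadModel:SymmetricKDiagramModel}) is the product model structure, and a morphism is a cofibration if and only if its component in each arity is; in particular the arity-$(s+1)$ component of the cofibration $f$ is a cofibration. Consequently $A_s\rightarrow B_s$ is a cofibration of symmetric $\K$-diagrams, and by Theorem~\ref{Interlude:GraphOperadModel:KOperadModel} the free functor $\FOp$ --- which carries generating (acyclic) cofibrations of symmetric $\K$-diagrams to generating (acyclic) cofibrations of $\K$-operads and preserves colimits, being a left adjoint --- carries $A_s\rightarrow B_s$ to a cofibration $\FOp(A_s)\rightarrow\FOp(B_s)$ of $\K$-operads. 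Its cobase change $\mathcal{R}_s\rightarrow\mathcal{R}_{s+1}$ is therefore a cofibration of $\K$-operads, and the transfinite composition argument of the first paragraph completes the proof.

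The main obstacle will be the pushout presentation of the second paragraph: one has to write down the diagrams $A_s$ and $B_s$ and the attaching morphism explicitly and check that the twisting derivation, which does not preserve the weight grading of the free $\K$-operad, nevertheless yields a genuine morphism of $\K$-operads into $\mathcal{R}_s$. This is precisely the point where the triangularity hypotheses $\partial(M)\subset\bigoplus_{m\geq 2}\FOp_{m}(M)$ and $\partial(N)\subset\bigoplus_{m\geq 2}\FOp_{m}(N)$ enter. The verification is a careful but essentially routine adaptation of~\cite[Lemma~1.4.11 and Theorem~1.4.12]{FresseCylinder} to the present relative, diagrammatic setting; everything else reduces to the standard closure properties of cofibrations under pushout, transfinite composition, and application of a left Quillen functor.
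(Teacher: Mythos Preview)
Your outline is correct and matches the paper's approach: the paper's own proof is the single sentence ``Easy extension of the arguments of~\cite[Proposition 1.4.13]{FresseCylinder} to $\K$-operads,'' and what you have written is precisely such an extension---the arity-by-arity cell-attachment argument from that reference, transposed to the $\K$-diagrammatic setting. Your citations to~\cite[Lemma~1.4.11 and Theorem~1.4.12]{FresseCylinder} point to the same circle of results (Proposition~1.4.13 there is the relative version), so there is no substantive difference.
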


\begin{proof}
Easy extension of the arguments of~\cite[Proposition 1.4.13]{FresseCylinder}
to $\K$-operads.
\end{proof}

\section{Second step: applications of $\K$-structures}\label{SecondStep}

The goal of this section is to prove:

\begin{mainlemm}\label{SecondStep:Result}\hspace*{2mm}
\begin{enumerate}
\item
Each quasi-free operad
\begin{equation*}
\BOp^c(\DOp_n) = (\FOp(\Sigma^{-1}\DOp_n),\partial)
\end{equation*}
arises from a quasi-free $\K$-operad
such that
\begin{equation*}
\colim_{\kappa\in\K_n(r)}\BOp^c(\DOp_n)(\kappa)\xrightarrow{\simeq}\cdots
\xrightarrow{\simeq}\colim_{\kappa\in\K(r)}\BOp^c(\DOp_n)(\kappa)
\xrightarrow{\simeq}\BOp^c(\DOp_n)(r).
\end{equation*}
\item
The morphism $\sigma^*: \BOp^c(\DOp_{n-1})\rightarrow\BOp^c(\DOp_n)$
deduced from the suspension morphism of the Barratt-Eccles operad
is realized by a morphism of quasi-free $\K$-operads
\begin{equation*}
(\FOp(\Sigma^{-1}\DOp_{n-1})(\kappa),\partial)\xrightarrow{\phi_{\sigma^*}}(\FOp(\Sigma^{-1}\DOp_n)(\kappa),\partial)
\end{equation*}
associated to a morphism of symmetric $\K$-diagrams $\sigma^*: \DOp_{n-1}(\kappa)\rightarrow\DOp_n(\kappa)$.
\end{enumerate}
\end{mainlemm}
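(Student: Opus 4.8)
The plan is to dualise the $\K$-structure carried by the Barratt-Eccles operad and then transport it through the free operad/cobar construction, exploiting that $\colim_{\K}$ is a left adjoint. First I would produce the symmetric $\K$-diagram underlying $\DOp_n$. By Lemma~\ref{FirstStep:Conclusion:BarrattEcclesDegreeBounds} each cell $\EOp(\kappa)\subset\EOp(r)$ is a bounded complex of degreewise finitely generated free $\ZZ$-modules, so dualising behaves well; applying $\Hom_{\C}(-,\ZZ)$ (together with the operadic $n$-fold desuspension) to the Barratt-Eccles $\K$-operad of~\S\S\ref{Interlude:CompleteGraphOperad:BarrattEccles}-\ref{Interlude:CompleteGraphOperad:BarrattEcclesLayers}, and to the compatibility relations of Observation~\ref{Interlude:CompleteGraphOperad:BarrattEcclesWeightRelations} and Lemma~\ref{Interlude:CompleteGraphOperad:BarrattEcclesKStructureVerifications}, yields a symmetric $\K$-diagram $\{\DOp_n(\kappa)\}_{\kappa\in\K(r)}$ equipped with the quadratic coproduct dual to the partial composites of~$\EOp$ -- this is the content of~\S\ref{SecondStep:DualModuleKStructure:Definition}. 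Because the basis simplices of $\EOp_n(r)$ are exactly those whose edge-weights are all $<n$, the diagram $\kappa\mapsto\DOp_n(\kappa)$ only ``sees'' the truncation of $\kappa$ to $\K_n$, and consequently the natural maps
\begin{equation*}
\colim_{\kappa\in\K_n(r)}\DOp_n(\kappa)\xrightarrow{\simeq}\dots\xrightarrow{\simeq}\colim_{\kappa\in\K(r)}\DOp_n(\kappa)\xrightarrow{\simeq}\DOp_n(r)
\end{equation*}
are all isomorphisms, the right-hand one being the tautological identification of $\DOp_n(r)$ with the colimit of its cells.

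Next I would set $\MOp_n(\kappa)=\Sigma^{-1}\DOp_n(\kappa)$, form the free $\K$-operad $\FOp(\MOp_n)$ of~\S\ref{Interlude:GraphOperadModel:FreeKOperad}, and add to its natural differential the derivation $\partial$ determined, exactly as in~\S\ref{Prelude:KoszulDuality:CobarConstruction}, by the desuspended quadratic coproduct of the $\K$-cooperad $\DOp_n$; the relation $\delta\partial+\partial^2=0$ holds level by level in $\K$ since it is dual, cell by cell, to the coassociativity of $\DOp_n$ (equivalently, it is the cellwise version of the corresponding identity for the ordinary cobar construction). This defines a quasi-free $\K$-operad $\BOp^c(\DOp_n)(\kappa):=(\FOp(\MOp_n)(\kappa),\partial)$. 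To compute its colimit, observe that the colimit functor $\colim_{\K}\colon\Op_1^{\K}\rightarrow\Op_1$ (Proposition~\ref{Interlude:CompleteGraphOperad:ColimitKOperadAdjunction}) and the analogous colimit functor on symmetric $\K$-diagrams are both left adjoint to the constant-diagram functor, and that taking underlying $\Sigma_*$-objects commutes with $\cst$; by uniqueness of left adjoints, $\colim_{\K}$ carries $\FOp(\MOp_n)$ to $\FOp(\colim_{\K}\MOp_n)=\FOp(\Sigma^{-1}\DOp_n)$ and carries the derivation $\partial$ to the cobar derivation, so $\colim_{\kappa\in\K(r)}\BOp^c(\DOp_n)(\kappa)=\BOp^c(\DOp_n)(r)$. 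Running the same argument over each $\K_m$ with $n\leq m\leq\infty$ and combining it with the isomorphisms of the previous paragraph gives the full chain of isomorphisms of the first assertion.

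For the second assertion, the point is that the suspension morphism $\sigma\colon\EOp\rightarrow\Lambda^{-1}\EOp$ is compatible with cells: $\sigma$ sends a simplex to a $\sgn$-weighted tail of it, and passing to a tail can only decrease the associated oriented weight system (cf.\ the proof of Lemma~\ref{FirstStep:Equations:SuspensionSurjectivity} and Observation~\ref{Prelude:BarrattEccles:SuspensionLittleCubesLayers}), so $\sigma(\EOp(\kappa))\subset\Lambda^{-1}\EOp(\kappa)$ and $\sigma$ underlies a morphism of $\K$-operads restricting to $\EOp_n\rightarrow\Lambda^{-1}\EOp_{n-1}$. Dualising gives a morphism of symmetric $\K$-diagrams $\sigma^*\colon\DOp_{n-1}(\kappa)\rightarrow\DOp_n(\kappa)$ which commutes with the quadratic coproducts; applying the free $\K$-operad functor and the recipe of~\S\ref{Interlude:GraphOperadModel:QuasiFreeMorphisms} produces the morphism of quasi-free $\K$-operads $\phi_{\sigma^*}=\FOp(\Sigma^{-1}\sigma^*)$, which commutes with the cobar derivations because $\sigma^*$ is a $\K$-cooperad morphism. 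Applying $\colim_{\K}$ recovers the operad morphism $\sigma^*\colon\BOp^c(\DOp_{n-1})\rightarrow\BOp^c(\DOp_n)$ of the prologue, which proves the second assertion.

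The main obstacle is the very first step: making precise the dualisation of the Barratt-Eccles $\K$-structure so that it genuinely produces a (covariant) symmetric $\K$-diagram on all of $\K(r)$ -- not merely on $\K_n(r)$ -- with the stated colimit behaviour. This is where the finiteness of the cells (Lemma~\ref{FirstStep:Conclusion:BarrattEcclesDegreeBounds}) and a careful analysis of how face maps of simplices of permutations interact with the edge-weight ordering on the complete graph posets are needed; once this is in place, everything downstream is a formal consequence of the universal property of free $\K$-operads and the adjunction $\colim_{\K}\dashv\cst$.
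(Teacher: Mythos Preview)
Your overall strategy is right, and you have correctly located the one genuine obstacle --- but you have not actually overcome it, and the fix is a specific construction that your proposal does not contain. Naive dualisation of the Barratt--Eccles $\K$-structure produces a \emph{contravariant} diagram $\kappa\mapsto\EOp(\kappa)^{\vee}$ (since $\EOp(\alpha)\hookrightarrow\EOp(\beta)$ dualises to a surjection in the wrong direction), and no amount of finiteness repairs the variance. The paper's device is the \emph{complement weight system} $\kappa^{\vee}=(n-1-\mu,\sigma)$ attached to $\kappa=(\mu,\sigma)$, together with the definition
\[
\DOp_n(\kappa)=\bigl\{c\in\DOp_n(r):\ c(\underline{w})\neq 0\ \Rightarrow\ \kappa(\underline{w})\geq\kappa^{\vee}\bigr\}.
\]
Because $\alpha\leq\beta\Leftrightarrow\beta^{\vee}\leq\alpha^{\vee}$, passing to the complement reverses the order on $\K(r)$ and converts the contravariant dual into a covariant symmetric $\K$-diagram. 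Note that the construction genuinely depends on $n$ (through $n-1-\mu$), so $\DOp_n(\kappa)$ is not ``the dual of $\EOp(\kappa)$'' in any literal sense; this is why your appeal to the cells of $\EOp$ and to Lemma~\ref{FirstStep:Conclusion:BarrattEcclesDegreeBounds} does not suffice.

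The complement then drives every subsequent verification. A dual basis element $\underline{s}^{\vee}$ lies in $\DOp_n(\kappa)$ exactly when $\kappa^{\vee}(\underline{s}):=(n-1-\mu(\underline{s}),\sigma(\underline{s}))\leq\kappa$; since $\kappa^{\vee}(\underline{s})\in\K_n(r)$ for every simplex of $\EOp_n$, the colimit over $\K_m$ already stabilises at $m=n$, which is the chain of isomorphisms you want. For the cobar derivation, the identity $\kappa(\underline{u}\circ_e\underline{v})=\kappa(\underline{u})\circ_e\kappa(\underline{v})$ of Observation~\ref{Interlude:CompleteGraphOperad:BarrattEcclesWeightRelations} translates, under the complement, into $\kappa^{\vee}(\underline{s})=\kappa^{\vee}(\underline{u})\circ_e\kappa^{\vee}(\underline{v})$ whenever $\underline{s}$ occurs in $\underline{u}\circ_e\underline{v}$; this is precisely what shows that $\partial$ sends $\MOp_n(\kappa)$ into $\FOp(\MOp_n)(\kappa)$. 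Your sentence ``$\delta\partial+\partial^2=0$ holds level by level by coassociativity'' presupposes that $\partial$ is already defined at the $\K$-level, which is the substantive step --- and it fails for summands $\tau(\MOp_n)(\kappa)$ with $\kappa$ not of the form $\lambda_*(\alpha_*)$, where a separate argument is needed to see that both sides vanish. Finally, your observation that $\sigma(\EOp(\kappa))\subset\Lambda^{-1}\EOp(\kappa)$ is correct but, because of the complement, does not directly yield $\sigma^*(\DOp_{n-1}(\kappa))\subset\DOp_n(\kappa)$; what is used instead is that $\sgn(w_0,\dots,w_{r-1})\neq 0$ forces $\mu_{ij}(w_0,\dots,w_d)\geq\mu_{ij}(w_{r-1},\dots,w_d)+1$ for every pair $ij$, matching the shift $(n-2)\mapsto(n-1)$ in the two complements.
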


Then we have immediately:

\begin{maincor}
The diagram of Lemma~\ref{FirstStep:Result}
\begin{equation*}
\xymatrix{ \BOp^c(\DOp_1)\ar[r]^(0.5){\sigma^*}\ar[d]_{\phi_1} &
\BOp^c(\DOp_2)\ar[r]^(0.65){\sigma^*}\ar@{.>}[d]_{\exists\phi_2} &
\cdots\ar[r]^(0.4){\sigma^*} &
\BOp^c(\DOp_n)\ar[r]^(0.65){\sigma^*}\ar@{.>}[d]_{\exists\phi_n} & \cdots \\
\COp\ar[r]_{=} &
\COp\ar[r]_{=} &
\cdots\ar[r]_{=} &
\COp\ar[r]_{=} & \cdots }.
\end{equation*}
is equivalent to an adjoint diagram, in the category of $\K$-operads,
the commutative operad $\COp$ being equipped with the structure of a constant $\K$-operad.
\end{maincor}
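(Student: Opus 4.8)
The plan is to deduce the corollary formally from Lemma~\ref{SecondStep:Result} together with the colimit--constant adjunction of Proposition~\ref{Interlude:CompleteGraphOperad:ColimitKOperadAdjunction}, so that once that lemma is granted the verification is a routine diagram chase. First I would record the two $\K$-structures in play. By Lemma~\ref{SecondStep:Result}(a), each cobar construction is recovered as the colimit $\BOp^c(\DOp_n)(r) = \colim_{\kappa\in\K(r)}\BOp^c(\DOp_n)(\kappa)$ of a quasi-free $\K$-operad $\BOp^c(\DOp_n)(\kappa)$. On the other side, the commutative operad, regarded as the constant $\K$-operad $\cst\COp$, satisfies $\colim_{\K}\cst\COp = \COp$: each poset $\K(r)$ is connected for $r\geq 1$ -- any two oriented weight-systems have a common upper bound, obtained by increasing all edge-weights -- so the colimit of the constant $\K(r)$-diagram with value $\COp(r)$ over this connected category is $\COp(r)$ again.

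I would then transport the diagram of Lemma~\ref{FirstStep:Result} across the adjunction $\colim_{\K}: \Op_1^{\K}\rightleftarrows\Op_1 :\cst$. The natural bijection $\Hom_{\Op_1}(\colim_{\K}\POp,\QOp)\cong\Hom_{\Op_1^{\K}}(\POp,\cst\QOp)$ shows that each operad morphism $\phi_n: \BOp^c(\DOp_n)\to\COp$ is the image under $\colim_{\K}$ of a uniquely determined $\K$-operad morphism $\tilde{\phi}_n: \BOp^c(\DOp_n)(\kappa)\to\cst\COp$, under the identification $\colim_{\K}\cst\COp = \COp$ just established; for $n=1$ one takes the adjoint of the composite $\phi_1 = \alpha\circ\epsilon_1$. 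Meanwhile Lemma~\ref{SecondStep:Result}(b) realizes each $\sigma^*: \BOp^c(\DOp_{n-1})\to\BOp^c(\DOp_n)$ as $\colim_{\K}$ of a $\K$-operad morphism $\phi_{\sigma^*}: \BOp^c(\DOp_{n-1})(\kappa)\to\BOp^c(\DOp_n)(\kappa)$. Thus every object and arrow of the diagram of Lemma~\ref{FirstStep:Result} lifts to the category of $\K$-operads, and it remains only to check that the lifted squares commute.

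For commutativity I would use that $\colim_{\K}$ is a functor and that the adjunction correspondence is a bijection, in particular injective. Functoriality gives $\colim_{\K}(\tilde{\phi}_n\circ\phi_{\sigma^*}) = \phi_n\circ\sigma^*$ and $\colim_{\K}(\tilde{\phi}_{n-1}) = \phi_{n-1}$; since the diagram of Lemma~\ref{FirstStep:Result} commutes, $\phi_n\circ\sigma^* = \phi_{n-1}$, so the adjunction sends both $\tilde{\phi}_n\circ\phi_{\sigma^*}$ and $\tilde{\phi}_{n-1}$ to $\phi_{n-1}$ and they agree by injectivity. Hence the diagram of $\K$-operads with objects $\BOp^c(\DOp_n)(\kappa)$ and $\cst\COp$, with vertical arrows $\tilde{\phi}_n$, horizontal top arrows $\phi_{\sigma^*}$, and horizontal bottom arrows the identities of $\cst\COp$, commutes, and applying $\colim_{\K}$ returns exactly the diagram of Lemma~\ref{FirstStep:Result}. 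The only point needing any care -- and the whole content added beyond Lemma~\ref{SecondStep:Result} -- is this compatibility of the $\tilde{\phi}_n$ with the horizontal maps, which as shown is formal; no new obstruction arises, since all the genuine work, the $\K$-structures on the cobar constructions and the $\K$-operad realization of $\sigma^*$, is exactly what Lemma~\ref{SecondStep:Result} supplies.
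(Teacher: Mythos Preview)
Your proposal is correct and matches the paper's approach: the paper states this corollary with the phrase ``Then we have immediately'' and gives no further proof, treating it as a formal consequence of Lemma~\ref{SecondStep:Result} and the adjunction of Proposition~\ref{Interlude:CompleteGraphOperad:ColimitKOperadAdjunction}, which is exactly what you spell out. Your added observation that each $\K(r)$ is connected (so that $\colim_{\K}\cst\COp = \COp$) and your explicit use of injectivity of the adjunction bijection to verify commutativity of the lifted squares are the only details needed beyond the cited results, and you handle them correctly.
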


The first step toward the proof of Lemma~\ref{SecondStep:Result}
is to define $\K$-structures on the generating $\Sigma_*$-objects $\DOp_n$
of the quasi-free operads $\BOp^c(\DOp_n)$.
This step is carried out in~\S\ref{SecondStep:DualModuleKStructure}.
Besides,
we check in that section that the morphism of $\Sigma_*$-objects $\sigma^*: \DOp_{n-1}\rightarrow\DOp_n$
arises from a morphism of symmetric $\K$-diagrams.

From the result of~\S\ref{SecondStep:DualModuleKStructure},
we obtain that the underlying free operad of $\BOp^c(\DOp_n)$
arises from a free $\K$-operad.
We check in~\S\ref{SecondStep:OperadKStructure} that the twisting derivation of the cobar construction arises
from a derivation of this free $\K$-operad.
We conclude from this observation that the quasi-free operad $\BOp^c(\DOp_n)$
has a $\K$-structure as claimed by the first assertion
of Lemma~\ref{SecondStep:Result}.
We also check that the morphism of symmetric $\K$-diagrams $\sigma^*: \DOp_{n-1}\rightarrow\DOp_n$
defined in~\S\ref{SecondStep:DualModuleKStructure},
induce morphisms of quasi-free $\K$-operads
on cobar constructions.
We deduce the second assertion of Lemma~\ref{SecondStep:Result}
from this verification.

\subsection{Definition of $\K$-structures on generating $\Sigma_*$-objects}\label{SecondStep:DualModuleKStructure}
Our first purpose is to define the $\K$-structure of $\DOp_n$.
The idea beyond our construction is to use a natural complement $\kappa^{\vee}$
associated to each oriented weight system $\kappa$ of $\K_n$
and to associate to $\kappa$
the submodule of $\DOp_n$
orthogonal to the complement of $\EOp_n(\kappa^{\vee})$ in $\EOp_n(r)$.
The precise definition of this submodule $\DOp_n(\kappa)\subset\DOp_n(r)$
is given next.
Then we check that the obtained collection $\DOp_n(\kappa)$
has the required structure to form a symmetric $\K$-diagram
and is preserved by the dual of the suspension morphism of the Barratt-Eccles operad.

\subsubsection{The $\K$-structure on generating $\Sigma_*$-objects}\label{SecondStep:DualModuleKStructure:WeightComplement}
Let $\kappa = (\mu,\sigma)$ be an oriented weight system.
The complement of $\kappa$ in $\K_n$
is the oriented weight system $\kappa^{\vee} = (n-1-\mu,\sigma)$,
where $n-1-\mu$ refers to the collection $\{n-1-\mu_{i j}\}_{i j}$.
If $\kappa\in\K_n(r)$,
then we have by definition $0\leq\mu_{i j}<n$, $\forall i j$,
from which we deduce $0\leq n-1-\mu_{i j}<n$, $\forall i j$.
Hence, we still have $\kappa^{\vee}\in\K_n(r)$.
Otherwise we can assume that $\kappa^{\vee} = (n-1-\mu,\sigma)$
has negative weights.
Observe simply that the order relation of $\K(r)$
has a natural extension
to oriented weight-systems with negative weights.

For our purpose,
it is useful to note:

\begin{obsv}\label{SecondStep:DualModuleKStructure:WeightComplementOrder}
For every pair of oriented weight-systems,
we have $(\mu,\sigma)\leq(\nu,\tau)\Leftrightarrow(n-1-\nu,\tau)\leq(n-1-\mu)$.
\end{obsv}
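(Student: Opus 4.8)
The plan is to reduce the equivalence to a pair-by-pair comparison, exploiting that the order relation of $\K(r)$ — and its natural extension to weight-systems with possibly negative entries — is by definition a conjunction, over all pairs $\{i,j\}\subset\{1,\dots,r\}$, of an elementary condition involving only the integers $\mu_{ij},\nu_{ij}$ and the orientations $\sigma|_{ij},\tau|_{ij}$. So it suffices to check that the elementary condition for the complements is equivalent, at each fixed pair, to the elementary condition for the original weight-systems.

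First I would fix a pair $\{i,j\}$ and write $\kappa=(\mu,\sigma)$, $\lambda=(\nu,\tau)$, with complements $\kappa^{\vee}=(n-1-\mu,\sigma)$ and $\lambda^{\vee}=(n-1-\nu,\tau)$. The one thing to observe is that $x\mapsto n-1-x$ is a strictly decreasing bijection of $\ZZ$; hence $n-1-\nu_{ij}<n-1-\mu_{ij}$ holds if and only if $\mu_{ij}<\nu_{ij}$. On the other hand, the equality clause $(n-1-\nu_{ij},\tau|_{ij})=(n-1-\mu_{ij},\sigma|_{ij})$ amounts to $\nu_{ij}=\mu_{ij}$ together with $\tau|_{ij}=\sigma|_{ij}$, i.e.\ exactly to $(\mu_{ij},\sigma|_{ij})=(\nu_{ij},\tau|_{ij})$, since the constant shift does not affect the orientation. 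Combining the two observations, the elementary condition defining $\lambda^{\vee}\leq\kappa^{\vee}$ at the pair $\{i,j\}$ is equivalent to the elementary condition defining $\kappa\leq\lambda$ at $\{i,j\}$.

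It then remains to quantify over all pairs: since $\kappa\leq\lambda$ means the elementary condition holds for every $\{i,j\}$, and likewise for $\lambda^{\vee}\leq\kappa^{\vee}$, the pair-by-pair equivalence upgrades to the stated equivalence $(\mu,\sigma)\leq(\nu,\tau)\Leftrightarrow(n-1-\nu,\tau)\leq(n-1-\mu,\sigma)$. I do not expect any genuine obstacle here — the argument is a direct unwinding of the definition of the order on $\K(r)$ — but the one point worth a sentence of care is that $\kappa^{\vee}$ and $\lambda^{\vee}$ may carry negative weights when $\kappa,\lambda\notin\K_n$, so the computation is to be read with respect to the natural extension of the partial order of $\K(r)$ to weight-systems with arbitrary integer entries, exactly as recalled just above the statement.
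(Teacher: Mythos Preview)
Your proof is correct and is exactly the straightforward unwinding of the definition that the paper intends; the paper records this as an Observation without proof, treating it as immediate from the pairwise definition of the order relation in~\S\ref{Interlude:CompleteGraphOperad:Poset} together with the fact that $x\mapsto n-1-x$ is strictly decreasing. Your care about the extension to possibly negative weights is appropriate and matches the remark preceding the observation.
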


\subsubsection{The $\K$-structure on generating $\Sigma_*$-objects}\label{SecondStep:DualModuleKStructure:Definition}
In our study,
we use that $\EOp_n$ is defined as a free dg-module over the non-degenerate simplices of the simplicial Barratt-Eccles operad
and that the dg-modules $\DOp_n(r)$, dual to $\EOp_n(r)$,
inherit a basis.
The simplicial Barratt-Eccles operad $\WOp$
is defined by the simplicial sets $\WOp(r)$, $r\in\NN$,
such that $\WOp(r)_d = \Sigma^{\times d+1}$,
for each dimension $d\in\NN$.
The subset of nondegenerate simplices of $\WOp(r)$,
denoted by $\NDeg\WOp(r)\subset\WOp(r)$,
consists of simplices $(w_0,\dots,w_d)\in\WOp(r)$
such that $w_j\not=w_{j+1}$ for $j = 0,\dots,d-1$.
The Barratt-Eccles chain operad $\EOp(r)$
is defined by the free $\ZZ$-module
\begin{equation*}
\EOp(r) = \ZZ[\NDeg\WOp(r)].
\end{equation*}

Recall that we have an oriented weight-system $\kappa(\underline{w}) = (\mu(\underline{w}),\sigma(\underline{w}))$
associated to each simplex $\underline{w} = (w_0,\dots,w_d)\in\WOp(r)$,
such that $\sigma(\underline{w}) = w_d$, the last permutation of $\underline{w}$,
and the term $\mu_{i j}(\underline{w})$
of the collection $\mu(\underline{w}) =\{\mu_{i j}(\underline{w})\}_{i j}$
is defined as the number of variations of the sequence $\underline{w}|_{i j} = (w_0|_{i j},\dots,w_d|_{i j})$.

Smith's filtration of the simplicial Barratt-Eccles operad
is defined by the simplicial sets
\begin{equation*}
\WOp_n(r) = \bigl\{\underline{w}\in\WOp(r)\ \text{such that $\mu_{i j}(\underline{w})<n$ for all pairs $i j$}\bigr\}
\end{equation*}
(see~\cite{Smith}).
We have clearly:
\begin{equation*}
\EOp_n(r) = \ZZ[\NDeg\WOp_n(r)]
\quad\text{and}
\quad\DOp_n(r) = \bigl\{ c: \NDeg\WOp_n(r)\rightarrow\ZZ\bigr\},
\end{equation*}
where $\NDeg\WOp_n(r) = \WOp_n(r)\cap\NDeg\WOp(r)$ denotes the subset of non-degenerate simplices of~$\WOp_n(r)$.
We set
\begin{equation*}
\DOp_n(\mu,\sigma) = \bigl\{ c: \NDeg\WOp_n(r)\rightarrow\ZZ
\quad\text{such that $c(\underline{w})\not= 0\Rightarrow\kappa(\underline{w})\geq(n-1-\mu,\sigma)$}\bigr\},
\end{equation*}
where $\kappa^{\vee} = (n-1-\mu,\sigma)$
represents the complement weight system, defined in~\S\ref{SecondStep:DualModuleKStructure:WeightComplement},
of $\kappa = (\mu,\sigma)$.

We have the easy observations:

\begin{lemm}\label{SecondStep:DualModuleKStructure:KStructureVerification}\hspace*{2mm}
\begin{enumerate}
\item\label{DualCellInclusion}
If $\alpha\leq\beta$, then we have $\DOp_n(\alpha)\subset\DOp_n(\beta)$.
\item\label{DualCellDifferential}
The differential of $\DOp_n(r)$
preserves each subobject $\DOp_n(\kappa)\subset\DOp_n(r)$
so that $\DOp_n(\kappa)$ forms a dg-submodule of $\DOp_n(r)$.
\item\label{DualCellPermutation}
For each permutation $w\in\Sigma_r$, the action of $w$ on $\DOp_n(r)$
maps the submodule $\DOp_n(\kappa)$
into $\DOp_n(w\kappa)$.
\end{enumerate}
\end{lemm}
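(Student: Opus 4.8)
The plan is to reformulate membership in $\DOp_n(\kappa)$ as a support condition and then deduce all three statements from the three compatibility relations collected in Observation~\ref{Interlude:CompleteGraphOperad:BarrattEcclesWeightRelations}, together with the order-reversal property of the complement operation recorded in Observation~\ref{SecondStep:DualModuleKStructure:WeightComplementOrder}. Concretely, a functional $c\colon\NDeg\WOp_n(r)\to\ZZ$ lies in $\DOp_n(\mu,\sigma)$ exactly when it is supported on the upward-closed subset $U(\kappa)=\{\underline{w}\in\NDeg\WOp_n(r):\kappa(\underline{w})\geq\kappa^{\vee}\}$, where $\kappa=(\mu,\sigma)$ and $\kappa^{\vee}=(n-1-\mu,\sigma)$; throughout I would suppress the operadic desuspension in $\DOp_n=\Lambda^{-n}\EOp_n^{\vee}$, since it is a formal grading shift affecting neither the differential nor the $\Sigma_r$-action beyond signs. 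Assertion~(\ref{DualCellInclusion}) is then immediate: if $\alpha\leq\beta$, Observation~\ref{SecondStep:DualModuleKStructure:WeightComplementOrder} gives $\beta^{\vee}\leq\alpha^{\vee}$, hence $U(\alpha)\subseteq U(\beta)$ by transitivity of the order on $\K(r)$, and therefore $\DOp_n(\alpha)\subset\DOp_n(\beta)$.

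For assertion~(\ref{DualCellDifferential}) I would use that the differential of $\DOp_n(r)$ is, up to sign, the transpose of the face differential $\delta(w_0,\dots,w_d)=\sum_i\pm(w_0,\dots,\widehat{w_i},\dots,w_d)$, so that $(\delta c)(\underline{w})$ is a signed sum of the values of $c$ on the faces $\partial_i\underline{w}$. Since $\kappa(\partial_i\underline{w})\leq\kappa(\underline{w})$ by the face relation of Observation~\ref{Interlude:CompleteGraphOperad:BarrattEcclesWeightRelations}, a simplex $\underline{w}\notin U(\kappa)$ has all of its faces outside $U(\kappa)$ as well---were some $\partial_i\underline{w}$ in $U(\kappa)$, transitivity would force $\underline{w}\in U(\kappa)$---so every term $c(\partial_i\underline{w})$ vanishes when $c\in\DOp_n(\kappa)$, and $\delta c$ is again supported on $U(\kappa)$. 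The only point deserving a word of care is that a face $\partial_i\underline{w}$ may be degenerate or might a priori leave $\WOp_n(r)$: the former contributes zero in the normalized complex, and the latter cannot occur because deleting a vertex never increases any variation number $\mu_{ij}$.

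For assertion~(\ref{DualCellPermutation}) I would first record that the complement operation is $\Sigma_r$-equivariant, namely $(w\kappa)^{\vee}=w(\kappa^{\vee})$, which follows at once from $(w\mu)_{ij}=\mu_{w^{-1}(i)w^{-1}(j)}$. The action of $w$ on $\DOp_n(r)$ sends $c$ to the functional $\underline{v}\mapsto\pm c(w^{-1}\underline{v})$, and the equivariance relation $\kappa(w^{-1}\underline{v})=w^{-1}\cdot\kappa(\underline{v})$ of Observation~\ref{Interlude:CompleteGraphOperad:BarrattEcclesWeightRelations}, combined with the fact that $\Sigma_r$ acts on $\K(r)$ by poset isomorphisms, shows that $\kappa(w^{-1}\underline{v})\geq\kappa^{\vee}$ holds if and only if $\kappa(\underline{v})\geq w(\kappa^{\vee})=(w\kappa)^{\vee}$. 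Hence $w\cdot c$ is supported on $U(w\kappa)$, that is, lies in $\DOp_n(w\kappa)$. None of these three verifications is a real obstacle---the paper itself flags them as easy observations---and the only mildly delicate bookkeeping, concerning degenerate faces and Smith's filtration in part~(\ref{DualCellDifferential}), is disposed of in a line.
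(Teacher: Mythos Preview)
Your proof is correct and follows essentially the same approach as the paper, which simply cites Observation~\ref{SecondStep:DualModuleKStructure:WeightComplementOrder} for part~(\ref{DualCellInclusion}) and Observation~\ref{Interlude:CompleteGraphOperad:BarrattEcclesWeightRelations} for parts~(\ref{DualCellDifferential})--(\ref{DualCellPermutation}); you have merely unpacked those references in full, including the contrapositive formulation for the differential and the explicit check that the complement operation is $\Sigma_r$-equivariant.
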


\begin{proof}
Assertion~(\ref{DualCellInclusion}) is immediate from Observation~\ref{SecondStep:DualModuleKStructure:WeightComplementOrder}.
Assertions~(\ref{DualCellDifferential}-\ref{DualCellPermutation})
are immediate consequences of Observation~\ref{Interlude:CompleteGraphOperad:BarrattEcclesWeightRelations}.
\end{proof}

From which we deduce:

\begin{prop}\label{SecondStep:DualModuleKStructure:DefinitionStatement}
The dg-modules $\{\DOp_n(\kappa)\}_{\kappa\in\K(r)}$, $r\in\NN$,
form a symmetric $\K$-diagram.\qed
\end{prop}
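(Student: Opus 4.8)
The plan is to read the statement straight off Lemma~\ref{SecondStep:DualModuleKStructure:KStructureVerification}, whose three clauses supply precisely the data and coherence demanded by the definition of a symmetric $\K$-diagram in~\S\ref{Interlude:GraphOperadModel:SymmetricKDiagrams} --- equivalently, by Fact~\ref{Interlude:GraphOperadModel:SymmetricKDiagramsAsFunctors}, the data of a functor $\K\Sigma_*\to\C$. So the proof is the assembly of that lemma; there is essentially nothing new to prove.

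Concretely I would argue as follows. For each fixed arity $r$, clause~(\ref{DualCellInclusion}) of the lemma attaches to every relation $\alpha\leq\beta$ in $\K(r)$ the inclusion $\DOp_n(\alpha)\hookrightarrow\DOp_n(\beta)$ of submodules of the fixed dg-module $\DOp_n(r)$; since a composite of two such inclusions is again the inclusion associated to the composite relation, this makes $\DOp_n(-)$ a functor $\K(r)\to\C$. Clause~(\ref{DualCellDifferential}) is what makes each value $\DOp_n(\kappa)$ a genuine object of $\C$, i.e.\ a dg-submodule, and simultaneously makes every structure map $i_*$ a chain map, being a restriction of the identity of $\DOp_n(r)$. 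Clause~(\ref{DualCellPermutation}) then furnishes, for each $w\in\Sigma_r$, the morphism $w_*\colon\DOp_n(\kappa)\to\DOp_n(w\kappa)$ obtained by restricting the action of $w$ on $\DOp_n(r)$, which is a chain map because $w$ acts on $\DOp_n(r)$ by a chain map.

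The only remaining points are the coherence square of~\S\ref{Interlude:CompleteGraphOperad:KOperads} relating $i_*$ and $w_*$ for $\alpha\leq\beta$, and the unit and associativity relations $1_*=\id$, $(vw)_*=v_*\,w_*$ of the $\Sigma_r$-action. I would dispose of all of these with a single observation: every arrow occurring in them is the restriction to a subobject of a morphism on the ambient dg-module $\DOp_n(r)$ --- the $w_*$ being restrictions of the $\Sigma_r$-action, the $i_*$ restrictions of $\id$ --- and each of the required identities already holds in $\DOp_n(r)$, which is just the $\Sigma_*$-object dual to the $n$th layer of the Barratt-Eccles operad. Hence $\{\DOp_n(\kappa)\}_{\kappa\in\K(r)}$, $r\in\NN$, satisfies the axioms of a symmetric $\K$-diagram.

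Since the whole argument is a purely formal consequence of Lemma~\ref{SecondStep:DualModuleKStructure:KStructureVerification}, I do not expect any real obstacle at this stage. The one point that genuinely required work --- and which is already discharged inside that lemma, via Observation~\ref{Interlude:CompleteGraphOperad:BarrattEcclesWeightRelations} --- is clause~(\ref{DualCellDifferential}): that the complement operation $\kappa\mapsto\kappa^{\vee}$ interacts with the weight systems $\kappa(\underline w)$ of simplices compatibly enough that the differential of $\DOp_n(r)$ keeps each orthogonal piece $\DOp_n(\kappa)$ invariant.
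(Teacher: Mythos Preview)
Your proposal is correct and matches the paper's approach exactly: the proposition is stated with a \qed{} immediately after ``From which we deduce,'' so the paper treats it as a direct corollary of Lemma~\ref{SecondStep:DualModuleKStructure:KStructureVerification}, just as you do. Your additional remark that all coherence conditions hold because every structure map is the restriction of a map on the ambient $\Sigma_*$-object $\DOp_n(r)$ is the right way to make this explicit.
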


\subsubsection{Dual basis and associated weights}\label{SecondStep:DualModuleKStructure:DualBasis}
In our next verifications,
we use the dual basis in $\DOp_n(r)$
of the basis of non-degenerate simplices $\underline{w}\in\NDeg\WOp_n(r)$
in $\EOp_n(r)$.
The basis element $\underline{s}^{\vee}$
dual to~$\underline{s}\in\NDeg\WOp_n(r)$
is characterized as a map $\underline{s}^{\vee}: \NDeg\WOp_n(r)\rightarrow\ZZ$
by the relation:
\begin{equation*}
\underline{s}^{\vee}(\underline{w}) = \begin{cases} 1, & \text{if $\underline{w} = \underline{s}$}, \\
0, & \text{otherwise}, \end{cases}
\end{equation*}
for each $\underline{w}\in\NDeg\WOp_n(r)$.

To a dual basis element $\underline{s}^{\vee}$,
we associate the complement in $\K_n$
of the oriented weight-system associated to $\underline{s}$:
\begin{equation*}
\kappa^{\vee}(\underline{s}) = (n-1-\mu(\underline{s}),\sigma(\underline{s})).
\end{equation*}
Note again that the relations $0\leq\mu_{i j}(\underline{s})<n$,
which characterize the simplices of $\EOp_n(r)$,
imply $\kappa^{\vee}(\underline{s})\in\K_n(r)$.

The next observation follows from a straightforward inspection of definitions:

\begin{obsv}\label{SecondStep:DualModuleKStructure:DualCellBasis}
For a dual basis element $\underline{s}^{\vee}\in\DOp_n(r)$,
we have $\underline{s}^{\vee}\in\DOp_n(\kappa)$
if and only if $\kappa^{\vee}(\underline{s})\leq\kappa$.
\end{obsv}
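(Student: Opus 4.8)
The plan is to unwind the definition of the symmetric $\K$-diagram $\DOp_n(\kappa)$ from~\S\ref{SecondStep:DualModuleKStructure:Definition} and to match it against the defining property of the dual basis $\underline{s}^{\vee}$. First I would fix an oriented weight-system $\kappa = (\mu,\sigma)\in\K(r)$ and recall that, by definition,
\begin{equation*}
\DOp_n(\kappa) = \bigl\{ c: \NDeg\WOp_n(r)\rightarrow\ZZ\quad\text{such that $c(\underline{w})\not=0\Rightarrow\kappa(\underline{w})\geq\kappa^{\vee}$}\bigr\},
\end{equation*}
where $\kappa^{\vee} = (n-1-\mu,\sigma)$ is the complement weight-system of~$\kappa$ (using the extension of the order of $\K(r)$ to oriented weight-systems with negative weights recalled in~\S\ref{SecondStep:DualModuleKStructure:WeightComplement}). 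For the particular map $c = \underline{s}^{\vee}$, the only simplex $\underline{w}\in\NDeg\WOp_n(r)$ with $\underline{s}^{\vee}(\underline{w})\not=0$ is $\underline{w} = \underline{s}$ itself; hence the membership $\underline{s}^{\vee}\in\DOp_n(\kappa)$ collapses to the single inequality $\kappa(\underline{s})\geq\kappa^{\vee}$, that is, $\kappa^{\vee} = (n-1-\mu,\sigma)\leq(\mu(\underline{s}),\sigma(\underline{s})) = \kappa(\underline{s})$.

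Then I would invoke Observation~\ref{SecondStep:DualModuleKStructure:WeightComplementOrder}, which asserts that passing to complements reverses the order. Applied to the pair $\kappa^{\vee}$ and $\kappa(\underline{s})$, the relation $\kappa^{\vee}\leq\kappa(\underline{s})$ becomes
\begin{equation*}
\bigl(n-1-\mu(\underline{s}),\sigma(\underline{s})\bigr)\leq\bigl(n-1-(n-1-\mu),\sigma\bigr) = (\mu,\sigma) = \kappa.
\end{equation*}
The left-hand side is by definition $\kappa^{\vee}(\underline{s}) = (n-1-\mu(\underline{s}),\sigma(\underline{s}))$, so this last inequality reads $\kappa^{\vee}(\underline{s})\leq\kappa$, which is exactly the claimed equivalence.

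The argument is purely a matter of bookkeeping; the one point that needs attention — the \emph{hard part} of an otherwise immediate proof — is keeping the two uses of the complement apart. The complement $\kappa^{\vee}$ of the diagram \emph{index} $\kappa$ is what enters the support condition defining $\DOp_n(\kappa)$, whereas $\kappa^{\vee}(\underline{s})$ in the statement is the complement of the weight-system $\kappa(\underline{s})$ attached to the \emph{simplex} $\underline{s}$; the order-reversal of Observation~\ref{SecondStep:DualModuleKStructure:WeightComplementOrder} is precisely the device that converts one into the other. Along the way I would also record that $\kappa^{\vee}(\underline{s})\in\K_n(r)$, since $0\leq\mu_{ij}(\underline{s})<n$ for the simplices of $\EOp_n(r)$, so that the inequality $\kappa^{\vee}(\underline{s})\leq\kappa$ makes sense inside $\K(r)$ when $\kappa$ has non-negative weights and inside the extended order otherwise.
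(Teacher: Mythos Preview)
Your proof is correct and is exactly the ``straightforward inspection of definitions'' the paper alludes to: you reduce the support condition defining $\DOp_n(\kappa)$ to the single inequality $\kappa^{\vee}\leq\kappa(\underline{s})$ and then apply the order-reversal of Observation~\ref{SecondStep:DualModuleKStructure:WeightComplementOrder} to rewrite it as $\kappa^{\vee}(\underline{s})\leq\kappa$. There is nothing to add.
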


The proof of the next proposition is an application of this observation:

\begin{prop}\label{SecondStep:DualModuleKStructure:Colimits}
The embeddings $\DOp_n(\kappa)\subset\DOp_n(r)$
induce isomorphisms of $\Sigma_*$-objects:
\begin{equation*}
\colim_{\kappa\in\K_n(r)}\DOp_n(\kappa)\xrightarrow{\simeq}\cdots
\xrightarrow{\simeq}\colim_{\kappa\in\K(r)}\DOp_n(\kappa)
\xrightarrow{\simeq}\DOp_n(r).
\end{equation*}
\end{prop}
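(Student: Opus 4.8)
The plan is to identify every one of these colimits explicitly with $\DOp_n(r)$ by writing down an inverse map in terms of the dual bases, in the same spirit as the proof of Proposition~\ref{Interlude:CompleteGraphOperad:BarrattEcclesLayers} for the Barratt-Eccles operad itself.

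The starting point is the combinatorial description of the cells supplied by Observation~\ref{SecondStep:DualModuleKStructure:DualCellBasis}: the submodule $\DOp_n(\kappa)\subset\DOp_n(r)$ is the span of the dual basis elements $\underline s^{\vee}$, $\underline s\in\NDeg\WOp_n(r)$, for which $\kappa^{\vee}(\underline s)\leq\kappa$. Two features of this description do the work. First, for each $\underline s\in\NDeg\WOp_n(r)$ the oriented weight-system $\kappa^{\vee}(\underline s) = (n-1-\mu(\underline s),\sigma(\underline s))$ again lies in $\K_n(r)$, since $0\leq\mu_{ij}(\underline s)<n$ forces $0\leq n-1-\mu_{ij}(\underline s)<n$. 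Second, $\kappa^{\vee}(\underline s)$ is the \emph{least} oriented weight-system $\kappa$ with $\underline s^{\vee}\in\DOp_n(\kappa)$.

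With this in hand I would treat all the colimits at once. Let $P$ be any subposet with $\K_n(r)\subseteq P\subseteq\K(r)$; the chain of the statement consists of the posets $\K_n(r)\subseteq\K_{n+1}(r)\subseteq\cdots\subseteq\K(r)$, all of this form. I claim the canonical morphism $\colim_{\kappa\in P}\DOp_n(\kappa)\to\DOp_n(r)$ is an isomorphism. Its inverse is the $\ZZ$-linear map that sends a dual basis element $\underline s^{\vee}\in\DOp_n(r)$ to the class in the colimit of $\underline s^{\vee}$ viewed in the summand $\DOp_n(\kappa^{\vee}(\underline s))$ --- legitimate because $\kappa^{\vee}(\underline s)\in\K_n(r)\subseteq P$. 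That the two composites are identities is immediate: one composite is plainly the identity of $\DOp_n(r)$; for the other it is enough to check on each summand $\DOp_n(\kappa)$, $\kappa\in P$, where a basis element $\underline s^{\vee}$ with $\kappa^{\vee}(\underline s)\leq\kappa$ is carried to the class of $\underline s^{\vee}\in\DOp_n(\kappa^{\vee}(\underline s))$, which equals the class of $\underline s^{\vee}\in\DOp_n(\kappa)$ because $\kappa^{\vee}(\underline s)\leq\kappa$ is an arrow of the diagram. Since the canonical morphism is a morphism of dg-modules, its set-theoretic inverse is automatically one as well, so we get an isomorphism of dg-modules; it is $\Sigma_r$-equivariant because each $\DOp_n(\kappa)$ is a $\Sigma_r$-stable dg-submodule of $\DOp_n(r)$ (Lemma~\ref{SecondStep:DualModuleKStructure:KStructureVerification}) and each $\K_m(r)$ is stable under the $\Sigma_r$-action on $\K(r)$. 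Since each composite $\colim_{\kappa\in P}\DOp_n(\kappa)\to\DOp_n(r)$ along the chain is thus an isomorphism, so is each map of the chain, and we obtain isomorphisms of $\Sigma_*$-objects as asserted.

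The one point worth dwelling on --- and the only place the argument is not completely formal --- is that the posets $\K_n(r)$ are \emph{not} filtered: two oriented weight-systems whose weights already attain the bound $n-1$ on some edge need not admit a common upper bound, so one cannot simply say that the colimit is the union of the submodules $\DOp_n(\kappa)$. What makes the colimit collapse onto $\DOp_n(r)$ anyway is exactly the existence of the \emph{smallest} cell $\DOp_n(\kappa^{\vee}(\underline s))$ containing a given $\underline s^{\vee}$, together with the fact that this cell already belongs to $\K_n(r)$, hence to every $P$ in the chain; this forces the several occurrences of $\underline s^{\vee}$ in different cells of $P$ to be glued in the colimit through a zigzag passing through $\kappa^{\vee}(\underline s)$, and it is why no extra generators or relations survive.
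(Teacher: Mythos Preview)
Your argument is essentially the paper's own proof: construct an inverse to the canonical map by sending each dual basis element $\underline{s}^{\vee}$ to its class in the summand indexed by the minimal weight-system $\kappa^{\vee}(\underline{s})\in\K_n(r)$, and check both composites are identities. One small slip: the submodules $\DOp_n(\kappa)$ are \emph{not} $\Sigma_r$-stable---Lemma~\ref{SecondStep:DualModuleKStructure:KStructureVerification}(\ref{DualCellPermutation}) says $w$ sends $\DOp_n(\kappa)$ to $\DOp_n(w\kappa)$---but equivariance of the canonical map still holds for exactly this reason, so the conclusion stands.
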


\begin{proof}
Let
\begin{equation*}
\DOp_n(r)\xrightarrow{\Psi}\colim_{\kappa\in\K_m(r)}\DOp_n(\kappa)
\end{equation*}
be the morphism of $\ZZ$-modules mapping a dual basis element $\underline{s}^{\vee}$
to the same element $\underline{s}^{\vee}$
in the summand $\DOp_n(\kappa^{\vee}(\underline{s}))$
of the colimit.
This mapping is well defined for all $m\geq n$
since, for every basis element $\underline{s}^{\vee}\in\DOp_n(r)$,
we have $\kappa^{\vee}(\underline{s})\in\K_n(r)$
(see~\S\ref{SecondStep:DualModuleKStructure:DualBasis}).

This map $\Psi$
is clearly right inverse to the natural morphism
\begin{equation*}
\colim_{\kappa\in\K_m(r)}\{\DOp_n(\kappa)\}
\xrightarrow{\Phi}\DOp_n(r).
\end{equation*}
In the other direction,
observe that a dual basis element $\underline{s}^{\vee}\in\DOp_n(\kappa)$
in the summand associated to any weight-system $\kappa$
is identified in the colimit $\colim_{\kappa\in\K_m(r)}\{\DOp_n(\kappa)\}$
with the same element $\underline{s}^{\vee}$
coming from the summand $\DOp_n(\kappa^{\vee}(\underline{s}))$.
Hence, we also have $\Psi\Phi = \Id$, from which we conclude that $\Psi$ is both right and left inverse to $\Phi$
so that $\Phi$ is necessarily a bijection.
\end{proof}

To complete the results of this subsection,
we study the mapping induced by the suspension morphism
on $\K$-structures.
We observe that:

\begin{lemm}\label{SecondStep:DualModuleKStructure:DualSuspensionFiltration}
The morphism $\sigma^*: \DOp_{n-1}(r)\rightarrow\DOp_n(r)$
maps the submodule $\DOp_{n-1}(\kappa)\subset\DOp_{n-1}(r)$ associated to a weight-system $\kappa\in\K(r)$
into $\DOp_n(\kappa)\subset\DOp_n(r)$.
\end{lemm}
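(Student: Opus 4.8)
The plan is to unwind all three sides of the claim in terms of the dual basis of non-degenerate simplices, so that the statement becomes a combinatorial comparison of oriented weight-systems. First I would fix a basis element $\underline{t}^{\vee}\in\DOp_n(r)$ appearing with nonzero coefficient in $\sigma^*(\underline{s}^{\vee})$ for some $\underline{s}^{\vee}\in\DOp_{n-1}(\kappa)$, and identify concretely when this happens. By definition $\sigma^*$ is the transpose of $\sigma\colon\EOp_n(r)\to\Lambda^{-1}\EOp_{n-1}(r)$, which is the cap product with the cochain $\sgn$ as recalled in~\S\ref{Prelude:BarrattEccles:SuspensionMorphisms}. So $\langle\sigma^*(\underline{s}^{\vee}),\underline{t}\rangle=\pm\langle\underline{s}^{\vee},\sigma(\underline{t})\rangle$, which is nonzero precisely when $\sigma(\underline{t})=\sgn\cap\underline{t}=\pm\underline{s}$, i.e. when $\underline{t}=(t_0,\dots,t_d)$ satisfies that $(t_0(1),\dots,t_{r-1}(1))$ is a permutation of $(1,\dots,r)$ and $(t_{r-1},\dots,t_d)=\underline{s}$ up to sign. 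The key point is then to compare $\kappa^{\vee}(\underline{t})$ with $\kappa^{\vee}(\underline{s})$.

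The heart of the argument is the weight comparison: I claim that whenever $\underline{t}$ is obtained from $\underline{s}$ by such a prefix (so $\underline{s}$ is the tail $(t_{r-1},\dots,t_d)$ of $\underline{t}$), one has $\mu_{ij}(\underline{t})=\mu_{ij}(\underline{s})+1$ for every pair $\{i,j\}$; this is exactly the computation that already appears in the proof of Lemma~\ref{FirstStep:Equations:SuspensionSurjectivity}, where the prepended permutations $t_1,\dots,t_{r-1}$ are arranged so as to raise each $\mu_{ij}$ by exactly one. Since $\sigma(\underline{t})$ and $\underline{s}$ have the same last permutation, $\sigma(\underline{t})=w_d$, we also get $\sigma(\underline{t})=\sigma(\underline{s})$. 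Consequently, recalling that the complement is formed relative to level $n$ for $\underline{t}$ and level $n-1$ for $\underline{s}$,
\begin{equation*}
\kappa^{\vee}(\underline{t})=(n-1-\mu(\underline{t}),\sigma(\underline{t}))=((n-1)-1-\mu(\underline{s}),\sigma(\underline{s}))=\kappa^{\vee}(\underline{s}),
\end{equation*}
so the two complementary weight-systems literally coincide. (Here one must be slightly careful: the simplices $\underline{t}$ that actually occur may prepend a longer or differently-arranged block of permutations than the specific section built in Lemma~\ref{FirstStep:Equations:SuspensionSurjectivity}; but the cap-product formula forces the first $r$ terms $t_0,\dots,t_{r-1}$ to have first values forming a permutation of $\{1,\dots,r\}$, and one checks directly that \emph{any} such block raises every $\mu_{ij}$ by exactly $1$, because each ordered pair $\{i,j\}$ must switch orientation exactly once among $t_0,\dots,t_{r-1}$ when $i$ and $j$ each appear as a leading value.)

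With the equality $\kappa^{\vee}(\underline{t})=\kappa^{\vee}(\underline{s})$ in hand, the lemma is immediate from Observation~\ref{SecondStep:DualModuleKStructure:DualCellBasis}: if $\underline{s}^{\vee}\in\DOp_{n-1}(\kappa)$ then $\kappa^{\vee}(\underline{s})\leq\kappa$, hence $\kappa^{\vee}(\underline{t})\leq\kappa$ for every $\underline{t}$ contributing to $\sigma^*(\underline{s}^{\vee})$, hence $\underline{t}^{\vee}\in\DOp_n(\kappa)$, and therefore $\sigma^*(\underline{s}^{\vee})\in\DOp_n(\kappa)$. The main obstacle is the second paragraph's combinatorial claim — getting the ``$+1$ for every pair'' bookkeeping exactly right for \emph{all} admissible prefixes, not just the canonical one from Lemma~\ref{FirstStep:Equations:SuspensionSurjectivity} — but this reduces to the elementary observation that a sequence of permutations whose leading values run through $1,\dots,r$ exactly once must flip the restricted orientation $w_k|_{ij}$ an odd number of times, hence at least once, for each pair, while the total weight bound forces it to be exactly once.
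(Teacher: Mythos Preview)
Your overall strategy is the same as the paper's --- reduce to the weight comparison between a simplex $\underline{t}\in\EOp_n(r)$ and its tail $\underline{s}=(t_{r-1},\dots,t_d)\in\EOp_{n-1}(r)$ whenever $\sgn(t_0,\dots,t_{r-1})\neq 0$ --- but the central combinatorial claim is wrong. You assert that $\mu_{ij}(\underline{t})=\mu_{ij}(\underline{s})+1$ for every pair, and hence $\kappa^{\vee}(\underline{t})=\kappa^{\vee}(\underline{s})$. This fails already for $r=3$: take $t_0=(1,2,3)$, $t_1=(2,3,1)$, $t_2=(3,1,2)$, so that $(t_0(1),t_1(1),t_2(1))=(1,2,3)$ and $\sgn(t_0,t_1,t_2)\neq 0$; then $t_0|_{12}=(1,2)$, $t_1|_{12}=(2,1)$, $t_2|_{12}=(1,2)$, giving two variations in the prefix, not one. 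Neither the ``odd number of flips'' claim nor the ``total weight bound'' argument is correct: the number of flips can be even, and the constraint $\underline{t}\in\EOp_n(r)$ only gives $\mu_{ij}(\underline{t})<n$, which does not force the increment to equal~$1$.

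What does hold, and what the paper uses, is the inequality $\mu_{ij}(\underline{t})\geq\mu_{ij}(\underline{s})+1$ for every pair: since both $i$ and $j$ occur as leading values among $t_0,\dots,t_{r-1}$, the restrictions $t_k|_{ij}$ take both possible values in the prefix, forcing at least one variation there. Combined with $\sigma(\underline{t})=\sigma(\underline{s})$, this yields $\kappa^{\vee}(\underline{t})\leq\kappa^{\vee}(\underline{s})$ in the poset (check the comparison relation of~\S\ref{Interlude:CompleteGraphOperad:Poset} edge by edge), and then transitivity gives $\kappa^{\vee}(\underline{t})\leq\kappa$. So your argument is repaired by replacing the false equality with this inequality; the paper carries out exactly this computation, phrased for a general $c\in\DOp_{n-1}(\kappa)$ evaluated on a simplex $\underline{w}$ rather than in the dual-basis language.
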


\begin{proof}
Let $c\in\DOp_{n-1}(\mu,\sigma)$.

We have by definition
\begin{equation*}
\sigma^*(c)(w_0,\dots,w_d)
= c(\sgn\cap(w_0,\dots,w_d))
= \sgn(w_0,\dots,w_{r-1})\cdot c(w_{r-1},\dots,w_d),
\end{equation*}
for any simplex $\underline{w} = (w_0,\dots,w_d)\in\NDeg\WOp_{n}(r)$,
for a cochain $\sgn: \EOp(r)\rightarrow\ZZ$
defined in~\S\ref{Prelude:BarrattEccles:CircleCochainMultiplications}.

We have clearly $(w_{r-1},\dots,w_d)\in\NDeg\WOp(r)$,
and we see easily from the definition of $\sgn: \EOp(r)\rightarrow\ZZ$
that $\sgn(w_0,\dots,w_{r-1})\not=0$
implies
\begin{equation*}
\mu_{i j}(w_0,\dots,w_d)\geq\mu_{i j}(w_{r-1},\dots,w_d)+1\quad(\forall i j).
\end{equation*}
When this condition is satisfied,
we have the implications
\begin{multline*}
c(w_{r-1},\dots,w_d)\not=0 \\
\begin{aligned} & \Rightarrow\kappa(w_{r-1},\dots,w_d)\geq(n-2-\mu,\sigma) \\
& \Rightarrow\mu_{i j}(w_{r-1},\dots,w_d)>n-2-\mu_{i j} \\
& \qquad\qquad\text{or}\ (\mu_{i j}(w_{r-1},\dots,w_d),w_d|_{i j})=(n-2-\mu_{i j},\sigma|_{i j})
\quad(\forall i j) \\
& \Rightarrow\mu_{i j}(w_0,\dots,w_d)>n-1-\mu_{i j} \\
& \qquad\qquad\text{or}\ (\mu_{i j}(w_{r-1},\dots,w_d),w_d|_{i j})=(n-1-\mu_{i j},\sigma|_{i j})
\quad(\forall i j) \\
& \Rightarrow\kappa(w_0,\dots,w_d)\geq(n-1-\mu,\sigma),
\end{aligned}
\end{multline*}
from which we conclude:
$c\in\DOp_{n-1}(\mu,\sigma)\Rightarrow\sigma^*(c)\in\DOp_n(\mu,\sigma)$.
\end{proof}

This lemma gives as a corollary:

\begin{prop}\label{SecondStep:DualModuleKStructure:DualSuspensionRealization}
The morphism $\sigma^*: \DOp_{n-1}(r)\rightarrow\DOp_n(r)$ is realized by a morphism of symmetric $\K$-diagrams
$\sigma^*: \DOp_{n-1}(\kappa)\rightarrow\DOp_n(\kappa)$.\qed
\end{prop}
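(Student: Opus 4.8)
The plan is to deduce the proposition immediately from Lemma~\ref{SecondStep:DualModuleKStructure:DualSuspensionFiltration} together with Proposition~\ref{SecondStep:DualModuleKStructure:Colimits}, since all the objects in play are defined as submodules of the ambient dg-modules $\DOp_n(r)$. First I would record that, by Lemma~\ref{SecondStep:DualModuleKStructure:DualSuspensionFiltration}, for each $\kappa\in\K(r)$ the homomorphism $\sigma^*: \DOp_{n-1}(r)\rightarrow\DOp_n(r)$ carries the submodule $\DOp_{n-1}(\kappa)\subset\DOp_{n-1}(r)$ into $\DOp_n(\kappa)\subset\DOp_n(r)$, hence restricts to a dg-module morphism $\sigma^*(\kappa): \DOp_{n-1}(\kappa)\rightarrow\DOp_n(\kappa)$.

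Next I would check that the collection $\{\sigma^*(\kappa)\}_{\kappa\in\K(r)}$, $r\in\NN$, forms a morphism of symmetric $\K$-diagrams $\sigma^*: \DOp_{n-1}\rightarrow\DOp_n$, that is, that these restrictions commute with the structure morphisms $i_*: \DOp_n(\alpha)\rightarrow\DOp_n(\beta)$ attached to relations $\alpha\leq\beta$ and with the permutation actions $w_*: \DOp_n(\kappa)\rightarrow\DOp_n(w\kappa)$. This is formal: by construction each $\DOp_n(\kappa)$ is a submodule of $\DOp_n(r)$, the morphism $i_*$ is the restriction of the identity of $\DOp_n(r)$, the morphism $w_*$ is the restriction of the $\Sigma_r$-action on $\DOp_n(r)$, and $\sigma^*(\kappa)$ is the restriction of the $\Sigma_r$-equivariant map $\sigma^*: \DOp_{n-1}(r)\rightarrow\DOp_n(r)$. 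Since the corresponding commutation squares already commute at the level of the ambient modules $\DOp_n(r)$, they commute after restriction to the submodules $\DOp_n(\kappa)$.

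Finally I would invoke Proposition~\ref{SecondStep:DualModuleKStructure:Colimits}, which identifies $\colim_{\kappa\in\K(r)}\DOp_n(\kappa)$ with $\DOp_n(r)$, to conclude that passing to the colimit over $\kappa$ of the $\K$-diagram morphism just built returns the original morphism $\sigma^*: \DOp_{n-1}(r)\rightarrow\DOp_n(r)$; thus $\sigma^*$ is realized by a morphism of symmetric $\K$-diagrams, as asserted. I do not expect any real obstacle here: the genuine content sits in Lemma~\ref{SecondStep:DualModuleKStructure:DualSuspensionFiltration}, and the only point demanding (minimal) care is that the colimit of $\{\sigma^*(\kappa)\}$ agrees with $\sigma^*$ under the identification of Proposition~\ref{SecondStep:DualModuleKStructure:Colimits} — which holds because on a dual basis element $\underline{s}^{\vee}$ both sides are read off from the value of $\sigma^*(\underline{s}^{\vee})$ inside $\DOp_n(r)$, the element $\underline{s}^{\vee}$ sitting in the summand $\DOp_n(\kappa^{\vee}(\underline{s}))$ on the source side.
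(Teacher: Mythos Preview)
Your proposal is correct and follows the paper's approach: the paper states this proposition as an immediate corollary of Lemma~\ref{SecondStep:DualModuleKStructure:DualSuspensionFiltration} (with a bare \qed\ and no further argument), and your write-up simply unpacks the formal verifications --- compatibility with $i_*$ and $w_*$, and recovery of $\sigma^*$ on colimits via Proposition~\ref{SecondStep:DualModuleKStructure:Colimits} --- that the paper leaves implicit.
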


\subsection{Extension of $\K$-structures to quasi-free operads}\label{SecondStep:OperadKStructure}
In order to define the $\K$-operad underlying the cobar construction $\BOp^c(\DOp_n)$,
we check how the $\K$-structure of $\DOp_n$
extends to the quasi-free operad
\begin{equation*}
\BOp^c(\DOp_n) = (\FOp(\Sigma^{-1}\DOp_n),\partial).
\end{equation*}
Throughout this subsection,
we adopt the short notation $\MOp_n = \Sigma^{-1}\DOp_n$
for the generating $\Sigma_*$-object of this quasi-free operad
and its underlying symmetric $\K$-diagram.

By adjunction,
we have clearly:

\begin{obsv}\label{SecondStep:OperadKStructure:FreeOperadColimit}
For any symmetric $\K$-diagram $M$,
we have a natural isomorphism
\begin{equation*}
\colim_{\K}\FOp(M)\simeq\FOp(\colim_{\K} M),
\end{equation*}
between:
\begin{itemize}
\item
on the left-hand-side,
the colimit $\colim_{\K}\FOp(M)$
of the free $\K$-operad $F(M)$ associated to the symmetric $\K$-diagram $M(\kappa)$,
\item
on the right-hand side,
the usual free operad on the $\Sigma_*$-object $(\colim_{\K} M)(r) = \colim_{\kappa\in\K(r)} M(\kappa)$
associated to the symmetric $\K$-diagram $M(\kappa)$.
\end{itemize}
\end{obsv}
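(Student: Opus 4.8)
The plan is to obtain the isomorphism formally, by matching left adjoints, rather than by grinding through the tree-wise expansion of the free operad. Four adjunctions are at hand: the free-operad/forgetful adjunction $\FOp\colon\C_1^{\K\Sigma_*}\rightleftarrows\Op_1^{\K}\colon U$ on $\K$-operads built in~\S\ref{Interlude:GraphOperadModel:FreeKOperad} (Proposition~\ref{Interlude:GraphOperadModel:FreeOperadStructure}); the classical free-operad/forgetful adjunction $\FOp\colon\C_1^{\Sigma_*}\rightleftarrows\Op_1\colon U$ on ordinary connected operads; the colimit/constant-diagram adjunction $\colim_{\K}\colon\Op_1^{\K}\rightleftarrows\Op_1\colon\cst$ of Proposition~\ref{Interlude:CompleteGraphOperad:ColimitKOperadAdjunction}; and its evident counterpart $\colim_{\K}\colon\C_1^{\K\Sigma_*}\rightleftarrows\C_1^{\Sigma_*}\colon\cst$ at the level of symmetric $\K$-diagrams, which is just the arity-wise adjunction between colimits over the posets $\K(r)$ and constant diagrams, patched over the $\Sigma_*$-structure, and is proved exactly as Proposition~\ref{Interlude:CompleteGraphOperad:ColimitKOperadAdjunction}.

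First I would check that the square of right adjoints commutes strictly: for a connected operad $\QOp$, the underlying symmetric $\K$-diagram of the constant $\K$-operad $\cst(\QOp)$ is nothing but the constant symmetric $\K$-diagram on the underlying $\Sigma_*$-object of $\QOp$, since the construction of~\S\ref{Interlude:CompleteGraphOperad:ConstantKOperads} merely repeats each component and commutes with forgetting the operad structure. Hence $U\circ\cst=\cst\circ U$ as functors $\Op_1\rightarrow\C_1^{\K\Sigma_*}$. By uniqueness of left adjoints, the square of the corresponding left adjoints then commutes up to a canonical natural isomorphism, and this isomorphism is exactly $\colim_{\K}\FOp(M)\xrightarrow{\simeq}\FOp(\colim_{\K}M)$. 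If one prefers an explicit description, the same isomorphism can be read off the defining formula $\FOp(M)(\kappa)=\bigoplus_{\tau\in\Theta(r)}\colim_{\lambda_*(\alpha_*)\le\kappa}\bigotimes_{v\in V(\tau)}M(\alpha_v)$ by noting that, as $\kappa$ runs over $\K(r)$, the subposets $\{\alpha_*:\lambda_*(\alpha_*)\le\kappa\}$ exhaust $\tau(\K)\simeq\prod_{v}\K(r_v)$, that colimits commute with one another and with the direct sum over $\tau$, and that the tensor product of dg-modules preserves colimits in each variable, so that $\colim_{\alpha_*\in\tau(\K)}\bigotimes_{v}M(\alpha_v)\simeq\bigotimes_{v}\colim_{\alpha_v\in\K(r_v)}M(\alpha_v)=\bigotimes_{v}(\colim_{\K}M)(r_v)$.

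The one delicate point, which I expect to be the main obstacle, is the bookkeeping of the quotients by tree isomorphisms in $\FOp(M)(\kappa)=\bigoplus_{\tau\in\Theta(r)}\tau(M)(\kappa)/\equiv$: I would argue that the reduction to reduced trees of~\S\ref{Prelude:KoszulDuality:FreeOperad} and~\S\ref{Interlude:GraphOperadModel:FreeKOperad} applies in the same way before and after passing to $\colim_{\K}$, since the hypothesis $M(0)=M(1)=0$ forces $(\colim_{\K}M)(0)=(\colim_{\K}M)(1)=0$ as well; consequently no genuine quotient occurs on either side and the comparison morphism is literally a bijection. It then remains only to observe that this bijection is compatible with the $\Sigma_r$-actions and with the grafting operations $\circ_e$ — which is immediate from the way these are induced, on the one hand from the operad structure of $\FOp(\K)$ and the composition structure of $\K$, on the other hand directly on the colimit — so that it is an isomorphism of operads, as asserted.
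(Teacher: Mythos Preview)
Your argument is correct and follows the same approach as the paper: the observation is introduced with the phrase ``By adjunction, we have clearly'' and is left at that, so the adjunction argument you spell out---matching the left adjoints $\colim_{\K}\circ\FOp$ and $\FOp\circ\colim_{\K}$ via the strict commutation $U\circ\cst=\cst\circ U$ of the right adjoints---is precisely what the paper has in mind. Your additional remarks on the explicit tree-wise expansion and on the reduction to reduced trees are sound but go beyond what the paper records; the paper defers the analogous explicit computation to the later Proposition~\ref{SecondStep:OperadKStructure:ColimitLayers}, where it is needed for the finer colimits over the subposets $\K_m$.
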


Hence,
in the case $M = \MOp_n$,
we obtain that the underlying free operad of the cobar construction $\BOp^c(\DOp_n)$
has a $\K$-structure.
The main task of this subsection
is to prove that the twisting differential of the cobar construction
is realized at the $\K$-operad level.
For this aim,
we need at first to analyze the differential of dual basis elements $\underline{s}^{\vee}$
in the usual cobar construction of the cooperad $\DOp_n$.

\subsubsection{The differential of dual basis elements in the usual cobar construction}\label{SecondStep:OperadKStructure:GenuineBasisDifferential}
According to the description of~\S\S\ref{Prelude:KoszulDuality:FreeOperad}-\ref{Prelude:KoszulDuality:CobarConstruction},
the cobar differential maps a cooperad element $c\in\DOp(r)$
to a sum of terms $\partial_{\tau}(c)\in\tau(\Sigma^{-1}\DOp)(r)$,
where $\tau$ ranges over trees with two vertices.
For the $\ZZ$-dual cooperad of a dualizable operad $\DOp = \POp^{\vee}$,
we have a duality pairing
\begin{equation*}
\langle-,-\rangle: \tau(\Sigma^{-1}\DOp)\otimes\tau(\Sigma\POp)\rightarrow\ZZ.
\end{equation*}
The elements of the dg-module $\tau(\Sigma\POp)(r)$
represent formal operadic composites $p(i_1,\dots,i_e,\dots,e_s)\circ_{i_e} q(j_1,\dots,j_t)$, $p\in\POp(s)$, $q\in\POp(t)$
together with an input sharing $\{1,\dots,r\} = \{i_1,\dots,\widehat{i_e},\dots,i_s\}\amalg\{j_1,\dots,j_t\}$
determined by the structure of the tree $\tau$.
The homomorphism $\partial_{\tau}: \Sigma^{-1}\DOp\rightarrow\tau(\Sigma^{-1}\DOp)$
is dual to the natural morphism $\lambda_{\tau}: \tau(\Sigma\POp)\rightarrow\Sigma\POp$
which forgets suspensions and performs the partial composite $p(i_1,\dots,i_e,\dots,i_s)\circ_{i_e} q(j_1,\dots,j_t)$ in $\POp$.
To forget suspension,
we have to move suspensions on the left first and this produces a sign,
but this process does not create any actual problem.

In this proof,
we use the literal expression $p(i_1,\dots,i_e,\dots,e_s)\circ_{i_e} q(j_1,\dots,j_t)$
to represent an element of $\tau(\Sigma\POp)$ (or $\tau(\Sigma^{-1}\DOp)$)
rather than the graphical representation of Figure~\ref{fig:QuadraticComposite}.
To shorten notation,
we also set $p(i_*) = p(i_1,\dots,i_e,\dots,e_s)$ and $q(j_*) = q(j_1,\dots,j_t)$.
Recall that the index sharing $\{1,\dots,r\} = \{i_1,\dots,\widehat{i_e},\dots,i_s\}\amalg\{j_1,\dots,j_t\}$
is fixed by the structure of the tree.
Note that the index $i_e$ in the expression $p(i_*)\circ_{i_e} q(j_*)$
is a dummy variable.

The element $w(i_1,\dots,i_r)$
associated to a permutation $w$
is equivalent to an ordering $(i_{w(1)},\dots,i_{w(r)})$
of the set $\{i_1,\dots,i_r\}$.
The element $\underline{w}(i_1,\dots,i_r)$
associated to a simplex of permutations $\underline{w} = (w_0,\dots,w_d)$
is just the $d+1$-tuple of orderings $\underline{w}(i_*) = (w_0(i_*),\dots,w_d(i_*))$
associated to the permutations of $\underline{w}$.

The composite $u(i_1,\dots,i_e,\dots,i_s)\circ_{i_e} v(j_1,\dots,j_t)$
of permutations is defined by the substitution of the value $i_e$
in the ordering $u(i_*) = (i_{u(1)},\dots,i_e,\dots,i_{u(s)})$
by the sequence $v(j_*) = (j_{v(1)},\dots,j_{v(t)})$.
For instance,
we have
\begin{equation*}
(1,i_e,4)\circ_{i_e} (5,2,3) = (1,5,2,3,4).
\end{equation*}
For the Barratt-Eccles chain operad $\EOp(r) = N_*(\WOp(r))$
and its suboperad $\EOp_n\subset\EOp$,
a partial composite of simplices
\begin{equation*}
\underline{u}(i_*) = (u_0(i_*),\dots,u_m(i_*))
\quad\text{and}
\quad\underline{v}(j_*) = (v_0(j_*),\dots,v_n(j_*))
\end{equation*}
is a signed sum of simplices of the form
\begin{equation*}
\underline{w} = (u_{k_0}(i_*)\circ_{i_e} v_{l_0}(j_*),\dots,u_{k_{m+n}}(i_*)\circ_e v_{l_{m+n}}(j_*)),
\end{equation*}
where the index sequence
\begin{equation*}
(k_0,l_0)\rightarrow(k_1,l_1)\rightarrow\dots\rightarrow(k_{m+n},l_{m+n})
\end{equation*}
ranges over paths
\begin{equation*}
\xymatrix@M=0pt@C=12mm{\ar@{.}[d]_<{0}\ar@{->}[r]^<{0}_(0.15){0} &
\ar@{.}[d]\ar@{->}[r]^<{1}_(0.15){1} &
\ar@{->}[d]\ar@{.}[r]^<{2}_(0.15){2} &
\ar@{.}[d]\ar@{.}[r]^<{3}^>{m=4} &
\ar@{.}[d] \\
\ar@{.}[d]_<{1}\ar@{.}[r] & \ar@{.}[d]\ar@{.}[r] &
\ar@{.}[d]\ar@{->}[d]^(0.15){3}\ar@{.}[r] & \ar@{.}[d]\ar@{.}[r] & \ar@{.}[d] \\
\ar@{.}[d]_<{2}_>{n=3}\ar@{.}[r] & \ar@{.}[d]\ar@{.}[r] &
\ar@{.}[d]\ar@{->}[r]_(0.15){4} & \ar@{->}[d]\ar@{.}[r]_(0.15){5} & \ar@{.}[d] \\
\ar@{.}[r] & \ar@{.}[r] & \ar@{.}[r] & \ar@{->}[r]_(0.15){6}_(1.15){7} & \\ }
\end{equation*}
and each $u_{k_*}(i_*)\circ_{i_e} v_{l_*}(j_*)$
is given by the substitution of permutations.
We use the notation $\underline{w}\in\underline{u}(i_*)\circ_{i_e}\underline{v}(j_*)$
to mean that a simplex $\underline{w}$
occurs in the expansion of $\underline{u}(i_*)\circ_{i_e}\underline{v}(j_*)$.

The definition of the substitution process for permutations
implies immediately that:

\begin{obsv}\label{SecondStep:OperadKStructure:SubstitutionPermutationMapping}
The mapping $(u(i_*),v(j_*))\mapsto u(i_*)\circ_{i_e} v(i_*)$
is injective.
\end{obsv}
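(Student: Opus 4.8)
The plan is to prove the statement by exhibiting an explicit inverse to the substitution map. Fix the two-vertex tree $\tau$ underlying the composite; this pins down, once and for all, the partition $\{1,\dots,r\} = I\amalg J$ in which $J = \{j_1,\dots,j_t\}$ is the set of leaves lying above the grafted vertex and $I = \{i_1,\dots,i_s\}\setminus\{i_e\}$ is the set of the remaining leaves of the bottom vertex, together with the input slot $i_e$ of the bottom vertex at which the grafting occurs (here $i_e$ is a formal placeholder, as noted in~\S\ref{SecondStep:OperadKStructure:GenuineBasisDifferential}). Given an ordering $w$ of $\{1,\dots,r\}$ that arises as a composite $w = u(i_*)\circ_{i_e} v(j_*)$, I want to recover from $w$ alone both the ordering $u(i_*)$ of $I\cup\{i_e\}$ and the ordering $v(j_*)$ of $J$.

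First I would observe that, directly from the definition of the substitution process recalled in~\S\ref{SecondStep:OperadKStructure:GenuineBasisDifferential}, the letters of $J$ occur in $w$ as a single contiguous block: the substitution replaces the one symbol $i_e$ in $u(i_*)$ by the whole sequence $v(j_*)$, so the $J$-letters sit consecutively and appear within that block in exactly the order prescribed by $v(j_*)$. Hence $v(j_*)$ is recovered from $w$ as the restriction $w|_J$ of $w$ to the sub-alphabet $J$ --- equivalently, as the unique contiguous sub-word of $w$ supported on the letters of $J$.

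Next I would recover $u(i_*)$ by contracting this $J$-block of $w$ back to the single symbol $i_e$: the resulting word is an ordering of $I\cup\{i_e\}$, and since contracting the inserted block to the placeholder $i_e$ manifestly undoes the substitution, this word coincides with $u(i_*)$. Thus $w$ determines the pair $(u(i_*),v(j_*))$, so the substitution map is injective; in fact the same analysis identifies its image with the set of orderings of $\{1,\dots,r\}$ in which the letters of $J$ form a contiguous block, the map being a bijection onto that image.

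There is no genuine obstacle here: the observation is a straightforward unwinding of the definition of the substitution of permutations. The only points deserving a word of care --- and both are immediate from that definition --- are the contiguity of the $J$-block inside $w$ (which is what makes the restriction $w|_J$ reconstruct all of $v(j_*)$ and locate the slot of $i_e$ unambiguously) and the bookkeeping of the dummy index $i_e$, so that $u(i_*)$ is recovered as an ordering of $I\cup\{i_e\}$ rather than only its restriction to $I$.
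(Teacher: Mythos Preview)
Your proposal is correct and follows essentially the same approach as the paper: you recover $v(j_*)$ as the contiguous subsequence of $w$ supported on $J$, and then recover $u(i_*)$ by collapsing that block back to the placeholder $i_e$. The paper's justification is the same one-line argument, stated more tersely.
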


Indeed,
the ordering $v(j_*)$ is identified with the connected subsequence of $w = u(i_*)\circ_{i_e} v(i_*)$
formed by the occurrences of $\{j_1,\dots,j_t\}$
and we recover $u(j_*)$ by replacing this subsequence by the variable $i_e$.
By an easy generalization of this argument, we obtain:

\begin{obsv}\label{SecondStep:OperadKStructure:SubstitutionSimplexMapping}
For any fixed sharing $\{1,\dots,r\} = \{i_1,\dots,\widehat{i_e},\dots,i_s\}\amalg\{j_1,\dots,j_t\}$
a non-degenerate simplex $\underline{w}\in\WOp(r)$
occurs in at most one partial composite of non-degenerate simplices $\underline{u}(i_*)\circ_{i_e}\underline{v}(j_*)$
and only once if so.
\end{obsv}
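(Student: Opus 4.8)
The plan is to reconstruct, from a non-degenerate simplex $\underline{w} = (w_0,\dots,w_d)\in\WOp(r)$ that occurs in some partial composite $\underline{u}(i_*)\circ_{i_e}\underline{v}(j_*)$, both the pair $(\underline{u},\underline{v})$ and the lattice path producing $\underline{w}$, and to check that each of these is uniquely determined by $\underline{w}$ (the sharing $\{1,\dots,r\} = \{i_1,\dots,\widehat{i_e},\dots,i_s\}\amalg\{j_1,\dots,j_t\}$ being fixed throughout). This is a vertexwise elaboration of Observation~\ref{SecondStep:OperadKStructure:SubstitutionPermutationMapping}.

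First I would apply that observation at each vertex. If $\underline{w}$ arises from a lattice path $(p_0,q_0)\to\dots\to(p_d,q_d)$, then $w_k = u_{p_k}(i_*)\circ_{i_e} v_{q_k}(j_*)$, and from $w_k$ alone one recovers the ordering $b_k := v_{q_k}(j_*)$ as the contiguous block of occurrences of $\{j_1,\dots,j_t\}$ inside $w_k$, and the ordering $a_k := u_{p_k}(i_*)$ by replacing that block by the single symbol $i_e$. Thus the sequences $(a_k)_k$ and $(b_k)_k$ are read off directly from $\underline{w}$, with no reference to the path.

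The key step is then to observe that the type of each step of the path is visible in $\underline{w}$. Since a lattice path moves one unit at a time, exactly one of $p,q$ increases between $k$ and $k+1$: if $p$ increases, then $q_{k+1}=q_k$, so $b_{k+1}=b_k$, while $a_{k+1}=u_{p_k+1}(i_*)\neq u_{p_k}(i_*)=a_k$ by non-degeneracy of $\underline{u}$; symmetrically if $q$ increases. Since $\underline{w}$ is non-degenerate we have $w_k\neq w_{k+1}$, so the two coordinates cannot both stay fixed; hence exactly one of $a_k,b_k$ changes at each step. This determines the path: $p_k$ is the number of indices $l<k$ with $a_{l+1}\neq a_l$, and $q_k$ the number with $b_{l+1}\neq b_l$. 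Collapsing consecutive repetitions in $(a_0,\dots,a_d)$ therefore returns exactly $(u_0(i_*),\dots,u_m(i_*))$, hence $\underline{u}$ (and, by non-degeneracy of $\underline{u}$, no information is lost in the collapse); likewise the $b$'s return $\underline{v}$. This yields at once that $\underline{w}$ occurs in at most one partial composite of non-degenerate simplices — the pair $(\underline{u},\underline{v})$ being recovered from $\underline{w}$ — and that, within that composite, $\underline{w}$ comes from a single lattice path, i.e. with multiplicity one.

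I do not expect a genuine obstacle: the argument is purely combinatorial bookkeeping on top of Observation~\ref{SecondStep:OperadKStructure:SubstitutionPermutationMapping}. The only point requiring attention is that it is the non-degeneracy of $\underline{u}$ and $\underline{v}$, not merely that of $\underline{w}$, which forces $a_{k+1}\neq a_k$ on the $p$-steps and $b_{k+1}\neq b_k$ on the $q$-steps; this is precisely why the statement is restricted to composites of non-degenerate simplices, and exactly what makes the step types — and hence the path and the two factors — readable off $\underline{w}$.
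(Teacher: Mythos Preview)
Your argument is correct and is precisely the ``easy generalization'' the paper invokes: recover the vertexwise factors $a_k,b_k$ via Observation~\ref{SecondStep:OperadKStructure:SubstitutionPermutationMapping}, read off the step type from which of $a_k,b_k$ changes (using non-degeneracy of $\underline{u}$ and $\underline{v}$), and collapse repetitions to recover $\underline{u},\underline{v}$ and the path. One minor remark: the appeal to non-degeneracy of $\underline{w}$ in the middle is superfluous, since a lattice path already moves exactly one unit at each step by definition; as you correctly note at the end, it is the non-degeneracy of $\underline{u}$ and $\underline{v}$ that does the work.
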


In light of this analysis,
we obtain from the definition of $\partial_{\tau}$
a result of the form:

\begin{fact}\label{SecondStep:OperadKStructure:BasisDifferentialMapping}
Let $\tau\in\Theta(r)$ be a tree with two vertices.
Let $\{1,\dots,r\} = \{i_1,\dots,\widehat{i_e},\dots,i_s\}\amalg\{j_1,\dots,j_t\}$
be the index sharing associated to $\tau$.
For a basis element $\underline{s}^{\vee}$,
we have:
\begin{equation*}
\partial_{\tau}(\underline{s}^{\vee})
= \begin{cases} \pm\underline{u}^{\vee}(i_*)\circ_{i_e}\underline{v}^{\vee}(j_*),
& \text{whenever $\underline{s}\in\underline{u}(i_*)\circ_{i_e}\underline{v}(j_*)$} \\
& \qquad\qquad\qquad\text{for some $\underline{u}\in\WOp(s)$, $\underline{v}\in\WOp(t)$}, \\
0, & \text{otherwise}. \end{cases}
\end{equation*}
\end{fact}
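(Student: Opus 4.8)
The plan is to read off the matrix of $\partial_{\tau}$ in the dual bases coming from non-degenerate simplices. Recall from \S\ref{SecondStep:OperadKStructure:GenuineBasisDifferential} that, since each $\EOp_n(r)$ is a degreewise finitely generated free $\ZZ$-module (see \S\ref{Prelude:BarrattEccles:Degree0}), the summand $\tau(\Sigma^{-1}\DOp_n)$ is the $\ZZ$-dual of $\tau(\Sigma\EOp_n)$ under the pairing $\langle-,-\rangle$ of that paragraph, and $\partial_{\tau}$ is the transpose, up to the sign produced by moving suspensions to the left, of the composition map $\lambda_{\tau}\colon\tau(\Sigma\EOp_n)\to\Sigma\EOp_n$. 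Now $\tau(\Sigma\EOp_n)$ has a basis consisting of the formal composites $\underline{u}(i_*)\circ_{i_e}\underline{v}(j_*)$, with $\underline{u}\in\NDeg\WOp_n(s)$ and $\underline{v}\in\NDeg\WOp_n(t)$ ranging over non-degenerate simplices for the index sharing $\{1,\dots,r\}=\{i_1,\dots,\widehat{i_e},\dots,i_s\}\amalg\{j_1,\dots,j_t\}$ fixed by $\tau$, and the dual basis of $\tau(\Sigma^{-1}\DOp_n)$ is exactly the family $\underline{u}^{\vee}(i_*)\circ_{i_e}\underline{v}^{\vee}(j_*)$. So it suffices to compute the coordinates of $\partial_{\tau}(\underline{s}^{\vee})$ against this basis.

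The key computation is the evaluation of the pairing
\begin{equation*}
\langle\partial_{\tau}(\underline{s}^{\vee}),\underline{u}(i_*)\circ_{i_e}\underline{v}(j_*)\rangle
= \pm\langle\underline{s}^{\vee},\lambda_{\tau}(\underline{u}(i_*)\circ_{i_e}\underline{v}(j_*))\rangle,
\end{equation*}
whose right-hand side is the coefficient of $\underline{s}$ in the partial composite $\underline{u}(i_*)\circ_{i_e}\underline{v}(j_*)$ performed in $\EOp_n$. Here I would record the elementary observation that such a composite of \emph{non-degenerate} simplices is again a signed sum of non-degenerate simplices: every simplex occurring in its expansion has the form $(u_{k_0}(i_*)\circ_{i_e}v_{l_0}(j_*),\dots,u_{k_{m+n}}(i_*)\circ_{i_e}v_{l_{m+n}}(j_*))$ along a lattice path, and two consecutive terms differ, because $\underline{u}$ and $\underline{v}$ are non-degenerate and the substitution process is injective (Observation~\ref{SecondStep:OperadKStructure:SubstitutionPermutationMapping}). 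Hence there is no correction coming from the normalization, and $\langle\underline{s}^{\vee},\lambda_{\tau}(\underline{u}(i_*)\circ_{i_e}\underline{v}(j_*))\rangle$ equals $\pm 1$ when $\underline{s}\in\underline{u}(i_*)\circ_{i_e}\underline{v}(j_*)$ and $0$ otherwise.

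To finish, I would invoke Observation~\ref{SecondStep:OperadKStructure:SubstitutionSimplexMapping}: for the fixed index sharing determined by $\tau$, a non-degenerate simplex $\underline{s}\in\WOp(r)$ occurs in the expansion of at most one composite of non-degenerate simplices $\underline{u}(i_*)\circ_{i_e}\underline{v}(j_*)$, and then exactly once. Therefore $\partial_{\tau}(\underline{s}^{\vee})$ has at most one non-zero coordinate in the dual basis, equal to $\pm 1$, which is precisely the claimed dichotomy; when the pair $(\underline{u},\underline{v})$ exists it is forced to be non-degenerate, so the $\WOp(s)$, $\WOp(t)$ in the statement may be read as their non-degenerate parts. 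The only genuinely delicate point in writing this out in full is the sign bookkeeping for the suspensions in $\lambda_{\tau}$; as already noted in \S\ref{SecondStep:OperadKStructure:GenuineBasisDifferential} this affects only the undetermined sign $\pm$ appearing in the formula and can safely be left implicit.
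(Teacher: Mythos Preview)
Your argument is correct and follows exactly the route the paper intends: the paper does not give a separate proof of this fact but presents it as an immediate consequence of the duality $\partial_{\tau}=\lambda_{\tau}^{\vee}$ together with Observations~\ref{SecondStep:OperadKStructure:SubstitutionPermutationMapping}--\ref{SecondStep:OperadKStructure:SubstitutionSimplexMapping}, which is precisely what you unwind. Your explicit check that a composite of non-degenerate simplices expands only in non-degenerate simplices (so no normalization correction appears) is a useful detail that the paper leaves implicit.
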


Moreover:

\begin{fact}\label{SecondStep:OperadKStructure:BasisDifferentialWeight}
If $\underline{s}^{\vee}$
has a non-trivial cobar differential in $\tau(\MOp_n)$,
then we have the relation
$\kappa^{\vee}(\underline{s}) = \kappa^{\vee}(\underline{u})(i_*)\circ_{i_e}\kappa^{\vee}(\underline{v})(j_*)$.
\end{fact}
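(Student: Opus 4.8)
The plan is to deduce the identity from two ingredients that are already available: the description of the cobar differential of a dual basis element in Fact~\ref{SecondStep:OperadKStructure:BasisDifferentialMapping}, and the compatibility of the complement operation $\kappa\mapsto\kappa^{\vee}$ of \S\ref{SecondStep:DualModuleKStructure:WeightComplement} with the composition structure of the complete graph operad $\K$. The weight relations of Observation~\ref{Interlude:CompleteGraphOperad:BarrattEcclesWeightRelations} then take care of the passage between simplices and their oriented weight-systems.

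First I would unwind the hypothesis. Since $\underline{s}^{\vee}$ has a non-trivial cobar differential in the summand $\tau(\MOp_n)$, Fact~\ref{SecondStep:OperadKStructure:BasisDifferentialMapping}, together with the injectivity statement of Observation~\ref{SecondStep:OperadKStructure:SubstitutionSimplexMapping}, produces a unique pair of simplices $\underline{u}\in\WOp(s)$, $\underline{v}\in\WOp(t)$ such that $\underline{s}$ occurs in the partial composite $\underline{u}(i_*)\circ_{i_e}\underline{v}(j_*)$ attached to the index sharing $\{1,\dots,r\} = \{i_1,\dots,\widehat{i_e},\dots,i_s\}\amalg\{j_1,\dots,j_t\}$ carried by the tree $\tau$. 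Combining parts (2) and (3) of Observation~\ref{Interlude:CompleteGraphOperad:BarrattEcclesWeightRelations}, every simplex occurring in $\underline{u}(i_*)\circ_{i_e}\underline{v}(j_*)$ has one and the same oriented weight-system, so in particular $\kappa(\underline{s}) = \kappa(\underline{u})(i_*)\circ_{i_e}\kappa(\underline{v})(j_*)$, the composite being taken in the operad $\K$.

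Next I would check that the complement operation commutes with the composition product of $\K$: for $\alpha = (\mu,\sigma)\in\K(s)$ and $\beta = (\nu,\tau)\in\K(t)$ one has $(\alpha\circ_e\beta)^{\vee} = \alpha^{\vee}\circ_e\beta^{\vee}$. The permutation underlying $\alpha\circ_e\beta$ is $\sigma\circ_e\tau$, which is insensitive to complementation; and according to the explicit formula of \S\ref{Interlude:CompleteGraphOperad:CompositionStructure} every weight $(\mu\circ_e\nu)_{k\ell}$ is literally a copy of a single entry $\mu_{ij}$ or $\nu_{ij}$, so replacing each $\mu_{ij}$ by $n-1-\mu_{ij}$ and each $\nu_{ij}$ by $n-1-\nu_{ij}$ turns $\mu\circ_e\nu$ into $(n-1-\mu)\circ_e(n-1-\nu)$ entrywise. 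The relabellings $(i_*)$ and $(j_*)$ do not interfere with this. Applying this to the formula of the previous paragraph gives
\[
\kappa^{\vee}(\underline{s}) = \bigl(\kappa(\underline{u})(i_*)\circ_{i_e}\kappa(\underline{v})(j_*)\bigr)^{\vee}
= \kappa^{\vee}(\underline{u})(i_*)\circ_{i_e}\kappa^{\vee}(\underline{v})(j_*),
\]
which is the asserted relation.

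The one place that demands genuine attention is the bookkeeping in the penultimate step: one must read off from the formula of \S\ref{Interlude:CompleteGraphOperad:CompositionStructure} that the ``outer--inner'' weights of $\mu\circ_e\nu$ (the copies of the entries $\mu_{i\,e}$ and $\mu_{e\,j}$ of $\alpha$ sitting at the vertex $e$) are transformed by complementation in exactly the same way as the purely outer and the purely inner weights. Once this routine verification is made, everything else is a direct substitution, so this is the main — and only — obstacle, and it is a mild one.
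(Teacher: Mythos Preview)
Your proof is correct and follows the same approach as the paper, which simply states that the assertion is an immediate consequence of the identity $\kappa(\underline{u}\circ_e\underline{v}) = \kappa(\underline{u})\circ_e\kappa(\underline{v})$ from Observation~\ref{Interlude:CompleteGraphOperad:BarrattEcclesWeightRelations}. You have merely made explicit the two steps the paper leaves implicit: that every simplex occurring in the chain-level composite carries the same oriented weight-system, and that the complement operation $\kappa\mapsto\kappa^{\vee}$ commutes with the composition product of $\K$.
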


This assertion is an immediate consequence of the identity
$\kappa(\underline{u}\circ_{e}\underline{v}) = \kappa(\underline{u})\circ_{e}\kappa(\underline{v})$
of Proposition~\ref{Interlude:CompleteGraphOperad:BarrattEcclesWeightRelations}.

\subsubsection{The differential of dual basis elements in the $\K$-diagram version of the cobar construction}\label{SecondStep:OperadKStructure:BasisDifferentialDefinition}
For a fixed $\kappa\in\K(r)$,
the summand $\tau(M)(\kappa)$ of a free operad $\FOp(M)$
is represented by a colimit
\begin{equation*}
\tau(M)(\kappa) = \colim_{\alpha(i_*)\circ_{i_e}\beta(j_*)\leq\kappa} \tau(M,\alpha(i_*)\circ_{i_e}\beta(j_*)),
\end{equation*}
where each $\tau(M,\alpha(i_*)\circ_{i_e}\beta(j_*))$
consists of formal composites $p(i_*)\circ_{i_e} q(j_*)$
such that $p\in M(\alpha)$ and $q\in M(\beta)$.
Define the image of a basis element $\underline{s}^{\vee}\in\DOp_n(\kappa)$
in the summand
$\tau(\MOp_n)(\kappa)\subset\FOp(\MOp_n)(\kappa)$
as the element of the colimit represented by the composite
\begin{equation*}
\underline{u}(i_*)\circ_{i_e}\underline{v}(j_*)
\in\tau(\MOp_n,\alpha(i_*)\circ_{i_e}\beta(j_*)),
\end{equation*}
where $\alpha = \kappa^{\vee}(\underline{u})$, $\beta = \kappa^{\vee}(\underline{v})$,
whenever we have $\underline{s}\in\underline{u}(i_*)\circ_{i_e}\underline{v}(j_*)$.
This image is set to be zero otherwise.
This definition is coherent with the definition
of $\K$-structures
since
\begin{equation*}
\kappa^{\vee}(\underline{s})\leq\kappa
\Rightarrow
\kappa^{\vee}(\underline{u})(i_*)\circ_{i_e}\kappa^{\vee}(\underline{v})(j_*)
= \kappa^{\vee}(\underline{s})\leq\kappa.
\end{equation*}

Moreover:

\begin{lemm}\label{SecondStep:OperadKStructure:CobarDifferentialKStructure}
The homomorphisms
\begin{equation*}
\MOp_n(\kappa)\xrightarrow{\partial}\FOp(\MOp_n)(\kappa)
\end{equation*}
defined by the process of~\S\ref{SecondStep:OperadKStructure:BasisDifferentialDefinition}
preserve $\K$-diagram structures,
commute with symmetric group actions,
and make commute the diagrams
\begin{equation*}
\xymatrix{ \MOp_n(\kappa)\ar[d]\ar@{.>}[r]^(0.45){\partial} &
\FOp(\MOp_n)(\kappa)\ar[d] \\
\MOp_n(r)\ar[r]_(0.45){\partial} &
\FOp(\MOp_n)(r) },
\end{equation*}
where $\partial: \MOp_n(r)\rightarrow\FOp(\MOp_n)(r)$ refers to the twisting homomorphism
of the usual cobar construction.
\end{lemm}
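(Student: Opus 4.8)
The plan is to deduce the three assertions from the explicit description of the cobar differential on dual basis elements obtained in~\S\ref{SecondStep:OperadKStructure:GenuineBasisDifferential}, together with the weight-tracking facts that precede the statement. The key point is that the homomorphism $\partial$ of~\S\ref{SecondStep:OperadKStructure:BasisDifferentialDefinition} is given on a dual basis element $\underline{s}^{\vee}\in\MOp_n(\kappa)$ by literally the same signed sum
\begin{equation*}
\partial(\underline{s}^{\vee}) = \sum_{\tau}\pm\,\underline{u}^{\vee}(i_*)\circ_{i_e}\underline{v}^{\vee}(j_*)
\end{equation*}
as the usual cobar differential of Fact~\ref{SecondStep:OperadKStructure:BasisDifferentialMapping}, the only difference being that the summand indexed by a two-vertex tree $\tau$ is now recorded in the component $\tau(\MOp_n,\alpha(i_*)\circ_{i_e}\beta(j_*))$ with $\alpha=\kappa^{\vee}(\underline{u})$ and $\beta=\kappa^{\vee}(\underline{v})$. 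That this formula is well defined as a map into the colimit $\tau(\MOp_n)(\kappa)=\colim_{\lambda_*(\gamma_*)\leq\kappa}\tau(\MOp_n,\gamma_*)$ follows from Observation~\ref{SecondStep:OperadKStructure:SubstitutionSimplexMapping} (for a fixed index sharing, $\underline{s}$ occurs in at most one composite of non-degenerate simplices, so the $\tau$-component is unambiguous) and from the identity $\kappa^{\vee}(\underline{u})(i_*)\circ_{i_e}\kappa^{\vee}(\underline{v})(j_*)=\kappa^{\vee}(\underline{s})\leq\kappa$ of Fact~\ref{SecondStep:OperadKStructure:BasisDifferentialWeight}, which guarantees that the target component really occurs in the colimit.

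Granting this, the bottom square is essentially tautological. Under the colimit identification $\FOp(\MOp_n)(r)\simeq\colim_{\kappa\in\K(r)}\FOp(\MOp_n)(\kappa)$ of Observation~\ref{SecondStep:OperadKStructure:FreeOperadColimit} and the analogous arity-wise colimit description of $\MOp_n$, the left vertical map sends $\underline{s}^{\vee}$ to $\underline{s}^{\vee}$ in the summand $\DOp_n(\kappa^{\vee}(\underline{s}))$, and the right vertical map carries the $\tau$-component of the $\K$-level $\partial(\underline{s}^{\vee})$ to $\pm\,\underline{u}^{\vee}(i_*)\circ_{i_e}\underline{v}^{\vee}(j_*)$; by Fact~\ref{SecondStep:OperadKStructure:BasisDifferentialMapping} this is precisely the $\tau$-component of the usual cobar $\partial(\underline{s}^{\vee})$, and summing over $\tau$ gives the square. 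The signs match because in both constructions the sign is the one produced by moving the desuspensions to the left, and this depends only on the underlying simplices $\underline{u},\underline{v}$ and on the tree $\tau$, not on the weight labels.

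The compatibility with the $\K$-diagram structure is of the same nature: for $\alpha\leq\beta$ in $\K(r)$, the structure morphism $\FOp(\MOp_n)(\alpha)\rightarrow\FOp(\MOp_n)(\beta)$ sends the component $\tau(\MOp_n,\gamma_*)$ with $\lambda_*(\gamma_*)\leq\alpha$ identically onto the component with the same label $\gamma_*$ viewed in $\FOp(\MOp_n)(\beta)$; since $\partial$ on $\MOp_n(\alpha)$ and on $\MOp_n(\beta)$ is given by the same formula on a basis element $\underline{s}^{\vee}\in\DOp_n(\alpha)\subset\DOp_n(\beta)$, the square commutes on basis elements, hence everywhere. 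For equivariance I would use Observation~\ref{Interlude:CompleteGraphOperad:BarrattEcclesWeightRelations}, which gives $\kappa^{\vee}(w\cdot\underline{s})=w\cdot\kappa^{\vee}(\underline{s})$, together with the $\Sigma_r$-equivariance of the permutation substitution process: $w\cdot(\underline{u}(i_*)\circ_{i_e}\underline{v}(j_*))$ is again a composite $\underline{u}(i'_*)\circ_{i_e}\underline{v}(j'_*)$ for the reindexed sharing, so $w_*$ merely permutes the $\tau$-components of $\partial$, with the same signs as in the ungraded case.

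I do not expect a conceptual difficulty here; the part that costs the most care is the bookkeeping that the factors $\underline{u},\underline{v}$ occurring in $\partial(\underline{s}^{\vee})$ genuinely lie in Smith's filtration layers $\NDeg\WOp_n(s)$ and $\NDeg\WOp_n(t)$ — so that $\underline{u}^{\vee}\in\DOp_n(\kappa^{\vee}(\underline{u}))$ and $\underline{v}^{\vee}\in\DOp_n(\kappa^{\vee}(\underline{v}))$ are meaningful — and that non-degeneracy is respected by the substitution combinatorics. The first point follows from the identity $\kappa(\underline{u}\circ_{i_e}\underline{v})=\kappa(\underline{u})\circ_{i_e}\kappa(\underline{v})$ of Observation~\ref{Interlude:CompleteGraphOperad:BarrattEcclesWeightRelations}, which forces every weight of $\underline{u}$ and of $\underline{v}$ to be bounded by the corresponding weight of $\underline{s}$, hence $<n$; the second is already built into the analysis of~\S\ref{SecondStep:OperadKStructure:GenuineBasisDifferential}.
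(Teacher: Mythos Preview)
Your proposal is correct and follows the same approach as the paper: the paper's own proof is the single line ``Immediate from the definition of~\S\ref{SecondStep:OperadKStructure:BasisDifferentialDefinition}'', and what you have written is precisely the direct verification that makes this immediacy explicit. Your additional remark that the factors $\underline{u},\underline{v}$ arising in $\partial(\underline{s}^{\vee})$ genuinely belong to $\NDeg\WOp_n$ (via the weight identity of Observation~\ref{Interlude:CompleteGraphOperad:BarrattEcclesWeightRelations}) is a useful sanity check that the paper leaves implicit.
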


\begin{proof}
Immediate from the definition of~\S\ref{SecondStep:OperadKStructure:BasisDifferentialDefinition}.
\end{proof}

Thus,
our definition gives a well-defined homomorphism of symmetric $\K$-diagrams
\begin{equation*}
\MOp_n\xrightarrow{\partial}\FOp(\MOp_n)
\end{equation*}
and we obtain naturally a derivation of the free $\K$-operad underlying the differential of usual cobar construction
when we form the derivation associated to this homomorphism.
But
we still have to check that this $\K$-operad derivation satisfies the equation $\delta(\partial) = \partial^2 = 0$
in order to achieve the construction of the quasi-free $\K$-operad underlying the cobar construction
$\BOp^c(\DOp_n)$.

To ease verifications,
we use the following observation:

\begin{obsv}\label{SecondStep:OperadKStructure:CompositeKStructureEmbedding}
Let $M$ be any symmetric $\K$-diagram.
Let $\tau$ denote a reduced tree (see~\S\ref{Prelude:KoszulDuality:FreeOperad}).
\begin{enumerate}
\item
For an oriented weight system of the form $\kappa = \lambda_*(\beta_*)$, where $\beta_*\in\tau(\K)$,
we have a natural isomorphism
\begin{equation*}
\tau(M,\beta_*)\rightarrow\colim_{\lambda_*(\alpha_*)\leq\kappa}\tau(M,\alpha_*) = \tau(M)(\kappa),
\end{equation*}
because $\beta_*\in\tau(\K)$ is the largest element of $\tau(\K)$
satisfying $\lambda_*(\beta_*)\leq\kappa$.
\item
If the symmetric $\K$-diagram $M$
consists of subobjects of a $\Sigma_*$-object $M(\kappa)\subset M(r)$ (like $\MOp_n$),
then the natural morphism
\begin{equation*}
\tau(M,\beta_*)\rightarrow\tau(M)(r)
\end{equation*}
is an embedding, for every $\beta_*\in\tau(\K)$.
\end{enumerate}
\end{obsv}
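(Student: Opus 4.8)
The plan is to reduce both assertions to one structural property of the iterated composition map $\lambda_*\colon\tau(\K)\rightarrow\K(r)$ of~\S\ref{Interlude:GraphOperadModel:FreeKOperad}: for a reduced tree $\tau$, this map is an \emph{order embedding} of the product poset $\tau(\K) = \prod_{v\in V(\tau)}\K(r_v)$ onto its image. First I would record that the order on each $\K(s)$ is \emph{edge-local}: by the definition in~\S\ref{Interlude:CompleteGraphOperad:Poset}, $\kappa\leq\kappa'$ is the conjunction, over the pairs $\{i,j\}$, of a condition depending only on the weight and orientation carried by the edge $\{i,j\}$ in $\kappa$ and in $\kappa'$. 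Next, from the explicit composition formula of~\S\ref{Interlude:CompleteGraphOperad:CompositionStructure} (with the attendant index shifts), every edge of a composite $\alpha\circ_e\beta$ is a faithful copy --- same weight \emph{and} same orientation --- of a single edge of $\alpha$ or of $\beta$; iterating over $\tau$, each edge $\{i,j\}$ of $\lambda_*(\alpha_*)$ is a copy of one edge $\{a,b\}$ of exactly one factor $\alpha_v$, namely where $v$ is the vertex at which the leaf-paths from $i$ and from $j$ first meet and $a,b$ are the two ingoing edges of $v$ carrying these paths. Since $\tau$ is reduced, every vertex has at least two ingoing edges and every ingoing edge lies below at least one leaf, so choosing leaves above $a$ and above $b$ realizes every pair of ingoing edges $\{a,b\}$ of every vertex $v$ this way. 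Combining this with edge-locality yields
\begin{equation*}
\lambda_*(\alpha_*)\leq\lambda_*(\beta_*)\ \iff\ (\alpha_v)_{ab}\leq(\beta_v)_{ab}\ \text{for all $v$ and all ingoing pairs}\ \{a,b\}\ \iff\ \alpha_*\leq\beta_*,
\end{equation*}
which is the asserted order embedding (in particular $\lambda_*$ is monotone).

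For assertion~(1): when $\kappa = \lambda_*(\beta_*)$, the equivalence just obtained shows that every $\alpha_*\in\tau(\K)$ with $\lambda_*(\alpha_*)\leq\kappa$ satisfies $\alpha_*\leq\beta_*$, so $\beta_*$ is the greatest element of the poset $\{\alpha_*\in\tau(\K):\lambda_*(\alpha_*)\leq\kappa\}$ indexing the colimit that defines $\tau(M)(\kappa)$. A colimit over a poset with a greatest element is computed by the value there, the canonical cocone map being an isomorphism; hence $\tau(M,\beta_*)\xrightarrow{\simeq}\tau(M)(\kappa)$.

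For assertion~(2): here $M$ is a $\K$-diagram of subobjects $M(\kappa)\subset M(r)$ with $\colim_{\kappa\in\K(r)}M(\kappa) = M(r)$, which for $M = \MOp_n = \Sigma^{-1}\DOp_n$ is Proposition~\ref{SecondStep:DualModuleKStructure:Colimits}; the structure maps of this colimit are the inclusions. Writing $\tau(M)(r)$ for the $\tau$-summand of $\FOp(M)(r) = (\colim_{\K}\FOp(M))(r)$, cofinality of the diagonal $\alpha_*\mapsto(\lambda_*(\alpha_*),\alpha_*)$ together with the commutation of tensor products with colimits in each variable identifies
\begin{equation*}
\tau(M)(r) = \colim_{\kappa\in\K(r)}\tau(M)(\kappa) = \colim_{\alpha_*\in\tau(\K)}\Bigl(\bigotimes_{v\in V(\tau)}M(\alpha_v)\Bigr) = \bigotimes_{v\in V(\tau)}M(r_v),
\end{equation*}
and under this identification the natural morphism $\tau(M,\beta_*)\rightarrow\tau(M)(r)$ is the tensor product of the inclusions $M(\beta_v)\subset M(r_v)$. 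For $M = \MOp_n$ each such inclusion $\DOp_n(\beta_v)\subset\DOp_n(r_v)$ is spanned by a subset of the dual basis (Observation~\ref{SecondStep:DualModuleKStructure:DualCellBasis}), hence a direct summand of free $\ZZ$-modules in each degree; a tensor product of such inclusions is injective, so $\tau(M,\beta_*)\rightarrow\tau(M)(r)$ is an embedding of dg-modules.

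The one point that needs genuine care is the first step --- establishing the edge-by-edge dictionary between $\lambda_*(\alpha_*)$ and its factors, and seeing that reducedness of $\tau$ is exactly what makes it a bijection on edges --- but this is routine bookkeeping once the composition formula of~\S\ref{Interlude:CompleteGraphOperad:CompositionStructure} is unwound. The rest is formal: colimits over posets with a top element, and tensoring inclusions with free cokernel.
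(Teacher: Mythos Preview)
Your argument is correct and supplies exactly the verification the paper leaves implicit. The paper gives no separate proof of this observation: the justification for part~(1) is folded into the statement itself (``because $\beta_*\in\tau(\K)$ is the largest element of $\tau(\K)$ satisfying $\lambda_*(\beta_*)\leq\kappa$''), and part~(2) is asserted without comment. Your order-embedding analysis of $\lambda_*$ is precisely what is needed to substantiate that ``because'' clause, and your treatment of part~(2) via tensoring direct-summand inclusions is the natural way to make the embedding claim rigorous for $\MOp_n$.

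One small remark on scope: the paper phrases part~(2) for any $\K$-diagram of subobjects $M(\kappa)\subset M(r)$, but over $\ZZ$ a tensor product of arbitrary monomorphisms need not be injective. Your restriction to the case where each $M(\beta_v)\subset M(r_v)$ is a direct summand (spanned by a subset of a basis, as in Observation~\ref{SecondStep:DualModuleKStructure:DualCellBasis}) is the right hypothesis, and this is the only case used downstream in Lemmas~\ref{SecondStep:OperadKStructure:InternalDifferentialBasisCobarDerivation} and~\ref{SecondStep:OperadKStructure:BasisCobarDerivationSquare}. The parenthetical ``(like $\MOp_n$)'' in the statement signals that this is the intended regime.
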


\medskip
We check at first:

\begin{lemm}\label{SecondStep:OperadKStructure:InternalDifferentialBasisCobarDerivation}
The homomorphism of Lemma~\ref{SecondStep:OperadKStructure:CobarDifferentialKStructure}
satisfies the commutation relation
$(\delta\partial + \partial\delta)(\underline{s}^{\vee}) = 0$
with respect to the internal differential of $\DOp_n$,
for every basis element $\underline{s}^{\vee}$.
\end{lemm}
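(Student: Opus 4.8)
The plan is to reduce the identity to the corresponding relation in the \emph{ordinary} cobar construction $\BOp^c(\DOp_n) = (\FOp(\MOp_n),\partial)$, where it is already available, and then to transport it down along the defining colimit. Recall first that the twisting homomorphism $\partial\colon\MOp_n\to\FOp(\MOp_n)$ of the ordinary cobar construction lands in the quadratic part $\FOp_2(\MOp_n)$, while the internal differential $\delta$ preserves the weight grading; since the total differential $\delta+\partial$ of $\BOp^c(\DOp_n)$ squares to zero and the weight-$2$ component on generators of $(\delta+\partial)^2$ is exactly $\delta\partial+\partial\delta$ (the weight-$1$ component being $\delta^2$ and the weight-$3$ component being $\partial^2$), we get $(\delta\partial+\partial\delta)(\underline{s}^{\vee})=0$ in $\FOp_2(\MOp_n)(r)$ for every dual basis element $\underline{s}^{\vee}$. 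The same weight considerations, applied to the $\K$-diagram version of $\partial$ from \S\ref{SecondStep:OperadKStructure:BasisDifferentialDefinition}, show that $(\delta\partial+\partial\delta)(\underline{s}^{\vee})$ lies in $\FOp_2(\MOp_n)(\kappa)$.

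\textbf{Transfer to the ordinary cobar construction.} Next I would invoke the commutative square of Lemma~\ref{SecondStep:OperadKStructure:CobarDifferentialKStructure}: via the canonical maps $\FOp(\MOp_n)(\kappa)\to\colim_{\kappa'\in\K(r)}\FOp(\MOp_n)(\kappa')=\FOp(\MOp_n)(r)$ (using Observation~\ref{SecondStep:OperadKStructure:FreeOperadColimit}), the homomorphism $\partial$ on the symmetric $\K$-diagram $\MOp_n$ agrees with the twisting homomorphism of the usual cobar construction; and $\delta$ commutes with these canonical maps as well, being a morphism of symmetric $\K$-diagrams. Hence the image of $(\delta\partial+\partial\delta)(\underline{s}^{\vee})\in\FOp_2(\MOp_n)(\kappa)$ under the canonical map to $\FOp_2(\MOp_n)(r)$ equals $(\delta\partial+\partial\delta)(\underline{s}^{\vee})$ computed in $\BOp^c(\DOp_n)$, which we have just seen vanishes. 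Thus $(\delta\partial+\partial\delta)(\underline{s}^{\vee})$ lies in the kernel of $\FOp_2(\MOp_n)(\kappa)\to\FOp_2(\MOp_n)(r)$, and it remains to check that this map is injective.

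\textbf{Injectivity on the quadratic summand.} For the last point I would argue summand by summand over two-vertex trees $\tau\in\Theta(r)$, each equipped with its index sharing $\{1,\dots,r\}=\{i_1,\dots,\widehat{i_e},\dots,i_s\}\amalg\{j_1,\dots,j_t\}$. By Observation~\ref{SecondStep:OperadKStructure:CompositeKStructureEmbedding}, together with the fact that $\MOp_n$ is componentwise spanned by the dual basis of non-degenerate simplices --- so that every structure map $\MOp_n(\alpha)\to\MOp_n(\beta)$ for $\alpha\le\beta$ is the inclusion of the span of a subset of this basis, by Observation~\ref{SecondStep:DualModuleKStructure:DualCellBasis} --- the summand $\tau(\MOp_n)(\kappa)$ is identified with the submodule of $\tau(\MOp_n)(r)$ spanned by the composite tensors $\underline{u}^{\vee}(i_*)\circ_{i_e}\underline{v}^{\vee}(j_*)$ whose associated oriented weight-system $\kappa^{\vee}(\underline{u})(i_*)\circ_{i_e}\kappa^{\vee}(\underline{v})(j_*)$ is $\le\kappa$. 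Since distinct pairs $(\underline{u},\underline{v})$ yield distinct composite tensors (Observations~\ref{SecondStep:OperadKStructure:SubstitutionPermutationMapping}--\ref{SecondStep:OperadKStructure:SubstitutionSimplexMapping}), the canonical map is injective on this summand, hence on $\FOp_2(\MOp_n)(\kappa)$, and we conclude $(\delta\partial+\partial\delta)(\underline{s}^{\vee})=0$ in $\FOp(\MOp_n)(\kappa)$.

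\textbf{Expected obstacle.} The step I expect to be the real work is this last one: identifying the colimit defining $\tau(\MOp_n)(\kappa)$ with an honest span of composite basis tensors inside $\tau(\MOp_n)(r)$, which forces one to track the combinatorics of operadic substitution of permutations together with the signs produced by moving desuspensions to the left --- though all the needed ingredients are isolated in the observations of \S\ref{SecondStep:OperadKStructure:GenuineBasisDifferential}. An alternative, more computational route would be to expand $\delta\partial(\underline{s}^{\vee})$ and $\partial\delta(\underline{s}^{\vee})$ directly in $\FOp_2(\MOp_n)(\kappa)$ using Fact~\ref{SecondStep:OperadKStructure:BasisDifferentialMapping} and the path description of partial composites of simplices, and cancel them term by term; I would prefer the transfer argument above.
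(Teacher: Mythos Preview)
Your proof is correct, and its core idea---reducing to the known relation in the ordinary cobar construction via the canonical map $\FOp_2(\MOp_n)(\kappa)\to\FOp_2(\MOp_n)(r)$---is also what the paper does in its main case. The difference is one of scope. The paper first reduces to $\kappa=\kappa^{\vee}(\underline{s})$ and then splits into two cases: when $\kappa$ has the form $\lambda_*(\beta_*)$ for some $\beta_*\in\tau(\K)$, Observation~\ref{SecondStep:OperadKStructure:CompositeKStructureEmbedding} gives the embedding $\tau(\MOp_n)(\kappa)\hookrightarrow\tau(\MOp_n)(r)$ and the argument is exactly yours; when $\kappa$ is \emph{not} of that form, the paper shows by a direct combinatorial argument (using the derivation relation for $\delta$ on composites of simplices) that both $\partial_\tau(\underline{s}^{\vee})$ and $\partial_\tau\delta(\underline{s}^{\vee})$ vanish. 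You instead establish the stronger fact that $\tau(\MOp_n)(\kappa)\to\tau(\MOp_n)(r)$ is injective for \emph{every} $\kappa$, which removes the case split entirely. Your justification---each basis tensor $\underline{u}^{\vee}\otimes\underline{v}^{\vee}$ has the minimal index $(\kappa^{\vee}(\underline{u}),\kappa^{\vee}(\underline{v}))$ in the diagram, so the colimit over $\{\alpha_*:\lambda_*(\alpha_*)\le\kappa\}$ is free on those tensors that appear---is valid and arguably cleaner than the paper's two-case treatment. One small correction: the linear independence of the tensors $\underline{u}^{\vee}\otimes\underline{v}^{\vee}$ in $\tau(\MOp_n)(r)$ is immediate from the tensor-product basis (the two-vertex tree is reduced, so there is no quotient), and Observations~\ref{SecondStep:OperadKStructure:SubstitutionPermutationMapping}--\ref{SecondStep:OperadKStructure:SubstitutionSimplexMapping} are not the relevant input here---those concern recovering $(\underline{u},\underline{v})$ from a term of the \emph{operadic} composite $\underline{u}\circ_e\underline{v}$ in $\EOp$, which is a different statement.
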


\begin{proof}
We prove that the composites of the diagram
\begin{equation}\label{eqn:InternalDifferentialBasisCobarDerivation}
\xymatrix{ \MOp_n(\kappa)\ar[r]^{\partial_{\tau}}\ar[d]_{\delta} &
\tau(\MOp_n)(\kappa)\ar[d]^{\delta} \\
\MOp_n(\kappa)\ar[r]_{\partial_{\tau}} &
\tau(\MOp_n)(\kappa) }
\end{equation}
agree on $\underline{s}^{\vee}$ (up to the minus sign), for each fixed tree with two vertices $\tau$.
We can assume $\kappa = \kappa^{\vee}(\underline{s})$
since each mapping $\partial_{\tau}$ preserves $\K$-diagram structures.

If $\kappa = \kappa^{\vee}(\underline{s})$ has the form $\kappa = \alpha(i_*)\circ_{i_e}\beta(j_*)$,
where $\alpha(i_*)\circ_{i_e}\beta(j_*)$ is the partial composite representation of an element of $\tau(\K)$,
then we are done,
because the dg-module $\tau(\MOp_n)(\kappa)$
embeds into $\tau(\MOp_n)(r)$ by Observation~\ref{SecondStep:OperadKStructure:CompositeKStructureEmbedding}
and the commutation with internal differentials is satisfied in the usual cobar construction
when $\underline{s}^{\vee}$ is viewed as an element of~$\MOp_n(r)$.

Otherwise, we have necessarily $\partial_{\tau}(\underline{s}^{\vee}) = 0$
by Fact~\ref{SecondStep:OperadKStructure:BasisDifferentialWeight}.
The internal differential of $\MOp_n(r)$
is defined on dual basis elements
by a formula of the form
\begin{equation*}
\delta(\underline{s}^{\vee}) = \sum_{\underline{s}\in\delta(\underline{w})} \pm\underline{w}^{\vee},
\end{equation*}
where we use the notation $\underline{s}\in\delta(\underline{w})$
to assert that the term $\underline{s}$ occurs in the expansion of $\delta(\underline{w})$
for any simplex $\underline{w}$.
Note again that $\underline{s}$ occurs only at most once in $\delta(\underline{w})$,
because $\underline{w}$ is assumed to be non-degenerated.

From the derivation relation
\begin{equation*}
\delta(\underline{u}(i_*)\circ_{i_e}\underline{v}(j_*))
= \delta(\underline{u}(i_*))\circ_{i_e}\underline{v}(j_*)
+ \pm\underline{u}(i_*)\circ_{i_e}\delta(\underline{v}(j_*)),
\end{equation*}
we obtain the implication:
\begin{multline*}
\underline{s}\in\delta(\underline{w})
\quad\text{and}
\quad\underline{w}\in\underline{u}(i_*)\circ_{i_e}\underline{v}(j_*) \\
\Rightarrow\quad\text{$\underline{s}\in\underline{t}(i_*)\circ_{i_e}\underline{v}(j_*)$
for some $\underline{t}\in\delta(\underline{u})$}, \\
\text{or $\underline{s}\in\underline{u}(i_*)\circ_{i_e}\underline{t}(j_*)$
for some $\underline{t}\in\delta(\underline{v})$},
\end{multline*}
whose conclusion requires that $\kappa^{\vee}(\underline{s})$
has the form $\kappa^{\vee}(\underline{s}) = \alpha(i_*)\circ_{i_e}\beta(j_*)$
for some $\alpha,\beta$.
Hence, if this is not the case,
then we also have $\partial_{\tau}(\underline{w}^{\vee}) = 0$
for each $\underline{w}$ such that $\underline{s}\in\delta(\underline{w})$.
From this observation,
we conclude that diagram~(\ref{eqn:InternalDifferentialBasisCobarDerivation})
still commutes when we have $\kappa\not=\alpha(i_*)\circ_{i_e}\beta(j_*)$, $\forall\alpha(i_*)\circ_{i_e}\beta(j_*)\in\tau(\K)$.
\end{proof}

\begin{lemm}\label{SecondStep:OperadKStructure:BasisCobarDerivationSquare}
The $\K$-operad derivation $\partial: \FOp(\MOp_n)\rightarrow\FOp(\MOp_n)$
induced by the homomorphism of Lemma~\ref{SecondStep:OperadKStructure:CobarDifferentialKStructure}
satisfies $\partial\partial(\underline{s}^{\vee}) = 0$,
for every generating element $\underline{s}^{\vee}\in\DOp_n$.
\end{lemm}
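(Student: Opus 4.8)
The plan is to follow the pattern of the proof of Lemma~\ref{SecondStep:OperadKStructure:InternalDifferentialBasisCobarDerivation}: reduce the identity $\partial\partial(\underline{s}^{\vee}) = 0$, component by component, to the corresponding identity in the usual cobar construction $\BOp^c(\DOp_n) = (\FOp(\Sigma^{-1}\DOp_n),\partial)$, where $\partial^2 = 0$ holds by construction. Since the $\K$-operad derivation $\partial$ preserves $\K$-diagram structures (Lemma~\ref{SecondStep:OperadKStructure:CobarDifferentialKStructure}), it suffices to check the vanishing of $\partial\partial(\underline{s}^{\vee})$ in the summand $\FOp(\MOp_n)(\kappa)$ with $\kappa = \kappa^{\vee}(\underline{s})$, and, expanding the free $\K$-operad into its tree summands, it suffices to check the $\rho$-component of $\partial\partial(\underline{s}^{\vee})$ for each reduced tree $\rho$ with three vertices. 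Such a $\rho$ has exactly two internal edges, and contracting either of them produces a reduced tree with two vertices; I would distinguish two cases according to whether $\kappa = \kappa^{\vee}(\underline{s})$ \emph{decomposes along} $\rho$, meaning $\kappa = \lambda_*(\gamma_*)$ for some $\gamma_* \in \rho(\K)$ (equivalently, $\kappa$ decomposes along each of the two two-vertex trees obtained from $\rho$ by edge contraction).

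In the case where $\kappa$ does \emph{not} decompose along $\rho$: any contribution to the $\rho$-component of $\partial\partial(\underline{s}^{\vee})$ is obtained by first applying $\partial_{\tau}$ for the two-vertex tree $\tau$ arising from contracting one internal edge of $\rho$, and then applying $\partial$ at the vertex of $\tau$ that gets split along the remaining internal edge. By Fact~\ref{SecondStep:OperadKStructure:BasisDifferentialMapping} and Fact~\ref{SecondStep:OperadKStructure:BasisDifferentialWeight}, the first step is nonzero only if $\kappa^{\vee}(\underline{s})$ decomposes along $\tau$, and the second step is then nonzero only if the oriented weight-system attached to the split vertex decomposes further along a two-vertex tree. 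Invoking the identity $\kappa(\underline{u}\circ_e\underline{v}) = \kappa(\underline{u})\circ_e\kappa(\underline{v})$ of Observation~\ref{Interlude:CompleteGraphOperad:BarrattEcclesWeightRelations}, these two requirements together force $\kappa^{\vee}(\underline{s})$ to decompose along $\rho$, contradicting the hypothesis of this case. Hence the $\rho$-component of $\partial\partial(\underline{s}^{\vee})$ vanishes term by term.

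In the case where $\kappa = \kappa^{\vee}(\underline{s})$ \emph{does} decompose along $\rho$, say $\kappa = \lambda_*(\gamma_*)$: by Observation~\ref{SecondStep:OperadKStructure:CompositeKStructureEmbedding} the canonical morphism $\rho(\MOp_n)(\kappa)\to\rho(\MOp_n)(r)$ is an embedding, and likewise each of the two intermediate two-vertex summands $\tau(\MOp_n)(\kappa)$ embeds into $\tau(\MOp_n)(r)$, since contracting an internal edge of $\rho$ carries $\gamma_*$ to a point of $\tau(\K)$ whose image under $\lambda_*$ is $\kappa$. Using Lemma~\ref{SecondStep:OperadKStructure:CobarDifferentialKStructure} at each stage, the computation of $\partial\partial(\underline{s}^{\vee})$ in the $\K$-diagram setting maps compatibly onto the computation of $\partial\partial(\underline{s}^{\vee})$ in the usual cobar construction; since $\partial^2 = 0$ there and the relevant map of $\rho$-summands is injective, the $\rho$-component of $\partial\partial(\underline{s}^{\vee})$ vanishes. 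Combining the two cases gives $\partial\partial(\underline{s}^{\vee}) = 0$ for every dual basis element $\underline{s}^{\vee}$. As $\partial^2$ is again a $\K$-operad derivation and is therefore determined by its restriction to the generating symmetric $\K$-diagram $\MOp_n$, and the elements $\underline{s}^{\vee}$ span $\MOp_n$, we conclude that $\partial^2 = 0$ on all of $\FOp(\MOp_n)$. Together with Lemma~\ref{SecondStep:OperadKStructure:InternalDifferentialBasisCobarDerivation} this yields $\delta(\partial) + \partial^2 = 0$, so $(\FOp(\MOp_n),\partial)$ is a genuine quasi-free $\K$-operad, completing the construction of the $\K$-operad underlying $\BOp^c(\DOp_n)$.

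The main obstacle I expect is the bookkeeping in the decomposable case: one must verify carefully that every intermediate two-vertex summand produced in expanding $\partial\partial(\underline{s}^{\vee})$ is genuinely an embedded summand of the corresponding term of the ordinary free operad, so that the diagrammatic comparison with the usual cobar construction is legitimate at each stage, and that the colimit representatives — the complement weight-systems $\kappa^{\vee}(\underline{u}),\kappa^{\vee}(\underline{v}),\dots$ attached to the vertices along the way — match up correctly under the composition law of the complete graph operad. Once this combinatorial compatibility is in place, the reduction to $\partial^2 = 0$ in the usual cobar construction is immediate.
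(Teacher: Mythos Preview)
Your proof is correct and follows essentially the same strategy as the paper: split into the two cases according to whether $\kappa^{\vee}(\underline{s})$ decomposes along the three-vertex tree, handle the non-decomposable case by showing each term vanishes via Fact~\ref{SecondStep:OperadKStructure:BasisDifferentialWeight}, and in the decomposable case use the embedding of Observation~\ref{SecondStep:OperadKStructure:CompositeKStructureEmbedding} to reduce to $\partial^2=0$ in the usual cobar construction. Your extra care about the intermediate two-vertex summands embedding is not strictly needed---the compatibility diagram of Lemma~\ref{SecondStep:OperadKStructure:CobarDifferentialKStructure} already ensures the full composite $\partial\partial$ at the $\K$-level maps to $\partial\partial$ at the usual level, so injectivity at the final three-vertex summand suffices---and your concluding extension of $\partial^2=0$ to all of $\FOp(\MOp_n)$ is the content of the paper's next lemma rather than this one, but neither point affects correctness.
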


\begin{proof}
By a straightforward extension of the description of $\partial\partial$
in the usual cobar construction (see for instance~\cite[\S\S 3.3-3.5]{FressePartitions}
or \cite[\S 1.4.3]{FresseCylinder}),
the restriction of the map
\begin{equation*}
\partial\partial: \FOp(\MOp_n)(\kappa)
\rightarrow\FOp(\MOp_n)(\kappa)
\end{equation*}
to generators $\MOp_n(\kappa)$
has a component for each summand $\tau(\MOp_n)$
associated to a tree with $3$ vertices
and this component is defined by the sum of all composites
\begin{equation*}
\MOp_n(\kappa)
\xrightarrow{\partial_{\rho}}\rho(\MOp_n)(\kappa)
\xrightarrow{\partial_{\sigma}}\tau(\MOp_n)(\kappa),
\end{equation*}
where $\partial_{\sigma}$ is applied to a vertex $v\in V(\rho)$
such that the blow-up $v\mapsto\sigma$ in the tree $\rho$
gives the tree $\tau$.

By an easy inspection of definitions,
we check that any such composite cancels $\underline{s}^{\vee}$
when $\kappa^{\vee}(\underline{s})$ is not of the form $\kappa^{\vee}(\underline{s}) = \lambda_*(\alpha_*)$
for some $\alpha_*\in\tau(\K)$.
Hence, in this case, the equation $\partial\partial(\underline{s}^{\vee}) = 0$
holds trivially.
Otherwise,
the summand $\tau(\MOp_n)(\kappa)$
embeds into $\tau(\MOp_n)(r)$
by Observation~\ref{SecondStep:OperadKStructure:CompositeKStructureEmbedding}
and since the identity $\partial\partial(\underline{s}^{\vee}) = 0$
holds in the usual cobar construction,
we still have $\partial\partial(\underline{s}^{\vee}) = 0$
on $\tau(\MOp_n)(\kappa)$.
Thus,
we obtain that $\partial\partial(\underline{s}^{\vee})$
vanishes in all cases.
\end{proof}

\begin{lemm}\label{SecondStep:OperadKStructure:CobarDerivationDifferential}
The $\K$-operad derivation $\partial: \FOp(\MOp_n)\rightarrow\FOp(\MOp_n)$
satisfies the assertions of Lemma~\ref{SecondStep:OperadKStructure:InternalDifferentialBasisCobarDerivation}
and Lemma~\ref{SecondStep:OperadKStructure:BasisCobarDerivationSquare}
for all elements of $\FOp(\MOp_n)$
and not only generating elements.
\end{lemm}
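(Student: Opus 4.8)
The plan is to reduce the claim to the two preceding lemmas via the standard principle that a derivation of a free $\K$-operad is uniquely determined by its restriction to the generating symmetric $\K$-diagram. First I would note that the internal differential $\delta$ of the free $\K$-operad $\FOp(\MOp_n)$ is itself a $\K$-operad derivation of degree $-1$: it is obtained from the internal differential of $\MOp_n$ by the Leibniz rule along the tree expansion $\FOp(\MOp_n)(\kappa)=\bigoplus_{\tau}\tau(\MOp_n)(\kappa)$, and it commutes with the $\K$-diagram structure maps $i_*\colon\FOp(\MOp_n)(\alpha)\to\FOp(\MOp_n)(\beta)$ and with the symmetric group actions since these are equivariant chain maps. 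The operator $\partial$ built in~\S\ref{SecondStep:OperadKStructure:BasisDifferentialDefinition} extends, by the derivation relation recalled in~\S\ref{Interlude:GraphOperadModel:QuasiFreeKOperads}, to a $\K$-operad derivation of degree $-1$ on $\FOp(\MOp_n)$ sharing the same compatibilities with the $i_*$ and with permutations.

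Next I would carry out the elementary algebra of graded derivations, which goes through verbatim in the $\K$-operad setting. A direct application of the Leibniz rule shows that for an odd derivation $\partial$ one has $\partial^2(xy)=\partial^2(x)\,y+x\,\partial^2(y)$ (the two cross terms cancel over $\ZZ$), so $\partial\partial$ is a $\K$-operad derivation of even degree $-2$; similarly $\delta\partial+\partial\delta$ is a $\K$-operad derivation of degree $-2$. Both operators still commute with the structure maps $i_*$ and with the symmetric group actions, because $\delta$ and $\partial$ do; hence each is a derivation of the free $\K$-operad $\FOp(\MOp_n)$ in the sense of~\S\ref{Interlude:GraphOperadModel:QuasiFreeKOperads}, and is therefore determined by its value on the generating symmetric $\K$-diagram $\MOp_n\subset\FOp(\MOp_n)$.

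Finally I would invoke Lemma~\ref{SecondStep:OperadKStructure:InternalDifferentialBasisCobarDerivation} and Lemma~\ref{SecondStep:OperadKStructure:BasisCobarDerivationSquare}: they state precisely that $\delta\partial+\partial\delta$ and $\partial\partial$ vanish on every dual basis element $\underline{s}^{\vee}$, and the elements $\underline{s}^{\vee}$ span $\MOp_n(\kappa)$ for every $\kappa\in\K(r)$. A $\K$-operad derivation of $\FOp(\MOp_n)$ vanishing on a generating set of $\MOp_n$ vanishes identically, so $\delta\partial+\partial\delta=0$ and $\partial\partial=0$ on all of $\FOp(\MOp_n)$, as asserted. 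There is no real obstacle beyond bookkeeping here; the only points demanding a little care are the signs in the two graded derivation identities and the routine check that the composites of $\delta$ and $\partial$ with the $i_*$ behave as required, both of which are formally identical to the case of ordinary operads. As an immediate consequence, $\delta+\partial$ satisfies $(\delta+\partial)^2=0$ and hence defines a genuine differential, completing the construction of the quasi-free $\K$-operad $\BOp^c(\DOp_n)=(\FOp(\MOp_n),\partial)$ whose colimit over $\K$ recovers the usual cobar construction by Observation~\ref{SecondStep:OperadKStructure:FreeOperadColimit} and Lemma~\ref{SecondStep:OperadKStructure:CobarDifferentialKStructure}.
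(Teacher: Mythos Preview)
Your proposal is correct and follows exactly the approach the paper indicates: the paper's proof reads in full ``Immediate from the derivation relation (straightforward generalization of the usual verification for the twisting derivation of a quasi-free operad),'' and you have simply spelled out that derivation-relation argument in detail, verifying that $\partial\partial$ and $\delta\partial+\partial\delta$ are themselves $\K$-operad derivations and hence vanish once they vanish on generators.
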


\begin{proof}
Immediate from the derivation relation (straightforward generalization of the usual verification
for the twisting derivation of a quasi-free operad).
\end{proof}

From Observation~\ref{SecondStep:OperadKStructure:FreeOperadColimit}
and lemmas~\ref{SecondStep:OperadKStructure:CobarDifferentialKStructure}-\ref{SecondStep:OperadKStructure:CobarDerivationDifferential},
we conclude:

\begin{thm}\label{SecondStep:OperadKStructure:DefinitionResult}
We have a quasi-free $\K$-operad
\begin{equation*}
\BOp^c(\DOp_n) = (\FOp(\MOp_n),\partial)
\end{equation*}
whose colimit $\colim_{\kappa\in\K(r)}\BOp^c(\DOp_n)(\kappa)$
is isomorphic to the usual cobar construction of the cooperad $\DOp_n$.\qed
\end{thm}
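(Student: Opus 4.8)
The plan is to assemble the ingredients established in the preceding lemmas, so the theorem is essentially a packaging statement. First I would record that, by Proposition~\ref{SecondStep:DualModuleKStructure:Colimits}, the colimit over $\K(r)$ of the symmetric $\K$-diagram $\MOp_n(\kappa) = \Sigma^{-1}\DOp_n(\kappa)$ recovers the $\Sigma_*$-object $\MOp_n(r) = \Sigma^{-1}\DOp_n(r)$ underlying the generators of the usual cobar construction, desuspension being applied degreewise and commuting with the colimit. Feeding this into the adjunction isomorphism of Observation~\ref{SecondStep:OperadKStructure:FreeOperadColimit} shows that $\colim_{\K}\FOp(\MOp_n)$ is the usual free operad $\FOp(\Sigma^{-1}\DOp_n)$ on that $\Sigma_*$-object. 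Thus the underlying free $\K$-operad of the putative quasi-free $\K$-operad is determined, and its colimit is the underlying free operad of $\BOp^c(\DOp_n)$.

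Next I would promote the twisting homomorphism to the $\K$-operad level. Lemma~\ref{SecondStep:OperadKStructure:CobarDifferentialKStructure} furnishes a homomorphism of symmetric $\K$-diagrams $\partial\colon\MOp_n\to\FOp(\MOp_n)$, compatible through the canonical morphisms $\MOp_n(\kappa)\to\MOp_n(r)$ and $\FOp(\MOp_n)(\kappa)\to\FOp(\MOp_n)(r)$ with the twisting homomorphism of the ordinary cobar construction, and taking values in $\bigoplus_{m\ge 2}\FOp_m(\MOp_n)$ on generators. By the description of derivations of free $\K$-operads in~\S\ref{Interlude:GraphOperadModel:QuasiFreeKOperads}, this extends uniquely to an operad derivation $\partial\colon\FOp(\MOp_n)\to\FOp(\MOp_n)$ of degree $-1$ with $\partial(\MOp_n)\subset\bigoplus_{m\ge 2}\FOp_m(\MOp_n)$. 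To see that $\delta+\partial$ is a genuine differential, i.e.\ that $\delta(\partial)+\partial^2=0$, it suffices by Lemma~\ref{SecondStep:OperadKStructure:CobarDerivationDifferential} to check the relations $\delta\partial+\partial\delta=0$ and $\partial\partial=0$ on the generating dual basis elements $\underline{s}^\vee\in\MOp_n(\kappa)$, and these are exactly Lemma~\ref{SecondStep:OperadKStructure:InternalDifferentialBasisCobarDerivation} and Lemma~\ref{SecondStep:OperadKStructure:BasisCobarDerivationSquare}. This produces the quasi-free $\K$-operad $(\FOp(\MOp_n),\partial)$.

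Finally I would identify its colimit. Since $\colim_{\K}\colon\Op_1^{\K}\to\Op_1$ is a left adjoint (Proposition~\ref{Interlude:CompleteGraphOperad:ColimitKOperadAdjunction}), it preserves the free operad as in Observation~\ref{SecondStep:OperadKStructure:FreeOperadColimit}; and by the uniqueness of the derivation extending a prescribed homomorphism on generators, together with the compatibility square of Lemma~\ref{SecondStep:OperadKStructure:CobarDifferentialKStructure}, the derivation $\partial$ of $\FOp(\MOp_n)$ induces on the colimit precisely the twisting derivation $\partial$ of $\BOp^c(\DOp_n)=(\FOp(\Sigma^{-1}\DOp_n),\partial)$, the internal differentials matching for the same reason. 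Hence $\colim_{\kappa\in\K(r)}\BOp^c(\DOp_n)(\kappa)\simeq\BOp^c(\DOp_n)(r)$ as dg-operads.

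The step requiring the most care is this last identification: one must check that forming the colimit of a quasi-free $\K$-operad really returns the quasi-free operad on the colimit \emph{carrying the induced derivation}, and not merely the correct underlying graded free operad. But this is forced, since the colimit functor transports the generating homomorphism $\partial|_{\MOp_n}$ to the usual cobar twisting homomorphism (the content of Lemma~\ref{SecondStep:OperadKStructure:CobarDifferentialKStructure}) and the extension of such a homomorphism to a free-operad derivation is unique; the remaining genuinely combinatorial input—the behaviour of the cobar differential on dual basis elements and its interaction with the complement weight systems $\kappa^\vee(\underline{s})$—has already been carried out in~\S\ref{SecondStep:OperadKStructure:GenuineBasisDifferential} and the lemmas that follow it.
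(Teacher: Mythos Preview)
Your proposal is correct and follows essentially the same approach as the paper. The paper's own proof is just the sentence preceding the theorem, ``From Observation~\ref{SecondStep:OperadKStructure:FreeOperadColimit} and lemmas~\ref{SecondStep:OperadKStructure:CobarDifferentialKStructure}--\ref{SecondStep:OperadKStructure:CobarDerivationDifferential}, we conclude,'' together with a \qed; you have simply unpacked that sentence, correctly adding the reference to Proposition~\ref{SecondStep:DualModuleKStructure:Colimits} needed to identify $\colim_{\K}\MOp_n$ with $\MOp_n$ before applying Observation~\ref{SecondStep:OperadKStructure:FreeOperadColimit}.
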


This theorem gives a first part of the assertions of Lemma~\ref{SecondStep:Result}.

\medskip
For our needs,
we prove a strengthened form of Observation~\ref{SecondStep:OperadKStructure:FreeOperadColimit}:

\begin{prop}\label{SecondStep:OperadKStructure:ColimitLayers}
Let $M$ be any symmetric $\K$-diagram.
The natural morphism $\eta: M\rightarrow\FOp(M)$ induces a natural isomorphism of operads
\begin{equation*}
\phi_\eta: \FOp(\colim_{\K_m} M)\xrightarrow{\simeq}\colim_{\K_m}\FOp(M),
\end{equation*}
for every $m$, including $m = \infty$
in which case we recover the identity of Observation~\ref{SecondStep:OperadKStructure:FreeOperadColimit}.
\end{prop}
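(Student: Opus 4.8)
The plan is to deduce the general statement, for arbitrary $m$, from the special case $m=\infty$ already recorded in Observation~\ref{SecondStep:OperadKStructure:FreeOperadColimit}, by rerunning the same formal argument with the suboperad $\K_m$ in place of $\K$. Recall from~\S\ref{Interlude:CompleteGraphOperad:NestedSuboperads} that each $\K_m$ is a suboperad of $\K$ in posets: it is closed under partial composites because, in the composition rule of~\S\ref{Interlude:CompleteGraphOperad:CompositionStructure}, every weight of $\alpha\circ_e\beta$ is a copy of a weight of $\alpha$ or of $\beta$, and conversely every weight of $\alpha$ and of $\beta$ occurs among the weights of $\alpha\circ_e\beta$; in particular the maximum of the weights of $\alpha\circ_e\beta$ equals $\max(\max\alpha,\max\beta)$. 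Consequently the formalism of~\S\S\ref{Interlude:CompleteGraphOperad:KOperads}--\ref{Interlude:GraphOperadModel:FreeKOperad} applies verbatim to $\K_m$: one has a category of symmetric $\K_m$-diagrams $\C_1^{\K_m\Sigma_*}$, a free $\K_m$-operad functor $\FOp^{\K_m}\colon\C_1^{\K_m\Sigma_*}\to\Op_1^{\K_m}$ left adjoint to the forgetful functor, and a colimit functor $\colim_{\K_m}\colon\Op_1^{\K_m}\to\Op_1$ left adjoint to the constant-diagram functor (Proposition~\ref{Interlude:CompleteGraphOperad:ColimitKOperadAdjunction} with $\K$ replaced by $\K_m$, its proof being insensitive to this change).

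The one nonformal ingredient — and the step I expect to be the main obstacle, although it amounts to bookkeeping with the composition formula — is the following comparison. For $M\in\C_1^{\K\Sigma_*}$, restrict $M$ to the full subcategory $\K_m\Sigma_*\subset\K\Sigma_*$, getting $M|_{\K_m}$; I claim that
\[
\FOp(M)(\kappa) \;=\; \FOp^{\K_m}(M|_{\K_m})(\kappa)\qquad\text{for every }\kappa\in\K_m(r),\ r\in\NN,
\]
as symmetric $\K_m$-diagrams, and also as $\K_m$-operads. Indeed, by the construction of~\S\ref{Interlude:GraphOperadModel:FreeKOperad}, $\FOp(M)(\kappa)$ is the sum over reduced trees $\tau$ of the colimits $\colim_{\lambda_*(\alpha_*)\le\kappa}\tau(M,\alpha_*)$ indexed by collections $\alpha_*\in\tau(\K)$ with $\lambda_*(\alpha_*)\le\kappa$. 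When $\kappa\in\K_m(r)$, the relation $\lambda_*(\alpha_*)\le\kappa$ forces every weight of $\lambda_*(\alpha_*)$ to be strictly less than $m$; iterating the weight description recalled above up the tree $\tau$ forces each $\alpha_v$ to lie in $\K_m(r_v)$, i.e. $\alpha_*\in\tau(\K_m)$; and conversely $\alpha_*\in\tau(\K_m)$ gives $\lambda_*(\alpha_*)\in\K_m(r)$. Hence the two colimits are indexed by the same poset, and the composition products match by the same computation, which proves the claim.

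Granting this, the proof finishes formally:
\[
\colim_{\K_m}\FOp(M) \;=\; \colim_{\K_m}\bigl(\FOp^{\K_m}(M|_{\K_m})\bigr) \;\simeq\; \FOp\bigl(\colim_{\K_m}(M|_{\K_m})\bigr) \;=\; \FOp(\colim_{\K_m} M),
\]
where the first equality uses the comparison together with the fact that $\colim_{\K_m}$ of a $\K$-operad only sees its restriction to $\K_m$, and the middle isomorphism is the instance for $\K_m$ of Observation~\ref{SecondStep:OperadKStructure:FreeOperadColimit}: exactly as there, both $\colim_{\K_m}\circ\FOp^{\K_m}$ and $\FOp\circ\colim_{\K_m}$ are left adjoint to the functor $\Op_1\to\C_1^{\K_m\Sigma_*}$ that forgets operad structures and forms constant diagrams, so they are canonically isomorphic. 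It remains to identify this composite isomorphism with the operad morphism $\phi_\eta$ extending $\colim_{\K_m}\eta\colon\colim_{\K_m}M\to\colim_{\K_m}\FOp(M)$; this follows from uniqueness in the universal property of the free operad $\FOp(\colim_{\K_m}M)$, since both morphisms restrict to $\colim_{\K_m}\eta$ on the generating $\Sigma_*$-object, the latter being the colimit over $\K_m$ of the corolla summands $\psi(M)\simeq M$ inside $\FOp(M)$. Finally, for $m=\infty$ we have $\K_\infty=\K$ and $M|_{\K}=M$, so the construction collapses to the isomorphism of Observation~\ref{SecondStep:OperadKStructure:FreeOperadColimit}, as asserted.
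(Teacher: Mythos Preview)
Your proof is correct, but the route differs from the paper's. The paper does not introduce the auxiliary category of $\K_m$-operads nor invoke adjunctions; instead it computes $\colim_{\kappa\in\K_m(r)}\FOp(M)(\kappa)$ directly by interchanging the colimit over $\kappa$ with the sum over trees and with the inner colimit $\colim_{\lambda_*(\alpha_*)\le\kappa}\tau(M,\alpha_*)$, reducing to $\bigoplus_\tau\colim_{\alpha_*\in\tau(\K_m)}\tau(M,\alpha_*)$, and then applying the tensor--colimit interchange $\colim_{I\times J}X(i)\otimes Y(j)\simeq\colim_I X(i)\otimes\colim_J Y(j)$ to identify this with $\tau(\colim_{\K_m}M)$. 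The single combinatorial input---that the maximal weight of a composite in $\K$ equals the maximum of the maximal weights of the factors, hence $\lambda_*(\alpha_*)\in\K_m\Leftrightarrow\alpha_*\in\tau(\K_m)$---is exactly the fact you isolate in your comparison $\FOp(M)(\kappa)=\FOp^{\K_m}(M|_{\K_m})(\kappa)$, so the mathematical content is the same. Your packaging via $\K_m$-operads and left adjoints is cleaner conceptually and makes the analogy with the case $m=\infty$ transparent, at the cost of setting up extra structure; the paper's hands-on computation is shorter and avoids introducing new categories, at the cost of being less uniform with Observation~\ref{SecondStep:OperadKStructure:FreeOperadColimit}.
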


\begin{proof}
From the definition of the free $\K$-operad $\FOp(M)$,
we obtain by interchange and composition of colimits:
\begin{multline*}
\colim_{\kappa\in\K_m(r)}\bigl\{\FOp(M)(\kappa)\bigr\}
= \colim_{\kappa\in\K_m(r)}\Bigl\{\bigoplus_{\tau\in\Theta(r)}\tau(M)(\kappa)/\equiv\Bigr\} \\
\begin{aligned}
& = \bigoplus_{\tau\in\Theta(r)}\colim_{\kappa\in\K_m(r)}
\Bigl\{\colim_{\lambda_*(\alpha_*)\leq\kappa}\tau(M,\alpha_*)\Bigr\}/\equiv \\
& = \bigoplus_{\tau\in\Theta(r)}
\Bigl\{\colim_{|\alpha_*|<n}\tau(M,\alpha_*)\Bigr\}/\equiv,
\end{aligned}
\end{multline*}
where we use the notation $|(\mu,\sigma)| = \max_{i j}\{\mu_{i j}\}$
for an oriented weight-system $(\mu,\sigma)$
and $|\alpha_*| = \max_{v\in V(\tau)} |\alpha_v|$
for a composite $\alpha_*\in\tau(\K)$.

We have clearly
\begin{equation*}
|\alpha_*|<n\Leftrightarrow\alpha_*\in\tau(\K_m).
\end{equation*}
Hence,
the standard relation $\colim_{I\times J} X(i)\otimes Y(j)\simeq\colim_I X(i)\otimes\colim_J Y(j)$
for tensor products of colimits
gives the identity
\begin{equation*}
\colim_{|\alpha_*|<n}\tau(M,\alpha_*) = \tau(\colim_{\K_n} M)
\end{equation*}
and the conclusion follows.
\end{proof}

In the case $M = \MOp_n$,
we deduce from this lemma and Proposition~\ref{SecondStep:DualModuleKStructure:Colimits}:

\begin{prop}\label{SecondStep:OperadKStructure:CobarColimitLayers}\hspace*{2mm}
For the quasi-free $\K$-operad
\begin{equation*}
\BOp^c(\DOp_n) = (\FOp(\MOp_n),\partial)
\end{equation*}
we have natural isomorphisms
\begin{equation*}
\colim_{\kappa\in\K_n(r)}\BOp^c(\DOp_n)(\kappa)
\xrightarrow{\simeq}\cdots\xrightarrow{\simeq}\colim_{\kappa\in\K(r)}\BOp^c(\DOp_n)(\kappa)\xrightarrow{\simeq}\BOp^c(\DOp_n)(r).
\end{equation*}
\end{prop}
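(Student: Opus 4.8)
The plan is to deduce the proposition by combining Proposition~\ref{SecondStep:OperadKStructure:ColimitLayers} with Proposition~\ref{SecondStep:DualModuleKStructure:Colimits}. First I would desuspend: since $\MOp_n = \Sigma^{-1}\DOp_n$ and the operadic suspension of $\Sigma_*$-objects commutes with colimits, Proposition~\ref{SecondStep:DualModuleKStructure:Colimits} yields a chain of isomorphisms of symmetric $\K$-diagrams
\begin{equation*}
\colim_{\kappa\in\K_n(r)}\MOp_n(\kappa)\xrightarrow{\simeq}\cdots\xrightarrow{\simeq}\colim_{\kappa\in\K(r)}\MOp_n(\kappa)\xrightarrow{\simeq}\MOp_n(r),
\end{equation*}
induced by the $\K$-diagram embeddings $\MOp_n(\alpha)\subset\MOp_n(\beta)$. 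Applying the free operad functor together with the natural isomorphism $\phi_\eta: \FOp(\colim_{\K_m} M)\xrightarrow{\simeq}\colim_{\K_m}\FOp(M)$ of Proposition~\ref{SecondStep:OperadKStructure:ColimitLayers}, which holds for each $m$ with $n\leq m\leq\infty$ and is compatible with the maps induced by the poset embeddings $\K_{m-1}(r)\subset\K_m(r)$, I obtain a chain of isomorphisms of operads
\begin{equation*}
\colim_{\kappa\in\K_n(r)}\FOp(\MOp_n)(\kappa)\xrightarrow{\simeq}\cdots\xrightarrow{\simeq}\colim_{\kappa\in\K(r)}\FOp(\MOp_n)(\kappa)\xrightarrow{\simeq}\FOp(\MOp_n)(r).
\end{equation*}

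Next I would check that these colimit identifications carry the twisting derivation of the quasi-free $\K$-operad $\BOp^c(\DOp_n) = (\FOp(\MOp_n),\partial)$ of Theorem~\ref{SecondStep:OperadKStructure:DefinitionResult} to the twisting derivation of the usual cobar construction. This is exactly what Lemma~\ref{SecondStep:OperadKStructure:CobarDifferentialKStructure} provides: the homomorphism $\partial: \MOp_n(\kappa)\rightarrow\FOp(\MOp_n)(\kappa)$ commutes with the $\K$-diagram structure maps and fits a commutative square with the twisting homomorphism $\partial: \MOp_n(r)\rightarrow\FOp(\MOp_n)(r)$ of the usual cobar construction; since a derivation of a free operad is determined by its restriction to generators, the induced derivations agree under the isomorphisms above. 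Consequently the internal differential $\delta+\partial$ descends to each $\colim_{\kappa\in\K_m(r)}\BOp^c(\DOp_n)(\kappa)$, and the preceding chain identifies each of these quasi-free operads with $(\FOp(\MOp_n)(r),\partial) = \BOp^c(\DOp_n)(r)$, which is the assertion.

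I do not expect a genuine obstacle here: the result is a bookkeeping combination of the two structural propositions quoted above. The only point requiring a word of justification is that, for a quasi-free $\K$-operad, the colimit over $\K_m$ may be computed on the underlying free $\K$-operad --- where Proposition~\ref{SecondStep:OperadKStructure:ColimitLayers} applies --- with the twisting derivation simply passing to the colimit; this is legitimate because $\colim_{\K}$, and likewise $\colim_{\K_m}$, is a left adjoint (Proposition~\ref{Interlude:CompleteGraphOperad:ColimitKOperadAdjunction}) and the derivation is natural with respect to the diagram maps.
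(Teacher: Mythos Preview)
Your proposal is correct and follows the same approach as the paper, which simply says the result is deduced from Proposition~\ref{SecondStep:OperadKStructure:ColimitLayers} combined with Proposition~\ref{SecondStep:DualModuleKStructure:Colimits}. You have spelled out more of the bookkeeping---in particular the compatibility of the twisting derivation under the colimit identifications via Lemma~\ref{SecondStep:OperadKStructure:CobarDifferentialKStructure}---which the paper leaves implicit (it is already absorbed into Theorem~\ref{SecondStep:OperadKStructure:DefinitionResult}).
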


The verification of this proposition achieves the proof of the first assertion
of Theorem~\ref{SecondStep:Result}.
To complete our result,
we still have to study the mapping induced by the morphism of $\K$-diagrams $\sigma^*: \DOp_{n-1}(\kappa)\rightarrow\DOp_n(\kappa)$
at the level of free $\K$-operads.
Our conclusions arise from the following lemmas:

\begin{lemm}\label{SecondStep:OperadKStructure:DualSuspensionBasisCobarDerivation}
Our homomorphisms $\partial: \MOp_n\rightarrow\FOp(\MOp_n)$
satisfy the commutation relation
\begin{equation*}
\partial(\sigma^*\underline{s}^{\vee}) = \sigma^*\partial(\underline{s}^{\vee})
\end{equation*}
with respect to the morphism of symmetric $\K$-diagrams $\sigma^*: \DOp_{n-1}(\kappa)\rightarrow\DOp_n(\kappa)$,
for every basis element $\underline{s}^{\vee}$.
\end{lemm}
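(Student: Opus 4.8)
The plan is to deduce this $\K$-diagram commutation relation from its counterpart in the usual cobar construction. By construction, $\sigma^*\colon\DOp_{n-1}\to\DOp_n$ is the image of the operad morphism $\sigma\colon\EOp_n\to\Lambda^{-1}\EOp_{n-1}$ under the functor $\Lambda^{-n}(-)^{\vee}$; since dualization (of degreewise finitely generated operads, which $\EOp_n$ and $\Lambda^{-1}\EOp_{n-1}$ are by Lemma~\ref{FirstStep:Conclusion:BarrattEcclesDegreeBounds}) and operadic desuspension both preserve (co)operad morphisms, $\sigma^*$ is a morphism of cooperads. Hence $\phi_{\sigma^*}=\FOp(\Sigma^{-1}\sigma^*)$ commutes with the twisting derivation of the cobar construction, so that the relation $\partial(\sigma^*\underline{s}^{\vee})=\sigma^*\partial(\underline{s}^{\vee})$ \emph{already holds} when $\underline{s}^{\vee}$ is regarded as an element of the generating $\Sigma_*$-object $\MOp_{n-1}(r)$ of the ordinary $\BOp^c(\DOp_{n-1})$ and $\partial$, $\sigma^*$ denote the ordinary maps.

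First I would reduce, exactly as in the proofs of Lemma~\ref{SecondStep:OperadKStructure:InternalDifferentialBasisCobarDerivation} and Lemma~\ref{SecondStep:OperadKStructure:BasisCobarDerivationSquare}, to verifying $\partial_{\tau}(\sigma^*\underline{s}^{\vee})=\sigma^*\partial_{\tau}(\underline{s}^{\vee})$ summand by summand, for each tree $\tau$ with two vertices, both sides being evaluated in $\tau(\MOp_n)(\kappa)$. Since $\sigma^*$ and every $\partial_{\tau}$ are morphisms of symmetric $\K$-diagrams, and since $\sigma^*$ carries $\MOp_{n-1}(\kappa)$ into $\MOp_n(\kappa)$ by Lemma~\ref{SecondStep:DualModuleKStructure:DualSuspensionFiltration}, it is enough to treat the case $\kappa=\kappa^{\vee}(\underline{s})$, the complement of $\kappa(\underline{s})$ in $\K_{n-1}$ regarded as an element of $\K_n(r)$.

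Then I would split into two cases following the pattern of the earlier lemmas. If $\kappa=\kappa^{\vee}(\underline{s})$ has the composite form $\alpha(i_*)\circ_{i_e}\beta(j_*)$ for the index sharing attached to $\tau$, then by Observation~\ref{SecondStep:OperadKStructure:CompositeKStructureEmbedding} the summands $\tau(\MOp_{n-1})(\kappa)$ and $\tau(\MOp_n)(\kappa)$ embed into $\tau(\MOp_{n-1})(r)$ and $\tau(\MOp_n)(r)$ respectively, compatibly with $\sigma^*$ and with $\partial_{\tau}$, so the desired identity on $\underline{s}^{\vee}$ is a consequence of the ordinary identity from the first paragraph. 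Otherwise $\kappa$ is not of that form; then $\partial_{\tau}(\underline{s}^{\vee})=0$ by Fact~\ref{SecondStep:OperadKStructure:BasisDifferentialWeight} applied inside $\MOp_{n-1}$, hence $\sigma^*\partial_{\tau}(\underline{s}^{\vee})=0$, and it remains to see that the a priori non-zero summands of $\partial_{\tau}(\sigma^*\underline{s}^{\vee})$ cancel. For this I would use that $\tau(\MOp_n)(\kappa)$ is the colimit over the poset $\{\alpha_*\in\tau(\K):\lambda_*(\alpha_*)\leq\kappa\}$, which has a largest element $\beta_*$, so that $\tau(\MOp_n)(\kappa)\cong\tau(\MOp_n,\beta_*)$ embeds into $\tau(\MOp_n)(r)$ by Observation~\ref{SecondStep:OperadKStructure:CompositeKStructureEmbedding}; since the image of $\partial_{\tau}(\sigma^*\underline{s}^{\vee})$ in $\tau(\MOp_n)(r)$ equals the image of $\sigma^*\partial_{\tau}(\underline{s}^{\vee})=0$, it must vanish already in $\tau(\MOp_n)(\kappa)$.

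The hard part is precisely this last step: establishing that $\tau(\MOp_n)(\kappa)$ injects into $\tau(\MOp_n)(r)$ even when $\kappa$ is not in the image of $\lambda_*\colon\tau(\K)\to\K(r)$, equivalently that the fiber poset $\{\alpha_*\in\tau(\K):\lambda_*(\alpha_*)\leq\kappa\}$ has a maximum (or at least is filtered). This comes down to a combinatorial analysis of how the weights and orientations of an oriented weight-system behave under restriction and under grafting at a single vertex, and it is the only point that is not pure bookkeeping. An alternative, more computational route avoids the reduction entirely: one works from the explicit formula $\sigma^*(c)(\underline{w})=\sgn(w_0,\dots,w_{r-1})\cdot c(w_{r-1},\dots,w_d)$ of the proof of Lemma~\ref{SecondStep:DualModuleKStructure:DualSuspensionFiltration} together with the description of $\partial_{\tau}$ on dual basis elements in \S\ref{SecondStep:OperadKStructure:BasisDifferentialDefinition}, and checks directly that prepending a ``sorting'' chain to a simplex commutes with the substitution process used in the partial composites of the Barratt--Eccles operad.
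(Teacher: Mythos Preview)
Your overall architecture matches the paper's: the proof there is literally ``Similar to Lemma~\ref{SecondStep:OperadKStructure:InternalDifferentialBasisCobarDerivation}'', i.e.\ reduce to a single $\partial_\tau$, take $\kappa=\kappa^{\vee}(\underline{s})$, and split into the composite/non-composite cases, using in Case~1 the embedding of Observation~\ref{SecondStep:OperadKStructure:CompositeKStructureEmbedding} together with the identity already known at the level of ordinary cobar constructions (your first paragraph).

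There is, however, a genuine gap in your Case~2. The fiber poset $\{\alpha_*\in\tau(\K):\lambda_*(\alpha_*)\leq\kappa\}$ need \emph{not} have a maximum, nor be filtered. For a two-vertex tree with $s=t=2$, $e=1$, take $\kappa\in\K(3)$ with $\mu_{13}=\mu_{23}=1$ but $\sigma|_{13}=(1,3)$ and $\sigma|_{23}=(3,2)$: then no choice of $\sigma^\alpha$ allows $\mu_{12}^\alpha=1$, and both $(0,\id)$ and $(0,\tau)$ give incomparable maximal $\alpha$'s. So your stated reason for the embedding $\tau(\MOp_n)(\kappa)\hookrightarrow\tau(\MOp_n)(r)$ fails.

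That said, the embedding itself \emph{is} true, and once established your argument goes through. The correct justification uses the special feature of $\MOp_n$ recorded in Observation~\ref{SecondStep:DualModuleKStructure:DualCellBasis}: each $\MOp_n(\alpha)$ is the span of those $\underline{u}^{\vee}$ with $\kappa^{\vee}(\underline{u})\leq\alpha$. Hence any basis tensor $\underline{u}^{\vee}\otimes\underline{v}^{\vee}$ lying in two summands $\tau(\MOp_n,\alpha_*)$ and $\tau(\MOp_n,\beta_*)$ of the colimit already lies in $\tau(\MOp_n,(\kappa^{\vee}(\underline{u}),\kappa^{\vee}(\underline{v})))$, and this index is in the diagram (it is $\leq\alpha_*$, so $\lambda_*$ of it is $\leq\lambda_*(\alpha_*)\leq\kappa$). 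Thus the colimit is simply the span inside $\tau(\MOp_n)(r)$, and the map is injective. With this fix, your route is a clean alternative to the paper's direct term-by-term argument; your ``alternative computational route'' is exactly the latter.
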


\begin{proof}Similar to Lemma~\ref{SecondStep:OperadKStructure:InternalDifferentialBasisCobarDerivation}.\end{proof}

\begin{lemm}\label{SecondStep:OperadKStructure:CobarDerivationDualSuspension}
For the twisting derivation of $\K$-operad
associated to the homomorphisms
$\partial: \MOp_n\rightarrow\FOp(\MOp_n)$,
the assertion of Lemma~\ref{SecondStep:OperadKStructure:DualSuspensionBasisCobarDerivation}
holds for all elements
and not only for generating elements,
so that we have a commutative diagram
\begin{equation*}
\xymatrix{ \FOp(\MOp_n)\ar[r]^{\partial} &
\FOp(\MOp_n) \\
\FOp(\MOp_{n-1})\ar[r]^{\partial}\ar[u]^{\FOp(\sigma^*)} &
\FOp(\MOp_{n-1})\ar[u]_{\FOp(\sigma^*)} },
\end{equation*}
for all $n\geq 2$.
\end{lemm}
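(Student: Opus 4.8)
The plan is to promote the commutation relation of Lemma~\ref{SecondStep:OperadKStructure:DualSuspensionBasisCobarDerivation}, which is established on generators, to the whole free $\K$-operad $\FOp(\MOp_{n-1})$, using the standard principle that a derivation of a free operad — more generally, a derivation relative to an operad morphism — is determined by its restriction to the generating object. Concretely, I would consider the homomorphism of degree $-1$
\begin{equation*}
D = \partial\circ\FOp(\sigma^*) - \FOp(\sigma^*)\circ\partial: \FOp(\MOp_{n-1})\rightarrow\FOp(\MOp_n),
\end{equation*}
where $\FOp(\sigma^*)$ is the morphism of free $\K$-operads induced by the morphism of symmetric $\K$-diagrams $\sigma^*: \MOp_{n-1}\rightarrow\MOp_n$ obtained by applying $\Sigma^{-1}$ to Proposition~\ref{SecondStep:DualModuleKStructure:DualSuspensionRealization}. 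Recall that $\FOp(\sigma^*)$ is a genuine morphism of $\K$-operads, compatible with the structure morphisms $i_*$, the symmetric group actions and the partial composition products, by functoriality of the free $\K$-operad construction of~\S\ref{Interlude:GraphOperadModel:FreeKOperad}, and that $\partial$ obeys the operadic derivation relation $\partial(p\circ_e q) = \partial(p)\circ_e q + \pm p\circ_e\partial(q)$ by Lemma~\ref{SecondStep:OperadKStructure:CobarDerivationDifferential}.

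Combining these two relations in a short computation, I expect to obtain the twisted derivation identity
\begin{equation*}
D(p\circ_e q) = D(p)\circ_e\FOp(\sigma^*)(q) + \pm\FOp(\sigma^*)(p)\circ_e D(q)
\end{equation*}
for every partial composite in $\FOp(\MOp_{n-1})$, with the same signs as in the derivation relation for $\partial$. Since $\FOp(\MOp_{n-1})$ is generated by $\MOp_{n-1}$ as a $\K$-operad, this identity shows by induction on the weight $|V(\tau)|$ of tree monomials that $D$ is entirely determined by its restriction to $\MOp_{n-1}\subset\FOp(\MOp_{n-1})$. By Lemma~\ref{SecondStep:OperadKStructure:DualSuspensionBasisCobarDerivation} this restriction vanishes on every basis element $\underline{s}^{\vee}$, hence $D = 0$ identically, which is exactly the commutativity of the square in the statement; in particular this makes $\FOp(\sigma^*)$ a morphism of the quasi-free $\K$-operads $\BOp^c(\DOp_{n-1})\rightarrow\BOp^c(\DOp_n)$, since its commutation with the internal differentials of the free $\K$-operads is automatic for a morphism induced by a morphism of symmetric $\K$-diagrams (see~\S\ref{Interlude:GraphOperadModel:QuasiFreeMorphisms}).

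All of this is to be read fiberwise over $\kappa\in\K(r)$: the component maps $\partial_{\tau}$, the morphism $\FOp(\sigma^*)$ and the partial composites of the free $\K$-operad all preserve the $\K$-diagram structure (Lemma~\ref{SecondStep:OperadKStructure:CobarDifferentialKStructure} and Lemma~\ref{SecondStep:DualModuleKStructure:DualSuspensionFiltration}), so the computation takes place inside the diagrams $\FOp(\MOp_n)(\kappa)$ without change, exactly as in Lemma~\ref{SecondStep:OperadKStructure:CobarDerivationDifferential}. I do not anticipate a genuine obstacle here — the essential content of the lemma lies in the base case, namely Lemma~\ref{SecondStep:OperadKStructure:DualSuspensionBasisCobarDerivation}. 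The only point requiring some care is the sign bookkeeping in the twisted derivation relation for $D$, which reduces to checking that the Koszul signs attached to the desuspensions in $\MOp_n = \Sigma^{-1}\DOp_n$ match those in $\MOp_{n-1}$; this follows routinely from $\sigma^*$ being a morphism of degree $0$.
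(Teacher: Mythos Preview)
Your argument is correct and is exactly what the paper has in mind: its proof is the one-liner ``use the derivation relation to deduce the claim from Lemma~\ref{SecondStep:OperadKStructure:DualSuspensionBasisCobarDerivation}'', and you have simply unpacked this by introducing the twisted derivation $D = \partial\circ\FOp(\sigma^*) - \FOp(\sigma^*)\circ\partial$ and observing that it vanishes on generators, hence everywhere. The extra remarks on the $\K$-diagram compatibility and the sign bookkeeping are sound and match the spirit of the parallel Lemma~\ref{SecondStep:OperadKStructure:CobarDerivationDifferential}.
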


\begin{proof}Straightforward:
use the derivation relation
to deduce the claim of this lemma
from the result of Lemma~\ref{SecondStep:OperadKStructure:DualSuspensionBasisCobarDerivation}.\end{proof}

From these lemmas, we conclude:

\begin{prop}\label{SecondStep:OperadKStructure:DualSuspension}
The morphism of symmetric $\K$-diagrams $\sigma^*: \DOp_{n-1}\rightarrow\DOp_n$
of Proposition~\ref{SecondStep:DualModuleKStructure:DualSuspensionRealization}
induces a morphism of quasi-free $\K$-operads
\begin{equation*}
(\FOp(\MOp_{n-1}),\partial)\xrightarrow{\phi_{\sigma^*}}(\FOp(\MOp_n),\partial)
\end{equation*}
which gives a realization at the $\K$-structure level
of the morphism
\begin{equation*}
\sigma^*: \BOp^c(\DOp_{n-1})\rightarrow\BOp^c(\DOp_n)
\end{equation*}
yielded by the suspension morphism of the Barratt-Eccles operad.\qed
\end{prop}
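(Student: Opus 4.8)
The plan is to obtain $\phi_{\sigma^*}$ simply by applying the free $\K$-operad functor to the morphism of symmetric $\K$-diagrams already constructed in Proposition~\ref{SecondStep:DualModuleKStructure:DualSuspensionRealization}, and then to check that the resulting morphism of free $\K$-operads is compatible with the twisting derivations and descends, after passing to colimits over $\K$, to the usual map on cobar constructions. Concretely, first I would desuspend the morphism $\sigma^*\colon\DOp_{n-1}(\kappa)\rightarrow\DOp_n(\kappa)$ to a morphism of symmetric $\K$-diagrams $\sigma^*\colon\MOp_{n-1}(\kappa)\rightarrow\MOp_n(\kappa)$, and then set $\phi_{\sigma^*}=\FOp(\sigma^*)\colon\FOp(\MOp_{n-1})\rightarrow\FOp(\MOp_n)$, the induced morphism of free $\K$-operads in the sense of~\S\ref{Interlude:GraphOperadModel:QuasiFreeMorphisms}.

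Next I would promote $\phi_{\sigma^*}$ to a morphism of \emph{quasi-free} $\K$-operads. By the discussion of~\S\ref{Interlude:GraphOperadModel:QuasiFreeMorphisms}, a morphism of free $\K$-operads induced by a genuine morphism of symmetric $\K$-diagrams automatically commutes with the internal differential of free $\K$-operads, so the only thing to verify is the relation $\partial\phi_{\sigma^*}=\phi_{\sigma^*}\partial$ with respect to the twisting derivations of the two cobar constructions. This is exactly the content of Lemma~\ref{SecondStep:OperadKStructure:CobarDerivationDualSuspension}; via the derivation relation it reduces to checking the identity on generating elements $\underline{s}^{\vee}$, which is Lemma~\ref{SecondStep:OperadKStructure:DualSuspensionBasisCobarDerivation}. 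Thus $\phi_{\sigma^*}$ is a morphism $(\FOp(\MOp_{n-1}),\partial)\rightarrow(\FOp(\MOp_n),\partial)$ in the category of $\K$-operads.

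Finally I would identify $\colim_{\kappa\in\K(r)}\phi_{\sigma^*}$ with the morphism $\sigma^*\colon\BOp^c(\DOp_{n-1})\rightarrow\BOp^c(\DOp_n)$ of the usual cobar-duality theory. For this I would invoke the naturality of the isomorphism $\colim_{\K}\FOp(M)\simeq\FOp(\colim_{\K}M)$ of Observation~\ref{SecondStep:OperadKStructure:FreeOperadColimit}, together with Proposition~\ref{SecondStep:DualModuleKStructure:Colimits}, which gives $\colim_{\kappa\in\K(r)}\sigma^*=\sigma^*\colon\DOp_{n-1}(r)\rightarrow\DOp_n(r)$: taking the colimit turns $\phi_{\sigma^*}=\FOp(\sigma^*)$ into the free-operad morphism on the generating $\Sigma_*$-object map $\sigma^*$. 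By Lemma~\ref{SecondStep:OperadKStructure:CobarDifferentialKStructure} the $\K$-diagram twisting homomorphisms reduce to the usual ones after passing to colimits, and since a morphism out of a quasi-free operad is determined by its restriction to generators, $\colim_{\kappa\in\K(r)}\phi_{\sigma^*}$ coincides with the morphism of quasi-free operads induced by $\sigma^*$, i.e.\ with the map on cobar constructions coming from the Barratt--Eccles suspension morphism. This is the asserted realization.

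I expect the main obstacle to lie not in this proposition, which is essentially a formal assembly of the preceding results, but in the lemmas it rests on: the combinatorial analysis (Facts~\ref{SecondStep:OperadKStructure:BasisDifferentialMapping}--\ref{SecondStep:OperadKStructure:BasisDifferentialWeight} and Observation~\ref{SecondStep:OperadKStructure:CompositeKStructureEmbedding}) that the cobar differential on dual basis elements respects the complement-weight $\K$-filtration, and the verification underlying Lemma~\ref{SecondStep:DualModuleKStructure:DualSuspensionFiltration} that the cap-product defining $\sigma$ raises each weight $\mu_{ij}$ by exactly one. Granting those, the only delicate point within the present proof is to make sure that the embeddings $\tau(\MOp_n)(\kappa)\hookrightarrow\tau(\MOp_n)(r)$ of Observation~\ref{SecondStep:OperadKStructure:CompositeKStructureEmbedding} are available precisely in the cases where the $\K$-diagram and ordinary differentials could conceivably disagree, so that no new relation is imposed at the $\K$-operad level.
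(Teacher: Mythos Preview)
Your proposal is correct and matches the paper's approach exactly: the proposition in the paper carries only a \qed, since it is meant to follow immediately from Lemmas~\ref{SecondStep:OperadKStructure:DualSuspensionBasisCobarDerivation} and~\ref{SecondStep:OperadKStructure:CobarDerivationDualSuspension} together with the general remarks of~\S\ref{Interlude:GraphOperadModel:QuasiFreeMorphisms}, and what you have written is precisely the unpacking of that implication. Your final paragraph correctly locates the substantive work in the preceding combinatorial lemmas rather than in this assembly step.
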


The verification of this proposition achieves the proof of the second assertion of Lemma~\ref{SecondStep:Result}.\qed

\section[Final step: applications of model structures and lifting arguments]{Final step: applications of model structures\\and lifting arguments}\label{FinalStep}

The goal of this section is to prove:

\begin{mainlemm}\label{FinalStep:Result}
We have morphisms
\begin{equation*}
\xymatrix{ \BOp^c(\DOp_1)\ar[r]^(0.5){\sigma^*}\ar[d]_{\kappa} &
\BOp^c(\DOp_2)\ar[r]^(0.65){\sigma^*}\ar@{.>}[d]_{\exists\kappa} &
\cdots\ar[r]^(0.4){\sigma^*} &
\BOp^c(\DOp_n)\ar[r]^(0.65){\sigma^*}\ar@{.>}[d]_{\exists\kappa} & \cdots \\
\EOp_1\ar[r]_{\iota} &
\EOp_2\ar[r]_{\iota} &
\cdots\ar[r]_{\iota} &
\EOp_n\ar[r]_{\iota} & \cdots }
\end{equation*}
lifting the morphisms $\phi_n: \BOp^c(\DOp_n)\rightarrow\COp$
of Lemma~\ref{FirstStep:Result}
and satisfying the requirement of Assertion~(\ref{Prelude:RealizationTheorem:Prescription})
in Theorem~\ref{Prelude:RealizationTheorem}.
\end{mainlemm}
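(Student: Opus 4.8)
The plan is to construct the morphisms $\psi_n$ of the lemma entirely inside the category of $\K$-operads and then pass to colimits. By Lemma~\ref{SecondStep:Result} and the corollary following it, each $\BOp^c(\DOp_n)$ is a quasi-free $\K$-operad, each $\sigma^*$ is realized by a morphism of quasi-free $\K$-operads, and each $\phi_n$ is realized by a $\K$-operad morphism into the constant $\K$-operad $\COp$. The decisive point is Proposition~\ref{Interlude:GraphOperadModel:BarrattEcclesAugmentation}: since every cell $\EOp(\kappa)$ is contractible (Fact~\ref{Interlude:CompleteGraphOperad:BarrattEcclesAcyclicity}), the augmentation $\epsilon\colon\EOp\to\COp$ is an \emph{acyclic fibration} of $\K$-operads. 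This is precisely what dissolves the obstruction flagged in~\S\ref{FirstStep:Conclusion}: the offending homology in degree $-2$ disappears once one works cell by cell, and the construction of $\psi_n$ becomes a lifting problem against an acyclic fibration.

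First I would record the cofibrancy inputs. The symmetric $\K$-diagrams $\MOp_n=\Sigma^{-1}\DOp_n$ are reduced and the cobar derivation lands in $\bigoplus_{m\geq 2}\FOp_m(\MOp_n)$; I would check that $\MOp_1$ is cofibrant as a symmetric $\K$-diagram and that each $\sigma^*\colon\MOp_{n-1}\to\MOp_n$ is a cofibration of symmetric $\K$-diagrams --- the $\K$-diagram refinement of Proposition~\ref{FirstStep:Conclusion:SigmaCofibrations}, obtained through the latching-object criterion of Proposition~\ref{Interlude:GraphOperadModel:SymmetricKDiagramCofibrations} and the explicit dual-basis description of~\S\ref{SecondStep:DualModuleKStructure:DualBasis} (each basis element $\underline s^{\vee}$ enters the diagram exactly at $\kappa^{\vee}(\underline s)$ and above, and the relevant pushout-latching cokernels are degreewise free over $\ZZ$). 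Hence every $\MOp_n$ is cofibrant, and feeding this into Proposition~\ref{Interlude:GraphOperadModel:CofibrantKOperads} yields that $\BOp^c(\DOp_1)$ is cofibrant as a $\K$-operad, that each $\phi_{\sigma^*}\colon\BOp^c(\DOp_{n-1})\to\BOp^c(\DOp_n)$ is a cofibration of $\K$-operads, and --- applying it with weight parameter $2$ to the map $0\to\MOp_n$ and to $\sigma^*$ --- that the pushout-corner map from $\BOp^c(\DOp_{n-1})\cup_{\BOp^c(\DOp_{n-1})_{\leq 2}}\BOp^c(\DOp_n)_{\leq 2}$ to $\BOp^c(\DOp_n)$ is a cofibration of $\K$-operads.

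Next I would construct $\K$-operad morphisms $\widetilde{\psi}_n\colon\BOp^c(\DOp_n)(\kappa)\to\EOp(\kappa)$ by induction on $n$, together with the relations $\epsilon\widetilde{\psi}_n=\phi_n$ and $\widetilde{\psi}_n\circ\phi_{\sigma^*}=\widetilde{\psi}_{n-1}$. For $n=1$ the Koszul duality equivalence $\epsilon_1$ sends the arity-$2$ generator to $\mu$ and all higher generators to zero; one checks on generators that this is compatible with the $\K$-structures (the generator $\mu^{\vee}$ and the $0$-simplex $\mu_0$ lie in the same subdiagrams), so $\epsilon_1$ extends to $\widetilde{\psi}_1$. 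For the inductive step I would first extend $\widetilde{\psi}_{n-1}$ over $\BOp^c(\DOp_n)_{\leq 2}$ by choosing its value on the arity-$2$ generator $\mu^{\vee}$ (which is not reached by $\phi_{\sigma^*}$) to be a cycle lifting $\phi_n(\mu^{\vee})$ along $\epsilon$ whose homology class realizes $\epsilon_n(\mu^{\vee})$ --- possible because on the arity-$2$ part there are no nontrivial tree composites, hence no obstruction beyond the degree-$0$ compatibility already carried by $\phi_n$ (cf.\ the end of~\S\ref{FirstStep:Conclusion}) --- and then fill in the remaining arities as a lift in the square with the cofibration of $\K$-operads just described on the left and the acyclic fibration $\epsilon$ on the right. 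Passing to the colimit over $\kappa\in\K_n(r)$ and invoking Propositions~\ref{SecondStep:OperadKStructure:CobarColimitLayers} and~\ref{Interlude:CompleteGraphOperad:BarrattEcclesLayers} produces operad morphisms $\psi_n\colon\BOp^c(\DOp_n)\to\EOp_n$; naturality of these colimits along $\K_{n-1}(r)\subset\K_n(r)$ converts $\widetilde{\psi}_n\phi_{\sigma^*}=\widetilde{\psi}_{n-1}$ into the commutativity $\psi_n\sigma^*=\iota\psi_{n-1}$, while $\epsilon\widetilde{\psi}_n=\phi_n$ colimits to the statement that $\psi_n$ lifts $\phi_n$.

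It then remains to verify assertion~(\ref{Prelude:RealizationTheorem:Prescription}) of Theorem~\ref{Prelude:RealizationTheorem}, namely $(\psi_n)_*\circ\edge=\epsilon_n$. Both sides are operad morphisms out of the quasi-free operad $\BOp^c(\Lambda^{-n}\GOp_n^{\vee})$, so it suffices to compare them on generators; in arity $>2$ both vanish (the edge morphism is null there, and so is $\epsilon_n$). On the arity-$2$ generator $\lambda_{n-1}^{\vee}$, which lies in the image of $\sigma^*$ since $\sigma_*(\lambda_{n-1})=\lambda_{n-2}$ by Proposition~\ref{Prelude:GerstenhaberOperads:LittleCubesMorphismHomology}, equality follows from the inductive hypothesis, the commutativity $\psi_n\sigma^*=\iota\psi_{n-1}$, and the Koszul patching $\iota_*\epsilon_{n-1}=\epsilon_n\sigma^*$ of Proposition~\ref{Prelude:GerstenhaberOperads:KoszulPatching} (with base case $\psi_1=\epsilon_1$); on the remaining arity-$2$ generator $\mu^{\vee}$ equality holds by the choice of $\widetilde{\psi}_n$ made in the inductive step. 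I expect the main obstacle to be the cofibrancy statement that $\sigma^*\colon\MOp_{n-1}\to\MOp_n$ is a cofibration of symmetric $\K$-diagrams: this is what makes the whole lifting run, and, unlike the plain $\Sigma_*$-object version of Proposition~\ref{FirstStep:Conclusion:SigmaCofibrations}, it forces one to control the interaction between the order relations of $\K(r)$ and the action of $\sigma^*$ on dual basis elements. A secondary delicate point is that the homology class of $H_*(\EOp_n(2))$ needed for the $\mu^{\vee}$ generator only materializes after the colimit over the non-filtered poset $\K_n(2)$, so the cycle chosen on the arity-$\leq 2$ part must be picked compatibly with that colimit.
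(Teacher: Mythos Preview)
Your strategy matches the paper's almost exactly: prove that $\sigma^*\colon\DOp_{n-1}\to\DOp_n$ is a cofibration of symmetric $\K$-diagrams via the latching criterion (this is the paper's Proposition~\ref{FinalStep:KModuleCofibration:SuspensionCofibration} and Lemma~\ref{FinalStep:KModuleCofibration:LatchingCofibration}), feed this into Proposition~\ref{Interlude:GraphOperadModel:CofibrantKOperads} to obtain a cofibration of $\K$-operads out of the pushout $\BOp^c(\DOp_{n-1})\vee_{\FOp(\MOp_{n-1}^{(2)})}\FOp(\MOp_n^{(2)})$, lift inductively against the acyclic fibration $\epsilon\colon\EOp\to\COp$ of $\K$-operads, and take $\K_n$-colimits.

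The one point where the paper is more concrete than your sketch is the arity-$2$ part. You propose to extend $\widetilde\psi_{n-1}$ by choosing the image of ``the generator $\mu^{\vee}$'' to be a cycle lifting $\phi_n(\mu^{\vee})$. This phrasing is imprecise in two ways. First, the chain-level element at the top of the cokernel of $\sigma^*$ in $\MOp_n(2)$ is $\mu_0^{\vee}$ (in degree~$n-1$), and it is \emph{not} a cycle, so its image must satisfy a boundary condition tied to the values already fixed on $\mu_1^{\vee},\tau\mu_1^{\vee}$ by induction; a naked cycle choice will not produce a chain map. Second, and more seriously, the extension must be made cell by cell over~$\K(2)$, not just at the colimit. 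The paper sidesteps both issues by writing down an explicit isomorphism of $\K(2)$-diagrams $\tilde\phi_n\colon\MOp_n(\kappa)\xrightarrow{\simeq}\EOp(\kappa)$ given by $\mu_{n-1-d}^{\vee}\mapsto\mu_d$ (see~\S\ref{FinalStep:KOperadLifting:TwoaryTerms}); this single formula simultaneously handles all cells, commutes with differentials and with~$\sigma^*$, and forces the homology value $\epsilon_n(\mu^{\vee})=\lambda_{n-1}$ after colimit. You flagged this compatibility as your ``secondary delicate point''; in practice it is the cleanest thing in the proof once one sees the formula, and your abstract lifting would need to reproduce exactly this map anyway.

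Your verification of assertion~(\ref{Prelude:RealizationTheorem:Prescription}) is fine and agrees with the paper's~\S\ref{FinalStep:Conclusion}.
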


In~\S\ref{SecondStep},
we proved that the morphisms $\phi_n: \BOp^c(\DOp_n)\rightarrow\COp$
of Lemma~\ref{FirstStep:Result}
are realized by morphisms of $\K$-operads.
The rough idea is to apply the lifting argument of~\S\ref{FirstStep:Conclusion}
in order to get morphisms of $\K$-operads $\tilde{\phi}_n: \BOp^c(\DOp_n)\rightarrow\EOp$
lifting $\phi_n: \BOp^c(\DOp_n)\rightarrow\COp$.
Then we simply take the $\K_n$-colimits of these morphisms
and use the identity $\EOp_n = \colim_{\K_n}\EOp$
to produce the morphisms $\kappa: \BOp^c(\DOp_n)\rightarrow\EOp_n$
of Lemma~\ref{SecondStep:Result}.

First of all,
we check in~\S\ref{FinalStep:KModuleCofibration} that the morphism of symmetric $\K$-diagrams
\begin{equation*}
\sigma^*: \DOp_{n-1}\rightarrow\DOp_n
\end{equation*}
which represents the dual of the suspension morphism of the Barratt-Eccles operad
is a cofibration (of symmetric $\K$-diagrams).
In this situation,
we obtain from Proposition~\ref{Interlude:GraphOperadModel:CofibrantKOperads}
that a natural cobase extension of the morphism of quasi-free operads $\sigma^*: \BOp^c(\DOp_{n-1})\rightarrow\BOp^c(\DOp_n)$
is a cofibration of $\K$-operads,
for each $n>1$.

Then
we apply the model category structure of $\K$-operads
in order to define the desired liftings $\tilde{\phi}_n: \BOp^c(\DOp_n)\rightarrow\EOp$
in~\S\ref{FinalStep:KOperadLifting}
and we achieve the proof of Lemma~\ref{FinalStep:Result} in~\S\ref{FinalStep:Conclusion}.

\subsection{The cofibration statement}\label{FinalStep:KModuleCofibration}
The purpose of this subsection is to prove the following proposition:

\begin{mainsecprop}\label{FinalStep:KModuleCofibration:SuspensionCofibration}
The morphism of symmetric $\K$-diagrams
\begin{equation*}
\sigma^*: \DOp_{n-1}\rightarrow\DOp_n
\end{equation*}
is a cofibration, for all $n>1$.
\end{mainsecprop}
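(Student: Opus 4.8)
The plan is to apply the criterion of Proposition~\ref{Interlude:GraphOperadModel:SymmetricKDiagramCofibrations}: it suffices to prove that, for every oriented weight-system $\kappa\in\K(r)$ and every $r\in\NN$, the relative latching map
\begin{equation*}
(\sigma^*,\lambda): \DOp_{n-1}(\kappa)\bigoplus_{L\DOp_{n-1}(\kappa)} L\DOp_n(\kappa)\rightarrow\DOp_n(\kappa)
\end{equation*}
is a cofibration of dg-modules. Since we work over $\ZZ$ and all the dg-modules in sight are degreewise free of finite rank (each $\DOp_n(\kappa)$ is a subobject of $\DOp_n(r)$, which is dual to the finitely generated free module $\EOp_n(r)$), by the characterization of cofibrations in~\cite[\S\S 2.3.6-9]{Hovey} it is enough to check that each relative latching map is a degreewise split monomorphism whose cokernel is degreewise free. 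All of this can be made completely explicit using the dual basis: by Observation~\ref{SecondStep:DualModuleKStructure:DualCellBasis}, a dual basis element $\underline{s}^{\vee}\in\DOp_n(r)$ lies in $\DOp_n(\kappa)$ exactly when $\kappa^{\vee}(\underline{s})\leq\kappa$, so each $\DOp_n(\kappa)$ is spanned by a subset of the dual basis, and likewise $L\DOp_n(\kappa)=\colim_{\alpha\lneqq\kappa}\DOp_n(\alpha)$ is spanned (modulo the identifications in the colimit) by those $\underline{s}^{\vee}$ with $\kappa^{\vee}(\underline{s})\lneqq\kappa$ — here one should first check that the latching colimit over $\K(r)$ is again spanned by a subset of the dual basis, which follows because the structure maps of the $\K$-diagram $\DOp_n$ are inclusions of such spanned subobjects by Lemma~\ref{SecondStep:DualModuleKStructure:KStructureVerification}(a).

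With that bookkeeping in place, the pushout $P(\kappa):=\DOp_{n-1}(\kappa)\oplus_{L\DOp_{n-1}(\kappa)}L\DOp_n(\kappa)$ is itself free, spanned by the dual basis elements of $\DOp_n(\kappa)$ that are either in the image of $\sigma^*$ from $\DOp_{n-1}(\kappa)$ or of the form $\underline{s}^{\vee}$ with $\kappa^{\vee}(\underline{s})\lneqq\kappa$; I would then identify the cokernel of $(\sigma^*,\lambda)$ concretely. First I would use Lemma~\ref{SecondStep:DualModuleKStructure:DualSuspensionFiltration} to know that $\sigma^*$ respects the $\K$-filtration, and Lemma~\ref{FirstStep:Conclusion:SuspensionSection} (the improvement of the surjectivity lemma) to get a $\Sigma_r$-equivariant, degreewise section of $\sigma:\EOp_n(r)\to\Lambda^{-1}\EOp_{n-1}(r)$ with projective — in fact, by freeness over $\ZZ$, free — kernel. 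Dualizing, $\sigma^*:\DOp_{n-1}(r)\to\DOp_n(r)$ is a degreewise split monomorphism with degreewise free cokernel $K(r)$. The plan is to show the relative latching map at $\kappa$ is the restriction of this global splitting: its cokernel should be the subquotient of $K(r)$ spanned by those dual basis elements $\underline{s}^{\vee}$ of $\DOp_n(\kappa)$ with $\kappa^{\vee}(\underline{s})=\kappa$ (top stratum) that do not come from $\DOp_{n-1}$, which is again degreewise free as a subobject of a degreewise free module, so the map is a cofibration.

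The main obstacle I anticipate is precisely this last identification of the cokernel, i.e.\ checking that the top-dimensional dual-basis elements of $\DOp_n(\kappa)$ that are genuinely new (not in the image of $\sigma^*$ and not already present in the latching object) form a $\delta$-stable, degreewise free complement — one has to verify that the differential on $\DOp_n(\kappa)$ does not mix these new generators with the old ones modulo the subobject $P(\kappa)$, so that the quotient complex is well-defined and free. This is where the explicit formula $\sigma^*(c)(w_0,\dots,w_d)=\sgn(w_0,\dots,w_{r-1})\cdot c(w_{r-1},\dots,w_d)$ together with the weight inequalities from the proof of Lemma~\ref{SecondStep:DualModuleKStructure:DualSuspensionFiltration} must be combined with the compatibility of $\delta$ with the $\K$-structure from Lemma~\ref{SecondStep:DualModuleKStructure:KStructureVerification}(b); the key point is that $\delta$ can only decrease the associated weight system $\kappa^{\vee}(\underline{s})$, so it preserves each latching subobject and descends to the quotient. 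Everything else is a matter of assembling the equivariance (from Lemma~\ref{SecondStep:DualModuleKStructure:KStructureVerification}(c), which guarantees the whole construction is natural in $\kappa$ under the $\Sigma_r$-action) and invoking Proposition~\ref{Interlude:GraphOperadModel:SymmetricKDiagramCofibrations}.
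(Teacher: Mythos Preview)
Your approach is essentially the paper's: reduce via Proposition~\ref{Interlude:GraphOperadModel:SymmetricKDiagramCofibrations} to checking that the relative latching maps are cofibrations of dg-modules, identify $L\DOp_n(\kappa)$ with $\Span\{\underline{s}^{\vee}:\kappa^{\vee}(\underline{s})\lneqq\kappa\}$, and use the explicit section of $\sigma$ from Lemma~\ref{FirstStep:Equations:SuspensionSurjectivity} to get split injectivity. The paper carries out exactly this, making explicit (Lemma~\ref{FinalStep:KModuleCofibration:SuspensionSplitting}) the point you leave as ``the plan'': the section $\underline{s}\mapsto(t_1,\dots,t_{r-1},s_0,\dots,s_d)$ satisfies $\kappa^{\vee}(t_1,\dots,t_{r-1},s_0,\dots,s_d)=\kappa^{\vee}(s_0,\dots,s_d)$, so the dual retraction $\tau^*$ preserves both $\DOp_m(\kappa)$ and $L\DOp_m(\kappa)$ and hence the relative latching map is split injective.

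Two corrections. First, your claimed criterion ``degreewise split mono with degreewise free cokernel implies cofibration'' is not correct for unbounded complexes: in Hovey's model structure one needs the cokernel to be \emph{cofibrant}, and an unbounded complex of projectives need not be. The paper closes this gap by invoking boundedness of $\EOp_n(r)$ (Fact~\ref{FinalStep:KModuleCofibration:DegreeBounds}, from Lemma~\ref{FirstStep:Conclusion:BarrattEcclesDegreeBounds}), so the cokernel is a bounded complex of free modules and hence cofibrant. You should make this explicit. Second, your worry about $\delta$-stability of the complement is misplaced: the relative latching map is already a map of dg-modules, so its cokernel is automatically a dg-module; the graded splitting need not, and in general will not, commute with the differential, and nothing in the cofibration criterion requires it to.
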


According to Proposition~\ref{Interlude:GraphOperadModel:SymmetricKDiagramCofibrations},
the verification of Proposition~\ref{FinalStep:KModuleCofibration:SuspensionCofibration}
reduces to the following result:

\begin{mainseclemm}\label{FinalStep:KModuleCofibration:LatchingCofibration}
The extended latching morphism
\begin{equation*}
\DOp_{n-1}(\kappa)\bigoplus_{L\DOp_{n-1}(\kappa)}L\DOp_n(\kappa)
\xrightarrow{(\sigma^*,\lambda)}\DOp_n(\kappa)
\end{equation*}
is a cofibration of dg-modules
for every $\kappa\in\K(r)$, $r\in\NN$.
\end{mainseclemm}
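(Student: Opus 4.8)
The plan is to invoke the characterization of cofibrations of dg-modules over a ring from~\cite[\S\S 2.3.6-9]{Hovey}. Each $\DOp_n(\kappa)$ is a submodule of $\DOp_n(r)$, which is degreewise a finitely generated free $\ZZ$-module and is a bounded complex (it is the $\ZZ$-dual of $\EOp_n(r)$, which is bounded by Lemma~\ref{FirstStep:Conclusion:BarrattEcclesDegreeBounds}); hence it suffices to prove that the extended latching morphism $(\sigma^*,\lambda)$ is injective and that its cokernel is a degreewise free $\ZZ$-module.

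First I would make the source of $(\sigma^*,\lambda)$ explicit. Applying the argument used in the proof of Proposition~\ref{SecondStep:DualModuleKStructure:Colimits} to the subposet $\{\alpha\in\K(r) : \alpha\lneqq\kappa\}$ in place of $\K_m(r)$, the latching morphism $\lambda\colon L\DOp_n(\kappa)\rightarrow\DOp_n(\kappa)$ is injective with image the $\ZZ$-span of the dual basis elements $\underline{s}^{\vee}$ such that $\kappa^{\vee}(\underline{s})\lneqq\kappa$; likewise $\DOp_n(\kappa)$ is free on the $\underline{s}^{\vee}$ with $\kappa^{\vee}(\underline{s})\leq\kappa$ by Observation~\ref{SecondStep:DualModuleKStructure:DualCellBasis}. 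Since $\sigma^*$ preserves each cell by Lemma~\ref{SecondStep:DualModuleKStructure:DualSuspensionFiltration}, it preserves latching objects, so that $\coker(\sigma^*,\lambda) = \coker\bigl(\bar\sigma^*\colon \DOp_{n-1}(\kappa)/L\DOp_{n-1}(\kappa)\rightarrow\DOp_n(\kappa)/L\DOp_n(\kappa)\bigr)$, and injectivity of $(\sigma^*,\lambda)$ will follow from injectivity of $\bar\sigma^*$ together with injectivity of $\sigma^*$ on all of $\DOp_{n-1}(r)$ (the latter being dual to the surjection of Lemma~\ref{FirstStep:Equations:SuspensionSurjectivity}). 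These two quotients are free $\ZZ$-modules on the sets $E_{n-1}(\kappa)$, $E_n(\kappa)$ of classes of dual basis elements $\underline{t}^{\vee}$, $\underline{s}^{\vee}$ with $\kappa^{\vee}(\underline{t}) = \kappa$, respectively $\kappa^{\vee}(\underline{s}) = \kappa$; these sets are empty unless $\kappa\in\K_{n-1}(r)$, respectively $\kappa\in\K_n(r)$, so the statement is trivial when $\kappa$ has a weight $\geq n-1$, and I may assume $\kappa\in\K_{n-1}(r)$ henceforth.

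The main computation is the matrix of $\bar\sigma^*$. Using the formula $\sgn\cap(w_0,\dots,w_d) = \sgn(w_0,\dots,w_{r-1})\,(w_{r-1},\dots,w_d)$ from~\S\ref{Prelude:BarrattEccles:SuspensionMorphisms}, one obtains for a dual basis element $\underline{t}^{\vee}$, with $\underline{t} = (t_0,\dots,t_m)$, the expansion $\sigma^*(\underline{t}^{\vee}) = \sum_{\underline{w}}\pm\sgn(w_0,\dots,w_{r-1})\,\underline{w}^{\vee}$, where $\underline{w}$ runs over the non-degenerate simplices of $\WOp_n(r)$ of the form $(w_0,\dots,w_{r-2},t_0,\dots,t_m)$, i.e. the ``prependings'' of $\underline{t}$ by $r-1$ permutations. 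Two observations make the resulting matrix transparent. First, every such $\underline{w}$ determines $\underline{t} = (w_{r-1},\dots,w_d)$, so with rows indexed by $E_n(\kappa)$ and columns by $E_{n-1}(\kappa)$ the matrix of $\bar\sigma^*$ has at most one nonzero entry per row; hence the columns $\bar\sigma^*(\underline{t}^{\vee})$ have pairwise disjoint supports. Second, each column is nonzero: the ``minimal'' prepending from the proof of Lemma~\ref{FirstStep:Equations:SuspensionSurjectivity} produces a non-degenerate simplex $\underline{w}$ with $\sgn(w_0,\dots,w_{r-1}) = \pm1$, with $\mu_{i j}(\underline{w}) = \mu_{i j}(\underline{t})+1$ for all $i,j$, and with $\sigma(\underline{w}) = \sigma(\underline{t})$, so that $\kappa^{\vee}(\underline{w}) = \kappa$ and $\underline{w}^{\vee}$ survives in the quotient with coefficient $\pm1$. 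Since all entries of $\bar\sigma^*$ are values of $\sgn$, hence lie in $\{0,\pm1\}$, each $\bar\sigma^*(\underline{t}^{\vee})$ is a $\pm1$-combination of distinct basis elements, and in particular a primitive vector; this also gives injectivity of $\bar\sigma^*$.

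I then conclude by a splitting argument. The image of $\bar\sigma^*$ is freely generated by the vectors $\bar\sigma^*(\underline{t}^{\vee})$, $\underline{t}\in E_{n-1}(\kappa)$, which have pairwise disjoint supports and each have a coordinate equal to $\pm1$; such a family extends to a $\ZZ$-basis of the free module on $E_n(\kappa)$, so $\im\bar\sigma^*$ is a direct summand and $\coker\bar\sigma^* = \coker(\sigma^*,\lambda)$ is a bounded complex of free $\ZZ$-modules, hence cofibrant; injectivity of $(\sigma^*,\lambda)$ was recorded above. I expect the only delicate point to be the bookkeeping in the second observation — verifying that the prepending of the proof of Lemma~\ref{FirstStep:Equations:SuspensionSurjectivity} indeed lands in the cell indexed by $\kappa$, and, more importantly, correctly identifying the images of the latching morphisms as spans of dual basis elements (this is precisely where the hypothesis $\kappa\in\K_{n-1}(r)$ and the combinatorics of the complement $\kappa^{\vee}$ come in). Once the matrix of $\bar\sigma^*$ has been put into the ``at most one nonzero entry per row, every column primitive'' form, the remainder is formal.
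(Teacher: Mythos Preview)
Your proof is correct and rests on the same two ingredients as the paper's: the identification of $L\DOp_m(\kappa)$ with the span of those $\underline{s}^{\vee}$ satisfying $\kappa^{\vee}(\underline{s})\lneqq\kappa$ (the paper isolates this as a separate lemma), and the prepending construction from Lemma~\ref{FirstStep:Equations:SuspensionSurjectivity}. The organization differs: where you reduce to $\bar\sigma^*$ on the quotients and analyze its matrix (disjoint column supports, each column primitive via the canonical prepending), the paper instead dualizes the prepending section directly to a retraction $\tau^*:\DOp_n(r)\to\DOp_{n-1}(r)$ of $\sigma^*$, checks that $\tau^*$ preserves both $\DOp_m(\kappa)$ and $L\DOp_m(\kappa)$ (this is the computation $\kappa^{\vee}(t_1,\dots,t_{r-1},s_0,\dots,s_d)=\kappa^{\vee}(s_0,\dots,s_d)$ you also carry out), and concludes split injectivity of $(\sigma^*,\lambda)$ in one stroke. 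The retraction packaging is a bit more economical, but your matrix analysis is equivalent and arguably makes the splitting more explicit; both finish with the same boundedness observation.
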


The verification of this lemmas is deferred to a series of sublemmas.

\begin{lemm}\label{FinalStep:KModuleCofibration:LatchingObject}
The latching morphism $\lambda: L\DOp_n(\kappa)\rightarrow\DOp_n(\kappa)$
induces an isomorphism between the latching object
\begin{equation*}
L\DOp_n(\kappa) = \colim_{\alpha\lneqq\kappa} \DOp_n(\alpha)
\end{equation*}
and the submodule
\begin{equation*}
\Span\{\underline{s}^{\vee}\ \text{such that}\ \kappa^{\vee}(\underline{s})\lneqq\kappa\}\subset\DOp_n(\kappa).
\end{equation*}
\end{lemm}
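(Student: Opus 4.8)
The plan is to rerun the colimit computation from the proof of Proposition~\ref{SecondStep:DualModuleKStructure:Colimits}, now over the subposet of oriented weight-systems strictly below $\kappa$. First I would recall from Observation~\ref{SecondStep:DualModuleKStructure:DualCellBasis} that for each $\alpha\in\K(r)$ the submodule $\DOp_n(\alpha)\subset\DOp_n(r)$ is spanned by the dual basis elements $\underline{s}^{\vee}$ with $\kappa^{\vee}(\underline{s})\leq\alpha$, and from Lemma~\ref{SecondStep:DualModuleKStructure:KStructureVerification} that the structure maps $i_*\colon\DOp_n(\alpha)\to\DOp_n(\beta)$, for $\alpha\leq\beta$, are simply the inclusions of submodules of $\DOp_n(r)$. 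Since $\DOp_n(\alpha)\subset\DOp_n(\kappa)$ for $\alpha\lneqq\kappa$, the compatible family of these inclusions induces the latching morphism $\Phi=\lambda\colon L\DOp_n(\kappa)=\colim_{\alpha\lneqq\kappa}\DOp_n(\alpha)\to\DOp_n(\kappa)$, which is therefore surjective onto the submodule $\sum_{\alpha\lneqq\kappa}\DOp_n(\alpha)$.

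Second, I would identify this image. A dual basis element $\underline{s}^{\vee}$ lies in $\DOp_n(\alpha)$ for some $\alpha\lneqq\kappa$ if and only if $\kappa^{\vee}(\underline{s})\leq\alpha\lneqq\kappa$ for some such $\alpha$; taking $\alpha=\kappa^{\vee}(\underline{s})$ gives the ``if'' direction, and antisymmetry of the order on $\K(r)$ gives the ``only if'' direction. So this condition is equivalent to $\kappa^{\vee}(\underline{s})\lneqq\kappa$, and the image of $\Phi$ is exactly $\Span\{\underline{s}^{\vee}\ \text{such that}\ \kappa^{\vee}(\underline{s})\lneqq\kappa\}$.

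Third — the only step needing a genuine (but short) argument — I would prove injectivity of $\Phi$ by constructing an explicit inverse, copying the pattern of Proposition~\ref{SecondStep:DualModuleKStructure:Colimits}: define $\Psi$ on the submodule $\Span\{\underline{s}^{\vee}\ \text{such that}\ \kappa^{\vee}(\underline{s})\lneqq\kappa\}$ by sending $\underline{s}^{\vee}$ to the class of $\underline{s}^{\vee}$ coming from the summand $\DOp_n(\kappa^{\vee}(\underline{s}))$ of the colimit (this summand is indexed by an element strictly below $\kappa$, by hypothesis on $\underline{s}$). The key point is that, by Observation~\ref{SecondStep:DualModuleKStructure:DualCellBasis}, $\kappa^{\vee}(\underline{s})$ is the \emph{least} oriented weight-system $\alpha$ with $\underline{s}^{\vee}\in\DOp_n(\alpha)$; hence whenever $\underline{s}^{\vee}\in\DOp_n(\alpha)$ with $\alpha\lneqq\kappa$, the relation $\kappa^{\vee}(\underline{s})\leq\alpha$ provides a comparison morphism identifying, in the colimit, the class of $\underline{s}^{\vee}$ from the $\alpha$-summand with its class from the $\kappa^{\vee}(\underline{s})$-summand. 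This shows $\Psi$ is well defined and is a two-sided inverse of $\Phi$, so $\Phi$ — a chain map — is an isomorphism of dg-modules onto $\Span\{\underline{s}^{\vee}\ \text{such that}\ \kappa^{\vee}(\underline{s})\lneqq\kappa\}$, as claimed.

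There is no real obstacle here; the single subtle point to handle with care is that a colimit of a poset-diagram of submodules with inclusion maps need not coincide with their sum in general, and it is precisely the minimality of $\kappa^{\vee}(\underline{s})$ among the $\alpha$ with $\underline{s}^{\vee}\in\DOp_n(\alpha)$ that forces the colimit to collapse onto $\sum_{\alpha\lneqq\kappa}\DOp_n(\alpha)$ — the same mechanism already exploited in Proposition~\ref{SecondStep:DualModuleKStructure:Colimits}.
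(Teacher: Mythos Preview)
Your proposal is correct and follows essentially the same approach as the paper's own proof: both explicitly adapt the argument of Proposition~\ref{SecondStep:DualModuleKStructure:Colimits}, constructing the inverse map $\Psi$ (the paper calls it $\psi$) that sends each dual basis element $\underline{s}^{\vee}$ with $\kappa^{\vee}(\underline{s})\lneqq\kappa$ to its class in the summand $\DOp_n(\kappa^{\vee}(\underline{s}))$ of the colimit, and both verify it is a two-sided inverse of $\lambda$ using the minimality property of $\kappa^{\vee}(\underline{s})$ from Observation~\ref{SecondStep:DualModuleKStructure:DualCellBasis}. Your write-up is somewhat more detailed in justifying well-definedness, but the substance is identical.
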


\begin{proof}
We adapt the argument of Proposition~\ref{SecondStep:DualModuleKStructure:Colimits}.
Set
\begin{equation*}
S = \Span\{\underline{s}^{\vee}\ \text{such that}\ \kappa^{\vee}(\underline{s})\lneqq\kappa\}.
\end{equation*}
We have clearly $\lambda(L\DOp_n(\kappa))\subset S$.
We have a map
\begin{equation*}
\psi: S\rightarrow L\DOp_n(\kappa)
\end{equation*}
sending a basis element $\underline{s}^{\vee}$, $\kappa^{\vee}(\underline{s})\lneqq\kappa$,
to the same element $\underline{s}^{\vee}$ in the summand $\DOp_n(\kappa^{\vee}(\underline{s}))$
of the latching object.
We have clearly $\lambda\psi = \id$.
Conversely, a basis element $\underline{s}^{\vee}\in\DOp_n(\alpha)$
is identified in the latching object
with the same element $\underline{s}^{\vee}$
coming from $\DOp_n(\kappa^{\vee}(\underline{s}))$
since $\underline{s}^{\vee}\in\DOp_n(\alpha)\Rightarrow\kappa^{\vee}(\underline{s})\leq\alpha$.
Hence, we also have $\psi\lambda = \id$.
\end{proof}

\begin{lemm}\label{FinalStep:KModuleCofibration:SuspensionSplitting}
The morphism
\begin{equation*}
\sigma^*: \DOp_{n-1}(r)\rightarrow\DOp_n(r)
\end{equation*}
has a retraction $\tau^*$ mapping the modules
\begin{equation*}
L\DOp_m(\kappa)\subset\DOp_m(\kappa)\subset\DOp_m(r)
\end{equation*}
for $m=n$ into the same modules for $m=n-1$
and preserving the splitting
\begin{equation*}
\DOp_m(\kappa) = L\DOp_m(\kappa)
\oplus\Span\{\underline{s}^{\vee}\ \text{such that}\ \kappa^{\vee}(\underline{s})\not<\kappa\}.
\end{equation*}
\end{lemm}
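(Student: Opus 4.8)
The plan is to construct $\tau^*$ as the linear dual of the explicit $\ZZ$-linear section of the suspension morphism already exhibited in the proof of Lemma~\ref{FirstStep:Equations:SuspensionSurjectivity}. Recall from that proof that to a simplex $\underline{w} = (w_0,\dots,w_d)$ with $w_0 = (i_1,\dots,i_r)$ one associates $\underline{t}(\underline{w}) = (t_1,\dots,t_{r-1},w_0,w_1,\dots,w_d)$, where $t_k$ is obtained by reversing the first $r-k+1$ entries of $w_0$; this simplex is non-degenerate whenever $\underline{w}$ is, it satisfies $\mu_{i j}(\underline{t}(\underline{w})) = \mu_{i j}(\underline{w}) + 1$ and $\sigma(\underline{t}(\underline{w})) = \sigma(\underline{w})$ for all $i j$, and $\sgn\cap\underline{t}(\underline{w}) = \pm\underline{w}$. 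First I would check that $\underline{w}\mapsto\pm\underline{t}(\underline{w})$, with that sign normalised so that the last relation becomes an equality, extends to a degree-preserving $\ZZ$-linear map $\tau\colon\Lambda^{-1}\EOp_{n-1}(r)\rightarrow\EOp_n(r)$ with $\sigma\tau = \id$ (it is not a chain map, but this plays no role). Since $\sigma^*$ is obtained from $\sigma$ by applying the $\ZZ$-dual together with an operadic suspension, its dual $\tau^*\colon\DOp_n(r)\rightarrow\DOp_{n-1}(r)$ then satisfies $\tau^*\sigma^* = (\sigma\tau)^* = \id$, so $\tau^*$ is a retraction of $\sigma^*$; the cases $r\leq 1$ are trivial.

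Next I would compute $\tau^*$ on the dual basis of $\DOp_n(r)$: one gets $\tau^*(\underline{s}^{\vee}) = \pm\underline{w}^{\vee}$ whenever $\underline{s}$ has the special form $\underline{t}(\underline{w})$ --- in which case $\underline{w} = (s_{r-1},\dots,s_e)$ is uniquely recovered from $\underline{s} = (s_0,\dots,s_e)$ and lies in $\NDeg\WOp_{n-1}(r)$ --- and $\tau^*(\underline{s}^{\vee}) = 0$ otherwise. The crucial observation, which is the analogue for $\tau^*$ of Lemma~\ref{SecondStep:DualModuleKStructure:DualSuspensionFiltration}, is that in the nonzero case
\begin{equation*}
\kappa^{\vee}(\underline{s}) = (n-1-\mu(\underline{s}),\sigma(\underline{s})) = (n-2-\mu(\underline{w}),\sigma(\underline{w})) = \kappa^{\vee}(\underline{w}),
\end{equation*}
where $\kappa^{\vee}(\underline{s})$ is formed in $\K_n$ and $\kappa^{\vee}(\underline{w})$ in $\K_{n-1}$: the shift $\mu\mapsto\mu+1$ coming from the extra permutations $t_1,\dots,t_{r-1}$ is exactly absorbed by the passage from $\K_{n-1}$ to $\K_n$ in the definition of the complement, and the last permutation is unchanged.

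From this identity the three assertions are immediate. By Observation~\ref{SecondStep:DualModuleKStructure:DualCellBasis} a basis element $\underline{s}^{\vee}$ lies in $\DOp_n(\kappa)$ exactly when $\kappa^{\vee}(\underline{s})\leq\kappa$, and, by Lemma~\ref{FinalStep:KModuleCofibration:LatchingObject}, in $L\DOp_n(\kappa)$ exactly when $\kappa^{\vee}(\underline{s})\lneqq\kappa$; so if $\tau^*(\underline{s}^{\vee}) = \pm\underline{w}^{\vee}\neq 0$ then $\kappa^{\vee}(\underline{w}) = \kappa^{\vee}(\underline{s})$ stands in the same relation to $\kappa$. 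This gives at once $\tau^*(\DOp_n(\kappa))\subset\DOp_{n-1}(\kappa)$ and $\tau^*(L\DOp_n(\kappa))\subset L\DOp_{n-1}(\kappa)$, and --- running the argument with the condition $\kappa^{\vee}(\underline{s})\not<\kappa$ --- that $\tau^*$ carries $\Span\{\underline{s}^{\vee}\ \text{such that}\ \kappa^{\vee}(\underline{s})\not<\kappa\}$ for $m=n$ into the corresponding submodule for $m=n-1$. Since, again by Lemma~\ref{FinalStep:KModuleCofibration:LatchingObject}, $\DOp_m(\kappa)$ is the direct sum of $L\DOp_m(\kappa)$ and the span of the basis elements $\underline{s}^{\vee}$ with $\kappa^{\vee}(\underline{s}) = \kappa$, this is precisely the statement that $\tau^*$ preserves the indicated splitting.

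I do not expect a genuine obstacle here. The only steps that call for a little care are the sign bookkeeping in identifying $\sigma^*$ with a suspension of the $\ZZ$-dual of $\sigma$, which is already governed by the conventions of~\S\ref{FirstStep:Equations} and~\S\ref{SecondStep:DualModuleKStructure}, and the elementary verification that $\underline{w}$ is uniquely determined by $\underline{s}$ when $\underline{s} = \underline{t}(\underline{w})$; the substantive content reduces to the weight-complement identity $\kappa^{\vee}(\underline{t}(\underline{w})) = \kappa^{\vee}(\underline{w})$, which is a one-line consequence of $\mu(\underline{t}(\underline{w})) = \mu(\underline{w}) + 1$ and the preservation of the last permutation.
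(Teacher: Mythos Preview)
Your proposal is correct and follows essentially the same approach as the paper: both dualise the explicit section $\underline{w}\mapsto\underline{t}(\underline{w})$ of Lemma~\ref{FirstStep:Equations:SuspensionSurjectivity} and reduce the claim to the weight--complement identity $\kappa^{\vee}(\underline{t}(\underline{w})) = \kappa^{\vee}(\underline{w})$ coming from $\mu(\underline{t}(\underline{w})) = \mu(\underline{w})+1$. Your description of $\tau^*$ on dual basis elements (namely $\tau^*(\underline{s}^{\vee}) = \pm\underline{w}^{\vee}$ when $\underline{s} = \underline{t}(\underline{w})$ and $0$ otherwise) is in fact more explicit than the paper's compressed formulation, but the substance is identical.
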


\begin{proof}
The section of the proof of Lemma~\ref{FirstStep:Equations:SuspensionSurjectivity}
gives by duality a morphism
\begin{equation*}
\tau^*: \DOp_n(r)\rightarrow\DOp_{n-1}(r)
\end{equation*}
mapping a basis element $\underline{s}^{\vee} = (s_0,\dots,s_d)^{\vee}$ with $\kappa(s_0,\dots,s_d) = (\mu,\sigma)$
to another basis element
\begin{equation*}
\tau^*((s_0,\dots,s_d)^{\vee}) = \pm (t_1,\dots,t_{r-1},s_0,\dots,s_d)^{\vee}
\end{equation*}
and an easy inspection of the definition of the permutations $t_1,\dots,t_{r-1}$
shows that $\kappa(t_1,\dots,t_{r-1},s_0,\dots,s_d) = (\mu-1,\sigma)$.
By inversion,
we obtain the identity
\begin{align*}
\kappa^{\vee}(t_1,\dots,t_{r-1},s_0,\dots,s_d) & = ((n-2)-(\mu-1),\sigma) \\
& = (n-1-\mu,\sigma) = \kappa^{\vee}(s_0,\dots,s_d)
\end{align*}
and we conclude that the map $\tau^*$ gives the required retraction.
\end{proof}

Lemmas~\ref{FinalStep:KModuleCofibration:LatchingObject}-\ref{FinalStep:KModuleCofibration:SuspensionSplitting}
imply readily:

\begin{lemm}\label{FinalStep:KModuleCofibration:LatchingSplitting}
The extended latching morphism of Lemma~\ref{FinalStep:KModuleCofibration:LatchingCofibration}
is split injective.\qed
\end{lemm}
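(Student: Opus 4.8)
The plan is to prove Lemma~\ref{FinalStep:KModuleCofibration:LatchingSplitting} by combining the explicit retraction $\tau^*$ of $\sigma^*$ constructed in Lemma~\ref{FinalStep:KModuleCofibration:SuspensionSplitting} with the explicit description of the latching object furnished by Lemma~\ref{FinalStep:KModuleCofibration:LatchingObject}. Recall that the extended latching morphism in question is
\begin{equation*}
(\sigma^*,\lambda): \DOp_{n-1}(\kappa)\bigoplus_{L\DOp_{n-1}(\kappa)}L\DOp_n(\kappa)\rightarrow\DOp_n(\kappa),
\end{equation*}
and that, by Lemma~\ref{FinalStep:KModuleCofibration:LatchingObject}, the submodule $L\DOp_m(\kappa)\subset\DOp_m(\kappa)$ is identified with the span of the dual basis elements $\underline{s}^{\vee}$ such that $\kappa^{\vee}(\underline{s})\lneqq\kappa$. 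This gives a natural splitting
\begin{equation*}
\DOp_m(\kappa) = L\DOp_m(\kappa)\oplus\Span\{\underline{s}^{\vee}\ \text{such that}\ \kappa^{\vee}(\underline{s})\not<\kappa\}
\end{equation*}
at the level of graded $\ZZ$-modules, for both $m = n$ and $m = n-1$.

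The first step is to observe that the source of the extended latching morphism fits into a pushout square of injections, so it is naturally identified with $L\DOp_n(\kappa) + \sigma^*(\DOp_{n-1}(\kappa))$ viewed inside $\DOp_n(\kappa)$; concretely, $\DOp_{n-1}(\kappa)\oplus_{L\DOp_{n-1}(\kappa)}L\DOp_n(\kappa)$ is the span of $\sigma^*(\DOp_{n-1}(\kappa))$ together with the dual basis elements $\underline{s}^{\vee}$ with $\kappa^{\vee}(\underline{s})\lneqq\kappa$. The second step is to verify, using the computation at the end of the proof of Lemma~\ref{FinalStep:KModuleCofibration:SuspensionSplitting}, that the retraction $\tau^*: \DOp_n(r)\rightarrow\DOp_{n-1}(r)$ preserves the two subspaces in the displayed splitting: it sends $L\DOp_n(\kappa)$ to $L\DOp_{n-1}(\kappa)$ and the complementary span for $m = n$ to the complementary span for $m = n-1$, precisely because $\kappa^{\vee}(t_1,\dots,t_{r-1},s_0,\dots,s_d) = \kappa^{\vee}(s_0,\dots,s_d)$ so that the condition $\kappa^{\vee}(\underline{s})\not<\kappa$ is preserved. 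The third step assembles these: the retraction $\tau^*$ restricted appropriately, together with the identity on $L\DOp_n(\kappa)$ wherever the two pieces overlap, yields a genuine retraction of the extended latching morphism, so that morphism is a split monomorphism of dg-modules. Since a split monomorphism of dg-modules over $\ZZ$ between degreewise projective (indeed free) modules is a cofibration by the characterization of \cite[\S\S 2.3.6-9]{Hovey}, this proves Lemma~\ref{FinalStep:KModuleCofibration:LatchingCofibration}, hence Proposition~\ref{FinalStep:KModuleCofibration:SuspensionCofibration} via Proposition~\ref{Interlude:GraphOperadModel:SymmetricKDiagramCofibrations}.

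I expect the only real bookkeeping obstacle to be making the identification in the first step fully precise — that is, checking that the canonical map from the pushout $\DOp_{n-1}(\kappa)\oplus_{L\DOp_{n-1}(\kappa)}L\DOp_n(\kappa)$ into $\DOp_n(\kappa)$ is injective and that its image is exactly the span described above. This amounts to checking that $\sigma^*(\DOp_{n-1}(\kappa))\cap L\DOp_n(\kappa) = \sigma^*(L\DOp_{n-1}(\kappa))$ inside $\DOp_n(\kappa)$, which follows from the fact (Lemma~\ref{SecondStep:DualModuleKStructure:DualSuspensionFiltration} and its proof) that $\sigma^*$ sends dual basis elements to dual basis elements with the same $\kappa^{\vee}$-value and is injective on the basis. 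Everything else is a routine assembly of the retraction, and no homological obstruction arises because the whole situation splits at the level of free $\ZZ$-modules spanned by explicit simplices. The upshot, as stated in the excerpt, is that the extended latching morphism is split injective, which is all that Lemma~\ref{FinalStep:KModuleCofibration:LatchingSplitting} asserts.
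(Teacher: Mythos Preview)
Your approach is essentially the same as the paper's: the statement is marked \qed\ and is asserted to follow ``readily'' from Lemmas~\ref{FinalStep:KModuleCofibration:LatchingObject}--\ref{FinalStep:KModuleCofibration:SuspensionSplitting}, which is precisely the combination you invoke. Your main three-step argument is correct.

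There is, however, a genuine slip in your final bookkeeping paragraph. You justify the identity $\sigma^*(\DOp_{n-1}(\kappa))\cap L\DOp_n(\kappa)=\sigma^*(L\DOp_{n-1}(\kappa))$ by asserting that ``$\sigma^*$ sends dual basis elements to dual basis elements with the same $\kappa^{\vee}$-value.'' This is false in arity $r>2$: from the formula $\sigma^*(c)(w_0,\dots,w_d)=\sgn(w_0,\dots,w_{r-1})\cdot c(w_{r-1},\dots,w_d)$ one sees that $\sigma^*(\underline{s}^{\vee})$ is a \emph{sum} of dual basis elements $\underline{w}^{\vee}$, and the inequality $\mu_{ij}(\underline{w})\geq\mu_{ij}(\underline{s})+1$ in the proof of Lemma~\ref{SecondStep:DualModuleKStructure:DualSuspensionFiltration} only gives $\kappa^{\vee}(\underline{w})\leq\kappa^{\vee}(\underline{s})$, not equality. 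The intersection identity is nevertheless true, but the correct argument runs through $\tau^*$ rather than $\sigma^*$: if $c\in C_{n-1}$ has $\sigma^*(c)\in L\DOp_n(\kappa)$, then $c=\tau^*\sigma^*(c)\in\tau^*(L\DOp_n(\kappa))\subset L\DOp_{n-1}(\kappa)$ by Lemma~\ref{FinalStep:KModuleCofibration:SuspensionSplitting}, hence $c\in C_{n-1}\cap L\DOp_{n-1}(\kappa)=0$.

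A second, smaller point: your closing sentence overreaches the statement of the lemma. Split injectivity alone does not yield a cofibration of unbounded dg-modules over~$\ZZ$; one also needs the cokernel to be cofibrant, and for this the paper invokes the boundedness of Fact~\ref{FinalStep:KModuleCofibration:DegreeBounds}. The lemma itself only asserts split injectivity, so this does not affect the proof of the stated result.
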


By Lemma~\ref{FirstStep:Conclusion:BarrattEcclesDegreeBounds},
we also have:

\begin{fact}\label{FinalStep:KModuleCofibration:DegreeBounds}
The dg-modules
\begin{equation*}
L\DOp_m(\kappa)\subset\DOp_m(\kappa)\subset\DOp_m(r)
\end{equation*}
are bounded, for all $m<\infty$.
\end{fact}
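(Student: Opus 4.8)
The plan is to derive Fact~\ref{FinalStep:KModuleCofibration:DegreeBounds} directly from the boundedness of the Barratt-Eccles operad established in Lemma~\ref{FirstStep:Conclusion:BarrattEcclesDegreeBounds}, by passing from $\EOp_m(r)$ to $\DOp_m(r) = \Lambda^{-m}\EOp_m^{\vee}(r)$ through dualization and operadic desuspension, and then observing that both $\DOp_m(\kappa)$ and $L\DOp_m(\kappa)$ sit inside $\DOp_m(r)$ as graded submodules.

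First I would recall that, by Lemma~\ref{FirstStep:Conclusion:BarrattEcclesDegreeBounds}, the chain complex $\EOp_m(r)$ is concentrated in a finite range of degrees (namely in degrees $0\leq d<m r(r-1)/2$), and that, by~\S\ref{Prelude:BarrattEccles:Degree0}, each $\EOp_m(r)$ is a degreewise finitely generated free $\ZZ$-module. Hence the $\ZZ$-dual $\EOp_m^{\vee}(r) = \Hom_{\C}(\EOp_m(r),\ZZ)$ is concentrated in the opposite finite range of degrees, and the operadic desuspension $\Lambda^{-m}$, which only applies a fixed arity-dependent shift of degrees together with a twist of the $\Sigma_r$-action, does not affect this property. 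Therefore $\DOp_m(r)$ is bounded for every $r\in\NN$ and every $m<\infty$.

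Next I would invoke the explicit description of the $\K$-structure from~\S\ref{SecondStep:DualModuleKStructure:Definition}: the dg-module $\DOp_m(\kappa)$ is the submodule of $\DOp_m(r)$ spanned by the dual basis elements $\underline{s}^{\vee}$ with $\kappa^{\vee}(\underline{s})\leq\kappa$, so it is a graded submodule of the bounded complex $\DOp_m(r)$, hence itself concentrated in a finite range of degrees. For the latching object, Lemma~\ref{FinalStep:KModuleCofibration:LatchingObject} identifies $L\DOp_m(\kappa)$ with $\Span\{\underline{s}^{\vee}\ \text{such that}\ \kappa^{\vee}(\underline{s})\lneqq\kappa\}\subset\DOp_m(\kappa)\subset\DOp_m(r)$, and the same argument applies verbatim.

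I do not expect any real obstacle here: the only nonformal input is Lemma~\ref{FirstStep:Conclusion:BarrattEcclesDegreeBounds}, which is already available, and the one point that calls for a little care is the bookkeeping of degrees under dualization and under the operadic desuspension $\Lambda^{-m}$, together with the remark that the degreewise finite generation of $\EOp_m(r)$ is precisely what makes the passage to the dual behave well.
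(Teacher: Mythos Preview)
Your proposal is correct and follows exactly the same approach as the paper: the paper simply writes ``By Lemma~\ref{FirstStep:Conclusion:BarrattEcclesDegreeBounds}, we also have'' before stating the fact, and your argument just spells out the routine dualization, desuspension, and submodule steps that this citation implicitly invokes.
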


This fact and Lemma~\ref{FinalStep:KModuleCofibration:LatchingSplitting}
imply the conclusion of Lemma~\ref{FinalStep:KModuleCofibration:LatchingCofibration}
and achieves the proof of Proposition~\ref{FinalStep:KModuleCofibration:SuspensionCofibration}.\qed

\subsection{The lifting argument}\label{FinalStep:KOperadLifting}
The goal of this subsection is to define the liftings $\tilde{\phi}_n: \BOp^c(\DOp_n)\rightarrow\EOp$
of the morphisms of $\K$-operads $\phi_n: \BOp^c(\DOp_n)\rightarrow\COp$
equivalent by adjunction to the morphisms of Lemma~\ref{FirstStep:Result}.
To fulfil the requirements of Theorem~\ref{Prelude:RealizationTheorem},
we have to fix $\tilde{\phi}_n: \BOp^c(\DOp_n)\rightarrow\EOp$
on the $2$-ary part of the generating $\K$-diagram $\DOp_n$
and we analyze this issue first.

Throughout this subsection, we use the short notation $\MOp_n = \Sigma^{-1}\DOp_n$
for the desuspension of~$\DOp_n$.

\subsubsection{The $2$-ary components}\label{FinalStep:KOperadLifting:TwoaryTerms}
Recall that $\EOp(2)$
is identified with the standard $\Sigma_2$-free resolution of the trivial representation of $\Sigma_2$:
\begin{equation*}
\ZZ\mu_0\oplus\ZZ\tau\mu_0\xleftarrow{\tau-1}\ZZ\mu_1\oplus\ZZ\tau\mu_1
\xleftarrow{\tau+1}\ZZ\mu_2\oplus\ZZ\tau\mu_2\xleftarrow{\tau-1}\cdots,
\end{equation*}
where $\mu_d$ is the element of $\EOp(2)$
defined by the alternate $d$-simplex $(\id,\tau,\id,\dots)$
and $\tau$ refers to the transposition of $(1,2)$.
For the submodules $\EOp(\kappa)$ associated to oriented weight systems $\kappa = (\mu,\id),(\mu,\tau)\in\K(2)$,
we have clearly:
\begin{align*}
& \EOp(\kappa)_d = \EOp(2)_d = \ZZ\mu_d\oplus\ZZ\tau\mu_d
&& \quad\text{in degree $d<\mu_{1 2}$},\\
& \EOp(\mu,\id)_d = \ZZ\mu_d\quad\text{and}\quad\EOp(\mu,\tau)_d = \ZZ\tau\mu_d
&& \quad\text{in degree $d=\mu_{1 2}$},\\
& \EOp(\kappa)_d = 0
&& \quad\text{in degree $d>\mu_{1 2}$}.
\end{align*}

The submodule $\EOp_n(2)$
is identified with the truncation of $\EOp(2)$
in degree $d<n$.
The dual dg-module $\MOp_n(2) = \Sigma^{-1}(\DOp_n)(2)$
is identified with the chain complex:
\begin{equation*}
\ZZ\mu_{n-1}^{\vee}\oplus\ZZ\tau\mu_{n-1}^{\vee}
\xleftarrow{\tau-1}\ZZ\mu_{n-2}^{\vee}\oplus\ZZ\tau\mu_{n-2}^{\vee}
\xleftarrow{\tau+1}\cdots
\xleftarrow{\tau\pm 1}\ZZ\mu_0^{\vee}\oplus\ZZ\tau\mu_0^{\vee}
\end{equation*}
where $\mu_d^{\vee}$ and $\tau\mu_d^{\vee}$
are dual basis elements, put in degree $n-1-d$,
of the alternate simplices
$\mu_d = (\id,\tau,\id,\dots)$ and $\tau\mu_d = (\tau,\id,\tau,\dots)$.
For an oriented weight systems $\kappa = (\mu,\id),(\mu,\tau)\in\K(2)$
such that $\mu_{1 2}\leq n-1$,
we also obtain:
\begin{align*}
& \MOp_n(\kappa)_d = \ZZ\mu_{n-1-d}^{\vee}\oplus\ZZ\tau\mu_{n-1-d}^{\vee}
&& \quad\text{in degree $d<\mu_{1 2}$},\\
& \MOp_n(\mu,\id)_d = \ZZ\mu_{n-1-d}^{\vee}\quad\text{and}\quad\MOp_n(\mu,\tau)_d = \ZZ\tau\mu_{n-1-d}^{\vee}
&& \quad\text{in degree $d=\mu_{1 2}$},\\
& \MOp_n(\kappa)_d = 0
&& \quad\text{in degree $d>\mu_{1 2}$}.
\end{align*}
In the case $\mu_{1 2}>n-1$,
we have $\MOp_n(\kappa)_d = \MOp_n(2)_d$
for every $d$.

The dual of the suspension morphism of the Barratt-Eccles operad
satisfies:
\begin{equation*}
\sigma^*(\mu_{n-2-d}^{\vee}) = \mu_{n-1-d}^{\vee}
\quad\text{and}\quad\sigma^*(\mu_{n-1-d}^{\vee}) = \mu_{n-1-d}^{\vee},
\end{equation*}
for every $0\leq d\leq n-2$
and hence identifies $\MOp_{n-1}(2)$
with a truncation of $\MOp_n(2)$.

\medskip
For each $\kappa\in\K_n(2)$,
we have a natural morphism
\begin{equation*}
\tilde{\phi}_n: \MOp_n(\kappa)\rightarrow\EOp(\kappa)
\end{equation*}
defined by $\tilde{\phi}_n(\mu_{n-1-d}^{\vee}) = \mu_d$
on basis elements.
From our observations,
we conclude:

\begin{fact}\label{FinalStep:KOperadLifting:TwoaryKModuleLifting}
In arity $r = 2$,
the just defined morphisms form, for each fixed $n\geq 1$, a morphism of $\K(2)$-diagrams
\begin{equation*}
\tilde{\phi}_n: \MOp_n(\kappa)\rightarrow\EOp(\kappa),
\end{equation*}
commuting with the action of permutations $w\in\Sigma_2$,
with augmentations
\begin{equation*}
\xymatrix{ \MOp_n(\kappa)\ar@{.>}[rr]\ar[dr] &&
\EOp(\kappa)\ar[dl] \\
& \ZZ & },
\end{equation*}
and so that the diagram
\begin{equation*}
\xymatrix{ \MOp_1(\kappa)\ar[r]^{\sigma^*}\ar@{.>}[d]_{\tilde{\phi}_1} &
\cdots\ar[r]^(0.3){\sigma^*} &
\MOp_n(\kappa)\ar[r]^{\sigma^*}\ar@{.>}[d]_{\tilde{\phi}_n} &
\cdots \\
\EOp(\kappa)\ar[r]_{=} & \cdots\ar[r]_{=} &
\EOp(\kappa)\ar[r]_{=} & \cdots }
\end{equation*}
commutes, for each $\kappa\in\K(2)$.
\end{fact}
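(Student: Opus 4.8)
The plan is to read everything off the explicit descriptions of $\MOp_n(\kappa)$ and $\EOp(\kappa)$ assembled in~\S\ref{FinalStep:KOperadLifting:TwoaryTerms}. First I would note that the prescription $\tilde{\phi}_n(\mu_{n-1-d}^{\vee}) = \mu_d$, extended $\Sigma_2$-equivariantly (so $\tilde{\phi}_n(\tau\mu_{n-1-d}^{\vee}) = \tau\mu_d$), defines a single degree-preserving homomorphism $\tilde{\phi}_n\colon\MOp_n(2)\rightarrow\EOp(2)$: the extension is unambiguous because in every degree the two basis elements $\mu_{n-1-d}^{\vee},\tau\mu_{n-1-d}^{\vee}$ form a free $\Sigma_2$-orbit, and it is a chain map because, up to the signs introduced by the operadic (de)suspensions, the internal differential of $\MOp_n(2)$ is the $\ZZ$-dual of the differential $\delta(\mu_e) = \tau\mu_{e-1}+(-1)^e\mu_{e-1}$ of $\EOp(2)$, which under the correspondence $\mu_{n-1-d}^{\vee}\leftrightarrow\mu_d$ is carried back to that same differential. (One already has $\MOp_n(2) = \colim_{\kappa\in\K(2)}\MOp_n(\kappa)$ and $\EOp(2) = \colim_{\kappa\in\K(2)}\EOp(\kappa)$ by Propositions~\ref{SecondStep:DualModuleKStructure:Colimits} and~\ref{Interlude:CompleteGraphOperad:BarrattEcclesLayers}, so this is precisely the colimit-level morphism.) The content of the Fact is then that $\tilde{\phi}_n$ descends to the level of $\K(2)$-diagrams and is compatible with permutations, augmentations, and the maps $\sigma^*$.

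Next I would verify the only non-formal point: that $\tilde{\phi}_n$ carries $\MOp_n(\kappa)$ into $\EOp(\kappa)$ for each oriented weight-system $\kappa = (\mu,\sigma)\in\K(2)$. Comparing the two three-line descriptions of~\S\ref{FinalStep:KOperadLifting:TwoaryTerms} degree by degree, the assignment $\mu_{n-1-d}^{\vee}\mapsto\mu_d$ identifies $\MOp_n(\kappa)_d$ with $\EOp(\kappa)_d$ term by term --- both are $\ZZ$-free of rank $2$ on the pair indexed by $d$ when $d<\mu_{12}$, of rank $1$ on the component selected by $\sigma$ when $d=\mu_{12}$, and zero when $d>\mu_{12}$; and when $\mu_{12}>n-1$ one has $\MOp_n(\kappa) = \MOp_n(2)$, while $\EOp(\kappa)_d = \EOp(2)_d$ throughout the degree range $d\le n-1$ in which $\MOp_n(2)$ is supported. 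This case split on $\mu_{12}\le n-1$ versus $\mu_{12}>n-1$ is the one place where one must pause. Consequently $\tilde{\phi}_n(\MOp_n(\kappa))\subset\EOp(\kappa)$ for all $\kappa$; since for every relation $\alpha\le\beta$ in $\K(2)$ the structure maps $i_*$ of both $\MOp_n$ and $\EOp$ are the submodule inclusions induced by the colimit-level identity, the naturality squares of~\S\ref{Interlude:CompleteGraphOperad:KOperads} commute automatically, and the induced chain maps $\tilde{\phi}_n\colon\MOp_n(\kappa)\rightarrow\EOp(\kappa)$ form a morphism of $\K(2)$-diagrams.

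Finally I would dispose of the three remaining compatibilities, all immediate from the definition of $\tilde{\phi}_n$ on basis elements. Equivariance under $\Sigma_2$ holds by construction; the augmentation triangles are checked on the generating-degree parts, over which $\MOp_n(\kappa)$ and $\EOp(\kappa)$ are augmented over $\ZZ$ in matching bases; and for the last square the relation $\sigma^*(\mu_{n-2-d}^{\vee}) = \mu_{n-1-d}^{\vee}$ gives
\[
\tilde{\phi}_n\bigl(\sigma^*(\mu_{n-2-d}^{\vee})\bigr) = \tilde{\phi}_n(\mu_{n-1-d}^{\vee}) = \mu_d = \tilde{\phi}_{n-1}(\mu_{n-2-d}^{\vee}),
\]
so the square commutes on basis elements, hence commutes. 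No step of this argument presents a real obstacle: the only delicate points are the degreewise bookkeeping of the middle paragraph --- especially the case distinction $\mu_{12}\le n-1$ versus $\mu_{12}>n-1$ --- and keeping track of the $(-1)^e$ signs in the chain-map check of the first.
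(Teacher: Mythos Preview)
Your proposal is correct and follows exactly the approach the paper intends: the paper gives no separate proof for this Fact, introducing it only with ``From our observations, we conclude,'' so the content is meant to be read off directly from the explicit degree-by-degree descriptions of $\MOp_n(\kappa)$ and $\EOp(\kappa)$ assembled in~\S\ref{FinalStep:KOperadLifting:TwoaryTerms}. Your write-up simply spells out those routine verifications --- the inclusion $\tilde{\phi}_n(\MOp_n(\kappa))\subset\EOp(\kappa)$ via the case split on $\mu_{12}$, equivariance, the augmentation triangle, and the compatibility with $\sigma^*$ --- in the detail the paper omits.
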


\subsubsection{The bottom filtration layer of a quasi-free operad}\label{FinalStep:KOperadLifting:TwoaryQuasiFreeBottomLayer}
In the remainder of this section,
we use the notation $M^{(2)}$ to represent the $\Sigma_*$-object $M^{(2)}\subset M$
such that
\begin{equation*}
M^{(2)}(r) = \begin{cases} M(2), & \text{if $r=2$}, \\ 0, & \text{otherwise}. \end{cases}
\end{equation*}
If $M$ has a $\K$-structure,
then so does the $\Sigma_*$-object $M^{(2)}\subset M$
and we may also use the notation $M^{(2)}$
to refer to the symmetric $\K$-diagram underlying $M^{(2)}(r)$.
If necessary,
then we use dummy variables to mark the distinction
between the $\Sigma_*$-object $M^{(2)}$
and its underlying $\K$-diagram,
as usual.

The assertion of Fact~\ref{FinalStep:KOperadLifting:TwoaryKModuleLifting}
amounts to the definition of a morphism of symmetric $\K$-diagrams
\begin{equation*}
\MOp_n^{(2)}\xrightarrow{\tilde{\phi}_n}\EOp^{(2)}\subset\EOp,
\end{equation*}
for every $n\geq 1$.

For a quasi-free operad $\POp = (\FOp(M),\partial)$
such that $M(0) = M(1) = 0$,
we have $M_{\leq 1} = 0$ and the submodule $M_{\leq 2}$ of~\S\ref{Interlude:GraphOperadModel:QuasiFreeFiltration}
is reduced to $M_{\leq 2} = M^{(2)}$.
Moreover,
if we assume $\partial(M)\subset\bigoplus_{m\geq 2}\FOp_{m}(M)$,
then the observation of~\S\ref{Interlude:GraphOperadModel:QuasiFreeFiltration}
\begin{equation*}
\partial(M_{\leq r})\subset\FOp(M_{\leq r-1})
\end{equation*}
implies that $\partial$ vanishes on $M_{\leq 2}$
and the $2$nd layer of the quasi-free operad filtration
\begin{equation*}
\POp_{\leq r} = (\FOp(M_{\leq r}),\partial)\subset\POp
\end{equation*}
is identified with the free operad $\POp_{\leq 2} = \FOp(M^{(2)})$.

The morphisms of $\K$-operads $\tilde{\phi}_n: \FOp(\MOp_n^{(2)})\rightarrow\EOp$
associated to the morphisms of Fact~\ref{FinalStep:KOperadLifting:TwoaryKModuleLifting}
define morphisms on the $2$nd layer of the quasi-free operads $\BOp^c(\DOp_n) = (\FOp(\MOp_n),\partial)$.

\medskip
Note that:

\begin{fact}\label{FinalStep:KOperadLifting:TwoaryRestriction}
The restriction of the morphisms of Lemma~\ref{FirstStep:Result}
\begin{equation*}
\phi_n: \BOp^c(\DOp_n)\rightarrow\COp
\end{equation*}
to the generators of arity $2$ of the cobar construction
agrees with the obvious augmentation $\epsilon: \MOp_n(2)\rightarrow\ZZ$
\end{fact}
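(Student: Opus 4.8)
The plan is to cut the statement down, by degree and equivariance, to a single integer which is then computed by induction on $n$ from the commutative squares of Lemma~\ref{FirstStep:Result}.

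First I would pin down the shape of the arity-$2$ component. The restriction of $\phi_n$ to the generating $\Sigma_*$-object $\MOp_n$ is a morphism of dg-modules, and $\COp(2) = \ZZ$ sits in degree $0$; hence $\phi_n$ vanishes on $\MOp_n(2)_d$ for $d>0$ and is determined by a $\Sigma_2$-equivariant homomorphism $\MOp_n(2)_0\rightarrow\COp(2)$. By the explicit description of~\S\ref{FinalStep:KOperadLifting:TwoaryTerms}, the module $\MOp_n(2)_0$ is free of rank one over $\ZZ[\Sigma_2]$ on $\mu_{n-1}^{\vee}$ (twisted by the sign of permutations when $n$ is odd), so such a homomorphism into the trivial module $\ZZ$ is determined by the single integer $\lambda_n := \phi_n(\mu_{n-1}^{\vee})$. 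The obvious augmentation $\epsilon\colon\MOp_n(2)\rightarrow\ZZ$ is, by its normalization, precisely the equivariant homomorphism with $\lambda_n$ replaced by $1$: it coincides in degree $0$ with the composite $\MOp_n^{(2)}\xrightarrow{\tilde\phi_n}\EOp^{(2)}\rightarrow\COp$ of~\S\S\ref{FinalStep:KOperadLifting:TwoaryTerms}-\ref{FinalStep:KOperadLifting:TwoaryQuasiFreeBottomLayer}, which sends $\mu_{n-1}^{\vee}\mapsto\mu_0\mapsto 1$ and all remaining generators to $0$. So the Fact amounts to the equality $\lambda_n = 1$ for every $n\geq 1$.

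Next I would run the induction on $n$. For $n=1$ the morphism $\phi_1$ is by definition the composite $\BOp^c(\DOp_1)\xrightarrow{\epsilon_1}\AOp\xrightarrow{\alpha}\COp$; the Koszul duality equivalence of the associative operad satisfies $\epsilon_1(\mu^{\vee}) = \mu = \mu_0$ by Fact~\ref{Prelude:GerstenhaberOperads:KoszulDualityStatement}(\ref{KoszulDuality:Associative}), and $\alpha$ carries the identity permutation $\mu_0\in\AOp(2)$ to $1\in\COp(2)$, so $\lambda_1 = 1$. For the passage from $n-1$ to $n$, the commutativity of the corresponding square in Lemma~\ref{FirstStep:Result} gives $\phi_{n-1} = \phi_n\circ\sigma^*$; evaluating this identity on the degree-$0$ generator $\mu_{n-2}^{\vee}$ of $\MOp_{n-1}(2)$ and invoking the formula $\sigma^*(\mu_{n-2}^{\vee}) = \mu_{n-1}^{\vee}$ recorded in~\S\ref{FinalStep:KOperadLifting:TwoaryTerms} yields $\lambda_{n-1} = \phi_{n-1}(\mu_{n-2}^{\vee}) = \phi_n(\mu_{n-1}^{\vee}) = \lambda_n$. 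Hence $\lambda_n = \lambda_1 = 1$ for all $n$, as needed.

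I do not expect any genuine obstacle here: the argument is entirely bookkeeping with the bases of $\MOp_n(2)$ and with the compatibility $\phi_{n-1} = \phi_n\sigma^*$ already built into Lemma~\ref{FirstStep:Result}. The one point deserving a glance is that the sign conventions coming from the adjunction $\theta_\omega(r)(c) = \pm c(\omega(r))$ of~\S\ref{FirstStep:Equations} and from the operadic (de)suspension do not interfere; since the formulas for $\sigma^*$ and for $\tilde\phi_n$ in~\S\S\ref{FinalStep:KOperadLifting:TwoaryTerms}-\ref{FinalStep:KOperadLifting:TwoaryQuasiFreeBottomLayer} already carry coefficient $+1$ on the relevant generators, these signs cancel and the recursion $\lambda_{n-1} = \lambda_n$ holds verbatim.
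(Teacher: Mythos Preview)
Your argument is correct and matches the paper's second justification essentially line for line: the paper notes that $\phi_n$ vanishes in degree $d>0$ and is determined on $\MOp_n(2)_0$ by $\phi_1$ via the compatibility $\phi_{n-1}=\phi_n\sigma^*$, which is exactly your induction $\lambda_n=\lambda_{n-1}=\cdots=\lambda_1=1$. The paper additionally mentions a shorter route you did not take, namely reading the claim off directly from the explicit construction of the twisting elements $\omega_n(r)$ in~\S\ref{FirstStep:Conclusion} (where $\omega_1(2)=\mu$ and $\sigma(\omega_n(2))=\omega_{n-1}(2)$ force $\omega_n(2)$ to pair to~$1$ with $\mu_{n-1}^{\vee}$), but your compatibility argument is equivalent and self-contained.
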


This assertion follows from the definition of $\phi_n$
in~\S\ref{FirstStep:Conclusion}.
This condition is also automatically fixed by the requirement of Lemma~\ref{FirstStep:Result},
because the morphisms $\phi_n$ of Lemma~\ref{FirstStep:Result}
necessarily vanish in degree $d>0$
and are determined by $\phi_1$ when we take their restriction to $\MOp_n(2)_0 = \Sigma^{-1}\DOp_n(2)_0$.

\medskip
The morphism of Fact~\ref{FinalStep:KOperadLifting:TwoaryKModuleLifting}
induces a morphism of $\K$-operads
\begin{equation*}
\tilde{\phi}_n: \FOp(\MOp_n^{(2)})\rightarrow\EOp,
\end{equation*}
for every $n\geq 1$.
From the assertions of Fact~\ref{FinalStep:KOperadLifting:TwoaryKModuleLifting}
and the observation of Fact~\ref{FinalStep:KOperadLifting:TwoaryRestriction}
we obtain:

\begin{lemm}\label{FinalStep:KOperadLifting:TwoaryFreeKOperadLifting}
The morphisms of $\K$-operads $\tilde{\phi}_n: \FOp(\MOp_n^{(2)})\rightarrow\EOp$
make the diagram
\begin{equation*}
\xymatrix{ \FOp(\MOp_n^{(2)})\ar[d]\ar@{.>}[r]^(0.6){\tilde{\phi}_n} & \EOp\ar[d] \\
\BOp^c(\DOp_n)\ar[r]_(0.6){\phi_n} & \COp }
\end{equation*}
commute, for every $n\geq 1$,
and commute with the morphisms of free operads $\sigma^*: \FOp(\MOp_{n-1}^{(2)})\rightarrow\FOp_n(\MOp_n^{(2)})$
induced by the suspension morphism of the Barratt-Eccles operad.
\qed
\end{lemm}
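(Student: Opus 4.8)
The plan is to deduce the statement from the universal property of the free $\K$-operad, which reduces both commutativity assertions to identities already established at the level of generators. Recall from~\S\ref{Interlude:GraphOperadModel:QuasiFreeMorphisms} that a morphism of $\K$-operads out of a free $\K$-operad $\FOp(M)$ is determined by its restriction to the generating symmetric $\K$-diagram $M$, and that a morphism of symmetric $\K$-diagrams $M\to\QOp$ into a $\K$-operad extends uniquely to $\FOp(M)$ by Proposition~\ref{Interlude:GraphOperadModel:FreeOperadStructure}. So it suffices to check, on the generating $\K$-diagram $\MOp_n^{(2)}$, first that the composite $\FOp(\MOp_n^{(2)})\xrightarrow{\tilde{\phi}_n}\EOp\to\COp$ agrees with the composite $\FOp(\MOp_n^{(2)})\to\BOp^c(\DOp_n)\xrightarrow{\phi_n}\COp$, and second that $\tilde{\phi}_n\circ\FOp(\sigma^*)=\tilde{\phi}_{n-1}$ as morphisms $\FOp(\MOp_{n-1}^{(2)})\to\EOp$. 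Since $\MOp_n^{(2)}$ is concentrated in arity $2$, these are identities between morphisms defined on the dg-modules $\MOp_n(\kappa)$, $\kappa\in\K(2)$.

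For the first identity I would begin by observing that $\FOp(\MOp_n^{(2)})$ is precisely the bottom layer of the quasi-free operad filtration of $\BOp^c(\DOp_n)$: since $\MOp_n(0)=\MOp_n(1)=0$ and the cobar derivation satisfies $\partial(\MOp_n)\subset\bigoplus_{m\geq 2}\FOp_m(\MOp_n)$, the observation of~\S\ref{FinalStep:KOperadLifting:TwoaryQuasiFreeBottomLayer} shows that $\partial$ vanishes on $\MOp_n^{(2)}$, so $\BOp^c(\DOp_n)_{\leq 2}=\FOp(\MOp_n^{(2)})$ as sub-$\K$-operads and the left-hand vertical of the square is the canonical inclusion. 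Its composite with the arity-$2$ restriction of $\phi_n$ is the augmentation $\epsilon\colon\MOp_n(2)\to\ZZ$ by Fact~\ref{FinalStep:KOperadLifting:TwoaryRestriction}, while the composite of $\tilde{\phi}_n$ with the augmentation $\EOp\to\COp$ is, on each $\MOp_n(\kappa)$, the augmentation $\MOp_n(\kappa)\to\ZZ$ appearing in the middle diagram of Fact~\ref{FinalStep:KOperadLifting:TwoaryKModuleLifting}; the two coincide. For the second identity, the same reduction to generators (together with naturality of the universal morphism $\eta$) turns $\tilde{\phi}_n\circ\FOp(\sigma^*)=\tilde{\phi}_{n-1}$ into the commuting squares of $\K(2)$-diagrams $\tilde{\phi}_n\circ\sigma^*=\tilde{\phi}_{n-1}$ displayed in the last diagram of Fact~\ref{FinalStep:KOperadLifting:TwoaryKModuleLifting}, with the base case $n=1$ being the definition of $\tilde{\phi}_1$.

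I do not anticipate any real difficulty: the substance of the statement was produced in~\S\S\ref{FinalStep:KOperadLifting:TwoaryTerms}--\ref{FinalStep:KOperadLifting:TwoaryQuasiFreeBottomLayer}, and the lemma is the formal consequence of packaging those computations through the free $\K$-operad functor. The only points that call for a modicum of care are that the universal property must be invoked in the category $\Op_1^{\K}$ of $\K$-operads rather than in the category of symmetric $\K$-diagrams, and that the morphisms $\tilde{\phi}_n$ of the statement are, by construction, the extensions to $\FOp(\MOp_n^{(2)})$ of the $\K(2)$-diagram morphisms of Fact~\ref{FinalStep:KOperadLifting:TwoaryKModuleLifting} postcomposed with the inclusion $\EOp^{(2)}\subset\EOp$ --- so that verifying commutativity on the generating $\K$-diagram does indeed suffice.
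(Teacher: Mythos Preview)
Your proposal is correct and follows exactly the approach the paper takes: the paper derives the lemma directly from Fact~\ref{FinalStep:KOperadLifting:TwoaryKModuleLifting} and Fact~\ref{FinalStep:KOperadLifting:TwoaryRestriction} via the universal property of the free $\K$-operad, and simply marks it with a \qed. You have spelled out explicitly the reduction to generators and the identification of $\FOp(\MOp_n^{(2)})$ with $\BOp^c(\DOp_n)_{\leq 2}$ that the paper leaves implicit in the phrase ``From the assertions of Fact~\ref{FinalStep:KOperadLifting:TwoaryKModuleLifting} and the observation of Fact~\ref{FinalStep:KOperadLifting:TwoaryRestriction} we obtain''.
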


The lifting argument, which motivates the constructions of~\S\S\ref{Interlude}-\ref{SecondStep},
is given by the following proposition:

\begin{mainsecprop}\label{FinalStep:KOperadLifting:MainConstruction}
The morphisms of $\K$-operads $\phi_n$
deduced from the results of Lemma~\ref{FirstStep:Result} and Lemma~\ref{SecondStep:Result}
admit liftings
\begin{equation*}
\xymatrix{ & \EOp\ar[d] \\
\BOp^c(\DOp_n)\ar[r]_(0.6){\phi_n}\ar@{.>}[ur]^(0.6){\exists\tilde{\phi}_n} & \COp }
\end{equation*}
so that the diagrams
\begin{equation*}
\xymatrix{ \BOp^c(\DOp_{n-1})\ar@{.>}[d]_{\tilde{\phi}_{n-1}}\ar[r]^{\sigma^*} &
\BOp^c(\DOp_n)\ar@{.>}[d]_{\tilde{\phi}_n} &
\FOp(\MOp_n^{(2)})\ar[l]\ar[d]^{\tilde{\phi}_n} \\
\EOp\ar[r]_{=} & \EOp & \EOp\ar[l]^{=} }
\end{equation*}
commute, for all $n>1$.
\end{mainsecprop}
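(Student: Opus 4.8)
The statement is a lifting argument in the model category of $\K$-operads of Theorem~\ref{Interlude:GraphOperadModel:KOperadModel}, and the plan is to construct the $\tilde\phi_n$ by induction on~$n$, solving at each stage a lifting problem against the acyclic fibration $\epsilon\colon\EOp\to\COp$ of Proposition~\ref{Interlude:GraphOperadModel:BarrattEcclesAugmentation} whose source is assembled from the two prescribed pieces of data. For the base case $n=1$ I would take $\tilde\phi_1$ to be the composite $\BOp^c(\DOp_1)\xrightarrow{\epsilon_1}\AOp=\EOp_1\subset\EOp$ of the Koszul duality equivalence with the first-layer inclusion; then $\epsilon\tilde\phi_1=\alpha\epsilon_1=\phi_1$, and since in arity~$2$ the generator $\mu_0^{\vee}$ of $\BOp^c(\DOp_1)$ satisfies $\epsilon_1(\mu_0^{\vee})=\mu_0$, the restriction of $\tilde\phi_1$ to $\FOp(\MOp_1^{(2)})$ is exactly the $2$-ary lifting $\tilde\phi_1$ produced in Lemma~\ref{FinalStep:KOperadLifting:TwoaryFreeKOperadLifting}.

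For the inductive step, assume $n\geq2$ and that $\tilde\phi_{n-1}\colon\BOp^c(\DOp_{n-1})\to\EOp$ has been built lifting $\phi_{n-1}$, restricting to the $2$-ary lifting on $\FOp(\MOp_{n-1}^{(2)})$, and (for $n\geq3$) satisfying $\tilde\phi_{n-1}\sigma^*=\tilde\phi_{n-2}$. Since $\sigma^*\colon\DOp_{n-1}\to\DOp_n$ is a cofibration of symmetric $\K$-diagrams by Proposition~\ref{FinalStep:KModuleCofibration:SuspensionCofibration}, Proposition~\ref{Interlude:GraphOperadModel:CofibrantKOperads} applied with $r=2$ shows that the natural morphism of $\K$-operads
\begin{equation*}
A_n\;:=\;\BOp^c(\DOp_{n-1})\bigvee_{\FOp(\MOp_{n-1}^{(2)})}\FOp(\MOp_n^{(2)})\;\longrightarrow\;\BOp^c(\DOp_n)
\end{equation*}
is a cofibration, where $\FOp(\MOp_{n-1}^{(2)})\to\BOp^c(\DOp_{n-1})$ is the bottom-layer inclusion $\POp_{\leq 2}\subset\POp$ (this makes sense because $\partial(\MOp_{n-1})\subset\bigoplus_{m\geq 2}\FOp_m(\MOp_{n-1})$ forces $\partial$ to vanish on the bottom layer, as recalled in~\S\ref{FinalStep:KOperadLifting:TwoaryQuasiFreeBottomLayer}). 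The morphisms $\tilde\phi_{n-1}$ and the $2$-ary lifting $\tilde\phi_n$ on $\FOp(\MOp_n^{(2)})$ agree on the common sub-$\K$-operad $\FOp(\MOp_{n-1}^{(2)})$ — via $\tilde\phi_{n-1}$ by the inductive hypothesis and via $\sigma^*$ by the $\sigma^*$-compatibility asserted in Lemma~\ref{FinalStep:KOperadLifting:TwoaryFreeKOperadLifting} — so they glue to a morphism $A_n\to\EOp$; composing with $\epsilon$ and using $\epsilon\tilde\phi_{n-1}=\phi_{n-1}=\phi_n\sigma^*$ (Lemma~\ref{FirstStep:Result}) together with the fact that the $2$-ary lifting lifts $\phi_n$ on $\FOp(\MOp_n^{(2)})$, the square with left side $A_n\to\BOp^c(\DOp_n)$, right side $\epsilon$, top $A_n\to\EOp$ and bottom $\phi_n$ commutes. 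A lift $\tilde\phi_n\colon\BOp^c(\DOp_n)\to\EOp$ in this square then lifts $\phi_n$, restricts to $\tilde\phi_{n-1}$ along $\sigma^*$, and restricts to the $2$-ary lifting on $\FOp(\MOp_n^{(2)})$, because it does so already on $A_n$; this closes the induction.

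The genuinely load-bearing point inside this proof is the appeal to Proposition~\ref{Interlude:GraphOperadModel:CofibrantKOperads} to recognize $A_n\to\BOp^c(\DOp_n)$ as a cofibration of $\K$-operads, together with the routine but essential bookkeeping that makes the gluing data over $\FOp(\MOp_{n-1}^{(2)})$ consistent; the model structure on $\K$-operads, the acyclic-fibration property of $\epsilon\colon\EOp\to\COp$, the cofibrancy of $\sigma^*$, and the $2$-ary liftings were all established in~\S\S\ref{Interlude}--\ref{SecondStep} and in~\S\S\ref{FinalStep:KModuleCofibration}--\ref{FinalStep:KOperadLifting}. Finally, Lemma~\ref{FinalStep:Result} is recovered by passing to $\K_n$-colimits of the $\tilde\phi_n$ and invoking $\EOp_n=\colim_{\K_n}\EOp$ (Proposition~\ref{Interlude:CompleteGraphOperad:BarrattEcclesLayers}) and $\colim_{\K_n}\BOp^c(\DOp_n)\simeq\BOp^c(\DOp_n)$ (Proposition~\ref{SecondStep:OperadKStructure:CobarColimitLayers}).
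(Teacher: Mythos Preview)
Your proof is correct and follows essentially the same route as the paper: an induction on $n$ with base case $\tilde\phi_1=\epsilon_1$ composed with $\AOp\subset\EOp$, and at each stage a lifting problem against the acyclic fibration $\epsilon\colon\EOp\to\COp$ with domain the pushout $\BOp^c(\DOp_{n-1})\bigvee_{\FOp(\MOp_{n-1}^{(2)})}\FOp(\MOp_n^{(2)})$, the left map being a cofibration by Proposition~\ref{Interlude:GraphOperadModel:CofibrantKOperads} applied with $r=2$ together with Proposition~\ref{FinalStep:KModuleCofibration:SuspensionCofibration}. You spell out the coherence check on $\FOp(\MOp_{n-1}^{(2)})$ and the commutativity of the outer square more explicitly than the paper does, but the argument is the same.
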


\begin{proof}
The lifting $\tilde{\phi}_n$ is defined by induction on $n$,
starting with the Koszul duality augmentation of the associative operad
\begin{equation*}
\BOp^c(\AOp)\xrightarrow{\epsilon}\AOp\subset\EOp
\end{equation*}
in the case $\EOp_1 = \AOp$.
At the $n$th stage of the induction process,
the already defined morphisms
fit the solid frame of a commutative diagram of $\K$-operads
\begin{equation*}
\xymatrix{ \BOp^c(\DOp_{n-1})
\bigvee_{\FOp(\MOp_{n-1}^{(2)})}\FOp(\MOp_n^{(2)})
\ar[d]\ar[rr]^(0.7){(\tilde{\phi}_{n-1},\tilde{\phi}_{n})} && \EOp\ar@{->>}[d]^{\sim} \\
\BOp^c(\DOp_n)\ar[rr]_(0.7){\phi_n}\ar@{.>}[urr] && \COp }.
\end{equation*}
Proposition~\ref{FinalStep:KModuleCofibration:SuspensionCofibration} implies,
according to Proposition~\ref{Interlude:GraphOperadModel:CofibrantKOperads},
that the left-hand side vertical morphism
is a cofibration of $\K$-operads (recall that $\FOp(\MOp_n^{(2)}) = \sk_2\BOp^c(\DOp_n)$, $\forall n$).
The lifting axiom of the model category of $\K$-operads
implies the existence of a fill-in morphism in this diagram
and this gives the desired morphism $\tilde{\phi}_n: \BOp^c(\DOp_n)\rightarrow\EOp$.
\end{proof}

\subsection{Conclusion of the lifting argument}\label{FinalStep:Conclusion}
The morphisms of Proposition~\ref{FinalStep:KOperadLifting:MainConstruction}
give by colimit a sequence of morphisms:
\begin{equation}\label{eqn:ColimitDiagram}
\xymatrix{ \colim_{\K_1}\BOp^c(\DOp_1)\ar[r]^(0.7){\sigma^*}\ar[d]_{\tilde{\phi}_1} &
\cdots\ar[r]^(0.25){\sigma^*} &
\colim_{\K_n}\BOp^c(\DOp_n)\ar[r]^(0.75){\sigma^*}\ar@{.>}[d]_{\exists\tilde{\phi}_n} & \cdots \\
\colim_{\K_1}\EOp\ar[r] &
\cdots\ar[r] &
\colim_{\K_n}\EOp\ar[r] & \cdots }.
\end{equation}
The upper row of~(\ref{eqn:ColimitDiagram})
is isomorphic to
\begin{equation*}
\BOp^c(\DOp_1)\xrightarrow{\sigma^*}
\BOp^c(\DOp_2)\xrightarrow{\sigma^*}
\cdots\xrightarrow{\sigma^*}
\BOp^c(\DOp_n)\xrightarrow{\sigma^*}\cdots
\end{equation*}
by Theorem~\ref{SecondStep:OperadKStructure:DefinitionResult}
and the lower row to
\begin{equation*}
\EOp_1\hookrightarrow\EOp_2\hookrightarrow\cdots\hookrightarrow\EOp_n\hookrightarrow\cdots
\end{equation*}
by Proposition~\ref{Interlude:CompleteGraphOperad:BarrattEcclesLayers}.
Hence, our construction returns morphisms $\psi_n: \BOp^c(\DOp_n)\rightarrow\EOp_n$.

In arity $r=2$,
the relation $\colim_{\kappa\in\K_n(2)}\MOp_n(\kappa)\simeq\MOp_n(2)$ gives, by Proposition~\ref{SecondStep:OperadKStructure:ColimitLayers},
an isomorphism between the colimit of the free $\K$-operad $\colim_{\K_n}\FOp(\MOp_n^{(2)})$,
where we consider the underlying symmetric $\K$-diagram of~$\MOp_n^{(2)}$,
and the usual free operad $\FOp(\MOp_n^{(2)})$ on the $\Sigma_*$-object
associated to this symmetric $\K$-diagram~$\MOp_n^{(2)}(\kappa)$.
By definition of the lifting $\tilde{\phi}_n$
in Proposition~\ref{FinalStep:KOperadLifting:MainConstruction},
the composites
\begin{equation*}
\xymatrix{ \MOp_n^{(2)}(r)\ar[r]
& \displaystyle\colim_{\kappa\in\K_n(r)}\FOp(\MOp_n^{(2)})(\kappa)\ar[r]
& \displaystyle\colim_{\kappa\in\K_n(r)}\BOp^c(\DOp_n)(\kappa)\ar[d]_{\simeq}\ar[r]^{\tilde{\phi}_n}
& \displaystyle\colim_{\kappa\in\K_n(r)}\EOp(\kappa)\ar[d] & \\
&& \BOp^c(\DOp_n)(r)\ar@{.>}[r]_{\psi_n}
& \EOp_n(r)\ar@{}[r]|{\displaystyle\subset\EOp(r)} & }
\end{equation*}
agree in arity $r=2$ with the natural embedding
\begin{equation*}
\MOp_n(2)\xrightarrow{\simeq}\EOp_n(2)\subset\EOp(2).
\end{equation*}
From this observation,
we obtain:

\begin{fact}\label{FinalStep:Conclusion:TwoaryColimitRestriction}
The restriction of our morphism $\psi_n: \BOp^c(\DOp_n)\rightarrow\EOp_n$
to the generating $\Sigma_*$-object
of $\BOp^c(\DOp_n)$ in arity $r=2$
agrees in homology with the morphism
\begin{equation*}
H_*(\MOp_n)(2)\rightarrow H_*(\EOp_n)(2)
\end{equation*}
mapping $\lambda_{n-1}^{\vee}$
to the representative of the product $\mu$
and $\mu^{\vee}$ to the representative of the bracket $\lambda_{n-1}$,
where we use the identity
\begin{equation*}
H_*(\MOp_n) = \Sigma^{-1} H_*(\DOp_n) = \Sigma^{-1}(\Lambda^{-n}\GOp_n^{\vee})
\end{equation*}
and the notation of Proposition~\ref{Prelude:GerstenhaberOperads:KoszulDualityStatement}
for the dual basis of~$\mu,\lambda_{n-1}\in\GOp_n(2)$
in $\GOp_n^{\vee}(2)$.
\end{fact}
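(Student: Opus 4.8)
The plan is to reduce the statement to the chain-level identification obtained in the paragraph just above and then pass to homology using the explicit descriptions of $H_*(\EOp_n(2))$ and $H_*(\MOp_n(2))$ from the prologue. Since the hypothesis $M(0)=M(1)=0$ forces the only reduced tree with two leaves to be the corolla, we have $\BOp^c(\DOp_n)(2)=\FOp(\MOp_n)(2)=\MOp_n(2)$, so the restriction of $\psi_n$ to the $2$-ary generators is literally the chain map $\psi_n\colon\MOp_n(2)\to\EOp_n(2)$. The discussion preceding the statement identifies this chain map with the natural degree-preserving isomorphism carrying the dual basis element $\mu_d^{\vee}$ to $\mu_{n-1-d}$ and $\tau\mu_d^{\vee}$ to $\tau\mu_{n-1-d}$, for $0\leq d\leq n-1$; it only remains to compute the induced map on homology.

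First I would record the two homologies. By~\S\ref{Prelude:GerstenhaberOperads:RepresentativeOperations}, $H_*(\EOp_n(2))$ is free of rank one in each of the degrees $0$ and $n-1$, with generators represented by the cycle $\mu=\mu_0$ (noting $\mu_0\equiv\tau\mu_0$ in $H_0$) and by the symmetrized cycle $\lambda_{n-1}$ of that paragraph in $H_{n-1}$. Dually, because $\EOp_n(2)$ is degreewise finitely generated and free over $\ZZ$, the identification $H_*(\DOp_n)(2)=\Lambda^{-n}\GOp_n^{\vee}(2)$ gives $H_*(\MOp_n)(2)=\Sigma^{-1}\Lambda^{-n}\GOp_n^{\vee}(2)$, again free of rank one in degrees $0$ and $n-1$; by definition of the dual basis, the generator $\lambda_{n-1}^{\vee}$ is represented by a cocycle of $\EOp_n(2)$ pairing to $1$ with $\lambda_{n-1}$ and, after the operadic desuspension $\Sigma^{-1}\Lambda^{-n}$, sits in degree $0$ of $\MOp_n(2)$, hence is a combination of $\mu_{n-1}^{\vee}$ and $\tau\mu_{n-1}^{\vee}$; symmetrically $\mu^{\vee}$ sits in degree $n-1$ of $\MOp_n(2)$ and is a combination of $\mu_0^{\vee}$ and $\tau\mu_0^{\vee}$.

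The verification is then immediate. Applying the chain isomorphism of the first paragraph, a degree-$0$ cycle representing $\lambda_{n-1}^{\vee}$, being a combination of $\mu_{n-1}^{\vee}$ and $\tau\mu_{n-1}^{\vee}$, is carried to the same combination of $\mu_0$ and $\tau\mu_0$, which represents the homology class of the product $\mu$; and a degree-$(n-1)$ cycle representing $\mu^{\vee}$, a combination of $\mu_0^{\vee}$ and $\tau\mu_0^{\vee}$, is carried to the corresponding combination of $\mu_{n-1}$ and $\tau\mu_{n-1}$, which is a cycle of $\EOp_n(2)$ and hence, up to sign, the symmetrized representative of the bracket $\lambda_{n-1}$. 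This yields $H_*(\psi_n)(\lambda_{n-1}^{\vee})=\mu$ and $H_*(\psi_n)(\mu^{\vee})=\lambda_{n-1}$, as asserted.

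The one point that needs care is the bookkeeping of operadic suspensions and signs hidden in the identifications $\DOp_n(2)=\Lambda^{-n}\EOp_n^{\vee}(2)$ and $\MOp_n=\Sigma^{-1}\DOp_n$: one must check that the specific combinations of $\mu_{n-1}^{\vee},\tau\mu_{n-1}^{\vee}$ and of $\mu_0^{\vee},\tau\mu_0^{\vee}$ representing $\lambda_{n-1}^{\vee}$ and $\mu^{\vee}$ are precisely those sent, under $\mu_d^{\vee}\mapsto\mu_{n-1-d}$, to cycles representing $\mu$ and $\lambda_{n-1}$ with coefficient $+1$ rather than $-1$. This is a routine sign chase of the kind the paper systematically suppresses, and its outcome is fixed by consistency with the normalization $\epsilon_n(\mu^{\vee})=\lambda_{n-1}$, $\epsilon_n(\lambda_{n-1}^{\vee})=\mu$ of Fact~\ref{Prelude:GerstenhaberOperads:KoszulDualityStatement}, which is exactly the map being matched.
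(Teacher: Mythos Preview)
Your argument is correct and follows essentially the same route as the paper: the paper derives the fact directly from the chain-level identification of $\psi_n|_{\MOp_n(2)}$ with the isomorphism $\mu_{n-1-d}^{\vee}\mapsto\mu_d$ established in the preceding paragraph, and your proposal simply unpacks the passage to homology that the paper leaves implicit. Your additional remarks on the homology classes and the sign bookkeeping are accurate elaborations of what the paper compresses into the phrase ``from this observation, we obtain.''
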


Recall that the edge morphism $\edge: \BOp^c(H_*(\DOp_n))\rightarrow H_*(\BOp^c(\DOp_n))$
in the homological requirement of Assertion~(\ref{Prelude:RealizationTheorem:Prescription})
of Theorem~\ref{Prelude:RealizationTheorem}
is defined on the generating $\Sigma_*$-object $\Sigma^{-1} H_*(\bar{\DOp}_n)$
by the homology of the natural embedding $\Sigma^{-1}\bar{\DOp}_n = \MOp_n\hookrightarrow\BOp^c(\DOp_n)$
in arity $r=2$ and by the null morphism in arity $r\not=2$,
like the Koszul duality equivalence of the $n$-Gerstenhaber operad $\GOp_n$.
Therefore Fact~\ref{FinalStep:Conclusion:TwoaryColimitRestriction} implies immediately:

\begin{fact}\label{FinalStep:Conclusion:HomologicalKoszulRequirement}
The composite of our morphisms $\psi_n: \BOp^c(\DOp_n)\rightarrow\EOp_n$
with the edge morphism $\edge: \BOp^c(H_*(\DOp_n))\rightarrow H_*(\BOp^c(\DOp_n))$
agrees with the Koszul duality equivalence of the $n$-Gerstenhaber operad,
as required by Assertion~(\ref{Prelude:RealizationTheorem:Prescription})
of Theorem~\ref{Prelude:RealizationTheorem}.
\end{fact}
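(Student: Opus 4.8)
The plan is to read off Fact~\ref{FinalStep:Conclusion:HomologicalKoszulRequirement} from Fact~\ref{FinalStep:Conclusion:TwoaryColimitRestriction}, together with the explicit normalizations of the edge morphism $\edge$ and of the Koszul duality equivalence $\epsilon_n$ on generators. First I would invoke the identification $H_*(\DOp_n) = \Lambda^{-n}\GOp_n^{\vee}$ -- recorded just after the statement of Theorem~\ref{Prelude:RealizationTheorem}, and resting on Fact~\ref{Prelude:GerstenhaberOperads:KoszulDualityStatement} and the degreewise finiteness of the Barratt-Eccles operad -- so that the source of the edge morphism reads $\BOp^c(H_*(\DOp_n)) = \BOp^c(\Lambda^{-n}\GOp_n^{\vee}) = (\FOp(\Sigma^{-1}\Lambda^{-n}\bar{\GOp}_n^{\vee}),\partial)$. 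Both $\psi_n{}_*\circ\edge$ and $\epsilon_n$ are then morphisms of graded operads out of this quasi-free operad to $\GOp_n = H_*(\EOp_n)$, and such a morphism is determined by its restriction to the generating $\Sigma_*$-object $\Sigma^{-1}\Lambda^{-n}\bar{\GOp}_n^{\vee}$; hence it suffices to compare the two restrictions arity by arity.

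In arities $r\neq 2$ both restrictions vanish. Indeed, by its definition in Proposition~\ref{Prelude:KoszulDuality:HomologyKoszulSpectralSequence} the edge morphism is the null morphism on generators of arity $r\neq 2$, so $\psi_n{}_*\circ\edge$ is too; and $\epsilon_n$ is null on generators of arity $r>2$ by the very definition of the Koszul duality equivalence recalled in~\S\ref{Prelude:KoszulDuality:HomologyKoszulCooperads}, there being no generators in arities $r=0,1$. In arity $r=2$ the free operad reduces to its generating object, so the arity $2$ components of $\BOp^c(\DOp_n)$ and of $\BOp^c(H_*(\DOp_n))$ are $\MOp_n(2)$ and $H_*(\MOp_n)(2)$, and $\edge$ restricted there is the identity of $H_*(\MOp_n)(2)$. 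Therefore $\psi_n{}_*\circ\edge$ agrees in arity $2$ with the morphism induced in homology by the arity $2$ restriction of $\psi_n$, which by Fact~\ref{FinalStep:Conclusion:TwoaryColimitRestriction} carries $\mu^{\vee}$ to $\lambda_{n-1}$ and $\lambda_{n-1}^{\vee}$ to $\mu$. By Assertion~(\ref{KoszulDuality:Gerstenhaber}) of Fact~\ref{Prelude:GerstenhaberOperads:KoszulDualityStatement}, $\epsilon_n$ has the same effect on arity $2$ generators, so the two restrictions coincide and $\psi_n{}_*\circ\edge = \epsilon_n$. This is precisely the prescription of Assertion~(\ref{Prelude:RealizationTheorem:Prescription}) in Theorem~\ref{Prelude:RealizationTheorem}, and, together with the commutative ladder assembled in~\S\ref{FinalStep:Conclusion}, it completes the proof of Lemma~\ref{FinalStep:Result} and hence of Theorem~\ref{Prelude:RealizationTheorem}.

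I do not expect a genuine obstacle at this point: all the substantive work has already been carried out in Fact~\ref{FinalStep:Conclusion:TwoaryColimitRestriction}, which extracts the arity $2$ behaviour of $\psi_n$ from the construction of the liftings $\tilde{\phi}_n$ on the second filtration layer $\FOp(\MOp_n^{(2)})$. The only point demanding care is the bookkeeping of the suspension $\Sigma^{-1}$, the operadic desuspension $\Lambda^{-n}$ and the linear dual when transporting $\mu^{\vee}$ and $\lambda_{n-1}^{\vee}$ through the identifications $H_*(\MOp_n) = \Sigma^{-1}H_*(\DOp_n) = \Sigma^{-1}(\Lambda^{-n}\GOp_n^{\vee})$, so that the comparison is stated in exactly the normalization used to define $\edge$ in Proposition~\ref{Prelude:KoszulDuality:HomologyKoszulSpectralSequence} and $\epsilon_n$ in Fact~\ref{Prelude:GerstenhaberOperads:KoszulDualityStatement}.
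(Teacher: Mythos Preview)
Your proposal is correct and follows essentially the same route as the paper: the paper's argument is the brief paragraph immediately preceding the Fact, which recalls that $\edge$ is null on generators in arity $r\neq 2$ and equals the homology of the embedding $\MOp_n\hookrightarrow\BOp^c(\DOp_n)$ in arity $2$, just like $\epsilon_n$, and then invokes Fact~\ref{FinalStep:Conclusion:TwoaryColimitRestriction} for the arity~$2$ comparison. You have simply spelled out the same reduction-to-generators argument in greater detail.
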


This observation achieves the verification of Lemma~\ref{FinalStep:Result}
and the proof of theorems~\ref{Prelude:RealizationTheorem}-\ref{Prelude:BarDualityTheorem}.\qed

\section*{Epilogue}


In the prologue,
we observe that our main results, theorems~\ref{Prelude:RealizationTheorem}-\ref{Prelude:BarDualityTheorem},
give an interpretation of the suspension morphisms $\sigma: \EOp_n\rightarrow\Lambda^{-1}\EOp_{n-1}$
in terms of the homotopy of $E_n$-operads.
Namely,
the cooperad morphism $\sigma^*: \Lambda^{1-n}\EOp_{n-1}^{\vee}\rightarrow\Lambda^{-n}\EOp_n^{\vee}$
associated to $\sigma$
and the embedding $\iota: \EOp_{n-1}\rightarrow\EOp_n$
are dual to each other with respect to the cobar construction.
This assertion means:
\begin{itemize}
\item
on one side,
we have a commutative square
\begin{equation}\label{eqn:EmbeddingBarDuality}
\xymatrix{ \BOp^c(\Lambda^{1-n}\EOp_{n-1}^{\vee})\ar[r]^{\sigma^*}\ar@{.>}[d]_{\sim} &
\BOp^c(\Lambda^{-n}\EOp_n^{\vee})\ar@{.>}[d]^{\sim} \\
\EOp_{n-1}\ar[r]_{\iota} & \EOp_n }
\end{equation}
in which vertical arrows are weak-equivalences;
\item
on the other side,
we have a commutative square
\begin{equation}\label{eqn:SuspensionBarDuality}
\xymatrix{ \BOp^c(\Lambda^{-n}\EOp_{n}^{\vee})\ar[r]^{\iota^*}\ar@{.>}[d]_{\sim} &
\BOp^c(\Lambda^{-n}\EOp_n^{\vee})\ar[r]^(0.43){\simeq} &
\Lambda^{-1}\BOp^c(\Lambda^{1-n}\EOp_n^{\vee})\ar@{.>}[d]^{\sim} \\
\EOp_n\ar[rr]_{\sigma} && \Lambda^{-1}\EOp_{n-1} },
\end{equation}
\end{itemize}
where $\iota^*$ represents the image of the embedding $\iota$
under the functor $\BOp^c(\Lambda^{-n}(-)^{\vee})$.

The existence of~(\ref{eqn:EmbeddingBarDuality})
is nothing but the assertion of theorems~\ref{Prelude:RealizationTheorem}-\ref{Prelude:BarDualityTheorem}.
Diagram~(\ref{eqn:SuspensionBarDuality})
is obtained from~(\ref{eqn:EmbeddingBarDuality})
by an application of the adjunction relation of the bar duality of operads (see~\cite[Theorem 2.17]{GetzlerJones})
using that the cobar construction commutes with operadic suspensions.

The purpose of this concluding section is give an intrinsic interpretation, in terms of the homotopy of $E_n$-operads,
of the action of the Barratt-Eccles operad
on the reduced normalized cochain complex of spheres~$\bar{N}^*(S^m)$.
The result arises as a consequence of the relation~(\ref{eqn:SuspensionBarDuality})

Recall that this action is defined by a morphism $\nabla_{S^m}: \EOp\rightarrow\End_{\bar{N}^*(S^m)}$,
where $\End_{\bar{N}^*(S^m)}$
is the endomorphism operad of $\bar{N}^*(S^m)$.
Since $\bar{N}^*(S^m) = \ZZ[-m]$ (recall that $\ZZ[d]$ denotes the free $\ZZ$-module of rank $1$ put in lower degree $d$),
we obtain $\End_{\bar{N}^*(S^m)}(r) = \ZZ[rm-m]$,
from which we deduce the identity:
\begin{equation*}
\End_{\bar{N}^*(S^m)} = \Lambda^{-m}\COp,
\end{equation*}
where $\Lambda^{-m}\COp$ is the operadic $m$-fold desuspension of the commutative operad $\COp$.
Hence,
the action of $\EOp$ on $\bar{N}^*(S^m)$
is represented by a morphism $\sigma_m: \EOp\rightarrow\Lambda^{-m}\COp$.

The identity $\End_{\bar{N}^*(S^m)}(r) = \ZZ[rm-m]$
implies that $\nabla_{S^m}$
is defined by a cochain of degree $rm-m$
on each $\EOp(r)$, $r\in\NN$.
According to~\cite{BergerFresse},
these cochains are nothing but the $m$-fold cup products $\sgn^{\cup m}: \EOp(r)\rightarrow\ZZ$
of the cochains $\sgn$
defined in~\S\ref{Prelude:BarrattEccles:CircleCochainMultiplications}.
The associativity relation between cup products and cap products
implies:

\begin{concludingfact}
The morphism $\sigma_m: \EOp\rightarrow\Lambda^{-m}\COp$
giving the action of the Barratt-Eccles operad on $\bar{N}^*(S^m)$
is identified with the composite of the $m$-fold suspension morphism
\begin{equation*}
\EOp\xrightarrow{\sigma}\Lambda^{-1}\EOp\xrightarrow{\sigma}\cdots\xrightarrow{\sigma}\Lambda^{-m}\EOp
\end{equation*}
with the morphism
\begin{equation*}
\Lambda^{-m}\EOp\xrightarrow{\sim}\Lambda^{-m}\COp = \End_{\bar{N}^*(S^m)}
\end{equation*}
induced by the augmentation of the Barratt-Eccles operad.
\end{concludingfact}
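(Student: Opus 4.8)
The plan is to verify the identity arity by arity, by comparing the cochains that define the two operad morphisms $\EOp(r)\rightarrow\Lambda^{-m}\COp(r) = \ZZ[rm-m]$. First I would recall, from~\cite{BergerFresse}, that the action morphism $\nabla_{S^m} = \sigma_m$ is given in arity $r$ by the homogeneous cochain $\sgn^{\cup m}: \EOp(r)\rightarrow\ZZ$ of degree $rm-m$, where $\sgn$ is the cochain of~\S\ref{Prelude:BarrattEccles:CircleCochainMultiplications}. On the other hand, the $m$-fold iterated suspension $\EOp\xrightarrow{\sigma}\Lambda^{-1}\EOp\xrightarrow{\sigma}\cdots\xrightarrow{\sigma}\Lambda^{-m}\EOp$ is, by the definition recalled in~\S\ref{Prelude:BarrattEccles:SuspensionMorphisms}, obtained by capping $m$ times with $\sgn$, and the morphism $\Lambda^{-m}\EOp\rightarrow\Lambda^{-m}\COp$ induced by the augmentation $\epsilon$ selects, in each arity $r$, the degree-$0$ component of $\EOp(r)$, which after the $m$-fold operadic desuspension occupies exactly the degree $rm-m = m(r-1)$.

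Then I would evaluate the composite of these maps on a non-degenerate basis simplex $(w_0,\dots,w_d)\in\EOp(r)$. By the cap product formula $\sgn\cap(w_0,\dots,w_d) = \sgn(w_0,\dots,w_{r-1})\cdot(w_{r-1},\dots,w_d)$, iterating $m$ times and then applying the augmentation yields zero unless $d = m(r-1)$, in which case it returns
\begin{equation*}
\pm\,\sgn(w_0,\dots,w_{r-1})\cdot\sgn(w_{r-1},\dots,w_{2(r-1)})\cdots\sgn(w_{(m-1)(r-1)},\dots,w_{m(r-1)}).
\end{equation*}
This is precisely the value of $\sgn^{\cup m}$ on $(w_0,\dots,w_{m(r-1)})$ given by the Alexander--Whitney expansion of an $m$-fold cup product. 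Conceptually this is exactly the associativity between cup and cap products, $\sgn\cap(\sgn\cap\cdots(\sgn\cap x)) = \sgn^{\cup m}\cap x$, combined with the elementary remark that capping with a top-degree cochain followed by the augmentation is evaluation of that cochain; I would phrase the argument this way to keep the bookkeeping uniform in $m$. Compatibility with operad structures needs no separate check, since all the maps involved ($\sigma$, the augmentation, and $\nabla_{S^m}$) are operad morphisms, hence so are both sides of the asserted identity.

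The one step that will require genuine care is the sign $\pm$: one must check that the Koszul signs produced by permuting the $m$ operadic desuspensions in $\Lambda^{-m}\EOp$, together with the sign twists of the $\Sigma_r$-actions, agree with the signs in the Alexander--Whitney formula for $\sgn^{\cup m}$. Because the cochains $\sgn$ take values in $\{0,\pm 1\}$, with nonzero values governed by the signature of the permutation $(w_0(1),\dots,w_d(1))$ (see~\S\ref{Prelude:BarrattEccles:CircleCochainMultiplications}), this reduces to the multiplicativity of signatures under concatenating the strings $(w_0(1),\dots,w_{r-1}(1))$, $(w_{r-1}(1),\dots,w_{2(r-1)}(1))$, and so on --- a direct, if slightly tedious, computation. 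Once this is settled, the equality of $\sigma_m$ with the stated composite holds in every arity, proving the Fact.
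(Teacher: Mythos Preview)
Your proposal is correct and follows essentially the same approach as the paper: the paper gives no detailed proof of this Fact, but simply records that it is a consequence of the associativity relation between cup products and cap products, which is exactly the mechanism you invoke. Your write-up merely unpacks that one-line justification by iterating the formula $\sgn\cap(-)$ and then applying the augmentation, together with the expected sign bookkeeping.
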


Note that the action of $\EOp_n\subset\EOp$
on $\bar{N}^*(S^m)$
vanishes when $n\leq m$.
In the case $n>m$,
we deduce from~(\ref{eqn:SuspensionBarDuality}):

\begin{mainthm}\label{Epilogue:SphereAction}
For every $n>m$,
we have a commutative diagram
\begin{equation*}
\xymatrix{ \BOp^c(\Lambda^{-n}\EOp_{n}^{\vee})\ar[d]_{\iota^*}\ar[rr]^{\sim} &&
\EOp_n\ar[d]^{\sigma}\ar@{^{(}->}[]!R+<4pt,0pt>;[r] & \EOp\ar[d]^{\sigma} \\
\vdots\ar[d]_{\iota^*} && \vdots\ar[d]^{\sigma} & \vdots\ar[d]^{\sigma} \\
\BOp^c(\Lambda^{-n}\EOp_{n-m}^{\vee})\ar[r]_(0.43){\simeq} &
\Lambda^{-m}\BOp^c(\Lambda^{m-n}\EOp_{n-m}^{\vee})\ar[r]_(0.6){\sim}\ar@/_1em/[drr]_{\Lambda^{-m}\phi_{n-m}} &
\Lambda^{-m}\EOp_{n-m}\ar@{^{(}->}[]!R+<4pt,0pt>;[r] &
\Lambda^{-m}\EOp\ar[d]^{\sim} \\
&&& \Lambda^{-m}\COp = \End_{\bar{N}^*(S^m)} }
\end{equation*}
giving the action of the $E_n$-operad $\BOp^c(\Lambda^{-n}\EOp_n^{\vee})$
on $\bar{N}^*(S^m)$.
\end{mainthm}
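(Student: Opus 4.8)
The plan is to obtain the asserted diagram by splicing together $m$ copies of the elementary square~(\ref{eqn:SuspensionBarDuality}), so that the only inputs used are theorems~\ref{Prelude:RealizationTheorem}--\ref{Prelude:BarDualityTheorem}, the factorization of $\nabla_{S^m}$ recorded in the preceding Fact, and the compatibility of the suspension morphism with the little cubes filtration from Observation~\ref{Prelude:BarrattEccles:SuspensionLittleCubesLayers}.

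First I would make precise what \emph{the action of the $E_n$-operad $\BOp^c(\Lambda^{-n}\EOp_n^{\vee})$ on $\bar{N}^*(S^m)$} means: since Theorem~\ref{Prelude:RealizationTheorem} provides a weak-equivalence $\psi_n\colon\BOp^c(\Lambda^{-n}\EOp_n^{\vee})\xrightarrow{\sim}\EOp_n$, this action is the composite of $\psi_n$, the inclusion $\EOp_n\hookrightarrow\EOp$, and the morphism $\nabla_{S^m}\colon\EOp\to\End_{\bar{N}^*(S^m)}=\Lambda^{-m}\COp$. By the preceding Fact, $\nabla_{S^m}$ is the $m$-fold suspension $\EOp\xrightarrow{\sigma}\Lambda^{-1}\EOp\xrightarrow{\sigma}\cdots\xrightarrow{\sigma}\Lambda^{-m}\EOp$ followed by the augmentation $\Lambda^{-m}\epsilon\colon\Lambda^{-m}\EOp\xrightarrow{\sim}\Lambda^{-m}\COp$, and by Observation~\ref{Prelude:BarrattEccles:SuspensionLittleCubesLayers} this $m$-fold suspension carries the subtower $\EOp_n\hookrightarrow\cdots\hookrightarrow\EOp$ into $\Lambda^{-m}\EOp_{n-m}\hookrightarrow\cdots\hookrightarrow\Lambda^{-m}\EOp$. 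This pins down the right-hand column, the bottom row, and the bottom-right augmentation of the target diagram; what remains is to connect them to the left-hand column and the top isomorphism.

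Next I would apply the square~(\ref{eqn:SuspensionBarDuality}) at each level $k$ with $n-m<k\le n$. That square --- already deduced in the excerpt from~(\ref{eqn:EmbeddingBarDuality}) by the adjunction of bar duality together with the commutation of $\BOp^c$ with operadic suspensions --- has top edge $\iota^*$ followed by the canonical isomorphism $\BOp^c(\Lambda^{-k}\EOp_{k-1}^{\vee})\cong\Lambda^{-1}\BOp^c(\Lambda^{1-k}\EOp_{k-1}^{\vee})$, bottom edge $\sigma\colon\EOp_k\to\Lambda^{-1}\EOp_{k-1}$, and vertical edges the weak-equivalences $\psi_k$ and $\Lambda^{-1}\psi_{k-1}$. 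Applying $\Lambda^{-(n-k)}(-)$ to the square at level $k$ and stacking the results, the desuspensions accumulate coherently and produce exactly the left column (iterated $\iota^*$), the middle column (iterated $\sigma$), the top isomorphism $\psi_n$, the bottom isomorphism $\Lambda^{-m}\psi_{n-m}$, and the horizontal identification $\BOp^c(\Lambda^{-n}\EOp_{n-m}^{\vee})\cong\Lambda^{-m}\BOp^c(\Lambda^{m-n}\EOp_{n-m}^{\vee})$. The diagonal $\Lambda^{-m}\phi_{n-m}$ is then filled in using the defining relation $\phi_{n-m}=\epsilon\circ\iota\circ\psi_{n-m}$ from~\S\ref{FinalStep:Conclusion}: applying $\Lambda^{-m}(-)$ shows that $\Lambda^{-m}\phi_{n-m}$ equals $\Lambda^{-m}\psi_{n-m}$ followed by $\Lambda^{-m}\EOp_{n-m}\hookrightarrow\Lambda^{-m}\EOp\xrightarrow{\Lambda^{-m}\epsilon}\Lambda^{-m}\COp$. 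Pasting this against the bottom of the stacked squares closes the diagram, and reading off its outer boundary recovers $\nabla_{S^m}\circ(\EOp_n\hookrightarrow\EOp)\circ\psi_n$, which is the action of the cobar model on $\bar{N}^*(S^m)$.

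I expect the only genuine work to be the bookkeeping of operadic desuspensions: checking that the iterated isomorphism ``$\BOp^c$ commutes with $\Lambda$'' is coherent with no accumulating sign, that $\sigma$ restricts compatibly through the entire nested tower, and that the two descriptions of $\phi_{n-m}$ coincide on the nose rather than merely up to homotopy --- all routine given Observation~\ref{Prelude:BarrattEccles:SuspensionLittleCubesLayers} and the explicit construction of $\psi_n$ in~\S\ref{FinalStep}. The substantive content, namely the weak-equivalences $\psi_k$ themselves, is supplied by theorems~\ref{Prelude:RealizationTheorem}--\ref{Prelude:BarDualityTheorem}, so this statement is essentially a repackaging and carries no new difficulty beyond careful diagram pasting.
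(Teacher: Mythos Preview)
Your proposal is correct and follows exactly the approach the paper intends: the paper offers no separate proof of this theorem, simply declaring ``we deduce from~(\ref{eqn:SuspensionBarDuality})'' before stating it, and your argument is precisely the elaboration of that deduction---stacking $m$ desuspended copies of~(\ref{eqn:SuspensionBarDuality}), invoking the preceding Fact for the factorization of $\nabla_{S^m}$, and using that $\psi_{n-m}$ lifts $\phi_{n-m}$ to fill in the diagonal. There is nothing to add; the paper treats this as routine bookkeeping and you have spelled it out accurately.
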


In a follow up~\cite{FresseOperadMaps},
we prove that each morphism $\phi_n: \BOp^c(\Lambda^{-n}\EOp_n^{\vee})\rightarrow\COp$
in Lemma~\ref{FirstStep:Result}
is uniquely determined up to homotopy.
Hence,
Theorem~\ref{Epilogue:SphereAction} implies that the action of an $E_n$-operad on the cochain complex of a sphere $\bar{N}^*(S^m)$
has an intrinsic characterization in terms of the embeddings $\iota: \EOp_{n-1}\hookrightarrow\EOp_n$
underlying the definition of an $E_n$-operad.

\section*{Glossary of notation}

\begin{trivlist}

\indexspace\item $\AOp$: the associative operad

\item $\BOp^c(\DOp)$: the operadic cobar construction of a cooperad
(\S\ref{Prelude:KoszulDuality:CobarConstruction})

\item $\C$: the base category of dg-modules

\item $\COp$: the commutative operad

\item $\COp_n$: the topological operad of little $n$-cubes

\item $\DOp$: any cooperad

\item $\DOp_n$: a short notation for $\DOp_n = \Lambda^{-n}\EOp_n^{\vee}$

\item $\EOp$: the Barratt-Eccles chain operad
(\S\ref{Prelude:BarrattEccles})

\item $\EOp_n$: the suboperad of $\EOp$ equivalent to the chain operad of little $n$-cubes
(\S\ref{Prelude:BarrattEccles:LittleCubesFiltration})

\item $\FOp(M)$: the free operad on a $\Sigma_*$-object
(\S\ref{Prelude:KoszulDuality:FreeOperad}),
on a symmetric $\K$-diagram
(\S\S\ref{Interlude:GraphOperadModel:FreeKOperad}-\ref{Interlude:GraphOperadModel:FreeOperadStructure})

\item $\GOp_n$: the $n$-Gerstenhaber operad
(\S\ref{Prelude:GerstenhaberOperads})

\item $\K$: the complete graph operad
(\S\ref{Interlude:CompleteGraphOperad})

\item $\K_n$: the filtration layers of the complete graph operad
(\S\ref{Interlude:CompleteGraphOperad})

\item $\kappa$: any oriented weight-system
(\S\ref{Interlude:CompleteGraphOperad:Poset})

\item $\KOp(\POp)$: the Koszul dual of an operad
(\S\ref{Prelude:KoszulDuality:KoszulDualDefinition})

\item $\LOp$: the Lie operad

\item $\Lambda$: the operadic suspension
(\S\ref{Prelude:BarrattEccles:SuspensionMorphisms})

\item $\MOp_n$: a short notation for $\MOp_n = \Sigma^{-1}\DOp_n$

\item $\POp$: any operad

\item $\Sigma$: the suspension of dg-modules

\item $\Sigma_r$: the symmetric group on $r$ letters

\item $\Theta$: the category of trees

\item $(-)^{\vee}$: the complement of oriented weight-systems (\S\ref{SecondStep:DualModuleKStructure:WeightComplement})
or the duality of $\ZZ$-modules

\item $\WOp$: the simplicial Barratt-Eccles operad
(\S\ref{SecondStep:DualModuleKStructure:Definition})

\end{trivlist}

\end{document}